\documentclass[reqno,11pt]{amsart}
\usepackage{amsmath,amssymb,mathrsfs,amsthm,amsfonts}
\usepackage[inline]{enumitem}
\usepackage[usenames,dvipsnames]{xcolor}
\usepackage{hyperref}
\usepackage[percent]{overpic}
\usepackage{comment}
\usepackage{stmaryrd}
\usepackage{algorithm}
\usepackage{algorithmic}
\usepackage{pgfplots}
\usepackage{subcaption}
\usepackage{tikz}
\usetikzlibrary{calc}
\usetikzlibrary{arrows.meta,backgrounds}
\usepackage{multirow,array,longtable,booktabs}
\usetikzlibrary{arrows}
\usetikzlibrary{shapes.multipart}

\hypersetup{
	colorlinks=true, linkcolor=blue,
	citecolor=ForestGreen
}
\usepackage[paper=letterpaper,margin=1in]{geometry}
\DeclareMathAlphabet{\mathpzc}{OT1}{pzc}{m}{it}

\newtheorem{theorem}{Theorem}[section]
\newtheorem{lemma}[theorem]{Lemma}
\newtheorem{proposition}[theorem]{Proposition}

\theoremstyle{definition}
\newtheorem{definition}[theorem]{Definition}

\newtheorem{remark}[theorem]{Remark}
\numberwithin{equation}{section}
\allowdisplaybreaks
\usepackage{acronym}

\acrodef{KPZ}{Kardar--Parisi--Zhang}
\acrodef{SHE}{Stochastic Heat Equation}
\acrodef{LDP}{Large Deviation Principle}


\renewcommand{\Pr}{\mathbb{P}}
\renewcommand{\P}{\mathbf{P}}
\newcommand{\2}{\textcolor{black}{2}}
\newcommand{\p}{\mathfrak{p}}
\newcommand{\Ex}{\mathbb{E}}
\newcommand{\E}{\mathbf{E}}	
\newcommand{\ind}{\mathbf{1}}	
\renewcommand{\H}{\mathcal{H}}
\newcommand{\hh}{H}
\newcommand{\mc}{\mathbf{w}}

\renewcommand{\wp}{\mathrm{whp}}
\makeatletter
\renewcommand{\ll}{\llbracket}
\newcommand{\rr}{\rrbracket}
\newcommand{\pp}{\mathfrak{p}}
\newcommand{\Br}{\mathfrak{B}}

\newcommand{\norm}[1]{\Vert#1\Vert}

\newcommand{\0}{0}


\newcommand{\R}{\mathbb{R}} 
\newcommand{\Z}{\mathbb{Z}} 
\newcommand{\e}{\varepsilon}
\newcommand{\calZ}{\mathcal{Z}}
\renewcommand{\L}{\mathcal{L}}
\newcommand{\m}{\mathsf}
\renewcommand{\hat}{\widehat}
\newcommand{\til}{\widetilde}
\renewcommand{\bar}{\overline}
\newcommand{\sd}[1]{{\color{red}\ttfamily\upshape\small[#1]\color{black}}}

\usepackage{graphicx}
\pgfplotsset{compat=1.17}

\title[Scaling limit of the half-space KPZ line ensemble]{The half-space KPZ line ensemble and its scaling limit}

\author[S.\ Das]{Sayan Das}
\address{S.\ Das,
	Department of Mathematics, University of Chicago,
	\newline\hphantom{\quad \ \ S. Das}
	5734 S.~University Avenue, Chicago, IL 60637, USA
}
\email{sayan.das@columbia.edu}
\author[C.\ Serio]{Christian Serio}
\address{C.\ Serio,
	Department of Mathematics, Stanford University,
	\newline\hphantom{\quad \ \ C. \ Serio}
	450 Jane Stanford Way,
	Stanford, CA 94305, USA
}
\email{cdserio@stanford.edu}

\begin{document}
	\begin{abstract}
             For each $\alpha \in \R, t\ge 1$, we show that there exists a unique  $\mathbb{N}$-indexed line ensemble of random continuous curves $\mathbb{R}_{\le 0} \to \mathbb{R}$ with the following properties:
            \begin{enumerate}
                \item The top curve is distributed as the time-$t$ Cole--Hopf solution to the half-space KPZ equation with narrow wedge initial condition and Neumann boundary condition with parameter $\alpha$.
                \item The line ensemble satisfies a one-sided resampling invariance property, involving softly non-intersecting Brownian motions with an attractive potential between pairs at the boundary.
            \end{enumerate}
             We call this object the \textit{half-space KPZ line ensemble}. For $\alpha=\mu t^{-1/3}$ with $\mu \in \R$ fixed (critical regime) and for $\alpha>0$ fixed (supercritical regime), we show that the half-space KPZ line ensemble is tight under 1:2:3 KPZ scaling as $t\to\infty$. Moreover, all subsequential limits approximate a parabola 
             and enjoy a one-sided Brownian Gibbs property, described by non-intersecting Brownian motions with pairwise interaction at the boundary. In the critical case this agrees with the half-space Airy line ensemble recently constructed in \cite{halfairy}. In the supercritical case, we demonstrate a novel structure involving \textit{pairwise pinned} Brownian motions, one of the main technical contributions of this paper.
            \end{abstract}
	
	
	\maketitle
	{
			\hypersetup{linkcolor=black}
			\setcounter{tocdepth}{1}
			\tableofcontents
		}

\section{Introduction}

The half-space Kardar--Parisi--Zhang (HSKPZ) equation on the negative half-line\footnote{We consider the equation on the negative half-line rather than the positive one, as the one-sided Gibbs property stated in Theorem \ref{thm1}\ref{12b} is easier to formulate and visualize in this setting.} is the stochastic PDE
\begin{equation}
    \label{halfkpz}
    \begin{aligned}
    & \partial_t \mathcal{H}^\alpha = \frac12\partial_{xx}\mathcal{H}^\alpha+\frac12(\partial_x \mathcal{H}^\alpha)^2+\xi, \qquad \mathcal{H}^\alpha=\mathcal{H}^\alpha(x,t), \qquad (x,t)\in (-\infty,0]\times (0,\infty),
\end{aligned}
\end{equation}
subject to Neumann boundary condition with parameter $\alpha\in \R$:
\begin{align}\label{bdy}
    \partial_x \mathcal{H}^\alpha(x,t)\mid_{x=0}=-\alpha.
\end{align}
Here $\xi(x,t)$ is a space-time white noise.
A physically relevant notion of solution to this equation is given by the Cole--Hopf
solution $\mathcal{H}^\alpha:=\log \mathcal{Z}_\alpha$ where $\mathcal{Z}_\alpha$ solves the half-space stochastic heat equation (HSSHE) with Robin boundary condition: 
\begin{equation}
    \label{sheeq}
    \begin{aligned}
 & \partial_t\mathcal{Z}_\alpha=\tfrac12\partial_{xx}\mathcal{Z}_\alpha+\mathcal{Z}_\alpha\cdot \xi, \\
   & \partial_x\mathcal{Z}_\alpha(x,t)\mid_{x=0} = -(\alpha-\tfrac12)\mathcal{Z}_\alpha(0,t).
\end{aligned}
\end{equation}
In this paper we work with narrow wedge initial data for HSKPZ, which corresponds to setting $\mathcal{Z}_\alpha(x,0)= \delta(x)$, a Dirac delta mass at $0$. The well-posedness of HSSHE was studied in \cite{par}, and in particular a mild
solution of HSSHE under delta initial data exists, is unique under mild growth conditions, and is almost surely positive. This justifies the definition
of $\mathcal{H}^\alpha = \log \mathcal{Z}_\alpha$ as a solution of the HSKPZ equation.

The solution to \eqref{halfkpz} can be interpreted as the point-to-point free energy (i.e., the logarithm of the point-to-point partition function) of a continuum directed random polymer propagating on the half-line $(-\infty, 0]$, subject to an interaction induced by a wall at the origin. The parameter $\alpha$ governs the strength of this interaction. The long-time behavior of the half-space KPZ equation has attracted significant attention in the physics literature since the work of Kardar \cite{kar2}. Depending on the value of $\alpha$, the system is expected to exhibit three distinct behaviors:
\begin{itemize}[leftmargin=20pt]
\item Subcritical regime ($\alpha < 0$): The polymer paths are pinned to the wall and the free energy displays Gaussian fluctuations with $1/2$ fluctuation scaling exponent.
\item Supercritical regime ($\alpha > 0$): The polymer paths stay away from the wall and the free energy exhibits $1/3$ fluctuation and $2/3$ transversal scaling exponents.
\item Critical regime ($\alpha = \mu t^{-1/3}$ for some $\mu \in \mathbb{R}$): The model still exhibits $1/3$ and $2/3$ scaling exponents, but the limiting distribution and qualitative behavior of the polymer paths differ from the supercritical regime.
\end{itemize}

At the one-point level\textemdash specifically, for the distribution of the half-space KPZ equation at the origin, $\mathcal{H}^\alpha(0, t)$\textemdash substantial progress has been made over the past decade in rigorously establishing this picture, both in the physics and mathematics literature. We mention in particular the work \cite{ims22}, which established scaling limits for $\mathcal{H}^\alpha(0,t)$ given by the Tracy--Widom GSE distribution in the supercritical regime and the Baik--Rains distribution in the critical regime. We review these developments in more detail in Section \ref{sec1.3}.

\medskip

In this work, we initiate a \textbf{process-level} study of the half-space KPZ equation via a Gibbsian line ensemble framework, i.e., we investigate scaling limits of the entire process $\mathcal{H}^\alpha(\cdot,t)$ rather than just $\mathcal{H}^\alpha(0,t)$. One of the main results of this paper is the demonstration that, for each fixed $t$, the function $\mathcal{H}^\alpha(\cdot, t)$ can be viewed as the top curve of a line ensemble (in all three regimes), which we call the \textit{half-space KPZ line ensemble}. This ensemble satisfies a certain one-sided Gibbs property, involving softly non-intersecting Brownian motions with a pairwise attraction at the boundary. We describe the line ensemble, its Gibbs property, and related results in the following subsection.
As a special case of our line ensemble results in Section \ref{sec1.1}, we deduce process-level tightness for the half-space KPZ equation under 1:2:3 KPZ scaling in the critical and supercritical regimes.

 \begin{theorem}[Part of Theorem \ref{kpz123}] \label{thms} Let $\mathcal{H}^\alpha(\cdot,\cdot)$ be the Cole--Hopf solution to the HSKPZ equation in \eqref{halfkpz} with narrow wedge initial data and Neumann boundary condition \eqref{bdy} with parameter $\alpha \in \R$. We define the 1:2:3 scaled KPZ equation as
     \begin{align*}
      \mathfrak{H}^{t,\alpha}(x):= \frac{\mathcal{H}^\alpha(xt^{2/3},t)+t/24}{t^{1/3}}, \quad (x,t)\in(-\infty,0]\times[1,\infty).
     \end{align*}
     \begin{enumerate}[label=(\alph*),leftmargin=20pt]
         \item For each $\alpha>0$, as $t\geq 1$ varies, $\mathfrak{H}^{t,\alpha}$ is tight in the space $\mathcal{C}(\R_{\le 0})$ of continuous functions on $\R_{\le 0}$ equipped with the topology of uniform convergence on compact sets.
         \item For each $\mu \in \R$, as $t\geq 1$ varies, $\mathfrak{H}^{t,\mu t^{-1/3}}$ is tight in the space $\mathcal{C}(\R_{\le 0})$ with the uniform-on-compact topology.
     \end{enumerate}   
 \end{theorem}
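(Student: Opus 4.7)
The plan is to deduce the process-level tightness of $\mathfrak{H}^{t,\alpha}$ from the line ensemble construction promised in Section \ref{sec1.1}. Because $\mathfrak{H}^{t,\alpha}(\cdot)$ (with the given centering and scaling) is the top curve of a rescaled half-space KPZ line ensemble enjoying a one-sided Gibbs property, the strategy is the classical one of upgrading one-point tightness to process-level tightness via the Gibbs resampling machinery, in the spirit of \cite{halfairy} on the half-space Airy line ensemble.

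First I would collect \emph{one-point tightness} of $\mathfrak{H}^{t,\alpha}(x)$ at a single point, say $x=0$. In the supercritical regime $\alpha>0$, the one-point distribution $\mathfrak{H}^{t,\alpha}(0)$ converges after scaling to a Tracy--Widom GSE law by \cite{ims22}, and in the critical regime $\alpha = \mu t^{-1/3}$ it converges to the corresponding Baik--Rains family in the same work; either convergence yields the required tightness at $x=0$. A parabolic upper-bound statement (of the type encoded in the ``approximate parabola'' assertion of the abstract) then transfers tightness to every fixed $x\le 0$.

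The second step is to turn one-point tightness into a modulus-of-continuity estimate. Using the one-sided Brownian Gibbs property, one can resample the top curve on any compact interval $[a,b]\subset \R_{\le 0}$ as a Brownian bridge (of variance $1$) conditioned on non-intersection with the second curve, and, when $b=0$, with an additional pairwise boundary interaction at the wall. Provided one has \emph{uniform} control on the endpoint values of the top curve, on the second curve from above, and on the normalization constant of the tilted bridge law, standard Brownian bridge oscillation estimates give Hölder-$\tfrac12^{-}$ modulus bounds for $\mathfrak{H}^{t,\alpha}$ on $[a,b]$ uniformly in $t$. Combining with one-point tightness and the Arzelà--Ascoli theorem yields tightness in $\mathcal{C}(\R_{\le 0})$.

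The main obstacle is producing the uniform lower bound on the normalizing constant (equivalently, an upper bound on the second curve and suitable endpoint control) \emph{near the wall} $x=0$, because the boundary interaction in the Gibbs kernel is singular: it attracts pairs of curves and, in the supercritical regime, degenerates into the pairwise pinning structure that is one of the paper's central novelties. To handle this I would argue separately. In the critical regime one can argue similarly to \cite{halfairy} on the half-space Airy line ensemble, since the limiting boundary interaction is of the same soft attractive type. In the supercritical regime, where $\alpha>0$ is fixed, the exponential tilt at the boundary diverges as $t\to\infty$; this will require a quantitative a priori control of the first few curves near $x=0$ (for instance, showing that $\mathfrak{H}^{t,\alpha}_1(0)-\mathfrak{H}^{t,\alpha}_2(0)$ is tight) before the Brownian Gibbs comparison can be invoked, and the execution of this step is where the bulk of the technical work should lie.
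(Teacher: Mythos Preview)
Your high-level plan (pointwise tightness plus Gibbs resampling $\Rightarrow$ modulus-of-continuity control) matches the paper's architecture, but two of the load-bearing steps are handled quite differently than you suggest, and in one place your proposal has a real gap.

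\textbf{Pointwise input away from the wall.} You propose to start from the one-point convergence of $\mathfrak{H}^{t,\alpha}(0)$ established in \cite{ims22} and then transfer to other $x$ via ``a parabolic upper-bound statement of the type encoded in the abstract.'' The paper does \emph{not} use \cite{ims22} at all in the tightness proof, and the parabolic behavior is not an input but something it has to produce. The key device is a distributional identity (Proposition~\ref{p:iden}),
\[
\mathcal{Z}_{\alpha}^{\mathrm{full},B}(x,t)\stackrel{d}{=}\tfrac12\int_{-\infty}^{x}\mathcal{Z}_\alpha(y,t)\,dy,
\]
relating the half-space SHE to the full-space SHE with half-Brownian initial data. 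For the latter, one-point asymptotics under 1:2:3 scaling are available from \cite{corwin2013crossover,borodin2014free}, which yields a uniform-in-$x$ parabolic trajectory for $\log\mathcal{Z}_\alpha^{\mathrm{full},B}$ (Lemma~\ref{hbpara}) and hence the weak parabolic control on $\mathfrak{H}_1^t$ in Lemma~\ref{p1}. From this, an inductive Gibbs scheme (Propositions~\ref{p3}--\ref{0ultp}) gives pointwise tightness of \emph{all} curves on intervals $[-Q_1-Q_2,-Q_1]$ far to the left. Without the identity of Proposition~\ref{p:iden} you have no mechanism to get tightness at points $x<0$, so this step in your outline is genuinely missing.

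\textbf{Boundary control.} You correctly identify the singular boundary interaction in the supercritical regime as the main obstacle, but your proposed fix (show $\mathfrak{H}_1^{t,\alpha}(0)-\mathfrak{H}_2^{t,\alpha}(0)$ is tight a priori, then run a Brownian-bridge comparison) is not what the paper does and would itself require exactly the kind of boundary control you are trying to establish. The paper instead proves \emph{weak convergence} of the scale-$L$ one-sided Gibbs measures themselves (Section~\ref{sec7}). For two paths, the sum $U=B_1+B_2$ and difference $V=B_1-B_2$ are independent; $U$ is free Brownian motion and $V$ is a drifted Brownian motion with a soft barrier at $0$, whose diffusive limit is identified (Theorem~\ref{wconvsf}) via monotonicity arguments rather than expectation computations. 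This yields uniform modulus-of-continuity control for the Gibbs measures with possibly unordered left data (Theorem~\ref{moclemma}) and a ``curves can be pushed high'' estimate (Lemma~\ref{2high}). The normalization-constant lower bound you flag is then obtained by a size-biasing argument in the proof of Theorem~\ref{kpz123} (see \eqref{acceptbot2}), using Lemma~\ref{2high} rather than any direct gap control at $x=0$.
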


Theorem \ref{thms} is a component of Theorem \ref{kpz123}, where we establish the tightness of the entire half-space KPZ line ensemble. This ensemble is expected to converge to the half-space Airy line ensemble\textemdash recently constructed in \cite{halfairy} in the critical case, though its construction in the supercritical regime remains open. While we do not fully prove this convergence, we identify several properties of the subsequential limits in Theorem \ref{subseq}. We believe these properties are sufficient to characterize the limiting distribution (see Section~\ref{rem:limit}).

\subsection{Half-space KPZ line ensemble}   \label{sec1.1}

The starting point of the proof of Theorem \ref{thms} is to show that the HSKPZ equation can be embedded as the top curve of a line ensemble that satisfies a novel one-sided Gibbs property. This line ensemble is described in the following result.

\begin{theorem}[HSKPZ line ensemble] \label{thm1} For all $t\ge 1$ and $\alpha \in \R$, there exists a unique collection of random curves $\{\mathcal{H}_k^\alpha(x,t) : k\in \mathbb{N},\, x\in (-\infty,0]\}$, which we call the half-space KPZ line ensemble (HSKPZLE), such that:
\begin{enumerate}[label=(\alph*),leftmargin=20pt]
    \item\label{12a} The top curve, $\mathcal{H}_1^\alpha(\cdot,t)$, is equal in distribution to $\mathcal{H}^\alpha(\cdot,t)$, the time $t$ Cole--Hopf solution to the narrow wedge initial data HSKPZ equation \eqref{halfkpz} with Neumann boundary condition \eqref{bdy}.

    \item\label{12b} The line ensemble enjoys a certain one-sided Brownian Gibbs property, which can be described as follows. For each $k\in \mathbb{N}$ and $A<0$, the law of the curves $(\mathcal{H}_i^\alpha(\cdot,t))_{i=1}^k$ on the interval $[A,0]$ conditioned on $(\mathcal{H}_i^\alpha(A,t))_{i=1}^k$ and $\mathcal{H}_{k+1}^\alpha(\cdot,t)$ is that of $k$ independent Brownian motions $({B}_i)_{i=1}^k$  started from $(A,\mathcal{H}_i^\alpha(A,t))_{i=1}^{k}$ and with drifts $(-1)^i \alpha$, reweighed by a Radon--Nikodym (RN) derivative  proportional to
\begin{align}\label{twosideg}
    \exp\left(-\sum_{i=1}^k \int_{A}^{0} e^{B_{i+1}(x)-B_i(x)} \,dx \right),
\end{align}
where $B_{k+1}:=\mathcal{H}_{k+1}^\alpha(\cdot,t)$. By the Cameron--Martin theorem, the drifts can be incorporated inside the RN derivative and the conditional law can be viewed as the law of $k$ independent Brownian motions $({B}_i)_{i=1}^k$  started from $(A,\mathcal{H}_i^\alpha(A,t))_{i=1}^{k}$ (with no drift) reweighed by an RN derivative  proportional to
\begin{align}\label{onesideg}
    \exp\left(\sum_{i=1}^k (-1)^i\alpha B_i(0) -\int_{A}^{0} e^{B_{i+1}(x)-B_i(x)} \,dx \right).
\end{align}
\end{enumerate}
\end{theorem}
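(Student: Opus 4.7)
My plan is to construct the HSKPZ line ensemble as a scaling limit of a prelimiting Gibbsian line ensemble associated with a tractable half-space polymer model, and to verify the one-sided Brownian Gibbs property by passing a discrete analog through the limit. A natural candidate for the prelimit is the half-space O'Connell--Yor or half-space log-gamma polymer, whose multi-path free energies form an $\mathbb{N}$-indexed line ensemble via a geometric RSK--type (Whittaker 2d growth) construction. The top curve of such a prelimit converges, after appropriate weak-noise rescaling, to the Cole--Hopf solution of HSKPZ with narrow-wedge initial data and Neumann parameter $\alpha$. The alternating sign pattern $(-1)^i\alpha$ in the drifts of \eqref{onesideg} should arise from the half-space reflection/Pfaffian structure of such a prelimit, in which consecutive lines feel opposite pulls at the wall.

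For existence, the first step is to identify the prelimit's one-sided Gibbs property: conditioned on its top $k$ curves at a left endpoint and on its $(k{+}1)$-th curve, the top $k$ curves are distributed as independent drifted random walks reweighted by a Radon--Nikodym factor of the shape $\exp(-\sum_i \int W(B_{i+1}-B_i))$ plus a boundary contribution, with discrete drifts scaling to $(-1)^i\alpha$ and $W$ scaling to $e^{\cdot}$. The second step is to prove process-level tightness of the entire scaled prelimit ensemble on $\R_{\le 0}$, using one-point control (via Fredholm Pfaffian formulas and existing HSKPZ tail bounds) together with modulus-of-continuity arguments driven by the prelimit Gibbs property, in the spirit of the full-space KPZ line ensemble work of Corwin and Hammond and its extensions. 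The third step passes to the limit: top-curve convergence yields \ref{12a}, while joint convergence of independent random walks to Brownian motions and of the discrete interaction to its continuum counterpart \eqref{onesideg} yields \ref{12b} for every subsequential limit. The equivalence of \eqref{twosideg} and \eqref{onesideg} is a direct Cameron--Martin computation.

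For uniqueness, the Gibbs property \ref{12b} applied recursively for $k=1,2,\dots$ pins down the conditional distribution of $(\mathcal{H}_1^\alpha,\dots,\mathcal{H}_k^\alpha)$ on $[A,0]$ given the boundary values at $A$ and the curve $\mathcal{H}_{k+1}^\alpha$, so it suffices to show that the top-curve law together with the Gibbs resampling kernels determines all finite-dimensional distributions. I would establish this via a monotone coupling argument: any two ensembles satisfying (a) and (b) can be simultaneously Gibbs-resampled from a common underlying Brownian input, forcing agreement in the limit $A \to -\infty$, $k \to \infty$, using the tail decay of the curves inherited from the prelimit and the integrability of the exponential interaction.

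The main obstacle will be extracting the prelimit Gibbs property \ref{12b} with the correct alternating-drift and boundary-interaction structure, and propagating it faithfully through the scaling limit. The combination of soft non-intersection in the bulk and pairwise attraction at the boundary is sensitive to the half-space combinatorics: one must verify that the chosen half-space polymer model produces precisely the drifts $(-1)^i\alpha$ and the boundary exponential weights matching \eqref{onesideg}, rather than some cousin expression. A secondary difficulty is uniform tightness of the lower curves in the supercritical regime $\alpha>0$, where the pairwise pinning at the wall creates strong boundary correlations absent in the critical case and likely requires new estimates specific to the half-space geometry.
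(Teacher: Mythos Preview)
Your overall strategy matches the paper's: construct the HSKPZ line ensemble as an intermediate disorder scaling limit of the half-space log-gamma (HSLG) line ensemble, prove tightness of the rescaled ensemble via Gibbs arguments, and pass the discrete one-sided Gibbs property through an invariance principle to obtain \ref{12b}. The alternating-drift structure $(-1)^i\alpha$ does indeed come out of the HSLG Gibbs property of \cite{half1}, as you anticipate.

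Two points where your plan diverges from the paper are worth flagging. First, for one-point control of the top curve you propose Fredholm Pfaffian formulas and existing HSKPZ tail bounds. The available exact formulas (e.g.\ \cite{ims22}) concern $\mathcal{H}^\alpha(0,t)$; what the tightness argument actually needs is a parabolic trajectory estimate for $\mathcal{H}^\alpha(x,t)+x^2/2t$ uniformly in $x\le 0$. The paper obtains this not from integrable formulas but by coupling the half-space and full-space SHE through their chaos expansions and showing the difference vanishes in $L^2$ as $x\to -\infty$ (Proposition~\ref{p:parabol}). Your Pfaffian route would likely stall here. Second, for uniqueness you outline a direct monotone coupling; the paper instead observes that any subsequential limit satisfies the \emph{two-sided} $H$-Brownian Gibbs property with $H=e^x$ (which follows from the one-sided property by further conditioning at a right endpoint), and then invokes the characterization theorem of \cite{dimh}, which says such an ensemble is determined by the finite-dimensional distributions of its top curve. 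Your coupling idea is morally what underlies \cite{dimh}, but the paper's shortcut is cleaner.

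Finally, your ``secondary difficulty'' about supercritical pairwise pinning is misplaced for this theorem. The pinning phenomenon arises only under the $1{:}2{:}3$ diffusive rescaling of the HSKPZ Gibbs measures as $t\to\infty$ (Theorems~\ref{kpz123} and~\ref{subseq}), where the scale parameter $L=t^{2/3}$ in the RN derivative diverges. For Theorem~\ref{thm1} itself one works at fixed scale $L=1$: the limiting Gibbs measure is soft, the normalizing constant is bounded away from zero by elementary estimates (Proposition~\ref{zlbd}), and no pinning analysis is required.
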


\medskip

We prove Theorem \ref{thm1} in Sections \ref{sec5} and \ref{sec6}. The one-sided Gibbs property stated in Theorem \ref{thm1}\ref{12b} was anticipated in \cite{bcy}, based on insights from the recently constructed half-space log-gamma (HSLG) line ensemble in \cite{half1}.

In the context of the full-space KPZ equation, the (full-space) KPZ line ensemble was introduced in \cite{kpzle}. It enjoys a two-sided Gibbs property: given boundary data on both ends, the conditional law is absolutely continuous with respect to Brownian bridges (with no drift), with a Radon-Nikodym derivative similar to \eqref{twosideg}. We note that the same two-sided Gibbs property follows for the HSKPZLE from \eqref{onesideg} by further conditioning at a right boundary point. A key feature distinguishing the half-space model from its full-space counterpart is the boundary interaction potential term $\sum_{i=1}^k (-1)^i \alpha B_i(0)$, which appears in the one-sided Gibbs property \eqref{onesideg}. This additional term leads to qualitatively different large-scale behavior in the half-space setting, particularly in the supercritical regime (see Figure \ref{figO1}(B)). 

Our proof for Theorem \ref{thm1} utilizes the HSLG line ensemble structure from \cite{half1}. We establish the existence of the HSKPZ line ensemble by proving that the HSLG line ensemble is tight under intermediate disorder scaling, and that every subsequential limit satisfies Properties \ref{12a} and \ref{12b} of Theorem \ref{thm1}. The uniqueness of such a line ensemble then follows by adapting results from \cite{dimh}, which provide a characterization of ensembles satisfying a two-sided Gibbs property.

As in the full-space setting, the availability of a Gibbsian line ensemble for the HSKPZ equation opens the door to a wide range of possibilities for extracting fine properties of the model. Indeed, Gibbsian line ensembles have proven immensely useful in studying various intricate aspects of models that exhibit a line ensemble structure\textemdash for example, in analyzing the increments of the free energy, investigating fractal properties and fine tail estimates, and understanding the geometry of geodesics and polymer paths. We refer the reader to \cite{ham2,ham4,ham3,ham1,chh19,chaos2,chaos1,ciw1,ciw2,cgh21,dg21,wuconv,dz22,dz22b,dz23,gh22,ds24,hegde} for results in this direction. Gibbsian line ensembles also play a pivotal role in the study of related scaling limits such as the Airy sheet and the directed landscape; see \cite{dv18,dov18,bgh1,bgh2,3by2,sv21,chhm,gh21,rv21,wu23}.

\medskip

We now turn towards the scaling limit of the HSKPZ line ensemble under 1:2:3 scaling. 
Let 
\begin{equation}\label{def:Hrescaled}
\mathfrak{H}_i^{t,\alpha}(x)= \frac{\mathcal{H}_i^\alpha(xt^{2/3},t)+t/24}{t^{1/3}}, \quad (x,t)\in(-\infty,0]\times[1,\infty)
\end{equation}
be the rescaled HSKPZ line ensemble. Our next result shows $\mathfrak{H}^{t,\alpha}$ is tight.
\begin{theorem}\label{kpz123}  For each fixed $\alpha>0$, as $t\geq 1$ varies, $\mathfrak{H}^{t,\alpha}$ is tight in the space $\mathcal{C}(\mathbb{N}\times \mathbb{R}_{\le 0})$ of continuous functions on $\mathbb{N}\times\mathbb{R}_{\le 0}$ equipped with the uniform-on-compact topology. For each fixed $\mu\in \R$, as $t\geq 1$ varies, $\mathfrak{H}^{t,\mu t^{-1/3}}$ is tight in $\mathcal{C}(\mathbb{N}\times \mathbb{R}_{\le 0})$ with the uniform-on-compact topology.  
\end{theorem}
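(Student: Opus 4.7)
The plan is to bootstrap one-point scaling estimates for the top curve into process-level tightness of the entire line ensemble, using the one-sided Brownian Gibbs property of Theorem \ref{thm1}\ref{12b} as the central resampling tool. The overall strategy mirrors the full-space KPZ line ensemble tightness programme (cf.~\cite{kpzle,ciw1,wuconv}), but with substantial new difficulties in the supercritical regime arising from the boundary interaction $\sum_i(-1)^i\alpha B_i(0)$ appearing in \eqref{onesideg}.

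The starting input is uniform-in-$t$ one-point control on $\mathfrak{H}_1^{t,\alpha}(0)$. In the supercritical case this comes from convergence of $\mathcal{H}^\alpha(0,t)$ to the Tracy--Widom GSE law, and in the critical case from convergence to the Baik--Rains crossover, both furnished by \cite{ims22}. Since the KPZ-scaled top curve should look like an inverted parabola plus an $O(1)$ fluctuation, pointwise control can then be transferred to interior $x$ in any compact subset of $\R_{\le 0}$ by combining the Gibbs property at a mesoscopic scale with a parabolic correction. With pointwise tightness in hand, I would upgrade to tightness of $\mathfrak{H}_1^{t,\alpha}$ in $\mathcal{C}(\R_{\le 0})$ via standard Brownian comparison: the Radon--Nikodym derivative in \eqref{twosideg} is bounded above by $1$, so conditionally on the top curve's values at two endpoints and on the second curve, the top curve is stochastically dominated by a suitably drifted Brownian bridge, yielding an upper modulus-of-continuity estimate. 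A matching lower bound is obtained by estimating the normalizing constant of the Gibbs measure from below, which reduces to the requirement that the second curve not plunge too far downward on the relevant interval.

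The argument then proceeds by downward induction: assuming tightness of $(\mathfrak{H}_i^{t,\alpha})_{i=1}^k$, I would establish tightness of $\mathfrak{H}_{k+1}^{t,\alpha}$ by resampling the first $k+1$ curves on intervals $[A,B] \subset \R_{\le 0}$ against independent drifted Brownian motions, using the soft non-crossing penalty to give an upper bound $\mathfrak{H}_{k+1}^{t,\alpha} \lesssim \mathfrak{H}_{k}^{t,\alpha} + O(1)$ and a comparison with free drifted Brownians to give a lower bound. Away from the wall one may condition at a right endpoint $B<0$ and work with the two-sided Gibbs property, so that interior tightness follows by essentially full-space arguments.

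The principal obstacle is lower-bounding the curves near the wall $x=0$ in the supercritical regime, where the boundary potential $\sum_{i=1}^k(-1)^i\alpha B_i(0)$ causes the curves to organize into \emph{pinned pairs} $(\mathfrak{H}_{2j-1}^{t,\alpha},\mathfrak{H}_{2j}^{t,\alpha})$ while consecutive pairs separate by a diverging gap\textemdash a phenomenon with no full-space analog, and the novel structure flagged in the abstract. Ruling out large downward fluctuations of an individual curve within such a pair requires uniform-in-$t$ estimates on the partition function of the one-sided Gibbs measure with pairwise-pinned boundary interaction, and I expect this analysis to be the technical heart of the proof. In the critical regime, by contrast, $\alpha=\mu t^{-1/3}$ makes the drift and boundary terms of order $t^{-1/3}\to 0$ after 1:2:3 rescaling, so the pairwise pinning is asymptotically negligible and adaptations of the full-space arguments in \cite{ciw1,wuconv} should suffice.
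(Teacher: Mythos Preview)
Your outline captures the overall Gibbsian-induction skeleton correctly, but there are two genuine gaps and one factual error.

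\textbf{Starting input.} You propose to start from the one-point convergence of $\mathfrak{H}_1^{t,\alpha}(0)$ from \cite{ims22} and then ``transfer to interior $x$ by combining the Gibbs property at a mesoscopic scale with a parabolic correction.'' This step is where the real work lies, and the paper explicitly flags that one-point information at the origin is not enough: away from the boundary, essentially nothing is known about $\mathcal{H}^\alpha(x,t)$ as $t\to\infty$. The paper does \emph{not} use \cite{ims22} as its primary input. Instead, it proves a distributional identity (Proposition~\ref{p:iden}) equating $\int_{-\infty}^x \mathcal{Z}_\alpha(y,t)\,dy$ with twice the full-space SHE with half-Brownian initial data, obtained by passing the Barraquand--Wang polymer identity \eqref{disciden} through intermediate disorder. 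Since the full-space half-Brownian solution has known 1:2:3 fluctuations with parabolic curvature (Lemma~\ref{hbpara}), this yields a \emph{weak parabolic trajectory} for $\mathfrak{H}_1^{t,\alpha}$ at every $x\le 0$ (Lemma~\ref{p1}), which is the actual seed for the induction. Without this identity, your proposed transfer from $x=0$ to interior $x$ has no mechanism: the Gibbs property alone cannot manufacture parabolic decay.

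\textbf{Structure of the induction.} Consequently the paper's logic runs opposite to yours: it first establishes tightness on intervals $[-Q_1-Q_2,-Q_1]$ \emph{far to the left} (Propositions~\ref{0ult}, \ref{0ultp}), using the weak parabolic trajectory, and only then propagates rightward to the boundary using the one-sided Gibbs property. Your plan to start at the wall and move left would have to contend with the boundary interaction immediately, before any control on lower curves is available.

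\textbf{Critical regime.} Your claim that the boundary drifts are $O(t^{-1/3})$ and vanish is incorrect. Under 1:2:3 scaling the relevant scale parameter is $L=t^{2/3}$, and the drift in Definition~\ref{kpzgibbs} is $(-1)^i\alpha\sqrt{L} = (-1)^i\mu t^{-1/3}\cdot t^{1/3} = (-1)^i\mu$, which persists in the limit (this is exactly the content of Theorem~\ref{subseq}\ref{parte}). The critical case is easier not because the drifts vanish but because the limiting non-intersection event has positive probability, so no singular conditioning arises.

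\textbf{Supercritical boundary.} You are right that this is the technical heart, but the paper's resolution is specific and worth knowing: for two paths, pass to sum and difference $U=B_1+B_2$, $V=B_1-B_2$; the RN derivative depends only on $V$, so $U$ is free Brownian motion and $V$ is a single drifted Brownian motion with a soft barrier at zero, whose diffusive limit is analyzed directly via stochastic monotonicity in the starting point, drift, and barrier (Section~\ref{sec:dbmsft}). The $2m$-path case is then a finite perturbation of $m$ independent two-path systems. This sum/difference decoupling is the idea you are missing.
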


The threshold $t\geq 1$ here is unimportant; $t$ only needs to be bounded away from 0.

The proof of Theorem \ref{kpz123}, presented in Section \ref{sec9}, relies crucially on the Gibbs property. There is now a substantial body of work demonstrating that the Gibbs property is a powerful tool for establishing spatial tightness of line ensembles. For instance, given one-point tightness of the top curve, the Gibbs property can be used to deduce tightness of the entire ensemble. This approach was first developed in the full-space setting in \cite{ble} and has since been widely applied in the analysis of Gibbsian line ensembles across various full-space probabilistic models; see, for example, \cite{kpzle,cd18,dnv19,wu2,bcd,xd1,dff,serio,wuconv,hegde}. However, extending this program to the half-space setting presents several unique challenges. An analogous result was obtained in the half-space context in \cite{half1} for the half-space log-gamma polymer model. In Section \ref{sec1.2}, we outline key ideas and strategies we use to address these challenges.

\smallskip

We expect that the rescaled HSKPZ line ensemble \eqref{def:Hrescaled} converges as $t\to\infty$ to a limiting object that serves as the universal scaling limit in the half-space KPZ universality class. This limiting object should be the \textit{half-space Airy line ensemble}\textemdash a natural half-space analogue of the full-space Airy line ensemble \cite{prahofer,ble}. In the critical regime, one has a one-parameter family (depending on $\mu$) of half-space Airy line ensembles, which was recently constructed in \cite{halfairy}. The supercritical version has not yet been constructed (see Figure \ref{figO1}(B)), but it is expected to arise as the $\mu \to \infty$ limit of the line ensembles in \cite{halfairy}.

\smallskip

While we do not prove convergence to the half-space Airy line ensemble in this paper, we establish several properties of the subsequential limits (see the theorem below) that we believe are sufficient to uniquely characterize the limiting object (see Section \ref{rem:limit} below).

\begin{theorem}[Properties of subsequential limits] \label{subseq} Fix $\alpha>0$ and $\mu\in \R$. Let $\mathfrak{H}_{\mathrm{sc}}^{\infty,\alpha}$ be any subsequential limit point of $\mathfrak{H}^{t,\alpha}$, and $\mathfrak{H}_{\mathrm{cr}}^{\infty,\mu}$ any subsequential limit point of $\mathfrak{H}^{t,\mu t^{-1/3}}$, as $t\to\infty$. We have the following:
\begin{enumerate}[label=(\alph*),leftmargin=20pt]
    \item\label{parta} $\mathfrak{H}_{\bullet}^{\infty,\bullet}$ is strictly ordered. For each $x<0$, with probability $1$ 
    $$\mathfrak{H}_{1;\bullet}^{\infty,\bullet}(x) > \mathfrak{H}_{2;\bullet}^{\infty,\bullet}(x) > \cdots$$
    \item \label{partb} $\mathfrak{H}_{1;\bullet}^{\infty,\bullet}$ approximates a parabola. For each $\e>0$, there exists $\mathfrak{R}(\e)$ such that for all $x\leq 0$,
    \begin{align*}
\Pr\bigg(\bigg|\mathfrak{H}_{1;\bullet}^{\infty,\bullet}(x)+\frac{x^2}{2}\bigg| \le \e |x|+\mathfrak{R}(\e)\bigg) \ge 1-\e.
    \end{align*}
     
    \item \label{partd}  
    For each $m\in \mathbb{N}$ and $A<0$, the law of the curves $(\mathfrak{H}_{i;\mathrm{sc}}^{\infty,\alpha}(\cdot))_{i=1}^{2m}$ on the interval $[A,0]$ conditioned on $(\mathfrak{H}_{i;\mathrm{sc}}^{\infty,\alpha}(A))_{i=1}^{2m}$ and $\mathfrak{H}_{2m+1;\mathrm{sc}}^{\infty,\alpha}(\cdot)$ is that of $2m$ independent Brownian motions $({B}_i)_{i=1}^{2m}$  started from $(A,\mathfrak{H}_{i;\mathrm{sc}}^{\infty,\alpha}(A))_{i=1}^{2m}$, conditioned not to intersect: $$B_1 \ge B_2 \ge \cdots \ge B_{2m} > \mathfrak{H}_{2m+1;\mathrm{sc}}^{\infty,\alpha} \mbox{ on }[A,0],$$ and with pairwise pinning at the origin:
    $$B_{2i-1}(0)=B_{2i}(0) \mbox{ for }i=1,2,\ldots,m.$$ 
    
    \item \label{parte}   For each $m\in \mathbb{N}$ and $A<0$, the law of the curves $(\mathfrak{H}_{i;\mathrm{cr}}^{\infty,\mu}(\cdot))_{i=1}^{2m}$ on the interval $[A,0]$ conditioned on $(\mathfrak{H}_{i;\mathrm{cr}}^{\infty,\mu}(A))_{i=1}^{2m}$ and $\mathfrak{H}_{2m+1;\mathrm{cr}}^{\infty,\mu}(\cdot)$ is that of $2m$ independent Brownian motions $({B}_i)_{i=1}^{2m}$ with drifts $(-1)^i\mu$ started from $(A,\mathfrak{H}_{i;\mathrm{cr}}^{\infty,\mu}(A))_{i=1}^{2m}$, conditioned not to intersect: $$B_1 \ge B_2 \ge \cdots \ge B_{2m}> \mathfrak{H}_{2m+1;\mathrm{cr}}^{\infty,\mu} \mbox{ on }[A,0].$$  

    \item \label{partc}  
    For each $k\in \mathbb{N}$ and $A<B<0$, the law of the curves $(\mathfrak{H}_{i;\bullet}^{\infty,\bullet}(\cdot))_{i=1}^{k}$ on the interval $[A,B]$ conditioned on $(\mathfrak{H}_{i;\bullet}^{\infty,\bullet}(A),\mathfrak{H}_{i;\bullet}^{\infty,\bullet}(B))_{i=1}^{k}$ and $\mathfrak{H}_{k+1;\bullet}^{\infty,\bullet}$ is that of $k$ independent Brownian bridges $({B}_i)_{i=1}^{2m}$  started from $(A,\mathfrak{H}_{i;\bullet}^{\infty,\bullet}(A))_{i=1}^{k}$ and ending at $(B,\mathfrak{H}_{i;\bullet}^{\infty,\bullet}(B))_{i=1}^{k}$ conditioned not to intersect: $$B_1 \ge B_2 \ge \cdots \ge B_{k} > \mathfrak{H}_{k+1;\bullet}^{\infty,\bullet} \mbox{ on }[A,B].$$
\end{enumerate}
\end{theorem}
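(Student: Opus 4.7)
The overall plan is to take subsequential limits using Theorem \ref{kpz123}, invoke Skorokhod representation to work with an almost-surely convergent subsequence, and transfer the one-sided Brownian Gibbs property of Theorem \ref{thm1} to the limit. Under the 1:2:3 scaling, a direct change of variables rewrites the prelimit Gibbs property as a reweighting of driftless Brownian motions (with the drift $(-1)^i\alpha$ absorbed into a boundary term via Cameron--Martin) by the kernel
\begin{equation*}
\exp\!\left(t^{1/3}\alpha \sum_{i=1}^{k} (-1)^i B_i(0) \;-\; t^{2/3}\sum_{i=1}^{k}\int_A^{0} e^{t^{1/3}(B_{i+1}-B_i)}\,dx\right).
\end{equation*}
All five parts of Theorem \ref{subseq} reduce to understanding the $t\to\infty$ behavior of this kernel; the main obstacle will be the supercritical boundary behavior needed for (d).

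For the two-sided interior property (c), I would condition at both endpoints $A<B<0$, which kills the boundary contribution and leaves only the exponential interior factor. As $t\to\infty$, the $t^{1/3}$ inside the exponent drives the integrand superexponentially to zero when the curves are strictly ordered with a uniform gap, while the $t^{2/3}$ prefactor causes divergence whenever the integrand is not negligible. Hence the reweighting converges to the indicator of strict non-intersection, and the soft-to-hard transfer of the Gibbs property to the subsequential limit can be executed via test-functional resampling invariance and uniform integrability, following the strategy developed in \cite{wuconv, half1}. Part (a) then follows from (c) via the classical strict ordering of non-intersecting Brownian bridges on the interior, applied on a countable family of subintervals. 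For the critical one-sided property (e) I would condition only at the left endpoint: with $\alpha = \mu t^{-1/3}$ the boundary factor reads $\exp(\mu\sum_{i}(-1)^i B_i(0))$, which is uniformly bounded in $t$ and by Cameron--Martin inserts exactly the required drifts $(-1)^i\mu$ into the driftless Brownian motions in the limit.

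For the supercritical property (d), $\alpha t^{1/3}\to\infty$ and using the limiting ordering the boundary factor reads $\exp(-\alpha t^{1/3}\sum_{j=1}^{m}\Delta_j)$, where $\Delta_j = \mathfrak{H}_{2j-1}^{t,\alpha}(0)-\mathfrak{H}_{2j}^{t,\alpha}(0)\ge 0$. For the weight to be nondegenerate each $\Delta_j$ must vanish, producing the pairwise pinning at the origin. To make this rigorous I would first establish a quantitative prelimit tail estimate of the form $\Pr(\Delta_j > \delta)\le C e^{-c\alpha t^{1/3}\delta}$ by combining the explicit form of the boundary tilt with monotone coupling arguments analogous to those in \cite{half1}, thereby forcing $\Delta_j=0$ almost surely in any subsequential limit. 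Second, I would identify the limiting conditional law as that of $2m$ non-intersecting Brownian motions subject to the hard constraints $B_{2j-1}(0)=B_{2j}(0)$ by approximating the hard pinning via tilted soft exponentials and taking a joint $t\to\infty$ limit on both sides of the resampling identity. This concentration mechanism has no analogue in prior full-space or critical half-space work and is the technically novel component of the theorem.

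Finally, the parabolic approximation (b) is largely independent of the Gibbs analysis and follows from uniform-in-$t$ one-point tail bounds on $\mathfrak{H}^{t,\bullet}_1(x)+x^2/2$. Such bounds can be extracted from known one-point distributional results for the HSKPZ equation (in particular \cite{ims22}) combined with translation estimates produced by the Gibbs property. A union bound over a dyadic collection of $x$-values, together with tightness on compact sets from Theorem \ref{kpz123}, yields the stated $\e|x|+\mathfrak{R}(\e)$ bound at the prelimit level, and this passes to any subsequential limit via Fatou's lemma.
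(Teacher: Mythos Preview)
Your outline captures the correct high-level mechanism (Skorokhod representation plus passing the one-sided Gibbs property through the limit), and your treatment of (e) matches the paper's. However there are three substantive gaps.

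\textbf{Circularity between (a) and (c).} You derive (a) from (c), but the soft-to-hard limit for (c) only makes sense once you know the limiting boundary data $(\mathfrak{H}_i^\infty(A),\mathfrak{H}_i^\infty(B))_{i=1}^k$ are strictly ordered: otherwise the normalizing constant (the probability that independent Brownian bridges do not intersect) vanishes and the conditional law is undefined. The paper breaks this circle by proving (a) \emph{first}, directly, via a uniform separation estimate for the prelimit one-sided Gibbs measures (Lemma \ref{ord}), which is itself deduced from the separation machinery of \cite{wuconv}. Only then are (d), (e) proved, and (c) is a one-line corollary.

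\textbf{Part (b): no spatial input from \cite{ims22}.} The results of \cite{ims22} are one-point at the origin; the half-space equation has no spatial shift invariance, and the Gibbs property alone cannot manufacture the needed control on $\mathfrak{H}_1^{t}(x)+x^2/2$ for general $x$. The paper instead uses the distributional identity (Proposition \ref{p:iden}) $\mathcal{Z}_\alpha^{\mathrm{full},B}(x,t)\stackrel{d}{=}\tfrac12\int_{-\infty}^x \mathcal{Z}_\alpha(y,t)\,dy$ together with the known parabolic behavior of the full-space half-Brownian KPZ (Lemma \ref{hbpara}) to get a weak parabolic trajectory for $\mathfrak{H}_1^t$ (Lemma \ref{p1}); a stopping-domain Gibbs argument then upgrades this to the pointwise bound.

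\textbf{Part (d): tail bounds on $\Delta_j$ do not identify the law.} A bound $\Pr(\Delta_j>\delta)\le Ce^{-c\alpha t^{1/3}\delta}$ would indeed force pairwise pinning, but it does not tell you what the conditional law \emph{is}: the limit conditions on a zero-probability event, and your step ``approximate hard pinning by tilted exponentials and take a joint limit'' is precisely the hard part, which you have not addressed. The paper explicitly rejects the expectation-bound route (cf.\ Section \ref{sec1.2}) and instead exploits a structural fact: for two paths, $U=B_1+B_2$ and $V=B_1-B_2$ are independent under the Gibbs measure, $U$ stays Brownian, and $V$ is a one-curve drifted Brownian motion with a soft barrier at zero. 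Full weak convergence of $V$ to a Brownian bridge from $a_1-a_2$ to $0$ conditioned to stay positive is then proved via stochastic monotonicity in the starting point, drift, and barrier (Theorem \ref{wconvsf}); the $2m$-path case follows by writing the measure as $m$ independent two-path blocks reweighted by an inter-pair factor whose limit is a positive-probability non-intersection indicator (Theorem \ref{6.11}). This decoupling is the actual new idea.
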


Theorem \ref{subseq} is proven in Section \ref{sec9}. The proofs of parts \ref{partd} and \ref{parte} rely crucially on establishing the convergence of the one-sided Gibbs measures described in part \ref{12b} of Theorem \ref{thm1}, in both the critical and supercritical regimes. The critical regime is significantly easier to handle; the convergence in the supercritical regime\textemdash established in Section \ref{sec7}\textemdash is one of the novel technical contributions of this paper. Indeed, the event of pairwise pinning described in part \ref{partd} has probability zero, so the conditioning is not immediately well-defined, and significant technical work is required to construct these limiting objects. We refer the reader to Section \ref{sec1.2} below for a brief discussion of how these pairwise pinned non-intersecting Brownian motions are constructed, and to Section \ref{sec7} for the full details.

\subsubsection{Uniqueness of the subsequential limits} \label{rem:limit} We do not prove the uniqueness of the subsequential limits in this paper, but we outline two potential approaches toward establishing it.

The first approach builds on recent work by Zhang \cite{zhang}, who proved the convergence of finite-dimensional distributions for the totally asymmetric simple exclusion process (TASEP) in half space under 1:2:3 scaling, by explicitly computing transition densities. The key idea is to establish a strong enough comparison between the transition densities of weakly asymmetric simple exclusion process (WASEP) and TASEP in half-space. Combined with the convergence of WASEP in half-space to the HSKPZ equation already established in \cite{cs1}, this would imply that the finite-dimensional distributions of the HSKPZ equation converge weakly. A similar strategy was carried out in the full-space setting in \cite{qs20}. When paired with the two-sided Gibbs property of the subsequential limits (Theorem \ref{subseq}\ref{partc}), and the characterization theorem from \cite{dm21}, this would imply weak convergence of the full HSKPZ line ensemble.

The second approach is to develop a strong characterization result analogous to that in \cite{ah23}. In the full-space setting, \cite{ah23} showed that any line ensemble with a two-sided non-intersecting Brownian Gibbs property that approximates a parabola must coincide with the Airy line ensemble. 
Once such a result becomes available in the half-space setting, Theorems \ref{kpz123} and \ref{subseq} would together imply the convergence of the half-space KPZ line ensemble.

\subsection{Proof ideas} \label{sec1.2} In this section, we sketch the key ideas behind the proof of our main results. 

\begin{figure}[ht!]
    \begin{tikzpicture}[auto,
 		block_KPZs/.style = {rectangle, draw=black, fill=white, text width=18em, text centered, minimum height=3.5em},
 		block_KPZ/.style = {rectangle, draw=black, fill=white, text width=20em, text centered, minimum height=3.5em},
 		line/.style ={draw, thick, -latex', shorten >=0pt}]
 		\node [block_KPZ] (hslg) at (-3,-3) {Half-space log-gamma line ensemble: $\L_i^{n;\theta}(j)$};
 		\node [block_KPZs] (hskpz) at (1,-6) {Half-space KPZ line ensemble: $\mathcal{H}_i^\alpha(x,t)$};
 		\node [block_KPZs] (hsair) at (5,-9) {Half-space Airy line ensemble: $\mathfrak{H}_i^\infty(y)$};
   
 		\begin{scope}[every path/.style=line]
             \path (hslg)  -- (hskpz) node[midway,right] {\begin{tabular}{c}
                  $\theta=\frac12+\sqrt{N}$, $n = Nt/2+1$, $j=-x\sqrt{N}+1$,
                  \\ $N\to \infty$
             \end{tabular}} node[midway,left] {(Theorem \ref{thm0}) \phantom{xx}};  
             \path [dashed] (hskpz)  -- (hsair) node[midway,right] {\phantom{xx} $x=yt^{2/3}$, $t\to \infty$}node[midway,left] {(Theorems \ref{kpz123} and \ref{subseq}) \phantom{xx}};
 		\end{scope}
 	\end{tikzpicture}
 	\caption{Schematic representation of the hierarchy of models. The second arrow is dashed to indicate that tightness and properties of subsequential limits are established under 1:2:3 scaling in the present work.}
 	\label{fig:conn}
\end{figure}
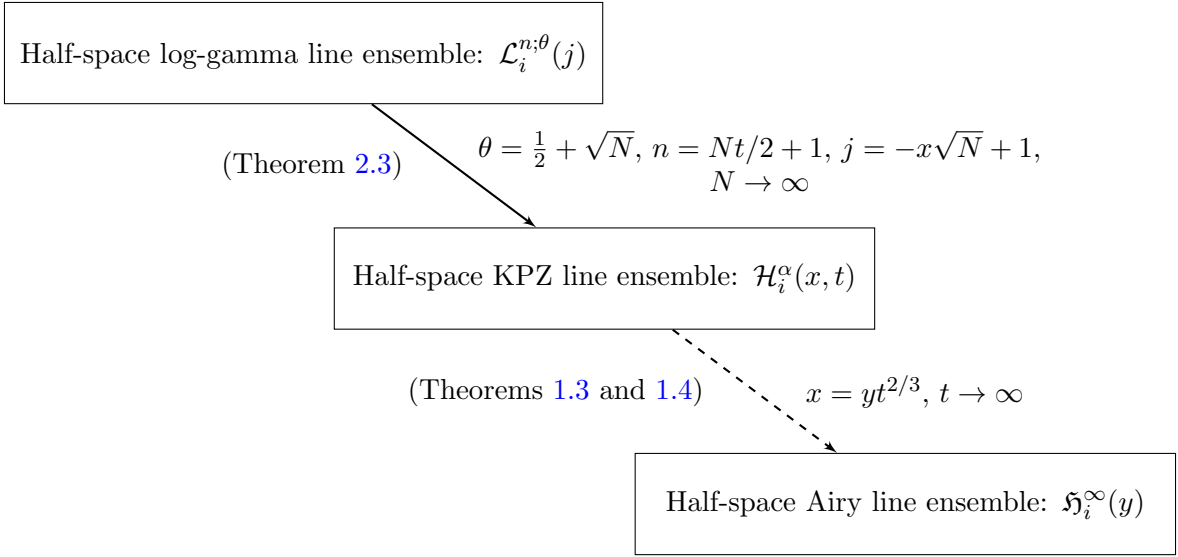

\subsubsection{Construction of the HSKPZ line ensemble} \label{sec1.2.1}

The construction of the HSKPZ line ensemble is relatively simple and broadly follows the strategy developed in \cite{kpzle}, where the authors constructed the analogous KPZ line ensemble in the full-space setting. We begin with the half-space log-gamma (HSLG) polymer model, whose free energy (log-partition function) is known to converge to the solution of the half-space KPZ (HSKPZ) equation under intermediate disorder scaling \cite{bc22}. It was recently shown in \cite{half1} that this free energy can be embedded into a Gibbsian line ensemble (the HSLG line ensemble), whose Gibbs property is a discretization of the continuous one stated in property \ref{12b} in Theorem \ref{thm1}, involving log-gamma random walks.

We show that under intermediate disorder scaling, the entire HSLG line ensemble is tight, and every subsequential limit satisfies properties \ref{12a} and \ref{12b} of Theorem \ref{thm1}. This is Theorem \ref{thm0} in the text (see also Figure \ref{fig:conn}). A similar program was carried out in the full-space setting by \cite{kpzle}, where the authors analyzed the O'Connell--Yor polymer model and established tightness of the associated line ensemble. A key input in their proof was the parabolic curvature of the full-space KPZ equation\textemdash the fact that $\mathcal{H}^{\mathrm{full}}(x,t) + \frac{x^2}{2t}$ is stationary in $x$. This fact does not hold in the half-space setting. However, by comparing the half-space KPZ equation with its full-space counterpart via chaos expansion, we derive a weaker parabolic curvature estimate (Proposition \ref{p:parabol}) that suffices for our purposes.

With this input, our proof largely follows the strategy of \cite{kpzle}, employing an inductive scheme to propagate tightness to the lower curves, with necessary technical modifications to account for the one-sided boundary and associated Gibbs property. This yields tightness of the HSLG line ensemble and ensures that any subsequential limit satisfies the desired properties. The two-sided Gibbs property of the HSKPZ line ensemble is identical to that of the full-space case. By applying a uniqueness result from \cite{dimh}, which characterizes line ensembles with such a Gibbs property, we deduce the uniqueness of subsequential limits.

\subsubsection{Tightness under KPZ scaling} 

The tightness of the HSKPZ line ensemble under KPZ scaling, on the other hand, is more challenging due to the lack of information about the behavior of the top curve\textemdash the HSKPZ equation\textemdash as $t \to \infty$. As mentioned earlier, \cite{ims22} established the distributional limit of $\mathfrak{H}_1^{t,\alpha}(0)$, but beyond this, little is known about the long-time behavior of the HSKPZ equation, particularly away from the boundary. Our proof strategy can be divided into two parts:

\subsubsection*{Tightness on intervals far to the left.} 

We first establish tightness of the HSKPZ line ensemble on intervals far to the left; that is, $(\mathfrak{H}_i^{t,\alpha}(x))|_{i\in \ll1,k\rr,\, x\in [-Q_1-Q_2,-Q_1]}$ is tight for all $Q_2>0$ and for all sufficiently large $Q_1>0$. This part closely follows the strategy developed in \cite{half1}. Fixing some notation, we say that a sequence $X_y^t$ is \textit{tight uniformly in $y$} if, for every $\varepsilon>0$, there exists $M(\varepsilon)>0$ such that $\Pr(|X_y^t| > M) \le \varepsilon$ for all $y$. We say $X_y^t$ is \textit{upper-tight} if $\max(X_y^t,0)$ is tight and \textit{lower-tight} if $\min(X_y^t,0)$ is tight.

Our proof starts with a distributional identity from \cite{bc22} that connects the HSLG polymer partition function to a full-space log-gamma polymer with a boundary perturbation. Taking the intermediate disorder limit on both sides yields a corresponding identity at the level of the SHE (Proposition \ref{p:iden}): for $x \le 0$, we have
\begin{align}\label{fhid}
    \mathcal{Z}_{\alpha}^{\mathrm{full},B}({x},t)\stackrel{d}{=}\frac12 \int_{-\infty}^{{x}} \mathcal{Z}_\alpha(y,t)\,dy.
\end{align}
Here $\mathcal{Z}_{\alpha}^{\mathrm{full},B}$ denotes the mild solution to the full-space SHE with half-Brownian initial data of drift $-\alpha$ (see Definition \ref{defshe}\ref{hbdata}). The long-time behavior of $\log \mathcal{Z}_{\alpha}^{\mathrm{full},B}$ has been studied in \cite{corwin2013crossover,borodin2014free}, from which we deduce that $\log \mathcal{Z}_{\alpha}^{\mathrm{full},B}$ is pointwise tight under 1:2:3 scaling and exhibits a parabolic trajectory (Lemma \ref{hbpara}):
\begin{align*}
      \frac{\log\mathcal{Z}_{\alpha}^{\mathrm{full},B}(xt^{2/3},t)+t/24}{t^{1/3}}+\frac{x^2}2
\end{align*}
is tight uniformly in $x$. Owing to the identity \eqref{fhid}, this gives us a weak version of parabolic trajectory for $\mathfrak{H}_1^{t,\alpha}$ (Lemma \ref{p1}): for all sufficiently large $r$,
\begin{align}\label{weakparab}
    & \sup_{y\in [x-r,x]} \mathfrak{H}_1^{t,\alpha}(y)+\frac{x^2}{2} \text{ is lower-tight}, \qquad \inf_{y\in [x-r,x]} \mathfrak{H}_1^{t,\alpha}(y)+\frac{x^2}{2} \text{ is upper-tight},
\end{align}
uniformly in $x$. {With this input, we are able to establish tightness of each curve of the HSKPZ line ensemble on intervals far to the left (see Propositions \ref{0ult} and \ref{0ultp}). This involves a fusion of Gibbsian line ensemble arguments from \cite{kpzle} and \cite{half1}. We employ an inductive scheme similar to that used in \cite{kpzle} (and in Section \ref{sec6} of the present paper) to propagate tightness from the top curve down. In particular, the argument for lower-tightness hinges on the fairly well known heuristic idea that if any curve of the line ensemble were excessively low, then the top curve would follow a straight trajectory rather than a parabolic one, contradicting \eqref{weakparab}. We refer to Section \ref{sec8.3} for more details.}

\subsubsection*{Tightness near the boundary: Limits of one-sided Gibbs measures} Having established tightness on intervals far to the left, the main challenge is to show that this tightness propagates all the way to the boundary.

We sketch the proof here for the supercritical case, as the critical case is significantly easier. In the supercritical regime, $\alpha > 0$, the main difficulty reduces to understanding the limit of the one-sided Gibbs measures under 1:2:3 scaling. Under this scaling, the RN derivative in \eqref{onesideg} (taking $k=2m$) becomes
 \begin{align*}
     \mathcal{W}_t :=  \prod_{i=1}^m e^{-\alpha t^{1/3}(B_{2i-1}(0)-B_{2i}(0))} \cdot \prod_{i=1}^{2m+1} \exp\left(-t^{2/3} \int_{A}^0 e^{t^{1/3}(B_{i+1}(x)-B_i(x))} \, dx\right).
 \end{align*}
Intuitively, the second factor enforces the inequality $B_i(x) \ge B_{i+1}(x)$ for $x\in[A,0]$ as $t \to \infty$, while the first factor penalizes configurations where $B_{2i-1}(0) > B_{2i}(0)$. Consequently, we expect pairwise pinning in the limit: $B_{2i-1}(0) = B_{2i}(0)$. However, such configurations have probability zero for independent Brownian motions, which makes the analysis quite delicate.

A similar difficulty was addressed in \cite{half1}, where the authors analyzed $W_N$\textemdash a discrete analogue of $\mathcal{W}_t$\textemdash that appears in the context of the HSLG line ensemble. Very loosely speaking, their key idea was to show
\[
\mathbb{E}[W_N \ind_{\m{A}}] \le \frac{C}{N^{2/3}} \mathbb{P}(\m{A}), \qquad \mathbb{E}[W_N] \ge \frac{C^{-1}}{N^{2/3}},
\]
for suitable events $\m{A}$ (such as events controlling the modulus of continuity). In their setting, the discrete structure of $W_N$ offered certain technical advantages in computing the order of the expectations\textemdash tools that are not available to us in the continuum setting. More importantly, we do not pursue this approach because even proving analogous bounds in our case would only imply tightness and would not suffice to extract the finer properties of the subsequential limits, particularly part \ref{parte} of Theorem \ref{subseq}.

\begin{figure}[h!]
		\centering
		\begin{subfigure}[b]{0.45\textwidth}
			\centering
            \begin{overpic}[]{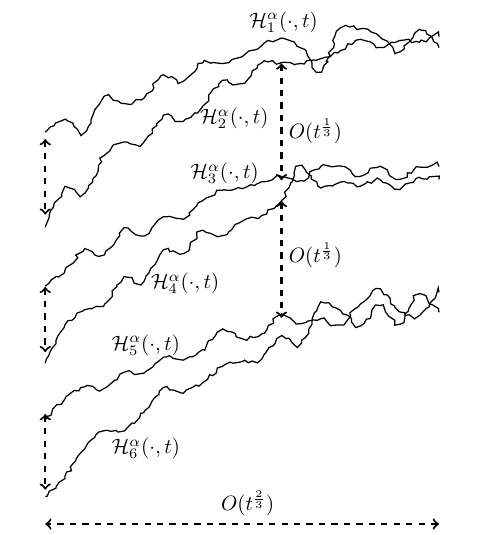}
\put(-4,67){$O(t^{\frac{1}{3}})$}
\put(-4,40){$O(t^{\frac{1}{3}})$}
\put(-4,15){$O(t^{\frac{1}{3}})$}
\put(85,90){$O(1)$}
\put(85,66){$O(1)$}
\put(85,42){$O(1)$}
\end{overpic}
	\caption{}
		\end{subfigure}\qquad\qquad
\begin{subfigure}[b]{0.45\textwidth}
		\centering
		\begin{tikzpicture}[scale=.8,line cap=round,line join=round,>=triangle 45,x=4cm,y=1.5cm]
			\draw[semithick, decorate, decoration={random steps,segment length=2pt,amplitude=2pt}] plot[domain=-2:0] (\x, {3-2*(\x)^2/5});
			\draw[semithick, decorate, decoration={random steps,segment length=2pt,amplitude=1.5pt}] plot[domain=-2:0] (\x, {2.98-2*(\x)^2/3});
			\draw[semithick, decorate, decoration={random steps,segment length=2pt,amplitude=1.5pt}] plot[domain=-2:0] (\x, {1-2*(\x)^2/5});
			\draw[semithick, decorate, decoration={random steps,segment length=2pt,amplitude=1.5pt}] plot[domain=-2:0] (\x, {.98-2*(\x)^2/3});
			\draw[semithick, decorate, decoration={random steps,segment length=2pt,amplitude=1.5pt}] plot[domain=-2:0] (\x, {-1-2*(\x)^2/5});
			\draw[semithick, decorate, decoration={random steps,segment length=2pt,amplitude=1.5pt}] plot[domain=-2:0] (\x, {-1.02-2*(\x)^2/3});
			\draw (-1,3) node[anchor=west] {$\mathfrak{H}_1^\infty(\cdot)$};
			\draw (-1,2) node[anchor=west] {$\mathfrak{H}_2^\infty(\cdot)$};
			\draw (-1,1) node[anchor=west] {$\mathfrak{H}_3^\infty(\cdot)$};
			\draw (-1,0) node[anchor=west] {$\mathfrak{H}_4^\infty(\cdot)$};
			\draw (-1,-1) node[anchor=west] {$\mathfrak{H}_5^\infty(\cdot)$};
			\draw (-1,-2) node[anchor=west] {$\mathfrak{H}_6^\infty(\cdot)$};
		\end{tikzpicture}
			\caption{}
		\end{subfigure}
		
		\caption{(A) Half-space KPZ line ensemble under KPZ scaling in the supercritical regime. (B) Half-space Airy line ensemble in the supercritical regime.}			
\label{figO1}
	\end{figure}

\medskip

We adopt a different approach that allows us to establish \textit{weak convergence} of the one-sided Gibbs measures. To illustrate the main idea, consider first the two-path version of the Gibbs measure: namely, the law of two independent Brownian motions started from $a_1, a_2$ (with $a_1 > a_2$), reweighed by the Radon--Nikodym derivative proportional to
\begin{align*}
    \mathcal{W}_t^{(2)} := e^{-\alpha t^{1/3}(B_1(0) - B_2(0))} \cdot \exp\left(-t^{2/3} \int_{A}^0 e^{t^{1/3}(B_2(x) - B_1(x))} \, dx\right).
\end{align*}
We introduce the sum and difference processes: $U = B_1 + B_2$ and $V = B_1 - B_2$. Since the RN derivative depends only on the difference, it follows from properties of Brownian motion that $U$ and $V$ are independent, with $U$ being a Brownian motion started from $a_1 + a_2$ with diffusion coefficient $2$. The law of $V$ is absolutely continuous w.r.t.~a Brownian motion started from $a_1 - a_2$ with diffusion coefficient $2$, with RN derivative proportional to
\[
e^{-\alpha t^{1/3} V(0)} \cdot \exp\left(-t^{2/3} \int_{A}^0 e^{-t^{1/3} V(x)} \, dx\right).
\]
The process $V$ can be viewed as a diffusively scaled version of a drifted Brownian motion with a soft barrier. We refer to Section~\ref{sec:dbmsft} for the full proof of its weak convergence. Briefly, our strategy relies on showing that $V$ satisfies several forms of stochastic monotonicity\textemdash with respect to the starting point, the drift parameter, and the barrier (see Lemma~\ref{stm}). These monotonicity properties allow us to bypass precise expectation computations involving the RN derivative and instead directly establish the weak convergence of $V$ (Theorem~\ref{wconvsf}). The limit of $V$ may be described as a time reversal of a Brownian meander conditioned to end at $a_1-a_2$ (see Remark \ref{rem:tran}). As $B_1 = \frac12(U+V)$ and $B_2 = \frac12(U-V)$, this yields the weak convergence of the two-path one-sided Gibbs measures. 

The general $2m$-path case then follows relatively easily by interpreting it as a system of $m$ independent two-path components, reweighed by an additional RN derivative (whose expectation remains bounded away from zero in the limit). Full details are presented in Section~\ref{sec:multi}.

\medskip

The stationary measure for the open KPZ equation on an interval admits a similar two-path Gibbsian description \cite{ck,bld01,bld2,bkww,bcy,himwich}, and the tools developed in Section \ref{sec7} may be used to study diffusive scaling limits of these stationary measures. We mention that our proof relies crucially on the fact that the sum and difference of two independent Brownian motions are themselves independent\textemdash a property not satisfied in general by analogous discrete models. It would be interesting to develop an invariance principle showing that all suitable discretizations of the one-sided Gibbs measures converge to the continuum Gibbs measures described in part~\ref{parte} of Theorem~\ref{subseq}. Such a result would constitute a key step toward establishing that any subsequential limit of the HSLG line ensemble under 1:2:3 scaling satisfies the property described in part~\ref{parte}, thereby potentially leading to a full proof of convergence of the HSLG line ensemble to the half-space Airy line ensemble. We leave this direction for future work.

\subsection{Related works} \label{sec1.3}
Half-space polymers are a natural variant of full-space polymers (introduced in \cite{huse, imb, bol}) that have been extensively studied in the literature; see, for example, \cite{comets, timo, batesch, bcd} and the references therein. These models arise naturally in the context of wetting phenomena \cite{phy1, phy2, phy3}, where one studies directed polymers interacting with a wall. They have attracted significant attention due to the presence of a phase transition\textemdash commonly referred to as the depinning transition\textemdash and a rich phase diagram for limiting distributions depending on the diagonal strength, as discussed earlier before Theorem \ref{thms}. However, aside from a few exactly solvable models, rigorous mathematical results remain scarce.

\subsubsection*{Zero temperature}
At the one-point level, the phase diagram was first rigorously established for geometric last passage percolation (LPP)\textemdash a discrete polymer model at zero temperature\textemdash in a series of works by Baik and Rains \cite{br1, br01, br3}. Multi-point fluctuations were subsequently analyzed in \cite{sis}, and similar results were later proven for exponential LPP in \cite{bbcs0, bbcs} using the framework of Pfaffian Schur processes.  For further recent works on half-space LPP, we refer to \cite{bete,ale1,ale2,ale3,zhang,halfairy}. 

Among these recent works, we highlight \cite{halfairy}, which is closely related to our own. In that paper, the authors constructed a Gibbsian line ensemble for half-space geometric LPP and, using Pfaffian Schur formulas, established its convergence under 1:2:3 scaling in the critical regime. The availability of exact formulas via the Pfaffian Schur process provides significantly more detailed information than what is currently accessible for the half-space log-gamma polymer, enabling them to prove full convergence (as opposed to tightness). The limiting object is the half-space Airy line ensemble in the critical regime. They further showed that it satisfies the resampling property described in Theorem~\ref{subseq}\ref{parte}, and that its restriction to finitely many vertical lines forms Pfaffian point processes with crossover kernels matching those obtained in \cite{bbcs}.

\subsubsection*{Positive temperature} For the HSLG polymer model, the first rigorous proof of the depinning transition appeared in \cite{bw}, where the authors established precise fluctuation results, including the BBP phase transition \cite{bbap} for the point-to-line free energy. Regarding the point-to-point free energy, the limit theorem and the Baik--Rains phase transition were initially conjectured in \cite{bbc20} based on an uncontrolled steepest descent analysis of formulas derived from half-space Macdonald processes. This conjecture was recently resolved in \cite{ims22} using novel ideas that connect the half-space polymer model to a free boundary variant of the Schur process.

The study of half-space Gibbsian line ensembles was initiated in \cite{half1}, where the HSLG line ensemble was constructed and tightness of the free energy process under 1:2:3 scaling was established. Building on this line ensemble structure, the subcritical regime of the HSLG polymer was studied in \cite{dz23}, and stationary measures were analyzed in \cite{ds24}. We also note in passing that the subcritical regime exhibits markedly different behavior and can be addressed for more general polymer models, as demonstrated in \cite{vic}.

Apart from arising as an intermediate disorder limit of the HSLG polymer, the HSKPZ equation can also be obtained as a scaling limit of the weakly asymmetric simple exclusion process (WASEP) in half-space, or studied directly from the stochastic PDE perspective. This has sparked a surge of new results recently in both the mathematics \cite{hai,cs1, bbcw, bbc20, par, par2, bc22, ims22,rantao} and physics literature \cite{gld,bbc,ito,de,kr,bkld2,bld1,bkld}.

Among these works, as mentioned previously, our study is closely related to \cite{ims22}. By taking limits of the Laplace transform formulas for the HSLG log-partition function derived therein, the authors obtained explicit formulas for the Laplace transform of the height function $\mathcal{H}^\alpha(0,t)$. These formulas are amenable to asymptotic analysis, allowing them to prove long-time fluctuation results across all three regimes and rigorously establish the Baik--Rains phase transition.

In addition to polymer models, the Baik--Rains phase transition has recently been rigorously proven for half-space ASEP and six-vertex models in the works \cite{he1, he2}.

\subsection*{Organization} The rest of the paper is organized as follows.

Sections \ref{sec:hslg}–\ref{sec6} are devoted to the construction of the HSKPZ line ensemble. In Section \ref{sec:hslg}, we introduce the HSLG polymer and its associated line ensemble, and show how the top curve of this ensemble converges to the HSKPZ equation. In Section \ref{sec2}, we establish the parabolic trajectory of the HSKPZ equation. Section \ref{sec4} defines the relevant Gibbs properties for both the HSLG and HSKPZ line ensembles. In Section \ref{sec5}, we prove Theorems \ref{thm0} and \ref{thm1}, assuming upper and lower tightness for each pre-limiting curve; the proofs of these tightness results are deferred to Section \ref{sec6}.

Sections \ref{sec7}–\ref{sec9} focus on the HSKPZ line ensemble under KPZ scaling. In Section \ref{sec7}, we study diffusive limits of the HSKPZ Gibbs measures. Section \ref{sec8} establishes preliminary estimates for the tightness of the HSKPZ line ensemble. The proofs of Theorem \ref{kpz123} and Theorem \ref{subseq} are given in Section \ref{sec9}.

\subsection*{Notation} Throughout this paper we use $C = C(\alpha, \beta, \gamma, \cdots) > 0$ to denote a generic deterministic positive finite constant that may change from line to line, but dependent on the designated variables $\alpha, \beta, \gamma, \cdots$. We use blackboard bold fonts for probability and expectation ($\mathbb{P}$, $\mathbb{E}$) when the underlying system involves discrete objects such as log-gamma polymers, random walks, or bridges. We use bold fonts ($\mathbf{P}$, $\mathbf{E}$) when the underlying system involves continuous objects such as the HSKPZ line ensemble and its limits, Brownian motions, or Brownian bridges. We use sans-serif fonts such as $\m{A},\m{B}, \m{C}, \ldots$ to denote events. We use $\neg \m{A}$ to denote the complement of an event $\m{A}$. 

We use $\mathbb{N}$ to denote the positive integers. For integers $k\leq \ell$, we write $\ll k,\ell \rr := [k,\ell]\cap\Z$. We often use $\R_{\ge 0}$ to denote $[0,\infty)$ and $\R_{\le 0}$ to denote $(-\infty,0]$. For $x,y\in \R$, we write $x\vee y : =\max \{x,y\}$ and $x\wedge y:= \min \{x,y\}$. Lastly, in writing we often use the abbreviations ``l.h.s.'' and ``r.h.s.'' for ``left-hand side'' and ``right-hand side'' respectively, and ``w.r.t.'' for ``with respect to.'' 

\subsection*{Acknowledgments} S.D.~thanks Ivan Corwin for useful discussions and Amol Aggarwal for bringing the reference \cite{borodin2014free} to their attention. C.S.~acknowledges fellowship support from the Northern California Chapter of the ARCS Foundation.

\section{The half-space log-gamma (HSLG) polymer and line ensemble} \label{sec:hslg}

In this section, we introduce the half-space log-gamma (HSLG) polymer and the corresponding line ensemble. Fix any $\theta$ large enough so that $\alpha>-\theta$. Consider a family of independent random variables $(\mathscr W_{i,j})_{(i,j)\in \mathcal{I}}$, with $\mathcal{I}:=\{(i,j) \in \mathbb{N}^2 :  j\le i\}$, such that
\begin{align}\label{eq:wt0} \mathscr
W_{i,j}\sim \operatorname{Gamma}^{-1}(\alpha+\theta)  \textrm{ for } i=j \qquad \textrm{and } \quad \mathscr W_{i,j}\sim \operatorname{Gamma}^{-1}(2\theta) \textrm{ for } j<i,
	\end{align}
where $X\sim \operatorname{Gamma}^{-1}(\beta)$ means  $X$ is a random variable with density $\ind_{x>0}\Gamma^{-1}(\beta)x^{-\beta-1}e^{-1/x}$. 

A directed lattice path $\pi=\big((x_i,y_i)\big)_{i=1}^k$ confined to the half-space index set $\mathcal{I}$ is an up-right path with all $(x_i,y_i)\in \mathcal{I}$ which only makes unit steps in the coordinate directions, that is, $(x_{i+1},y_{i+1})=(x_i,y_i)+(0,1)$ or $(x_{i+1},y_{i+1})=(x_i,y_i)+(1,0)$.   
Given $(m,n)\in\mathcal{I}$, we let $\Pi_{m,n}$ denote the set of all directed paths from $(1,1)$ to $(m,n)$ confined to $\mathcal{I}$. Given the random variables from \eqref{eq:wt0}, we define the weight of a path $\pi$ and the point-to-point partition function of the HSLG polymer by
	\begin{align}\label{def:hslg}
		 Z(m,n):=\sum_{\pi\in \Pi_{m,n}} \prod_{(i,j)\in \pi} \mathscr W_{i,j}.
	\end{align}

In \cite{half1}, the authors demonstrated that the free energy process $(\log Z(n+k-1,n-k+1))_{k\ge 1}$ along the line $x+y=2n$ of the HSLG polymer, can be realized as the top curve of a Gibbsian line ensemble $(\L_i^{n;\theta}(\cdot))_{i\in \ll1,N\rr}$,  called the HSLG line ensemble. This ensemble consists of log-gamma increment random walks interacting through a soft non-intersection condition and subject to pairwise pinning at the boundary.  
This remarkable line ensemble structure comes from the geometric RSK correspondence \cite{cosz14, osz14, nz17,bz19} and the half-space Whittaker process \cite{bbc20}.

The construction of the HSLG line ensemble is based on the multi-path point-to-point partition functions corresponding to full-space polymers under \textit{symmetrized environments}. These are defined in \eqref{zsymr} as sums over multiple non-intersecting paths on the full quadrant $\mathbb{N}^2$ (not just the octant $\mathcal{I}$) of products of symmetrized versions of the weights from \eqref{eq:wt0}:
	\begin{align}
		\label{eq:symwt}
		\til{\mathscr W}_{i,j} := \begin{cases}
		\tfrac{1}{2} \mathscr W_{i,j} & \mbox{when }i=j, \\
			\mathscr W_{j,i} & \mbox{when }j>i, \\
			\mathscr W_{i,j} & \mbox{when }j<i.
		\end{cases}
	\end{align}
For $m,n,r\in \mathbb{N}$ with $n\ge r$, let $\Pi_{m,n}^{(r)}$ be the set of $r$-tuples of non-intersecting upright paths in $\mathbb{N}^2$ starting from $(1,r), (1,r-1),\cdots, (1,1)$ and going to $(m,n),(m,n-1), \ldots, (m,n-r+1)$ respectively. We define the multi-path point-to-point symmetrized partition function as
	\begin{align}\label{zsymr}
		Z_{\operatorname{sym}}^{(r)}(m,n):=\sum_{(\pi_1,\ldots,\pi_r)\in \Pi_{m,n}^{(r)}}  \prod_{(i,j)\in \pi_1\cup \cdots \cup \pi_r} \til{\mathscr W}_{i,j},
	\end{align}
with the convention that $Z_{\operatorname{sym}}^{(0)}(m,n)\equiv 1$ for all $m,n\in \mathbb{N}$. One can recover the HSLG partition function from the symmetrized partition function via the following identity. For each $(m,n)\in \mathcal{I}$ we have
\begin{align}\label{symi}
    2Z_{\m{sym}}^{(1)}(p,q)=Z(p,q),
\end{align} 
where $Z$ is the HSLG polymer partition function. The identity \eqref{symi} appears in Section 2.1 of \cite{bw} and follows from the symmetry of the weights. We stress that this is an exact equality, not just in distribution.

	\begin{definition}[Half-space log-gamma line ensemble] \label{l:nz}
		Fix $n\in \mathbb{N}$. For each $i\in \ll1,n\rr$ and $j\in \ll1,2n-2i+2\rr$, we set		
		\begin{equation*}
			\L_i^{n;\theta}(j)=\log\left(\frac{2 Z_{\operatorname{sym}}^{(i)}(p,q)}{ Z_{\operatorname{sym}}^{(i-1)}(p,q)}\right),
		\end{equation*}
		where $p:=n+\lfloor j/2 \rfloor$ and $q:=n-\lceil j/2\rceil+1$. We call the collection of random variables
		\begin{align*}
			\big(\L_i^{n;\theta}(j): i\in \ll1,n\rr, j\in \ll1,2n-2i+2\rr\big)
		\end{align*}
the half-space log-gamma (HSLG) line ensemble with parameters $(\alpha,\theta)$.
	\end{definition}
Owing to the identity \eqref{symi}, we have
 \begin{align*}
     (\L_i^{n;\theta}(2j-1))_{j\in \ll1,n\rr} \stackrel{d}{=} (\log Z(n+j-1,n-j+1))_{j\in \ll1,n\rr}.
 \end{align*}
We note that the above definition omits the centering term considered in \cite{half1} and \cite{ds24}, as that centering is only relevant for analyzing the HSLG line ensemble under 1:2:3 scaling. In contrast, as outlined in the introduction, our focus will be on the intermediate disorder scaling. We now introduce this scaling.

\begin{definition}[Scaled HSKPZ line ensemble]
Fix $t\ge 1, N\in \mathbb{N}$. Set $\theta=\frac12+\sqrt{N}$, and assume $N$ is large enough so that $\alpha>-\theta$. Fix $i\in \ll1, \lfloor Nt/2\rfloor+1\rr$.  Whenever $x\sqrt{N} \in \ll 2i-2\lfloor Nt /2\rfloor+1,0\rr$, we define
\begin{equation}
\label{hinxt}
\begin{aligned}
\hh_i^N(x,t) := \mathcal{L}_i^{\lfloor Nt/2\rfloor + 1;\theta}(-x\sqrt{N}+1) + \left(\frac{2\lfloor Nt/2\rfloor +i +1+ \mathbf{1}_{x\sqrt{N}\;\mathrm{odd}}}{2}\right)\log N,
\end{aligned}
\end{equation}
For other values of $x\le 0$, we define $\hh_i^N(x,t)$ by linear interpolation from the above defined values, so that $\hh_i^N(\cdot,t)$ is a continuous function on $\mathbb{R}_{\geq 0}$ for each $t$. We call $\{\hh_i^N(\cdot,t)\}_{i\in \ll1,\lfloor Nt/2\rfloor +1\rr}$ the \textit{scaled HSLG line ensemble}. 
\end{definition}

\begin{theorem}\label{thm0}  For each fixed $t\ge 1$, $k\in \mathbb{N}$, we have the following weak convergence:
    \begin{align*}
        (\hh_i^N(\cdot,t))_{i=1}^k \stackrel{d}{\longrightarrow} (\mathcal{H}_i^\alpha(\cdot,t))_{i=1}^k
    \end{align*}
    as $N\to\infty$ in the uniform-on-compact topology, where $(\mathcal{H}_i^\alpha(\cdot,t))_{i=1}^k$ is the HSKPZ line ensemble coming from Theorem \ref{thm1}.
\end{theorem}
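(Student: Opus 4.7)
The plan is to follow the three-step strategy outlined in Section~\ref{sec1.2.1}: (i) establish tightness in $N$ of the scaled HSLG line ensembles $(\hh_i^N(\cdot,t))_{i=1}^k$ in $\mathcal{C}(\ll 1,k\rr \times \R_{\le 0})$ for each fixed $k$; (ii) identify every subsequential limit as a line ensemble satisfying properties \ref{12a} and \ref{12b} of Theorem~\ref{thm1}; and (iii) invoke the uniqueness statement in Theorem~\ref{thm1} to conclude that the full sequence converges weakly to the HSKPZLE. The top-curve convergence $\hh_1^N(\cdot,t)\stackrel{d}{\longrightarrow}\mathcal{H}^\alpha(\cdot,t)$ under the parametrization $\theta=\tfrac12+\sqrt{N}$, $n=\lfloor Nt/2\rfloor+1$ is precisely the intermediate disorder scaling result of \cite{bc22}, which in particular yields tightness of $\hh_1^N$.

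To propagate tightness from the top curve to the lower ones, I would run an inductive scheme modeled on that of \cite{kpzle}, leveraging the discrete one-sided log-gamma Gibbs property of the HSLG line ensemble described in Section~\ref{sec4}. The standard Gibbsian line ensemble heuristic is that once one has one-point upper and lower tightness of each $\hh_i^N(x,t)$ uniformly over $x$ on compacts, a resampling argument produces modulus-of-continuity estimates and thereby process-level tightness. One-point upper tightness of $\hh_k^N$ would be obtained by dominating lower curves by sub-ensembles built from the already-tight top curve, using the monotonicity inherent in the discrete Gibbs property. One-point lower tightness would follow by adapting the familiar \emph{straightening} argument: if $\hh_k^N$ were atypically low on some subinterval, the Gibbs resampling would force $\hh_1^N$ to follow a nearly straight trajectory, contradicting the weak parabolic curvature of HSKPZ established in Section~\ref{sec2}. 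These tightness estimates form the technical core of the proof and are carried out in Section~\ref{sec6}.

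Given tightness, fix any weak subsequential limit $(\bar{\mathcal{H}}_i(\cdot,t))_{i=1}^k$. Property \ref{12a} is immediate from \cite{bc22}. To verify the one-sided Brownian Gibbs property \ref{12b}, I would pass the discrete Gibbs description to the continuum limit: under the scaling in \eqref{hinxt}, a CLT applied to the log-gamma increment walks (centered and rescaled by $\sqrt{N}$) turns them into Brownian motions with drifts $(-1)^i\alpha$; the discrete soft non-intersection weights between adjacent curves converge to the exponential repulsion factor $\exp(-\int_A^0 e^{B_{i+1}-B_i}\,dx)$ appearing in \eqref{twosideg}; and the discrete pairwise pinning weights at the boundary converge to $\exp(\sum_i(-1)^i\alpha B_i(0))$, producing the one-sided form \eqref{onesideg} after a Cameron--Martin step absorbs the drifts. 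Since the resampled law on a compact interval $[A,0]$ is a bounded continuous functional of its boundary data and the $(k{+}1)$-st curve, convergence of the discrete conditional laws transfers to the continuum conditional laws. Combined with the uniqueness asserted in Theorem~\ref{thm1} (proved there by conditioning the one-sided property at an auxiliary right endpoint $B<0$ to recover the two-sided KPZ Gibbs property and then applying \cite{dimh}), every subsequential limit agrees in law with $(\mathcal{H}_i^\alpha(\cdot,t))_{i=1}^k$, so convergence holds along the full sequence. The main obstacle is the inductive lower-tightness step in the presence of the boundary and the alternating drifts $(-1)^i\alpha$, where the full-space techniques of \cite{kpzle} must be reworked using the weaker parabolic input available in the half-space setting.
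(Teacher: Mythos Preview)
Your proposal is correct and follows essentially the same route as the paper: tightness via the inductive scheme of Section~\ref{sec6} (Proposition~\ref{tightline}, built on the parabolic trajectory from Section~\ref{sec2}), passage of the Gibbs property to subsequential limits via the invariance principle (Theorem~\ref{lem:invprin}), and uniqueness from \cite{dimh}. One small correction: there are no separate ``discrete pairwise pinning weights at the boundary'' in the HSLG Gibbs description (Definition~\ref{def:halfgibbs})\textemdash the boundary term $\exp\big(\sum_i(-1)^i\alpha B_i(0)\big)$ in \eqref{onesideg} arises solely from Cameron--Martin applied to the alternating drifts $(-1)^i\alpha$ of the limiting Brownian motions, not from a convergence of distinct discrete weights.
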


As mentioned in Section~\ref{sec1.2.1}, the proofs of Theorems~\ref{thm1} and~\ref{thm0} are closely intertwined, and we will establish them simultaneously in Section~\ref{sec5}.

The convergence of the top curve $\hh_1^N(\cdot,t)$ to the HSKPZ equation $\mathcal{H}^\alpha(\cdot,t)$ essentially follows from results established in \cite{wu,bc22}. However, some care is needed in applying these results due to the fact that the line ensemble is defined along a staircase path. For completeness, we conclude this section with a short proof of finite-dimensional convergence of the top curve that invokes \cite{wu,bc22} as key inputs. This finite-dimensional convergence will be needed in the proof of Theorem \ref{thm0} to apply the characterization result of \cite{dimh}; of course, Theorem \ref{thm0} in particular upgrades the convergence to the process level.

\begin{proposition}\label{thma} For each fixed $t\geq 1$, we have the following convergence of finite-dimensional distributions:
    \begin{align*}
        \hh_1^N(\cdot,t) \stackrel{\mathrm{f.d.}}\longrightarrow \mathcal{H}^\alpha(\cdot,t),
    \end{align*}
    where $\mathcal{H}^\alpha(\cdot,t)$ is the\footnote{The uniqueness in law of $\mathcal{H}^\alpha$ is justified by \cite[Proposition 4.3]{par}.} Cole--Hopf solution to the HSKPZ equation \eqref{halfkpz} with Neumann boundary condition \eqref{bdy} and narrow wedge initial data.
\end{proposition}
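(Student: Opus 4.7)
The plan is to reduce Proposition \ref{thma} to the intermediate disorder convergence of the HSLG partition function to the half-space stochastic heat equation established in \cite{bc22,wu}.

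First, by the identity \eqref{symi} and Definition \ref{l:nz}, the top curve of the HSLG line ensemble coincides with the HSLG free energy: $\mathcal{L}_1^{n;\theta}(2j-1) = \log Z(n+j-1, n-j+1)$. Substituting $j = -x\sqrt{N}+1$ into \eqref{hinxt}, a direct calculation shows that for integer-valued $x\sqrt{N}$ one has
\[
    \hh_1^N(x,t) = \log Z(p_N, q_N) + \tfrac{p_N+q_N}{2}\log N,
\]
where $(p_N,q_N)\in\mathcal{I}$ satisfies $(q_N-p_N)/\sqrt{N}\to x$ and $(p_N+q_N)/N\to t$ as $N\to\infty$. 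In both parities of $x\sqrt{N}$, the parity-dependent indicator in \eqref{hinxt} is precisely what converts the raw centering $\frac{2\lfloor Nt/2\rfloor+2+\ind_{x\sqrt{N}\,\mathrm{odd}}}{2}\log N$ into the lattice centering $\tfrac{p_N+q_N}{2}\log N$. With $\theta = \frac12+\sqrt{N}$, the diagonal inverse-gamma parameter $\alpha+\theta$ is exactly the one producing the Robin parameter $\alpha-\tfrac12$ in \eqref{sheeq}, i.e., Neumann parameter $\alpha$ at the HSKPZ level.

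Second, I would invoke the intermediate disorder theorem of \cite{bc22,wu}: for any sequence $(p_N, q_N)\in \mathcal{I}$ with $(q_N-p_N)/\sqrt{N}\to x$ and $(p_N+q_N)/N\to t$,
\[
    N^{(p_N+q_N)/2}\, Z(p_N,q_N) \stackrel{d}{\longrightarrow} \mathcal{Z}_\alpha(x,t),
\]
where $\mathcal{Z}_\alpha$ is the mild solution to the HSSHE \eqref{sheeq} with delta initial data. Although this is typically stated pointwise, the joint convergence at finitely many spatial locations $x_1,\ldots,x_m$ follows from the same chaos-expansion arguments applied jointly, since all partition functions are functionals of the common random environment and admit a common chaos decomposition indexed by Brownian motion. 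Since $\mathcal{Z}_\alpha(\cdot, t) > 0$ almost surely by \cite{par}, the continuous mapping theorem applied to $\log$ yields the finite-dimensional convergence $\hh_1^N(x_\ell, t)\to \mathcal{H}^\alpha(x_\ell, t)$ at integer-valued $x_\ell\sqrt{N}$.

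Finally, the linear interpolation used in \eqref{hinxt} at non-integer $x\sqrt{N}$ requires no extra work. For any fixed $x\leq 0$, the two adjacent lattice values of $x\sqrt{N}$ both produce lattice sequences $(p_N,q_N)$ satisfying the hypotheses of the quoted limit theorem with the same target $(x,t)$, so both endpoints of the interpolation converge jointly to the same limit $\mathcal{H}^\alpha(x,t)$, and the interpolated value does as well. The only technically delicate step is the careful bookkeeping that pins the parity-dependent centering in \eqref{hinxt} to the lattice centering $\tfrac{p_N+q_N}{2}\log N$ uniformly; once this is in place, the remainder is a direct application of the quoted inputs.
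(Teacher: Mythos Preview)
Your reduction is correct in spirit and the bookkeeping in your first step is right: for integer $x\sqrt{N}$ one has $\hh_1^N(x,t)=\log Z(p_N,q_N)+\tfrac{p_N+q_N}{2}\log N$, and the parity indicator in \eqref{hinxt} exactly produces this lattice centering. The paper's argument is very close to yours, but it is more careful about one point you gloss over.

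The results in \cite{wu,bc22} that you invoke are stated for partition functions along a \emph{single} antidiagonal $m+n=\text{const}$; your blanket claim ``for any sequence $(p_N,q_N)$ with $(p_N+q_N)/N\to t$'' is not what is literally proved there. For $x\sqrt{N}$ even the pair $(p_N,q_N)$ sits on one antidiagonal, for $x\sqrt{N}$ odd on the next. The paper therefore first introduces $\hat{\hh}^N$, the interpolation through only the even-$x\sqrt{N}$ points, so that all relevant partition functions lie on a common antidiagonal; for this object \cite[Theorem~2.2]{wu} or \cite[Theorem~5.4]{bc22} gives process-level (hence joint) convergence directly. It then handles the odd-$x\sqrt{N}$ points via the exact one-step identity \eqref{evenodd}, showing $\hh_1^N-\hat{\hh}^N\to 0$ in probability. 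Note that the second term in \eqref{evenodd} involves $\L_1^{n;\theta}(2j+3)-\L_1^{n;\theta}(2j+1)$, whose vanishing uses the \emph{process-level} convergence of $\hat{\hh}^N$, not just pointwise convergence.

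So your line ``the remainder is a direct application of the quoted inputs'' hides exactly the step the paper works to justify: robustness of the limit under an $O(1)$ shift of the antidiagonal. Your chaos-expansion sentence is the right intuition, but as written it is an assertion, not an argument. Either (i) spell out why the cited theorems extend to varying $p_N+q_N$ (an extra-weight estimate of the type $\sqrt{N}\,\mathscr{W}\to 1/2$ together with a path-doubling), or (ii) adopt the paper's route: restrict to one parity, cite the process-level result there, and control the other parity by a concrete identity like \eqref{evenodd}.
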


\begin{proof} We begin by considering a slight variant of $\hh_1^N(x,t)$ from \eqref{hinxt}: whenever $x\sqrt{N} \in \ll 2i-2\lfloor Nt /2\rfloor+1,0\rr$ and \textbf{is even}, we define
\begin{equation*}
\begin{aligned}
\hat{\hh}^N(x,t) := \mathcal{L}_1^{\lfloor Nt/2\rfloor + 1;\theta}(-x\sqrt{N}+1) + \left(\lfloor Nt/2\rfloor + 1\right)\log N.
\end{aligned}
\end{equation*}
For other values of $x\le 0$, we define $\hat{\hh}^N(x,t)$ by linear interpolation from the above defined values, so that $\hat{\hh}^N(\cdot,t)$ is a continuous function on $\mathbb{R}_{\geq 0}$ for each $t$.

The difference between $\hat{\hh}^N$ and $\hh_1^N$ lies in the interpolation: $\hat{\hh}^N$ is obtained by interpolating only the odd  points of the top curve of the line ensemble, whereas $\hh_1^N$ uses all points. The advantage of considering $\hat{\hh}^N$ is that it only involves  partition functions $Z(m,n)$ with $m+n=2\lfloor Nt/2\rfloor$, which have been well studied in the literature. In particular invoking \cite[Theorem 2.2]{wu} or \cite[Theorem 5.4]{bc22}, we get that
\begin{align}\label{Hweak}
    \hat{\hh}^N(\cdot,t) \stackrel{d}{\longrightarrow} \mathcal{H}^\alpha(\cdot,t)
\end{align}
as processes in $x$. To extend the convergence to $\hh_1^N$, we observe the following identity from Definition \ref{l:nz} and \eqref{symi}:
\begin{equation}
    \label{evenodd}
    \begin{aligned}
 \mathcal{L}_1^{n;\theta}(2j+2)+\frac12\log N -\mathcal{L}_1^{n;\theta}(2j+1) & = \left[\log 2\mathscr W_{n+j+1,n-j}+\frac12\log N\right] \\ & \hspace{2cm}+ \log\left[\frac12\left(1+e^{\L_1^{n;\theta}(2j+3)-\L_1^{n;\theta}(2j+1)}\right)\right].   
\end{aligned}
\end{equation}
We claim that under the intermediate disorder scaling ($n=\lfloor Nt/2 \rfloor+1$ and $j= - x\sqrt{N}/2 $), the r.h.s. of \eqref{evenodd} goes to zero in probability. Indeed, when $\theta=\frac12+\sqrt{N}$, $\mathscr W_{n+j+1,n-j}$ is distributed as $\operatorname{Gamma}^{-1}(1+2\sqrt{N})$, which implies
$$\log 2\mathscr W_{n+j+1,n-j} +\frac12\log N \stackrel{p}{\longrightarrow} 0$$
by the law of large numbers (see e.g. \eqref{loggamma} below). Thanks to the process-level convergence of $\hat{\hh}^N$, the second term on the right of \eqref{evenodd} goes to $0$ in probability (under intermediate disorder scaling). Note that there is an additional $\frac12\log N$ in the centering in \eqref{hinxt} whenever $x\sqrt{N}$ is odd. Thus, $\hat{\hh}^N(x,t)-\hh_1^N(x,t) \stackrel{p}{\longrightarrow} 0$. In combination with the weak convergence of $\hat{\hh}_1^N$ in \eqref{Hweak}, this implies finite-dimensional convergence of $\hh_1^N$.
\end{proof}

\section{Comparing the full-space and half-space KPZ equations} \label{sec2}

The goal of this section is to prove a parabolic trajectory for the HSKPZ equation \eqref{halfkpz}.

\begin{proposition}\label{p:parabol} Fix $t\geq 1$ and $\e\in (0,1)$. There exists $K=K(\alpha,\e,t)>0$ such that for all $x\le 0$,
    \begin{align}\label{e:parabol}
        \P\bigg(\bigg|\H^\alpha(x,t)+\frac{x^2}{2t}\bigg| \ge K\bigg) \le \e.
    \end{align}
\end{proposition}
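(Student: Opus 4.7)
The plan is to compare the half-space Cole--Hopf solution with its full-space counterpart $\mathcal{H}^{\mathrm{full}}(\cdot,t)$ (narrow wedge from the origin) and to exploit the well-known exact parabolic stationarity of the latter: for each $t>0$, the process $x\mapsto \mathcal{H}^{\mathrm{full}}(x,t)+\frac{x^2}{2t}$ is stationary in $x$ (equal in one-point law to $\mathcal{H}^{\mathrm{full}}(0,t)$), hence tight uniformly in $x\in\R$. It therefore suffices to show that $\mathcal{H}^\alpha(x,t)-\mathcal{H}^{\mathrm{full}}(x,t)$ is tight uniformly in $x\le 0$, equivalently that the positive ratio $\mathcal{Z}_\alpha(x,t)/\mathcal{Z}^{\mathrm{full}}(x,t)$ is both bounded above and bounded away from $0$ in probability.

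Both $\mathcal{Z}_\alpha$ and $\mathcal{Z}^{\mathrm{full}}$ admit Wiener chaos expansions $\mathcal{Z}(x,t) = \sum_{n\ge 0}I_n(x,t)$, differing only in that $\mathcal{Z}_\alpha$ uses the Robin heat kernel $p_t^\alpha$ rather than the Gaussian kernel $p_t$. The method of images provides
\[
p_t^\alpha(x,y) = p_t(x,y) + p_t(x,-y) - 2\bigl(\alpha-\tfrac12\bigr)\int_0^\infty e^{-(\alpha-1/2)z}\,p_t(x,-y-z)\,dz,
\]
and on the quadrant $x,y\le 0$ one has $(x+y)^2\ge (x-y)^2$, so every image term is pointwise dominated by $p_t(x,y)$ up to finite constants depending only on $(\alpha,t)$. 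Feeding this domination termwise into the $L^2$-chaos identity $\E[\mathcal{Z}_\alpha(x,t)^2] = \sum_n \|I_n^\alpha(x,t)\|_{L^2(\xi)}^2$ and comparing with the analogous full-space identity (whose ratio with $(\E\mathcal{Z}^{\mathrm{full}}(x,t))^2$ is bounded by a $t$-dependent constant) yields
$\E[\mathcal{Z}_\alpha(x,t)^2] \le C(\alpha,t)\,(\E\mathcal{Z}_\alpha(x,t))^2$ uniformly in $x\le 0$. Since $\E\mathcal{Z}_\alpha(x,t)=p_t^\alpha(x,0) \asymp e^{-x^2/(2t)}$ on $x\le 0$ (with prefactors depending on $\alpha,t$), a Markov bound on $\mathcal{Z}_\alpha(x,t)$ delivers the upper half of \eqref{e:parabol}.

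The main obstacle is the matching lower bound: Paley--Zygmund applied to the second moment only yields polynomial lower tails, which do not imply tightness of $\log\mathcal{Z}_\alpha(x,t)$ from below. My plan here is to produce a stochastic lower domination $\mathcal{Z}_\alpha(x,t) \ge c(\alpha,t)\,\mathcal{Z}^{\mathrm{full}}(x,t)$ by coupling both SHEs to a common white noise $\xi$, after which the parabolic stationarity of $\mathcal{H}^{\mathrm{full}}$ transfers the lower-tightness directly. At the chaos level this amounts to showing that the half-space integrand differs from the full-space one by nonnegative reflection contributions; the delicate case is $\alpha<\tfrac12$, where the image representation contains a negatively signed term and one may need to invoke positivity of the total kernel $p_t^\alpha$ and argue through a comparison principle for the multiplicative noise PDE. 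A complementary route is to use the one-point tightness of $\mathcal{H}^\alpha(0,t)$ from \cite{ims22} as an anchor at the boundary and propagate it to all $x\le 0$ via the Markov property of the SHE together with heat-kernel estimates. In either route, the most subtle point is uniformity as $x\uparrow 0$, where the image terms are of the same order as the direct term and no asymptotic simplification is available; this is precisely why the argument must pass through the chaos expansion rather than a purely large-$|x|$ comparison.
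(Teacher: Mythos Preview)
Your high-level strategy---compare with the full-space solution and invoke the exact parabolic stationarity of $\mathcal{H}^{\mathrm{full}}$---is exactly the paper's approach. However, your execution of the lower bound has a genuine gap, and the paper resolves it by a different mechanism than the two you propose.

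Neither of your routes for the lower bound works. The ``stochastic lower domination $\mathcal{Z}_\alpha(x,t)\ge c\,\mathcal{Z}^{\mathrm{full}}(x,t)$'' via chaos does not make sense: the chaos terms $I_n$ are iterated stochastic integrals, hence signed random variables, so the statement ``the half-space integrand differs from the full-space one by nonnegative reflection contributions'' has no pathwise meaning. There is no known comparison principle for the multiplicative SHE that produces a pointwise inequality of this type. Your alternative---anchoring at $x=0$ via \cite{ims22} and propagating by the Markov property---is also problematic: \cite{ims22} concerns long-time asymptotics at a single spatial point, not fixed-$t$ spatial control, and ``propagating'' tightness from the boundary would itself require the uniform spatial bounds you are trying to prove.

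The paper avoids the upper/lower dichotomy entirely. It couples both equations to the same noise and shows directly that, after normalizing by the respective heat kernels,
\[
\E\left[\left|\frac{\mathcal{Z}_\alpha(-x,t)}{p_t^\alpha(0,x)}-\frac{\mathcal{Z}^{\mathrm{full}}(x,t)}{\mathfrak{p}_t(x)}\right|^2\right]\longrightarrow 0 \quad\text{as } x\to\infty.
\]
Since $\mathcal{Z}^{\mathrm{full}}(x,t)/\mathfrak{p}_t(x)\overset{d}{=}\mathcal{Z}^{\mathrm{full}}(0,t)/\mathfrak{p}_t(0)$ is a fixed positive random variable, $L^2$-closeness plus Markov's inequality gives both tails of $\mathcal{H}^\alpha(x,t)+x^2/(2t)$ simultaneously for large $|x|$. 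The $L^2$ estimate is proved term-by-term in the chaos: one uses the two-sided bound $C^{-1}\mathfrak{p}_t\le p_t^\alpha\le C\mathfrak{p}_t$ (Lemma~\ref{l:heatbd}) to truncate to finitely many chaos levels, and for each fixed level uses It\^o isometry and the asymptotic $p_t^\alpha(x,y)=\mathfrak{p}_t(x-y)(1+e^{-2xy/t}+O(x^{-1}))$ to show the contribution vanishes as $x\to\infty$.

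Finally, a small correction: the subtle regime is large $|x|$, not $x\uparrow 0$. For $x$ in any fixed compact interval, $\mathcal{H}^\alpha(\cdot,t)+x^2/(2t)$ is a.s.\ continuous and hence bounded, so the estimate there is trivial; the paper says this explicitly.
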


We prove Proposition~\ref{p:parabol} by first showing that the half-space KPZ equation is close to the full-space KPZ equation, using chaos expansions for the stochastic heat equation (SHE). We then leverage the spatial stationarity of the full-space KPZ equation after parabolic shift. Towards this end, we introduce the SHE on the full-space:

\begin{definition}[Full-space SHE] \label{defshe} Consider the SPDE
\begin{align}
    \label{fshe}
    \partial_t\mathcal{Z} =\tfrac12\partial_{xx}\mathcal{Z}+\mathcal{Z}\cdot \xi, \qquad \mathcal{Z}=\mathcal{Z}(x,t), \qquad (x,t)\in \mathbb{R}\times (0,\infty).
\end{align}
We consider mild solutions\footnote{We refer to e.g. \cite{corwin2018exactly} for the definition of mild solutions, as we will not need it here.} for two types of initial data:
\begin{enumerate}[label=(\alph*),leftmargin=20pt]
    \item Dirac delta: For $y\in\R$ and $\mathcal{Z}(x,0)=\delta(x-y)$, we denote the solution by $\mathcal{Z}^{\mathrm{full}}(y,0;x,t)$. These are known as propagators (from $(y,0)$ to $(x,t)$) in the literature. When $y=0$, we simply write $\mathcal{Z}^{\mathrm{full}}(x,t)$ for $\mathcal{Z}^{\mathrm{full}}(0,0;x,t)$.
    
    \item \label{hbdata}  Half-Brownian: For $\mathcal{Z}(x,0)=\exp\big(B(x)-\alpha x\big)\ind_{x\ge 0}$ where $B$ is a Brownian motion independent of $\xi$, we denote the solution by $\mathcal{Z}_{\alpha}^{\mathrm{full},B}(x,t)$. 
\end{enumerate}
\end{definition}

The half-Brownian initial data in \ref{hbdata} arises as the scaling limit for the weakly asymmetric simple exclusion process at the edge of the rarefaction fan \cite{corwin2013crossover}. The one-point fluctuations of $\mathcal{Z}_{\alpha}^{\mathrm{full},B}(x,t)$ and its generalizations have been studied in \cite{corwin2013crossover,borodin2014free}. The following convolution formula is well known:
\begin{align}\label{caldec}
    \mathcal{Z}_{\alpha}^{\mathrm{full},B}(x,t)= \int_{0}^\infty e^{B(y)-\alpha y}\mathcal{Z}^{\mathrm{full}}(y,0;x,t)\,dy.
\end{align}
(See for instance Proposition 5.3 in \cite{bc22} for the analogous statement in half space; the convergence of the integral on the right can be justified using chaos expansions as below, cf.~the proof of Proposition 2.9 in \cite{cgh21}.)
We investigate properties of $\mathcal{Z}_{\alpha}^{\mathrm{full},B}(x,t)$ and how it relates to the half-space SHE in  Section \ref{sec8.1}.

\medskip

The proof of Proposition \ref{p:parabol} involves writing out the mild solutions to the half- and full-space SHE as chaos expansions (see \cite[Theorem 2.2]{corwin2018exactly} for the full-space case and \cite[Proposition 5.3]{bc22} for the half-space case). Recall $\mathcal{Z}_\alpha$ denotes the HSSHE \eqref{sheeq}. Fix a space-time white noise $\{\xi(x,t) : x\in\R,\,t\geq 0\}$. For any $x\ge 0$, we may write the solutions $\mathcal{Z}_\alpha$ and $\mathcal{Z}^{\mathrm{full}}$ as 
\begin{equation}
    \label{chaos}
    \begin{aligned}
        &  \frac{\calZ_\alpha(-x,t)}{p_t^\alpha(0,x)}=1+\sum_{k=1}^{\infty} I_k^\alpha(x,t)   \mbox{ where }   I_k^\alpha(x,t):=\frac{1}{p_t^\alpha(0,x)}\int_{\Delta_k(t)} \int_{\R_{\ge 0}^k} \prod_{i=1}^{k+1} p_{t_{i}-t_{i-1}}^\alpha(x_{i-1},x_i)\xi^{\otimes k}(d\mathbf{x},d\mathbf{t}), \\ &  \frac{\calZ^{\mathrm{full}}(x,t)}{\pp_t(x)}=1+\sum_{k=1}^{\infty} I_k(x,t)  \mbox{ where }  I_k(x,t):=\frac{1}{\pp_t(x)}\int_{\Delta_k(t)} \int_{\R^k} \prod_{i=1}^{k+1} \pp_{t_{i}-t_{i-1}}(x_{i-1}-x_i)\xi^{\otimes k}(d\mathbf{x},d\mathbf{t}),
    \end{aligned}
\end{equation} 
where $\Delta_k(t):=\{(t_1,t_2,\ldots,t_k) : 0=t_0< t_1< t_2<\ldots<t_{k}<t_{k+1}=t\}$, $x_0=0$, and $x_{k+1}=x$. {Note we have coupled the two equations with the \textit{same} white noise.} Here 
\begin{align}
    \label{heat}
    \p_t(x)=\frac1{\sqrt{2\pi t}}e^{-x^2/2t}
\end{align}
is the standard heat kernel, and $p_t^\alpha(x,y)$ is the \textit{Robin heat kernel}. Before analyzing the chaos expansions, we recall the definition of $p_t^\alpha$ and state an estimate associated with it. It is the unique solution to the heat equation on $\mathbb{R}_{\ge 0}$ with Robin boundary condition:
\begin{align*}
  &  \partial_tp_t^\alpha(x,y)=\frac12\partial_{xx} p_t^\alpha(x,y), \quad \partial_x p_t^\alpha(0,y)=(\alpha -\tfrac12)p_t^\alpha(0,y),
\end{align*}
and delta initial condition, i.e., $p_t^\alpha(x,y) \to \delta(x-y)$ as $t\downarrow 0$ weakly in $L^2(\R_{\ge 0})$. By Lemma 4.4 in \cite{cs1}, this solution is given by
\begin{align}\label{Robin}
    p_t^\alpha(x,y)=\pp_t(x+y)+\pp_t(x-y)-2(\alpha-\tfrac12)\int_0^{\infty} \pp_t(x+y+z)e^{-(\alpha-\frac12)z}dz.
\end{align}
The following lemma shows that $p_t^\alpha(x,y)$ can be approximated well by $\p_t(x-y)$.

\begin{lemma}\label{l:heatbd} Fix any $T>0$. There exists a constant $C=C(\alpha,T)>0$ such that for all $x,y\ge 0$ and $t\in (0,T]$ we have
\begin{align}\label{e:heatbd}
C^{-1}\pp_t(x-y) \le p_t^\alpha(x,y) \le C \pp_t(x-y).
\end{align}
Furthermore,
\begin{align}\label{e:heatasym}
    p_t^\alpha(x,y)=\pp_t(x-y)\big(1+e^{-2xy/t}+O(x^{-1})\big),
\end{align}
where the $O$ term depends only on $\alpha,T$ and can be chosen independent of $y$. 
\end{lemma}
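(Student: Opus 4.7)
The plan starts from the explicit representation \eqref{Robin}. Setting $\beta := \alpha - \tfrac{1}{2}$ and completing the square, the integral evaluates in closed form:
\begin{equation*}
J(x,y,t) := \int_0^\infty \pp_t(x+y+z)\,e^{-\beta z}\,dz = e^{\beta(x+y) + \beta^2 t/2}\,\bar\Phi\left(\frac{x+y+\beta t}{\sqrt{t}}\right),
\end{equation*}
where $\bar\Phi(a) := \int_a^\infty \tfrac{1}{\sqrt{2\pi}} e^{-s^2/2}\,ds$ denotes the standard Gaussian upper tail. Thus \eqref{Robin} reads $p_t^\alpha(x,y) = \pp_t(x-y) + \pp_t(x+y) - 2\beta J(x,y,t)$, and throughout I use the elementary identity $\pp_t(x+y) = \pp_t(x-y)\,e^{-2xy/t}$ valid for $x,y \ge 0$.

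To prove \eqref{e:heatasym}, divide through by $\pp_t(x-y)$ to obtain
\begin{equation*}
\frac{p_t^\alpha(x,y)}{\pp_t(x-y)} = 1 + e^{-2xy/t} - \frac{2\beta J(x,y,t)}{\pp_t(x-y)}.
\end{equation*}
Applying the Mills ratio bound $\bar\Phi(a) \le \phi(a)/a$ for $a > 0$ (with $\phi$ the standard Gaussian density) to the argument $a = (x+y+\beta t)/\sqrt{t}$, all exponential factors cancel by direct algebra, leaving
\begin{equation*}
\frac{2|\beta|\,J(x,y,t)}{\pp_t(x-y)} \le \frac{2|\beta|\,t}{x+y+\beta t}\,e^{-2xy/t} \le \frac{C(\alpha,T)}{x}
\end{equation*}
for $x \ge x_0(\alpha,T)$ with $x_0 > 2|\beta|T$. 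This bound is uniform in $y \ge 0$ and $t \in (0, T]$ and establishes \eqref{e:heatasym}.

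For \eqref{e:heatbd} the upper and lower bounds are handled separately. Upper bound: when $\beta \ge 0$, $J \ge 0$, so $p_t^\alpha \le \pp_t(x-y) + \pp_t(x+y) \le 2\pp_t(x-y)$; when $\beta < 0$, combining $\bar\Phi \le 1/2$ with the uniform boundedness of $e^{\beta(x+y) + \beta^2 t/2}$ on $\{x,y \ge 0,\, t \in (0,T]\}$ gives $-2\beta J \le C(\alpha, T)\pp_t(x-y)$. Lower bound: by \eqref{e:heatasym}, $p_t^\alpha(x,y) \ge \tfrac{1}{2}\pp_t(x-y)$ for $x \ge x_1(\alpha,T)$; the symmetry $p_t^\alpha(x,y) = p_t^\alpha(y,x)$, manifest from \eqref{Robin}, extends this to all $(x,y)$ with $\max(x,y) \ge x_1$. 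On the remaining compact region $(x,y) \in [0, x_1]^2$, a case split based on whether $(x+y+\beta t)/\sqrt{t} \ge 1$ (apply Mills) or not (apply $\bar\Phi \le 1/2$, noting that $x+y \lesssim \sqrt{t}$ in this regime forces $\pp_t(x-y) \ge \pp_t(x+y) \ge c(T)/\sqrt{t}$) shows that $|J(x,y,t)|/\pp_t(x-y) \to 0$ as $t \to 0^+$ uniformly. Together with continuity and positivity of $p_t^\alpha/\pp_t(x-y)$ on $[0, x_1]^2 \times [t_0, T]$ for any fixed $t_0 > 0$, this yields the required positive lower bound.

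The main obstacle is controlling the lower bound uniformly as $t \to 0^+$ in the compact boundary region $[0,x_1]^2$, since neither Mills' ratio nor the trivial bound $\bar\Phi \le 1/2$ is individually effective across the full range of $a = (x+y+\beta t)/\sqrt{t}$. The case split above is designed precisely to resolve this; everything else is elementary manipulation of the explicit Robin kernel formula.
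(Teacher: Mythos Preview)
Your approach is essentially the same as the paper's: both bound the integral term $J$ via the same core estimate (your Mills' ratio bound $\bar\Phi(a)\le\phi(a)/a$ yields exactly $J\le t\,\pp_t(x+y)/(x+y+\beta t)$, which is what the paper obtains by direct integration of $\pp_t(x+y+z)\le\pp_t(x+y)e^{-z(x+y)/t}$). You are in fact more careful than the paper in treating the compact region $[0,x_1]^2$, which the paper dismisses with ``it suffices to prove the inequality when $x,y$ are large enough.''

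There is, however, one slip in your upper bound for $\beta<0$: the bounds $\bar\Phi\le 1/2$ and $e^{\beta(x+y)+\beta^2t/2}\le e^{\beta^2T/2}$ only give $J\le C$, not $J\le C\,\pp_t(x-y)$, and the latter does not follow since $\pp_t(x-y)$ can be arbitrarily small when $|x-y|\gg\sqrt{t}$. The fix is immediate: either reuse the compact-region case split you already wrote for the lower bound (the asymptotic \eqref{e:heatasym} covers $\max(x,y)\ge x_1$), or note directly that $\pp_t(x+y+z)\le\pp_t(x+y)\,e^{-z^2/(2t)}$, which after integrating against $e^{-\beta z}$ gives $J\le\sqrt{2\pi T}\,e^{\beta^2T/2}\,\pp_t(x+y)\le C\,\pp_t(x-y)$ globally.
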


\begin{proof} It suffices to prove the inequality when $x,y$ are large enough (say $x+y \ge \alpha T+4T$).  We have $$0 \le \pp_t(x+y+z)e^{-(\alpha-\frac12)z} \le \pp_t(x+y)e^{-z(x+y)/t-(\alpha-\frac12)z}.$$ Assuming $x+y \ge \alpha T+4T$, integrating the above inequality w.r.t.~$z$ over $[0,\infty)$, we get 
\begin{align*}
    0 \le \int_{0}^\infty \pp_t(x+y+z)e^{-(\alpha-\frac12)z}dz \le \frac{t \pp_t(x+y)}{x+y+(\alpha-\frac12)t}.
\end{align*}
Using this inequality in \eqref{Robin}, and the fact that $0 \le \pp_t(x+y) \le \pp_t(x-y)$ (as $x,y\ge 0$), we get the desired bounds in \eqref{e:heatbd}. The precise asymptotic result in \eqref{e:heatasym} also follows from the above estimates (as $\pp_t(x+y)=\pp_t(x-y)e^{-2xy/t}$). 
\end{proof}

We now turn towards the proof of Proposition \ref{p:parabol}.

\begin{proof}[Proof of Proposition \ref{p:parabol}] It suffices to prove the result for large enough $|x|$, {as the restriction of $\mathcal{H}^\alpha(x,t)+x^2/2t$ to any compact interval is continuous and therefore a.s.~bounded}. We claim that {under the coupling \eqref{chaos}}, as $x\to\infty$,
\begin{align}\label{e:closeness}
    \E\left[\left|\frac{\calZ_\alpha(-x,t)}{p_t^\alpha(0,x)}-\frac{\calZ^{\mathrm{full}}(x,t)}{\pp_t(x)}\right|^2\right] \longrightarrow 0.
\end{align}
Recall \cite[Proposition 1.4]{amir2011probability} that $\log\mathcal{Z}^{\mathrm{full}}(x,t)$ is stationary in space after a parabolic shift:
\begin{align*}
    \frac{\calZ^{\mathrm{full}}(x,t)}{\pp_t(x)} \stackrel{d}{=} \frac{\calZ(0,t)}{\pp_t(0)}.
\end{align*}
Given this, \eqref{e:closeness} and Markov's inequality imply the desired parabolic trajectory \eqref{e:parabol} for $\mathcal{H}^\alpha = \log \mathcal{Z}_\alpha$. We therefore focus on showing \eqref{e:closeness} using the chaos expansions \eqref{chaos}. The covariance structure of $\xi$ implies that $I_j^\alpha(x,t)-I_j(x,t)$ and $I_k^\alpha(x,t)-I_k(x,t)$ are independent when $j\neq k$, so
 \begin{align*}
     \E\left[\left|\frac{\calZ_\alpha(-x,t)}{p_t^\alpha(0,x)}-\frac{\calZ^{\mathrm{full}}(x,t)}{\pp_t(x)}\right|^2\right] = \sum_{k=1}^{\infty} \E[(I_k^\alpha(x,t)-I_k(x,t))^2].
 \end{align*}
  Using Lemma \ref{l:heatbd} we have $\E[(I_k^\alpha(x,t))^2] \le C\cdot \E[(I_k(x,t))^2]$. Furthermore, $$\sum_{k=r}^{\infty} \E[(I_k(x,t))^2] \longrightarrow 0$$
    as $r\to \infty$ (see \cite[Lemma 2.4]{corwin2018exactly} for example).  Thus it suffices to show
    \begin{align}\label{tshw}
        \E[(I_k^\alpha(x,t)-I_k(x,t))^2] \longrightarrow 0
    \end{align}
    as $x\to \infty$ for each fixed $k \in \mathbb{N}$. By Ito's isometry, we have
  \begin{align*}
       & \E\big[(I_k^\alpha(x,t)-I_k(x,t))^2\big] 
 = \int_{\Delta_k(t)} \int_{\R^k} V^2(\mathbf{x},\mathbf{t}) \,d\mathbf{x}\,d\mathbf{t}, \mbox{ where} \\
 & V(\mathbf{x},\mathbf{t}):=\frac{\prod_{i=1}^{k+1} p_{t_{i}-t_{i-1}}^\alpha(x_{i-1},x_i)}{p_t^\alpha(0,x)}{\prod_{i=1}^k\ind_{x_i\ge 0}}-\frac{\prod_{i=1}^{k+1} \pp_{t_{i}-t_{i-1}}(x_{i-1}-x_i)}{\pp_t(x)}.
    \end{align*}
  Fix any $M$ large enough and divide the integral into two parts: letting $\R_{M}^k := [M,\infty)^k$, we write
    \begin{align}\label{intM}
         \int_{\Delta_k(t)} \int_{\R^k} V^2(\mathbf{x},\mathbf{t})\,d\mathbf{x}\,d\mathbf{t}= \int_{\Delta_k(t)} \int_{\mathbb{R}_{ M}^k}V^2(\mathbf{x},\mathbf{t})\,d\mathbf{x}\,d\mathbf{t} + \int_{\Delta_k(t)} \int_{(\mathbb{R}_{M}^k)^c}V^2(\mathbf{x},\mathbf{t})\,d\mathbf{x}\,d\mathbf{t}.
    \end{align}
    By \eqref{e:heatasym}, the first integral is at most $O(M^{-1})\cdot \E[(I_k(x,t))^2]$. But $\E[(I_k(x,t))^2]$ is independent of $x$, so in fact the first integral in \eqref{intM} is bounded by $CM^{-1}$. For the second integral we use the estimate \eqref{e:heatbd} to obtain
    \begin{align*}
       \int_{\Delta_k(t)} \int_{(\mathbb{R}_{ M}^k)^c}V^2(\mathbf{x},\mathbf{t})\,d\mathbf{x}\,d\mathbf{t} \le C\int_{\Delta_k(t)} \int_{(\mathbb{R}_{ M}^k)^c}\frac{\prod_{i=1}^{k+1} \pp_{t_{i}-t_{i-1}}^2(x_{i-1}-x_i)}{\pp_t^2(x)}\,d\mathbf{x}\,d\mathbf{t}. 
    \end{align*}
    Since $\pp_t^2(x)=\frac{1}{\sqrt{4\pi t}}\pp_{t/2}(x)$, 

    \begin{align}\label{intB}
         \int_{(\mathbb{R}_{ M}^k)^c}\frac{\prod_{i=1}^{k+1} \pp_{t_{i}-t_{i-1}}^2(x_{i-1}-x_i)}{\pp_t^2(x)} = \left(\frac{t^{1/2}}{(4\pi)^{k/2}}\prod_{i=1}^{k+1}\frac{1}{\sqrt{t_i-t_{i-1}}}\right)\P\left(\min_{1\le i\le k} B(t_i/2) < M\right),
    \end{align}
    where $B$ is Brownian bridge on $[0,t/2]$ from 0 to $x$. Note that $\til{B}(s):=B(s)-2sx/t$ is a Brownian bridge on $[0,t/2]$ from 0 to 0. If $t_1 \ge 2Mt/x$, then $$\P\left(\min_{1\le i\le k} B(t_i/2) < M\right) \le \P\left(\inf_{s\in [0,t/2]} \til B(s) < -M\right) = e^{-4M^2/t}.$$
    The last equality follows from the reflection principle, see for instance \cite[Proposition 12.3.3]{dudley}. Thus if we set $A=\{\mathbf{t}\in \Delta_k(t) : t_1\ge 2Mt/x\}$, then
    \begin{align*}
        \int_{A} \int_{(\mathbb{R}_{ M}^k)^c}\frac{\prod_{i=1}^{k+1} \pp_{t_{i}-t_{i-1}}^2(x_{i-1}-x_i)}{\pp_t^2(x)}\,d\mathbf{x}\,d\mathbf{t} \le e^{-M^2/C} \int_{\Delta_k(t)}\frac{t^{1/2}}{(4\pi)^{k/2}}\prod_{i=1}^{k+1}\frac{1}{\sqrt{t_i-t_{i-1}}}\,d\mathbf{t}.
    \end{align*}
    The last integral is finite. On the other hand, \eqref{intB} implies
    \begin{align*}
        \int_{A^c \cap \Delta_k(t)} \int_{(\mathbb{R}_{ M}^k)^c}\frac{\prod_{i=1}^{k+1} \pp_{t_{i}-t_{i-1}}^2(x_{i-1}-x_i)}{\pp_t^2(x)}\,d\mathbf{x}\,d\mathbf{t} \le C\int_0^{2Mt/x} \frac{dt_1}{\sqrt{t_1}} =O(\sqrt{M/x}).
    \end{align*}
    Combining the above estimates, we find
    \begin{align*}
        \E[(I_k^\alpha(x,t)-I_k(x,t))^2] \le C\big(M^{-1}+e^{-M^2/C}+\sqrt{M/x}\big).
    \end{align*}
    Taking $M=\sqrt{x}$ and letting $x\to \infty$ leads to \eqref{tshw}.
\end{proof}

\section{HSLG and HSKPZ Gibbs properties} \label{sec4}

In this section, we discuss the Gibbs property for the scaled HSLG and HSKPZ line ensembles. 

\begin{definition}[Scale-$N$ log-gamma random walks and bridges] Fix a scale $N\in \mathbb{N}$. Set $\mathbb{Z}_N = \{x : -x\sqrt{N} \in \mathbb{N} \cup \{0\}\}$. 
We say $X$ is distributed as $\log\operatorname{Gamma}(\beta)$ if $e^X$ is distributed as $\operatorname{Gamma}(\beta)$.
    Suppose $(X(i))_{i\ge 1}$ are independent~random variables distributed as $\log\operatorname{Gamma}(\frac12+(-1)^{i}\alpha+\sqrt{N})$. For convenience we record that
    \begin{equation}\label{loggamma}
    \begin{split}
       & \Ex[X(i)] = \psi(\tfrac12+(-1)^{i}\alpha+\sqrt{N})=\log\sqrt{N}+(-1)^{i}\frac{\alpha}{\sqrt{N}}+O(N^{-1}), \\ & \operatorname{Var}(X(i))=\psi'(\tfrac12+(-1)^{i}\alpha+\sqrt{N})=\frac{1}{\sqrt{N}}-(-1)^{i}\frac{\alpha}{N}+O(N^{-3/2}).
    \end{split}
    \end{equation}
    Fix $a,b\in \R$, and $A_1,A_2 \in \mathbb{Z}_{N}$ with $A_1 \le A_2$. Set $S^N(A_1)=a$ and for $x\in \Z_N\cap (A_1,A_2]$ set
    $$S^N(x)-S^N(x-N^{-1/2}):=(-1)^{x\sqrt{N}}\big(X(x\sqrt{N}+1)-\log \sqrt{N}\big)$$
     We linearly interpolate $S^N$ in between. We refer to the distribution of $S^N$ as the scale-$N$ log-gamma random walk on $[A_1,A_2]$ started from $a$ with parameter $\alpha$. We refer to the distribution of $S^N$ conditioned on $S^N(A_2)=b$ as the scale-$N$ log-gamma random bridge on $[A_1,A_2]$ from $a$ to $b$.
\end{definition}

 HSLG Gibbs measures are absolutely continuous w.r.t.~the above random walks/bridges with an explicit Radon--Nikodym (RN) derivative which we define next.
 
\begin{definition}[Discrete RN derivative] Suppose $N\ge 1, \ell\geq k\geq 1$ are integers  and $A_1,A_2\in \Z_{N}$ with $A_1\le A_2$.  Let $(S_i^N)_{i=k-1}^{\ell+1} :[A_1,A_2]\to \R^{\ell-k+3}$ be a given function. Set $S_i(p):=\infty$ for $i\in \llbracket k-1,\ell+1 \rrbracket$, $p\notin [A_1,A_2]$. We define the discrete RN derivative as
\begin{align}\label{def:W}
    W_{N;f,g}^{k,\ell;A_1,A_2}(S^N) :=  \prod_{i=k-1}^{\ell} \exp\bigg(-\frac1{\sqrt{N}}\ \sum_{j\in [A_1,A_2]\cap 2\Z_N}  \sum_{r\in \{+1,-1\}} e^{S_{i+1}^N(j)-S_i^N(j+rN^{-1/2})} \bigg),
\end{align}
where $f:=S_{k-1}^N$ and $g:=S_{\ell+1}^N$. We often just write $W_{N;f,g}^{k,\ell;A_1,A_2}$ when the law of $S^N$ is clear from context.
\end{definition}

We now proceed define the one-sided and two-sided HSLG Gibbs measures.

\begin{definition}[Two-sided HSLG Gibbs measures] \label{def:twogibbs}
Suppose $N\ge 1$ and $\ell\geq k\geq 1$ are integers and $A_1,A_2\in \Z_N$ with $A_1\le A_2$.  Let $(S_{i}^N)_{i=k}^{\ell}$ be $\ell-k+1$ independent log-gamma random walk bridges on $[A_1,A_2]$ from $(a_i)_{i=k}^{\ell}$ to $(b_i)_{i=k}^{\ell}$. Let  $f,g : [A_1,A_2] \to \R\cup \{\pm \infty\}$ be two given functions. We define the  scale-$N$ two-sided HSLG Gibbs measure by
$$\Pr_{N;f,g}^{k,\ell;A_1,A_2;\vec{a},\vec{b}}(\m{A})=\frac{\Ex[W_{N;f,g}^{k,\ell;A_1,A_2}(S^N)\ind_{\m{A}}]}{\Ex[W_{N;f,g}^{k,\ell;A_1,A_2}(S^N)]}.$$
\end{definition}

 We will often refer to $f$ as the ceiling and $g$ as the floor in the context of the above definition (see Remark \ref{rem:inter} for interpretation). Let us take a moment to explain the notation. We use blackboard fonts (e.g., $\mathbb{P}, \mathbb{E}$) when the underlying system involves log-gamma random walks or bridges. The subscript indicates the scale parameter $N$ and includes the floor/ceiling data, separated by a semicolon. The superscript encodes the first and last indices of the curves, their domain, and the starting and ending boundary data\textemdash again separated by semicolons.

\begin{definition}[One-sided HSLG Gibbs measures]\label{def:halfgibbs} Suppose $N\ge 1$ and $\ell\geq k\geq 1$ are integers and $A\in \Z_N$ with $A<0$.  Let $(S_{i}^N)_{i=k}^{\ell}$ be $\ell-k+1$ independent scale-$N$ log-gamma random walks on $[A,0]$ started from $(a_i)_{i=k}^{\ell}$ with parameters $(-1)^{i}\alpha$. Let  $f,g : [A,0] \to \R\cup \{\pm\infty\}$ be two given functions. We define the scale-$N$ one-sided HSLG Gibbs measure by
$$\Pr_{N;f,g}^{k,\ell;A;\vec{a},\star}(\m{A})=\frac{\Ex[W_{N;f,g}^{k,\ell;A,\0}(S^N)\ind_{\m{A}}]}{\Ex[W_{N;f,g}^{k,\ell;A,\0}(S^N)]}.$$
\end{definition}

In the case of one-sided HSLG Gibbs measures, the right boundary data is free, so we denote it by replacing $\vec{b}$ with $\star$. Since the domains in this setting are always intervals of the form $[A, 0]$, we omit the right endpoint from the notation for brevity. When $k=1$, $f$ will always be $\infty$. In that case, we drop $k$ and $f$ from the notation and simply write $\Pr_{N;g}^{\ell;A_1,A_2;\vec{a},\vec{b}}$ or $\Pr_{N;g}^{\ell;A;\vec{a},\star}$. If $g\equiv -\infty$, we drop $g$ from the notation as well and write $\Pr_{N}^{\ell;A_1,A_2;\vec{a},\vec{b}}$ or $\Pr_{N}^{\ell;A;\vec{a},\star}$. The one-sided HSLG Gibbs measure depends on $\alpha$ but we have suppressed it from the notation.

\begin{definition}[External $\sigma$-algebra]\label{def:ext}
    Fix a line ensemble $(\mathcal{L}_i(x) : i\in\mathbb{N},\, x\in\Lambda)$, where $\Lambda \subset \mathbb{R}$ is an interval (see e.g. Section 2.1 in \cite{dff} for a general definition of line ensembles). For integers $1\leq k\leq \ell$, and an interval $I \subset \Lambda$, we write $\mathcal{F}_{\m{ext}}(\ll k,\ell\rr \times I)$ to denote the $\sigma$-algebra generated by $(\mathcal{L}_i(x) : i\notin \ll k,\ell\rr \mbox{ or } x\notin I)$. 
\end{definition}

The scaled HSLG line ensemble defined in \eqref{hinxt} enjoys a Gibbs property that is described in the next lemma.

\begin{lemma}[Scaled HSLG Gibbs property]\label{lem:gibbs} Fix $N\ge 1$, $t>0$, $\ell \ge k \ge 1$, and $A_1,A_2\in \Z_N$ with $A_1 \le A_2$. Consider the scaled HSLG line ensemble $(\hh_{i}^N(\cdot,t))_{i\in \ll k,\ell\rr}$ from \eqref{hinxt}.
\begin{enumerate}[label=(\alph*),leftmargin=20pt]
    \item The law of $(\hh_{i}^N(\cdot,t))_{i\in \ll k,\ell\rr}$ on $[A_1,\0]$ conditioned on $\mathcal{F}_{\m{ext}}(\ll 1,k\rr \times (A_1,0])$ is $\Pr_{N;f,g}^{k,\ell;A_1,\star;\vec{a},\star}$ with $f=\hh_{k-1}^N(\cdot,t)$, $g=\hh_{\ell+1}^N(\cdot,t)$, and $a_i = \hh_{i}^N(A_1)$ for $i\in \llbracket k,\ell \rrbracket$.

    \item The law of $(\hh_{i}^N(\cdot,t))_{i\in \ll k,\ell\rr}$ on $[A_1,A_2]$ conditioned on $\mathcal{F}_{\m{ext}}(\ll k,\ell\rr \times (A_1,A_2))$ is $\mathbb{P}_{N;f,g}^{k,\ell;A_1,A_2;\vec{a},\vec{b}}$, with $f=\hh_{k-1}^N(\cdot,t)$, $g=\hh_{\ell+1}^N(\cdot,t)$, $a_i = \hh_{i}^N(A_1,t)$, and $b_i = \hh_{i}^N(A_2,t)$ for $i\in \llbracket k,\ell \rrbracket$.
\end{enumerate}
\end{lemma}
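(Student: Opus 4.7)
The plan is to obtain Lemma \ref{lem:gibbs} as a direct translation of the (unscaled) HSLG Gibbs property for $(\mathcal{L}_i^{n;\theta})$ established in \cite{half1} into the scaled coordinates defined by \eqref{hinxt}. No new probabilistic input is required; the statement is a bookkeeping consequence of the scaling convention.

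First, I would invoke the unscaled Gibbs property from \cite{half1}. That property asserts that, conditionally on the external $\sigma$-algebra, the restriction of $(\mathcal{L}_i^{n;\theta})_{i \in \ll k,\ell\rr}$ to a discrete sub-interval is a product of independent log-gamma random walks (or bridges, in the two-sided case), reweighted by a Radon--Nikodym factor of the form $\prod \exp(-e^{\Delta_{i,j}})$ which encodes soft non-intersection between consecutive curves. The log-gamma parameter of each increment alternates between $\alpha + \theta$ and $2\theta - \alpha$ (up to additive conventions) according to the parity of the lattice site, a direct reflection of the diagonal/off-diagonal weight split $\mathscr{W}_{i,i} \sim \operatorname{Gamma}^{-1}(\alpha+\theta)$ vs.\ $\mathscr{W}_{i,j} \sim \operatorname{Gamma}^{-1}(2\theta)$ in \eqref{eq:wt0} entering the symmetrized partition function \eqref{zsymr}. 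When the sub-interval reaches the boundary site $j = 2n - 2i + 2$, the symmetrization in \eqref{zsymr} automatically imposes pairwise pinning $\mathcal{L}_{2r-1}^{n;\theta} = \mathcal{L}_{2r}^{n;\theta}$ at that boundary.

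Second, I would push this property through the scaling \eqref{hinxt}. The additive centering $\tfrac12(2\lfloor Nt/2 \rfloor + i + 1 + \ind_{x\sqrt{N}\;\mathrm{odd}})\log N$ subtracts $\log\sqrt{N}$ per unit lattice increment with alternating sign $(-1)^{x\sqrt{N}}$, so in combination with the digamma asymptotic $\psi(\tfrac12 + (-1)^i \alpha + \sqrt{N}) = \log\sqrt{N} + O(N^{-1/2})$ from \eqref{loggamma}, the increments of $\hh_i^N$ under the unscaled conditional law become exactly the scale-$N$ log-gamma walk increments of Definition~4.1, with the curve-dependent parameter shift $(-1)^i \alpha$ arising from the interaction between the parity of the curve index $i$ and the parity of the lattice site. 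Under the same scaling, the unscaled interaction $\prod \exp(-e^{\Delta_{i,j}})$ turns into $\prod \exp(-N^{-1/2} e^{\hh_{i+1}^N(j) - \hh_i^N(j \pm N^{-1/2})})$, with the $N^{-1/2}$ prefactor emerging from an extra $\tfrac12\log N$ absorbed into the centering; this is precisely the expression for $W_{N;f,g}^{k,\ell;A_1,A_2}$ in \eqref{def:W}. Part~(b) then follows by identifying bridges with bridges (two-sided case), and part~(a) follows by identifying walks with walks while retaining the boundary pinning inherited from the unscaled setting (one-sided case).

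The main obstacle is the careful bookkeeping of three interacting parities: the curve index $i$ (determining which log-gamma alternation is in force), the lattice parity $x\sqrt{N}$ (determining the sign of the increment and the placement of the $\log N$ centering), and whether the sub-interval touches the boundary $x = 0$ (which toggles whether boundary pairwise pinning is produced). These must be aligned simultaneously so that the parameters of the scaled log-gamma walks, the sign conventions in the interaction kernel \eqref{def:W}, and the boundary behavior precisely match Definitions~\ref{def:twogibbs} and~\ref{def:halfgibbs}. Once this alignment is in place, the argument is mechanical.
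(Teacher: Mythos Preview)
Your approach is correct and matches the paper's: the proof there is literally one sentence citing Lemmas 2.2 and 2.5 of \cite{half1} for the unscaled Gibbs property and asserting that the scaled version follows by translation through \eqref{hinxt}. Your proposal is an expanded sketch of exactly this translation; a minor caveat is that the one-sided measure in Definition~\ref{def:halfgibbs} encodes the boundary interaction via the alternating drifts $(-1)^i\alpha$ rather than an explicit pairwise pinning constraint, so your phrase ``retaining the boundary pinning'' is slightly off, but this does not affect the argument.
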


\begin{proof} This is a consequence of Lemmas 2.2 and 2.5 in \cite{half1}, where the one-sided and two-sided boundary Gibbs property are established for the unscaled HSLG line ensemble.
\end{proof}

We next define the continuous analogues of the above Gibbs measures.

\begin{definition}[Continuous RN derivative] Suppose $A_1\le A_2\le 0$ and $L\ge 1$. Let $B:=(B_i)_{i=k-1}^{\ell+1} :[A,B]\to \R^{\ell-k+3}$ be a given function. We define the continuous RN derivative as
\begin{align}\label{def:Wcont}
    \mathcal{W}_{L;f,g}^{k,\ell;A_1,A_2}(B):= \prod_{i=k-1}^{\ell} \exp\left(-L\int_{A_1}^{A_2} e^{\sqrt{L}(B_{i+1}(x)-B_i(x))} \,dx \right),
\end{align}
where $f:=B_{k-1}$ and $g:=B_{\ell+1}$. We often just write $\mathcal{W}_{L;f,g}^{k,\ell;A_1,A_2}$ when the law of $B$ is clear from context.
\end{definition}

\begin{definition}[Two-sided HSKPZ Gibbs measures] \label{kpzgibbs2}  Suppose $A_1\le A_2\le 0$ and $L\geq 1$. Let $(B_{i})_{i=k}^{\ell} : [A,B]\to \R^{\ell-k+1}$ be $\ell-k+1$ independent Brownian motions from $(a_i)_{i=k}^{\ell}$ to $(b_i)_{i=k}^{\ell}$. Fix any two functions $f,g : [A_1,A_2] \to \R$, and define the scale-$L$ two-sided HSKPZ Gibbs measure by
$$\mathbf{P}_{L;f,g}^{k,\ell;A_1,A_2;\vec{a},\vec{b}}(\m{A})=\frac{\E[\mathcal{W}_{L;f,g}^{k,\ell;A_1,A_2}(B)\ind_{\m{A}}]}{\E[\mathcal{W}_{L;f,g}^{k,\ell;A_1,A_2}(B)]}.$$
\end{definition}

\begin{definition}[One-sided HSKPZ Gibbs measures] \label{kpzgibbs} Suppose $A\le 0$ and $L\ge 1$. Let $(B_{i})_{i=k}^{\ell} : [A,0]\to \R^{\ell-k+1}$ be $\ell-k+1$ independent Brownian motions started from $(a_i)_{i=k}^{\ell}$ and with drifts $(-1)^i \alpha \sqrt{L}$. We denote their joint law by $\E_\alpha$. Fix any two functions $f,g : [A,0] \to \R$, and define
the scale-$L$ one-sided HSKPZ Gibbs measure as
$$\mathbf{P}_{L;f,g}^{k,\ell;A;\vec{a},\star}(\m{A})=\frac{\E_{\alpha}\left[\mathcal{W}_{L;f,g}^{k,\ell;A,0}(B)\ind_{\m{A}}\right]}{\E_{\alpha}\left[\mathcal{W}_{L;f,g}^{k,\ell;A,0}(B)\right]}.$$
Thanks to Cameron--Martin theorem, we may incorporate the drifts within the RN derivative and have the following equivalent description (with no drift):
\begin{align}
    \label{rnd}
    \mathbf{P}_{L;f,g}^{k,\ell;A;\vec{a},\star}(\m{A})=\frac{\E_{0}\left[\mathcal{W}_{L;f,g}^{k,\ell;A,0}(B)\prod\limits_{i=k}^\ell e^{(-1)^i\alpha \sqrt{L} B_i(0)} \ind_{\m{A}}\right]}{\E_{0}\left[\mathcal{W}_{L;f,g}^{k,\ell;A,0}(B)\prod\limits_{i=k}^\ell e^{(-1)^i\alpha \sqrt{L} B_i(0)}\right]}.
\end{align}
\end{definition}

We use bold fonts (e.g., $\mathbf{P}, \mathbf{E}$) in the above two definitions as the underlying system involves Brownian motions or Brownian bridges. The same conventions for superscripts and subscripts described below Definitions \ref{def:twogibbs} and \ref{def:halfgibbs} apply in this context as well. The scaling parameter $L$ in the above definitions acts as the diffusive scaling parameter. Indeed, if $B$ is distributed as $\mathbf{P}_{1;f\sqrt{L},g\sqrt{L}}^{k,\ell;A_1L,A_2L;\vec{a}\sqrt{L},\vec{b}\sqrt{L}}$ (resp.~$\mathbf{P}_{1;f\sqrt{L},g\sqrt{L}}^{k,\ell;AL;\vec{a}\sqrt{L},\star}$), then $B_L(x):=L^{-1/2}B(xL)$ is distributed as $\mathbf{P}_{L;f,g}^{k,\ell;A_1,A_2;\vec{a},\vec{b}}$ (resp.~$\mathbf{P}_{L;f,g}^{k,\ell;A;\vec{a},\star}$). We shall study the $L\to \infty$ limit of scale-$L$ one-sided HSKPZ Gibbs measures in Section \ref{sec7}.

\begin{remark}[Soft non-intersection and pinning]\label{rem:inter}
    Note that the Radon–Nikodym (RN) derivatives in the one- and two-sided HSLG and HSKPZ Gibbs measures heavily penalize adjacent curves crossing one another. Consequently, these measures can be interpreted as softened versions of non-intersecting random walks/bridges or Brownian motions/bridges. Accordingly, we will often refer to them as log-gamma random walks and bridges (in the case of HSLG Gibbs measures) or Brownian motions and bridges (in the case of HSKPZ Gibbs measures) conditioned ``softly'' not to intersect. The functions $f$ and $g$ serve as additional soft barriers: the RN derivatives heavily penalize configurations where the curves cross above $f$ or below $g$. Accordingly, we will loosely refer to $f$ as a soft ceiling and $g$ as a soft floor. When we take the diffusive scaling limit of HSKPZ Gibbs measures (as we will do in Section~\ref{sec7}), the soft non-intersection and barrier constraints become hard\textemdash meaning the curves are conditioned to strictly non-intersect and to stay below $f$ and above $g$. 
    
    In the one-sided case, and specifically in the supercritical regime $\alpha >0$, the exponential factors in \eqref{rnd} on the other hand penalize configurations in which $B_{2j-1}(0) > B_{2j}(0)$ for any $j$. In combination with the soft non-intersection penalty, this roughly enforces $B_{2j-1}(0) - B_{2j}(0) = O(1)$. In the diffusive limit $L\to\infty$ this will lead to the pairwise pinned non-intersecting Brownian motions mentioned in the introduction, as we show in Section \ref{sec7}.
\end{remark}

The following theorem shows that the scaled HSLG Gibbs measures converge to the HSKPZ Gibbs measures.

\begin{theorem}[Invariance principle] \label{lem:invprin} Fix $N\ge 1$ and $\ell\ge k\ge 1$. Suppose $A_{1,N},A_{2,N} \in \Z_N$ with $A_{1,N}\le A_{2,N}$ and  $A_{1,N}\to A_1$, $A_{2,N}\to A_2$ as $N\to \infty$. Suppose $\vec{a}_N, \vec{b}_N \in \R^{\ell-k+1}$ and $\vec{a}_N \to \vec{a}$, $\vec{b}_N\to\vec{b}$ as $N\to \infty$. Suppose $f_N, g_N :\R \to \R$ with $f_N\to f$ and $g_N\to g$ uniformly.
Suppose $(S_{i}^1(x) : x\in [A_{1,N},A_{2,N}])_{i\in \ll k,\ell\rr}$  and $(S_{i}^2(x) : x\in[A_{1,N},0])_{i\in \ll k,\ell\rr}$ are distributed according to $\Pr_{N;f_N,g_N}^{k,\ell;A_{1,N},A_{2,N};\vec{a}_N,\vec{b}_N}$  and $\Pr_{N;f_N,g_N}^{k,\ell;A_{1,N},\star;\vec{a}_N,\star}$ respectively. We have the convergence 
\begin{align*}
    (S_i^j(x))_{i\in \ll k,\ell \rr} \stackrel{d}{\longrightarrow} (B_i^j(x))_{i\in \ll k,\ell \rr}, \qquad j\in\{1,2\},
\end{align*}
as continuous processes on $\llbracket k,\ell \rrbracket \times [A_1,0]$ and $\llbracket k,\ell\rrbracket \times [A_1,A_2]$, where $(B_i^1(\cdot))_{i\in \ll k,\ell\rr}$ and $(B_i^2(\cdot))_{i\in \ll k,\ell\rr}$ are distributed according to $\mathbf{P}_{1;f,g}^{k,\ell;A_1,A_2;\vec{a},\vec{b}}$  and $\mathbf{P}_{1;f,g}^{k,\ell;A_1;\vec{a},\star}$ respectively.
\end{theorem}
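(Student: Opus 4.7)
The plan is to combine a Donsker-type invariance principle for the unweighted log-gamma walks/bridges with a bounded convergence argument for the Radon--Nikodym weight, exploiting that $W_{N;f,g}^{k,\ell;A_1,A_2}$ takes values in $[0,1]$ pointwise.

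\emph{Step 1 (Unweighted invariance principle).} From the mean and variance asymptotics in \eqref{loggamma} together with a uniform third-moment bound for $\log\operatorname{Gamma}(\tfrac12+c+\sqrt{N})$ with $c$ in a compact set, a Lindeberg-type Donsker theorem shows that the scale-$N$ log-gamma random walk with parameter $(-1)^i\alpha$ on $[A_{1,N},0]$ started from $a_{i,N}$ converges in the uniform topology to a Brownian motion on $[A_1,0]$ with drift $(-1)^i\alpha$ started from $a_i$. The bridge version follows either by conditioning on the endpoint or by invoking a bridge invariance principle directly. Independence across $i$ gives joint convergence, and by Skorokhod representation we may couple so that $S_i^N\to B_i$ uniformly on $[A_1,A_2]$ (respectively $[A_1,0]$) almost surely.

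\emph{Step 2 (Convergence of the Radon--Nikodym weight).} Since $2\Z_N$ has spacing $2N^{-1/2}$, the quantity
\[
\Sigma_N^{(i)} \ :=\ \frac{1}{\sqrt{N}}\sum_{j\in[A_{1,N},A_{2,N}]\cap 2\Z_N}\sum_{r\in\{+1,-1\}} e^{S_{i+1}^N(j)-S_i^N(j+rN^{-1/2})}
\]
is a Riemann sum for $\int_{A_1}^{A_2}e^{B_{i+1}(x)-B_i(x)}\,dx$. Under the Skorokhod coupling, $S_{i+1}^N(\cdot)-S_i^N(\cdot\pm N^{-1/2})$ converges uniformly on $[A_1,A_2]$ to $B_{i+1}-B_i$, which is a.s.~bounded on the compact interval, so $\Sigma_N^{(i)}$ converges a.s.~to the integral. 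Combined with the uniform convergence $f_N\to f$ and $g_N\to g$ (handling the edge factors $i=k-1$ and $i=\ell$), this gives
\[
W_{N;f_N,g_N}^{k,\ell;A_{1,N},A_{2,N}}(S^N)\ \longrightarrow\ \mathcal{W}_{1;f,g}^{k,\ell;A_1,A_2}(B)\qquad\text{almost surely.}
\]

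\emph{Step 3 (Bounded convergence and conclusion).} Both weights lie in $[0,1]$, so the bounded convergence theorem yields
\[
\Ex\bigl[W_{N;f_N,g_N}^{k,\ell;A_{1,N},A_{2,N}}(S^N)\cdot F(S^N)\bigr]\ \longrightarrow\ \E\bigl[\mathcal{W}_{1;f,g}^{k,\ell;A_1,A_2}(B)\cdot F(B)\bigr]
\]
for every bounded continuous functional $F$ on the path space. Moreover $\mathcal{W}>0$ almost surely, since $B_{i+1}-B_i$ is continuous (hence bounded) on the compact $[A_1,A_2]$, so $\E[\mathcal{W}]>0$. Taking the ratio with the $F\equiv 1$ choice produces weak convergence of the reweighted laws, i.e.~of the Gibbs measures, in both the bridge case ($j=1$) and the drifted-walk case ($j=2$).

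The only delicate point is the Riemann-sum convergence in Step 2: the summand $e^{S_{i+1}^N(j)-S_i^N(j\pm N^{-1/2})}$ can be very large when adjacent curves nearly cross, but under the Skorokhod coupling the limiting differences $B_{i+1}-B_i$ are continuous and hence uniformly bounded on $[A_1,A_2]$ almost surely, so the Riemann sum converges uniformly a.s.~to the integral. In the one-sided setting the drift $(-1)^i\alpha$ of the pre-limiting walks matches the drift of the limiting Brownian motions by construction; alternatively, one may invoke Cameron--Martin and work with the tilted representation \eqref{rnd}, absorbing the drift into the RN derivative without altering the analysis.
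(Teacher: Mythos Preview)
Your proposal is correct and follows essentially the same route as the paper: Donsker for the unweighted walks/bridges, Skorokhod representation to get almost-sure uniform convergence, Riemann-sum convergence of the weight to $\mathcal{W}$, and dominated/bounded convergence on numerator and denominator of the ratio. If anything, you are slightly more explicit than the paper in noting that $\mathcal{W}\in(0,1]$ and that $\E[\mathcal{W}]>0$, which justifies the division.
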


In terms of notation, this invariance principle can be informally summarized as:
 $$\lim_{N\to\infty} \Pr_{N;f_N,g_N}^{k,\ell;A_{1,N},A_{2,N};\vec{a}_N,\vec{b}_N}=\P_{1;f,g}^{k,\ell;A_1,A_2;\vec{a},\vec{b}}, \qquad \lim_{N\to\infty} \Pr_{N;f_N,g_N}^{k,\ell;A_{1,N};\vec{a}_N,\star}=\P_{1;f,g}^{k,\ell;A_1;\vec{a},\star}.$$
\begin{remark}
    Set $\hat{A}_1=\inf_N A_{1,N}$, and $\hat{A}_2=\sup_N A_{2,N}$. All the random and non-random curves in the above statement are viewed as continuous functions on $[\hat{A}_1,0]$ or $[\hat{A}_1,\hat{A}_2]$ by taking them to be constant outside of their original domains.
\end{remark}

\begin{proof} 
Although the log-gamma random walk is not a random walk with i.i.d.~increments (because of the $(-1)^i$ sign in $X(i)$), it is not hard to prove that Donsker's invariance principle still applies for this model: $S^N$ converges weakly to a Brownian motion with drift $\alpha$ or Brownian bridge. The proof will follow easily by applying the invariance principle to the underlying log-gamma random walks/bridges and noting that the discrete RN derivatives $W_{N;f_N,g_N}^{k,\ell;A_{1,N},A_{2,N}}$ converge to the corresponding continuous one $\mathcal{W}^{k,\ell;A_1,A_2}_{1;f,g}$.

Let us focus on $j=1$; the case $j=2$ is analogous. Let $F$ be any bounded continuous functional on $C([\widehat{A}_1,0], \mathbb{R}^{\ell-k+1})$. It suffices to show that
\begin{equation}\label{invconv}
\mathbb{E}_{N;f_N,g_N}^{k,\ell;A_{1,N};\vec{a}_N,\star}[F(S^1)] \longrightarrow \mathbf{E}_{1;f,g}^{k,\ell,A_1;\vec{a},\star}[F(B^1)]
\end{equation}
as $N\to\infty$. By definition,
\begin{equation}\label{invfrac}
\mathbb{E}_{N;f_N,g_N}^{k,\ell;A_{1,N};\vec{a}_N,\star}[F(S^1)] = \frac{\mathbb{E}[F(S^N) W_{N;f_N,g_N}^{k,\ell;A_{1,N},0}(S^N)]}{\mathbb{E}[W_{N;f_N,g_N}^{k,\ell;A_N,0}(S^N)]},
\end{equation}
where $S^N = (S_i^N)_{i=k}^\ell$ are independent scale-$N$ log-gamma random walks started from $(a_i^N)_{i=k}^\ell$ with parameters $(-1)^i\alpha$. As noted above, the invariance principle implies that $(S_i^N)_{i=k}^\ell$ converge weakly to independent Brownian motions $(B_i)_{i=k}^\ell$ on $[A,0]$ started at $(a_i)_{i=k}^\ell$ and with drifts $(-1)^i\alpha$. By the Skorohod representation theorem, we may pass to another probability space (for which we will use the same notation for convenience) where the convergence (in the uniform topology) holds almost surely.

Now consider the weights $W_{N;f_N,g_N}^{k,\ell;A_{1,N},0}$. In the definition \eqref{def:W}, the exponent in each factor in the product is a Riemann sum, and the uniform convergence $S_i^N \to B_i$, $f_N\to f$, and $g_N\to g$ implies
\[
\frac{1}{\sqrt{N}} \sum_{j\in[A_1,0]\cap2\mathbb{Z}_N} \sum_{r\in\{\pm 1\}} e^{S_{i+1}^N(j) - S_i^N(j+rN^{-1/2})} \longrightarrow \int_{A_1}^0 e^{B_{i+1}(x) - B_i(x)}\,dx.
\]
In view of the definition \eqref{def:Wcont} of $\mathcal{W}_{1;f,g}^{k,\ell;A_1,0}$, it follows that $W_{N;f_N,g_N}^{k,\ell;A_{1,N},0} \longrightarrow \mathcal{W}_{1;f,g}^{k,\ell;A_1,0}$ a.s. Applying the dominated convergence theorem to numerator and denominator of \eqref{invfrac} and using the continuity of $F$ implies \eqref{invconv}.
\end{proof}

We next state an important stochastic monotonicity result for the HSLG Gibbs measures. This result follows from \cite[Proposition 2.6]{half1}.

\begin{lemma}[Stochastic monotonicity] \label{lem:sm}
Fix $N\geq 1$, $\ell\geq k\leq 1$, and $A_1,A_2\in\Z_N$ with $A_1\leq A_2$. For $j=1,2$, fix $(a_i^j)_{i=k}^\ell$ and $(b_i^j)_{i=k}^\ell$ with $a_i^1 \leq a_i^2$ and $b_i^1 \leq b_i^2$ for $i\in\ll k,\ell\rr$, and let $f^j,g^j : [A_1,A_2]\to\R$ be continuous functions such that $f^1(x) \leq f^2(x)$ and $g^1(x)\leq g^2(x)$ for all $x\in[A_1,A_2]$. Then
\begin{enumerate}[label=(\alph*),leftmargin=20pt]

\item There exists a coupling of the two processes $(S_i^j)_{i=k}^\ell \sim \Pr_{N;f^j,g^j}^{k,\ell;A_1,A_2;\vec{a}^j,\vec{b}^j}$ such that $S_i^1(x) \leq S_i^2(x)$ for all $i\in\ll k,\ell\rr$ and $x\in[A_1,A_2]$ a.s.

\item There exists a coupling of the two processes $(S_i^j)_{i=k}^\ell \sim \Pr_{N;f^j,g^j}^{k,\ell;A_1;\vec{a}^j,\star}$ such that $S_i^1(x) \leq S_i^2(x)$ for all $i\in\ll k,\ell\rr$ and $x\in[A_1,0]$ a.s.
\end{enumerate}
{By the invariance principle (Theorem \ref{lem:invprin}), stochastic monotonicity holds for one-sided and two-sided HSKPZ Gibbs measures as well.} 
\end{lemma}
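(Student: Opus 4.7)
The plan is to reduce both parts to Proposition 2.6 of \cite{half1}, which establishes the analogous stochastic monotonicity for the unscaled HSLG line ensemble. First, observe that passing from the unscaled HSLG Gibbs measures to the scaled ones defined in Definitions \ref{def:twogibbs} and \ref{def:halfgibbs} amounts to a deterministic affine reparametrization: shifting curves by constants of the form $\tfrac12(2\lfloor Nt/2\rfloor + i + 1)\log N$ and evaluating at lattice points $x\in \Z_N$. Since pointwise ordering of curves is invariant under common deterministic shifts, a monotone coupling for the unscaled measures transfers verbatim to one for the scaled measures. Thus it suffices to recall the coupling construction from \cite{half1}.

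The underlying argument in \cite{half1} is a standard heat-bath (Glauber) dynamics coupling. One defines a continuous-time Markov chain on the product of finite-dimensional configuration spaces, reversible with respect to the Gibbs measure in question, in which updates at each site are driven by a common uniform random variable for both copies. The key algebraic input is the log-supermodularity of the Radon--Nikodym derivative $W_{N;f,g}^{k,\ell;A_1,A_2}$: raising $S_i(x)$ at a single site, while holding all other sites and the boundary data fixed, modifies the interaction terms $\exp(-N^{-1/2}e^{S_{i+1}(x\pm N^{-1/2})-S_i(x)})$ and $\exp(-N^{-1/2}e^{S_i(x)-S_{i-1}(x\pm N^{-1/2})})$ in a way that commutes with raising its neighbors or raising the ceiling $f$, and is reversed by raising the floor $g$. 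This guarantees that the heat-bath update at each site is monotone in both the boundary data $\vec a,\vec b$ and in the external functions $f,g$. Initializing the two copies at ordered configurations (e.g.\ the maximal admissible configuration for the upper instance and the minimal one for the lower instance) and running the dynamics to stationarity therefore produces the desired ordered coupling. The one-sided case is not qualitatively different: the free right endpoint simply means the heat-bath update at $x=0$ uses a single-sided conditional density, while the drifts $(-1)^i\alpha$ enter through the log-gamma step distribution and preserve monotonicity in $\vec a$.

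For the final assertion about continuous HSKPZ Gibbs measures, we invoke the invariance principle (Theorem~\ref{lem:invprin}). Given data $(\vec a^j,\vec b^j,f^j,g^j)$ with $j=1,2$ satisfying the ordering hypotheses, choose discrete data at scale $N$ with the same ordering and converging to the continuous data (for example, by snapping $A_i$ to the nearest lattice point in $\Z_N$ and leaving $\vec a^j,\vec b^j,f^j,g^j$ unchanged). Apply the discrete monotone coupling to obtain random processes $(S^{N,1},S^{N,2})$ with $S^{N,1}\le S^{N,2}$ pointwise a.s.\ and marginals converging to $\mathbf{P}_{1;f^j,g^j}^{k,\ell;A_1,A_2;\vec a^j,\vec b^j}$ (or the one-sided analogue). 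Tightness of the joint law is immediate from tightness of each marginal, and passing to a subsequential limit via Skorohod representation yields a coupling of the continuous measures. Since the set $\{(u,v)\in C(I)^2 : u\le v \text{ pointwise}\}$ is closed under uniform convergence, the ordering persists in the limit.

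The main obstacle in this argument is confirming that the log-supermodularity structure required by the Glauber dynamics coupling is indeed present, but this is handled once and for all in \cite{half1}; everything else is a routine transcription across our scaling conventions together with a standard weak-limit argument for the continuous case.
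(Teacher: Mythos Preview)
Your approach is correct and matches the paper's: the discrete case is reduced to \cite[Proposition~2.6]{half1} (the paper's proof consists of exactly this citation), and the continuous case is obtained from the discrete one via the invariance principle together with a closed-set weak-limit argument. One minor imprecision: for continuous state spaces such as log-gamma random bridges there are no ``maximal'' or ``minimal'' admissible configurations to initialize from; the standard fix is either to start both Glauber chains from a common configuration and verify the ordering is preserved at each update, or to first pass through a lattice discretization as in the proof of Lemma~\ref{stm} and then take a limit.
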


We conclude this section by stating an extension of the Gibbs property from Lemma \ref{lem:gibbs} for random domains, the strong Gibbs property.

\begin{definition}[Stopping domains] \label{def:sd}
    Fix  $1\leq k\leq \ell$ and an interval $\Lambda\subset \R$. Suppose $(\L_i(\cdot))_{i\in \mathbb{N}}$ is a line ensemble on $(-\infty,0]$. For two random variables $\sigma\leq\tau$, we say the random interval $[\sigma,\tau]$ is a stopping domain for $(\mathcal{L}_i(\cdot))_{i\in \ll k,\ell\rr}$ if for all $a\leq b\leq 0$, the event $\{\sigma\leq a, \tau\geq b\}$ lies in the $\sigma$-algebra $\mathcal{F}_{\m{ext}}(\ll k,\ell\rr \times (a,b))$ (recall Definition \ref{def:ext}). Similarly, we say that $[\sigma,0]$ is a stopping domain for $(\mathcal{L}_i(\cdot))_{i\in \ll k,\ell \rr}$ if for all $a\leq 0$, the event $\{\sigma\leq a\}$ lies in $\mathcal{F}_{\m{ext}}(\ll 1,k\rr \times (a,0])$. In these cases, we abuse notation slightly and write $\mathcal{F}_{\m{ext}}(\ll k,\ell\rr \times (\sigma,\tau))$ for the $\sigma$-algebra generated by $\sigma,\tau$, and $(\mathcal{L}_i(x) : i\notin\ll k,\ell\rr \mbox{ or } x\notin (\sigma,\tau))$, and likewise for $\mathcal{F}_{\m{ext}}(\ll k,\ell\rr \times (\sigma,0])$.
\end{definition}

\begin{lemma}[HSLG strong Gibbs property]
 \label{lem:sg}    
 Fix $N\geq 1$, $\ell\geq k\geq 1$, and random variables $\sigma,\tau\in\Z_N$ with $\sigma\leq \tau$.
 \begin{enumerate}[label=(\alph*),leftmargin=20pt]

     \item If $[\sigma,\tau]$ is a stopping domain for $(\hh_i^N(\cdot,t))_{i\in\llbracket k,\ell\rrbracket}$, then the law of $(\hh_i^N(\cdot,t))_{i\in\llbracket k,\ell\rrbracket}$ on $[\sigma,\tau]$ given $\mathcal{F}_{\m{ext}}(\ll k,\ell\rr \times (\sigma,\tau))$ is $\Pr_{N;f,g}^{k,\ell;\sigma,\tau;\vec{a},\vec{b}}$, with $f=\hh_{k-1}^N(\cdot,t)$, $g=\hh_{\ell+1}^N(\cdot, t)$, $a_i = \hh_i^N(\sigma,t)$, and $b_i = \hh_i^N(\tau,t)$ for $i\in\ll k,\ell \rr$. 
     
     \item If $[\sigma,0]$ is a stopping domain for $(\hh_i^N(\cdot,t))_{i\in\llbracket k,\ell\rrbracket}$, then the law of $(\hh_i^N(\cdot,t))_{i\in\llbracket k,\ell\rrbracket}$ on $[\sigma,0]$ given $\mathcal{F}_{\m{ext}}(\ll k,\ell\rr \times (\sigma,0])$ is $\Pr_{N;f,g}^{k,\ell;\sigma;\vec{a},\star}$, with $f=\hh_{k-1}^N(\cdot,t)$, $g=\hh_{\ell+1}^N(\cdot, t)$, and $a_i = \hh_i^N(\sigma,t)$ for $i\in\ll k,\ell \rr$. 
 \end{enumerate}
\end{lemma}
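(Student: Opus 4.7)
The key simplification here is that $\sigma,\tau$ take values in the countable discrete set $\Z_N$, so no limiting procedure is needed to pass from the deterministic Gibbs property (Lemma \ref{lem:gibbs}) to its random-domain counterpart. The plan is to decompose according to the values of $(\sigma,\tau)$ and apply the ordinary Gibbs property piece by piece.

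I focus on part (a); part (b) is analogous. Let $F$ be a bounded measurable functional of $(\hh_i^N(\cdot,t))_{i\in\ll k,\ell\rr}$ restricted to $[\sigma,\tau]$, and let $G$ be a bounded $\mathcal{F}_{\m{ext}}(\ll k,\ell\rr\times(\sigma,\tau))$-measurable random variable. It suffices to prove
\begin{equation*}
\mathbb{E}[F\cdot G]=\mathbb{E}\bigl[\mathbb{E}_{N;f,g}^{k,\ell;\sigma,\tau;\vec a,\vec b}[F]\cdot G\bigr].
\end{equation*}
Decomposing over the discrete values of $(\sigma,\tau)$, one writes
\begin{equation*}
\mathbb{E}[F\cdot G]=\sum_{a\le b,\,a,b\in\Z_N\cap(-\infty,0]}\mathbb{E}\bigl[F\cdot G\cdot\mathbf{1}_{\{\sigma=a,\tau=b\}}\bigr].
\end{equation*}
By Definition \ref{def:sd}, on the event $\{\sigma=a,\tau=b\}$ the indicator $\mathbf{1}_{\{\sigma\le a,\tau\ge b\}}\in\mathcal{F}_{\m{ext}}(\ll k,\ell\rr\times(a,b))$, and since $\{\sigma=a,\tau=b\}=\{\sigma\le a,\tau\ge b\}\setminus(\{\sigma<a\}\cup\{\tau>b\})$, a straightforward set-theoretic argument (taking the intersection of the events for slightly smaller $a'$ and larger $b'$) shows $\{\sigma=a,\tau=b\}\in\mathcal{F}_{\m{ext}}(\ll k,\ell\rr\times(a,b))$ as well. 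Likewise, any $\mathcal{F}_{\m{ext}}(\ll k,\ell\rr\times(\sigma,\tau))$-measurable variable $G$, restricted to this event, is measurable with respect to $\mathcal{F}_{\m{ext}}(\ll k,\ell\rr\times(a,b))$.

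Applying the deterministic Gibbs property of Lemma \ref{lem:gibbs}(b) on each summand gives
\begin{equation*}
\mathbb{E}\bigl[F\cdot G\cdot\mathbf{1}_{\{\sigma=a,\tau=b\}}\bigr]=\mathbb{E}\bigl[\mathbb{E}_{N;f,g}^{k,\ell;a,b;\vec a,\vec b}[F]\cdot G\cdot\mathbf{1}_{\{\sigma=a,\tau=b\}}\bigr],
\end{equation*}
where the boundary data $\vec a=(\hh_i^N(a,t))_i$, $\vec b=(\hh_i^N(b,t))_i$ and the floor/ceiling $f=\hh_{k-1}^N,g=\hh_{\ell+1}^N$ all depend measurably on the external data. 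On $\{\sigma=a,\tau=b\}$ the Gibbs measure $\mathbb{E}_{N;f,g}^{k,\ell;a,b;\vec a,\vec b}$ equals $\mathbb{E}_{N;f,g}^{k,\ell;\sigma,\tau;\vec a,\vec b}$, so summing over $(a,b)$ yields the desired identity. Since $F,G$ were arbitrary bounded measurable functionals, and $\mathbb{E}_{N;f,g}^{k,\ell;\sigma,\tau;\vec a,\vec b}[F]$ is itself $\mathcal{F}_{\m{ext}}(\ll k,\ell\rr\times(\sigma,\tau))$-measurable (being a function of $\sigma,\tau$ and the exterior curve data), this is precisely the statement that the conditional law is the claimed Gibbs measure. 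Part (b) is proved by the same argument, summing only over $\sigma\in\Z_N\cap(-\infty,0]$ and using Lemma \ref{lem:gibbs}(a). No step here presents a real obstacle; the only thing to be careful about is tracking that every piece of data used by the Gibbs measure is $\mathcal{F}_{\m{ext}}$-measurable, which follows directly from the discreteness of the stopping times.
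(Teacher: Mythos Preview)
Your proof is correct and follows essentially the same approach the paper has in mind: the paper simply states that the strong Gibbs property is a straightforward consequence of the ordinary Gibbs property in Lemma \ref{lem:gibbs} and refers to \cite[Lemma 2.5]{kpzle} for the standard decomposition-over-discrete-stopping-times argument, which is exactly what you carried out.
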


The proof of the strong Gibbs property is a straightforward consequence of the standard Gibbs property in Lemma \ref{lem:gibbs}; see for instance \cite[Lemma 2.5]{kpzle}.

\section{Construction of the HSKPZ line ensemble} \label{sec5}

In this section we prove our main results constructing the HSKPZ line ensemble, Theorems \ref{thm1} and \ref{thm0}. We will rely heavily on Proposition \ref{tightline}, which we state here. Its proof is deferred to the next section.

\begin{proposition}\label{tightline}
Fix $t\ge 1$. For all $n\geq 1$, $\varepsilon>0$, there exists $R_n = R_n(\varepsilon)>0$ such that for all $x_0>0$, large $N$, and $\overline{x}\in[-x_0,0]$, we have
\begin{equation*}
    \mathbb{P} \bigg(\sup_{x\in[\overline{x}-1,\overline{x}]} \left|\hh_n^N(x,t) + \frac{x^2}{2t} \right| >  R_n \bigg) < \varepsilon.
\end{equation*}
\end{proposition}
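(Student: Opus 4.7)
The plan is to prove Proposition \ref{tightline} by induction on $n$, following the inductive Gibbs-property strategy of \cite{kpzle} adapted to the half-space one-sided setting.

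For the base case $n = 1$, the starting point is to upgrade Proposition \ref{p:parabol} (parabolic curvature of the continuous HSKPZ equation) and Proposition \ref{thma} (finite-dimensional convergence of $\hh_1^N$ to $\mathcal{H}^\alpha$) into pointwise tightness of $\hh_1^N(\bar{x},t) + \bar{x}^2/(2t)$, uniformly in $\bar{x}\le 0$ and large $N$. Pointwise convergence at each fixed $\bar{x}$ combined with the uniform-in-$\bar{x}$ bound from Proposition \ref{p:parabol} yields this tightness at each fixed $\bar{x}$ for $N$ large; uniform control of the $N$-threshold would be handled by compactness of $\bar{x}$ in a fixed bounded range together with a large-$|\bar{x}|$ comparison of the HSLG partition function with a full-space log-gamma analog with boundary perturbation. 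To promote one-point tightness to tightness of the supremum over $[\bar{x}-1,\bar{x}]$, I would condition on the endpoints $\hh_1^N(\bar{x}-1,t)$ and $\hh_1^N(\bar{x},t)$, invoke the scale-$N$ Gibbs property (Lemma \ref{lem:gibbs}) to express the conditional law as a log-gamma random walk bridge with soft floor coming from $\hh_2^N$, and use stochastic monotonicity (Lemma \ref{lem:sm}) together with standard bridge fluctuation estimates (a consequence of the invariance principle, Theorem \ref{lem:invprin}) to bound the supremum in terms of the tight endpoint values.

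For the inductive step, assuming the conclusion for $\hh_1^N,\ldots,\hh_n^N$, I would establish it for $\hh_{n+1}^N$. \emph{Upper tightness} follows relatively directly from the soft non-intersection in the Gibbs property: after using Lemma \ref{lem:sm} to drop the lower floor $\hh_{n+2}^N$, the curve $\hh_{n+1}^N$ is controlled from above by the random walk Gibbs-coupled just below $\hh_n^N$, which the inductive hypothesis handles. \emph{Lower tightness} is the main obstacle and would be proved by contradiction: if $\hh_{n+1}^N(x,t)+x^2/(2t)$ were extremely negative on some subinterval, then the interaction factor $\exp\bigl(-N^{-1/2}\sum e^{\hh_{n+1}^N-\hh_n^N}\bigr)$ would be essentially trivial there, so by the strong Gibbs property (Lemma \ref{lem:sg}) applied on the stopping domain of this excursion, the block $(\hh_1^N,\ldots,\hh_n^N)$ would behave on that domain as an essentially free log-gamma bridge system with only mutual soft non-intersection. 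Quantitative bridge concentration for the resulting freed system then forces at least one of $\hh_j^N+x^2/(2t)$, $j\le n$, to exhibit a macroscopic deviation with positive probability, contradicting the inductive hypothesis.

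The hardest step will be the quantitative implementation of the lower-tightness argument: identifying the stopping domain on which $\hh_{n+1}^N$ is low, precisely estimating the joint law of $(\hh_1^N,\ldots,\hh_n^N)$ on this domain after softening the floor, and extracting the contradictory deviation, all uniformly in the spatial location $\bar{x}\in(-\infty,0]$. Two features beyond the full-space setting of \cite{kpzle} complicate the bookkeeping: the $(-1)^i\alpha$ sign-alternating drifts of the scale-$N$ log-gamma increments, and the pairwise boundary interaction $\sum_i(-1)^i\alpha B_i(0)$ in \eqref{onesideg} at $x=0$. Both can in principle be handled via the stochastic monotonicity in $\alpha$ from Lemma \ref{lem:sm}, but the inductive constants must be tracked carefully so as not to degenerate with $n$ or as $\bar{x}\to 0^-$.
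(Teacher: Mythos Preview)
Your overall architecture\textemdash an induction on the curve index following the scheme of \cite{kpzle}, with the base case fed by Propositions~\ref{p:parabol} and~\ref{thma}\textemdash is exactly what the paper does. However, two of your inductive ingredients, as sketched, have genuine gaps.

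\textbf{Upper tightness.} Your claim that ``$\hh_{n+1}^N$ is controlled from above by the random walk Gibbs-coupled just below $\hh_n^N$'' does not work: a soft ceiling penalizes but does not forbid excursions above $\hh_n^N$, and you have no a priori control on the endpoint values of $\hh_{n+1}^N$ to anchor a bridge argument (that control is precisely what you are trying to prove). The paper's mechanism is the reverse ``push-up'' argument (Lemma~\ref{uppertight}): if $\hh_n^N$ is high at some point $\chi$, one resamples the \emph{pair} $(\hh_{n-1}^N,\hh_n^N)$ on a stopping domain ending at $\chi$, using the high value of $\hh_n^N(\chi)$ and inductively controlled endpoints for $\hh_{n-1}^N$ as boundary data, and shows (via a two-curve bridge estimate, Lemma~\ref{pchi}) that $\hh_{n-1}^N$ is then forced high with probability bounded below. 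This contradicts upper tightness of $\hh_{n-1}^N$.

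\textbf{Lower tightness.} Your contradiction idea (if $\hh_{n+1}^N$ is very low, the block $(\hh_1^N,\dots,\hh_n^N)$ becomes an approximately free bridge system, which follows chords rather than parabolas) is the right heuristic, but as stated it cannot close: on a unit-length interval the parabola $-x^2/(2t)$ is indistinguishable from its chord to within $O(1)$, so there is no macroscopic discrepancy to exploit. The paper resolves this by introducing a third, intermediate statement, Lemma~\ref{lowertight2} (a weak lower bound of the form $\inf_{[y_0-T,y_0]}(\hh_n^N+x^2/2t)\ge -\delta T^2$ on \emph{large} intervals of length $T$), and running a coupled three-lemma induction in which each of Lemmas~\ref{lowertight1}, \ref{lowertight2}, \ref{uppertight} at level $n$ feeds the others. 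The parabola-versus-chord contradiction is then extracted on an interval of length $2T$ (Lemma~\ref{lowtightE}), where the $T^2$-scale curvature is visible, and this is then propagated back to unit scale (Lemma~\ref{lowtightEinf}). Your proposal is missing this large-scale intermediate lemma entirely.

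Finally, your concern about the $(-1)^i\alpha$ drifts and the boundary interaction at $x=0$ is overstated: almost all of the paper's arguments in this section use the \emph{two-sided} Gibbs property on intervals bounded away from $0$, where the boundary term is absent and the drifts are $O(1)$ and harmless under the invariance principle. The one-sided property only enters in the case $\bar x$ near $0$ of Lemma~\ref{lowtightEinf}, and there the drift is a fixed constant.
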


We will also need the following technical ingredient, which establishes a lower bound on the normalization constant for the one-sided HSLG Gibbs measure describing the HSLG line ensemble on an interval near the boundary.

\begin{proposition}\label{zlbd}
Fix $t\ge 1$, $k\in\mathbb{N}$, and $A<0$, and let $A_N$ be the largest element of $\Z_N$ less than $A$. Let $\vec{x}^N = (\hh_1^N(A_N,t),\dots,\hh_k^N(A_N,t))$. Let $(S_i^N)_{i=1}^k$ be $k$ independent scale-$N$ log-gamma random walks on $[A_N,0]$ started from $(x_i^N)_{i=1}^k$ with parameters $(-1)^{i}\alpha$. For any $\e>0$ there exists $\delta>0$ such that for all large $N$,
\begin{equation*}
\mathbb{P}\left( \Ex \left[W^{1,k;A_N,0}_{N;+\infty,g_t}(S^N)\right] < \delta\right) < \e,
\end{equation*}
where $W^{1,k;A_N,0}_{N;+\infty,g_t}(S^N)$ is defined in \eqref{def:W} and $g_t := \hh_{k+1}^N(\cdot,t)|_{[A_N,0]}$.
\end{proposition}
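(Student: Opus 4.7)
The plan is to reduce the statement to a uniform deterministic lower bound on $\Ex\big[W^{1,k;A_N,0}_{N;+\infty,g}(S^N)\big]$ over $\vec{x}\in[-R,R]^k$ and continuous floors $g$ with $\|g\|_\infty\le R$, absorbing the randomness of $(\vec{x}^N, g_t)$ via Proposition~\ref{tightline}. A union bound applying Proposition~\ref{tightline} on the $\lceil|A|\rceil+1$ unit intervals covering $[A,0]$ to each of the curves $\hh_n^N$ for $n\in\ll 1,k+1\rr$ produces $R = R(\e, t, A, k)$ such that
\[
\m{G}:=\Big\{\max_{1\le i\le k}|\hh_i^N(A_N,t)|\le R\Big\}\cap\Big\{\sup_{x\in[A_N,0]}|\hh_{k+1}^N(x,t)|\le R\Big\}
\]
has $\Pr(\m{G})\ge 1-\e$ for all large $N$; on $\m{G}$ we have $|\vec{x}^N|_\infty\le R$ and $\|g_t\|_\infty\le R$.

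Fix now $\vec{x}\in[-R,R]^k$, a continuous $g$ with $\|g\|_\infty\le R$, and a constant $M$ to be chosen large (depending on $R, k, A$). Introduce the reference lines
\[
B_i^*(x) := x_i + \big((k-i+1)M - x_i\big)\cdot \frac{x - A_N}{-A_N},\qquad x\in[A_N,0],\ i\in\ll 1, k\rr,
\]
so $B_i^*(A_N)=x_i$ and $B_i^*(0)=(k-i+1)M$, and the favorable event
\[
\m{E}:=\big\{|S_i^N(x) - B_i^*(x)|\le 1 \text{ for all } x\in[A_N,0] \text{ and } i\in\ll 1, k\rr\big\}.
\]
Writing $\tau := (x-A_N)/(-A_N)\in[0,1]$, we have $B_i^*(x) - B_{i+1}^*(x) = (x_i-x_{i+1})(1-\tau) + M\tau$. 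Combined with the Lipschitz bound on $B_i^*$ (constant $O(kM/|A|)$) and the sup-norm control provided by $\m{E}$, each exponent appearing in \eqref{def:W} satisfies $S_{i+1}^N(j) - S_i^N(j + rN^{-1/2}) \le C(R) - M\tau(j)$ (and the analogous bound for $g(j) - S_k^N(j + rN^{-1/2})$) for $N$ large. Comparing the Riemann sum to the corresponding integral yields $-\log W^{1,k;A_N,0}_{N;+\infty,g}(S^N) \le C'(R,k)|A|/M$ on $\m{E}$, which is less than $\log 2$ once $M$ is chosen large enough, so $W \ge 1/2$ on $\m{E}$.

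It remains to lower-bound $\Pr(\m{E})$. Independence across $i$ gives $\Pr(\m{E}) = \prod_{i=1}^k \Pr(\m{E}_i)$, where $\m{E}_i := \{|S_i^N - B_i^*|\le 1 \text{ on } [A_N, 0]\}$. The centered process $T_i^N := S_i^N - B_i^*$ starts at $0$ and, up to a deterministic linear shift, is a scale-$N$ log-gamma random walk; its mean at $x$ equals $(x-A_N)\big[(-1)^i\alpha - ((k-i+1)M-x_i)/(-A_N)\big]$, whose slope is bounded in magnitude by $|\alpha| + (kM+R)/|A|$. By the invariance principle underlying Theorem~\ref{lem:invprin}, $T_i^N$ converges weakly on $[A,0]$ to a Brownian motion with this drift, and a standard small-ball estimate yields $\Pr(\m{E}_i)\ge c_i > 0$, uniformly for $|\vec{x}|_\infty\le R$ and for all $N$ large. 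Thus $\Pr(\m{E})\ge \delta_0 := \prod_i c_i > 0$, and combining with the preceding paragraph gives $\Ex[W^{1,k;A_N,0}_{N;+\infty,g}(S^N)] \ge \tfrac{1}{2}\delta_0 =: \delta$. Since $(\vec{x}^N, g_t)$ satisfies the required bounds on $\m{G}$, the proposition follows.

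The main obstacle is producing the lower bound on $\Pr(\m{E}_i)$ with uniformity simultaneously in $N$, in the starting points $\vec{x}\in[-R,R]^k$, and in the drifts (varying over a compact set as $\vec{x}$ does). The pointwise Donsker convergence is immediate from the log-gamma mean/variance expansion \eqref{loggamma}, but the uniformity requires either a quantitative version of the invariance principle or a direct concentration estimate exploiting the finite exponential moments of the log-gamma increments.
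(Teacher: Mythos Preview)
Your approach is correct and genuinely different from the paper's. The paper first replaces the random floor $g_t$ by the constant $M$ (using $g_t\le M$ on the high-probability event), then argues pointwise in $\omega$ via Fatou's lemma: for each $\omega$ in the good event, pass to a subsequence along which $\vec{x}^{N_j}(\omega)\to\vec{x}(\omega)$, apply the invariance principle to get $\Ex[W^{N_j}_{+\infty,M}]\to\E[\mathcal{W}^{1,k;A,0}_{1;+\infty,M}(B^{\vec{x}(\omega)})]$, and finally set $\delta=\tfrac12\inf_{\vec{x}\in[-M,M]^k}\E[\mathcal{W}^{1,k;A,0}_{1;+\infty,M}(B^{\vec{x}})]$, which is positive by continuity on a compact set. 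Your route is instead constructive: you build explicit corridors in which $W\ge 1/2$ and lower-bound the corridor probability directly. The paper's argument is cleaner (no explicit estimate on $W$ needed), while yours is more quantitative and avoids the Fatou/subsequence machinery.

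The uniformity obstacle you flag is milder than you suggest and does not require a quantitative invariance principle. Write $U^N(x):=S_i^N(x)-x_i$, which is a process whose law does not depend on $\vec{x}$ and which converges weakly to a drifted Brownian motion. Then $\m{E}_i=\{\sup_x|U^N(x)-L_{x_i}(x)|\le 1\}$ where $L_y(x)=((k-i+1)M-y)\tau(x)$ satisfies $\|L_{y_1}-L_{y_2}\|_\infty=|y_1-y_2|$. Cover $[-R,R]$ by finitely many intervals of length $\eta<1$; at each center $y_j$ the invariance principle gives $\Pr(\sup|U^N-L_{y_j}|\le 1-\eta)\to p_j>0$, so $\ge p_j/2$ for all large $N$ (a single threshold works for the finite list). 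The Lipschitz bound then gives $\Pr(\m{E}_i)\ge\min_j p_j/2>0$ uniformly over $x_i\in[-R,R]$ and large $N$.
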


{The expectation in $\Ex[ W^{1,k;A,0}_{N;+\infty,g_t}(S^N)]$ is taken only w.r.t.~the randomness coming from the log-gamma random walks. This expectation is still random as $g_t= \hh_{k+1}^N(\cdot,t)$ and the starting data $\vec{x}_N=(\hh_i^N(A_N,t))_{i=1}^k$ are random.} 

\begin{proof}
For $M>0$, define the event
\[
\mathsf{E}_M := \bigg\{\sup_{x\in[A_N,0]} \hh_{k+1}^N(x,t) \leq M \bigg\} \cap \left\{\max_{1\leq i\leq k} |\hh_i^N(A_N,t)| \leq M \right\}.
\]
By Proposition \ref{tightline}, we can choose $M$ large enough so that $\mathbb{P}(\mathsf{E}_M) > 1-\e/2$ for large $N$. Let $\m{D}_\delta := \{\Ex [W^{1,k;A_N,0}_{N;+\infty,g_t}(S^N)] < \delta\}$. Then 
\[
\mathbb{P}(\mathsf{D}_\delta) \leq \mathbb{P}(\mathsf{D}_\delta\cap\mathsf{E}_M) + \e/2,
\]
so it suffices to show that for small enough $\delta$, $\mathbb{P}(\mathsf{D}_\delta \cap\mathsf{E}_M) \to 0$ as $N\to\infty$. On the event $\mathsf{E}_M$, by definition of $W_{N;+\infty,\bullet}^{1,k;A_N,0}$ in \eqref{def:W}, we have
\begin{equation}\label{Wmlbd}
W^{1,k;A_N,0}_{N;+\infty,g_t}(S^N) \geq W^{1,k;A_N,0}_{N;+\infty,M}(S^N).
\end{equation}
Define the new event
\[
\mathsf{F}_{M,\delta,N} := \left\{ \Ex \left[W^{1,k;A_N,0}_{N;+\infty,M}(S^N)\right] < \delta, \, \max_{1\leq i\leq k} |x_i^N| \leq M \right\},
\]
where $x_i^N$ are defined in the statement. Then $\mathbb{P}(\mathsf{D}_\delta \cap \mathsf{E}_M) \leq \mathbb{P}(\mathsf{F}_{M,\delta,N})$ by \eqref{Wmlbd}. By Fatou's lemma,
\begin{equation*}
    \limsup_{N\to\infty} \mathbb{P}(\mathsf{F}_{M,\delta,N}) \leq \mathbb{E}\left[\limsup_{N\to\infty} \mathbf{1}_{\mathsf{F}_{M,\delta,N}}\right].
\end{equation*}
We claim that the limsup inside the expectation is identically 0 if $\delta>0$ is small enough. Let $\Omega$ denote the implicit probability space supporting the HSLG line ensemble, and fix $\omega\in\Omega$. Let $\Ex [W^{1,k;A_N,0}_{N;+\infty,M}(S^N(\omega))]$ denote the corresponding realization of the random variable $\Ex [W^{1,k;A_N,0}_{N;+\infty,M}(S^N)]$ (w.r.t.~the randomness of the boundary conditions $x_i^N(\omega)$\textemdash again, the expectation is taken over the random walk paths). It suffices to show that for some $\delta>0$, any subsequence of $\{\mathbf{1}_{\mathsf{F}_{M,\delta,N}}(\omega)\}_{N\geq 1}$ has a further subsequence converging to 0 as $N\to\infty$. If $|x_i^N(\omega)| > M$ for some $i$, then $\mathbf{1}_{\mathsf{F}_{M,\delta,N}}(\omega) = 0$. So it is enough to consider $\omega$ such that for infinitely many $N$, $\max_{1\leq i\leq k} |x_i^N(\omega)| \leq M$. Then any subsequence has a further subsequence $\{N_j\}_{j\geq 1}$ such that $\vec{x}^{N_j}(\omega)$ converges to some $\vec{x}(\omega) \in [-M,M]^k$. Now for any $\vec{x}\in\R^k$ let $B^{\vec{x}} = (B_i^{x_i})_{i=1}^k$ denote $k$ independent Brownian motions on $[A,0]$ started from $x_i$ with drifts $(-1)^{i}\alpha$, and let $\mathcal{W}_{1;+\infty,M}^{1,k;A,0}(B^{\vec{x}})$ be as in \eqref{def:Wcont}. Then it follows by the argument in the proof of Theorem \ref{lem:invprin} that
\begin{equation}\label{Zconv}
\Ex \left[W^{1,k;A_N,0}_{N_j;+\infty,M}(S^{N_j}(\omega))\right] \longrightarrow \E  \left[\mathcal{W}_{1;+\infty,M}^{1,k;A,0}(B^{\vec{x}(\omega)})\right]
\end{equation}
as $j\to\infty$. Define 
\[
\delta := \frac{1}{2} \inf_{\vec{x}\in[-M,M]^k} \E \left[\mathcal{W}^{1,k;A,0}_{1;+\infty,M}(B^{\vec{x}})\right].
\]
This $\delta$ exists and is positive, since $\E[\mathcal{W}^{1,k;A,0}_{+\infty,M}(B^{\vec x})]$ is clearly a positive continuous function of $\vec{x}\in\R^k$ and $[-M,M]^k$ is compact. By \eqref{Zconv}, we can choose $j_0 = j_0(\omega)$  so that for all $j\geq j_0$ we have $\Ex [W_{+\infty,M}^{1,k;A_N,0}(S^{N_j}(\omega))] \geq \delta$, thus $\mathbf{1}_{\mathsf{F}_{M,\delta,N_j}}(\omega)=0$ for $j\geq j_0$. So indeed $\mathbf{1}_{\mathsf{F}_{M,\delta,N_j}}(\omega)\to 0$ as $j\to\infty$, and we are done.
\end{proof}

We are now equipped to construct the HSKPZ line ensemble.

\begin{proof}[Proof of Theorems \ref{thm1} and \ref{thm0}]
We will split the proof into two steps for clarity. We first prove tightness of the rescaled HSLG line ensemble; we then establish the Gibbs property for subsequential limits and appeal to a uniqueness result of \cite{dimh} to obtain convergence to a unique limit, which we identify as the HSKPZ line ensemble.

\medskip

\noindent\textbf{Step 1.} We prove tightness as follows. For $A<0$, a function $\mathbf{f} = (f_1,\dots,f_k) : \ll 1, k\rr \times [A,0] \to \mathbb{R}$, and $\delta > 0$, define the modulus of continuity
\begin{align}\label{moc}
    \mc(\mathbf{f},\delta) := \max_{1\leq i\leq k} \sup_{\substack{x,y\in [A,0], \\ |x-y|<\delta}} |f_i(x)-f_i(y)|.
\end{align}

Then it suffices to prove the following two statements:
\begin{enumerate}
    \item For all $k\in\mathbb{N}$, and $t,\e>0$, there exists $M>0$ so that
    \[
    \limsup_{N\to\infty} \mathbb{P}\left( |\hh^N_k(0,t)| > M \right) < \e.
    \]

    \item For all $k\in\mathbb{N}$, $A<0$, and $t,\e,\rho>0$, there exists $\delta>0$ so that
    \[
    \limsup_{N\to\infty} \mathbb{P}\left(\mc(\hh^N(\cdot,t)|_{\ll 1, k\rr \times [A_N,0]}, \delta) > \rho\right) < \e.
    \]
\end{enumerate}
Here $A_N$ denotes the largest element of $\Z_N$ less than $A$. Statement (1) follows immediately from Proposition \ref{tightline}, so it remains to prove statement (2). For $\delta,\rho,\eta,M>0$, define the two events
\begin{align*}
\mathsf{A}_N(\delta,\rho) &:= \left\{\mc(\hh^N(\cdot,t)|_{\ll 1, k\rr \times [A_N,0]},\delta) > \rho \right\}, \quad \mathsf{B}_N(\eta,M) := \left\{\mathbb{E}\left[W_{N;+\infty,\hh_{k+1}^N(\cdot,t)}^{1,k;A_N,0}(S^N)\right] > \eta \right\}.
\end{align*}
Here $S^N = (S_i^N)_{i=1}^k$ are independent scale-$N$ log-gamma random walks on $[A_N,0]$ started from $(\hh_i^N(A_N,t))_{i=1}^k$. By Proposition \ref{zlbd}, we may choose $\eta,M>0$ so that
\begin{equation}\label{BNetaM}
    \limsup_{N\to\infty} \mathbb{P}(\mathsf{B}_N(\eta,M)) > 1 - \e/2.
\end{equation}
The one-sided Gibbs property (Lemma \ref{lem:gibbs}) implies that
\begin{equation}\label{ANBN}
\begin{split}
    \mathbb{P}(\mathsf{A}_N(\delta,\rho) \cap \mathsf{B}_N(\eta,M)) &= \mathbb{E} \left[\mathbf{1}_{\mathsf{B}_N(\eta,M)} \mathbb{P}\left(\mathsf{A}_N(\delta,\rho) \mid \mathcal{F}_{\mathrm{ext}}(\ll 1, k\rr \times (A_N,0]) \right) \right]\\
    &= \mathbb{E} \left[\mathbf{1}_{\mathsf{B}_N(\eta,M)} \mathbb{P}_{N;\hh_{k+1}^N(\cdot,t)}^{k;A_N;\vec{a},\star}(\mathsf{A}_N(\delta,\rho)) \right],
    \end{split}
\end{equation}
where we have written $\vec{a} = (\hh_1^N(A_N,t),\dots,\hh_k^N(A_N,t))$. Note on the event $\mathsf{B}_N(\eta,M)$, where the normalization constant for the law $\mathbb{P}_{N;\hh_{k+1}^N(\cdot,t)}^{k;A;\vec{a},\star}$ is bounded below by $\eta$, we may upper bound the probability in the second line of \eqref{ANBN} by
\begin{equation}\label{tightprod}
\mathbb{P}_{N;\hh_{k+1}^N(\cdot,t)}^{k;A_N;\vec{a},\star}(\mathsf{A}_N(\delta,\rho)) \leq \eta^{-1} \sum_{i=1}^k \mathbb{P}\left(\mc(S^N_i,\delta)>\rho\right),
\end{equation}
where $S_i^N$ are independent scale-$N$ log-gamma random walks on $[A,0]$ with parameters $(-1)^i\alpha$ started from $(a_i)_{i=1}^k$. Note the modulus of continuity of $S^N_i$ is independent of $a_i$ (this just amounts to a vertical shift of the entire walk), so we may assume that in fact each $S_i^N$ starts at 0. Then by Donsker's invariance principle, $S_i^N$ converge weakly as $N\to\infty$ to independent Brownian motions $B^i$ on $[A,0]$ started from 0 with drifts $(-1)^i\alpha$. Clearly the modulus of continuity of each $B^i$ has the same distribution, namely that of $\mathbf{w}(B,\delta)+|\alpha|\delta$, where $B$ is a standard Brownian motion on $[A,0]$ with no drift. Therefore we may choose $N$ large enough so that the r.h.s.~of \eqref{tightprod} is at most $2\eta^{-1}k \cdot \P(\mathbf{w}(B,\delta)+|\alpha|\delta>\rho)$. By L\'{e}vy's modulus of continuity for Brownian motion, this probability can be made arbitrarily small by choosing $\delta$ small depending on $\rho,A,\alpha$. In particular, we can take $N$ large and $\delta$ small so that the r.h.s.~of \eqref{tightprod} is at most $\e/2$, uniformly over the event $\m{B}_N(\eta,M)$. In view of \eqref{BNetaM} and \eqref{ANBN}, statement (2) above follows.

\medskip

\noindent\textbf{Step 2.} We now show that any subsequential limit of $\hh^N(\cdot,t)$ must satisfy the Brownian Gibbs property described in Theorem \ref{thm1}(b). Let $\mathcal{H} = (\mathcal{H}_i(x,t) : i\in\mathbb{N}, x\in\mathbb{R})$ be any such subsequential limit; without loss of generality we may assume that in fact $\hh^N(\cdot,t) \to \mathcal{H}(\cdot,t)$. Applying the Skorohod representation theorem again, we may pass to another probability space where the convergence is almost sure. From now on we will omit the variable $t$ from the notation as it doesn't play a role in the argument.

We will show that for any $k,k'\in\mathbb{N}$ with $k\leq k'$, any intervals $[A,0] \subseteq [A',0]$, and any bounded continuous functionals $F : C(\ll 1, k\rr \times [A,0]) \to \mathbb{R}$ and $G : C(\ll k+1,k'\rr \times [A',A]) \to \R$, we have
\begin{equation}\label{eqgibbs}
\E \left[F(\mathcal{H}|_{\ll 1,k\rr \times [A,0]}) G(\mathcal{H}|_{\ll k+1,k'\rr \times [A',A]}) \right] = \E\left[ \mathbf{E}_{1;g}^{k;A;\vec{a},\star}[F] \cdot G(\mathcal{H}|_{\ll k+1,k'\rr \times [A',A]})\right],
\end{equation}
where $\vec{a} = (\mathcal{H}_1(A),\dots,\mathcal{H}_k(A))$ and $g = \mathcal{H}_{k+1}|_{[A,0]}$. By a standard monotone class argument (see e.g. the proof of \cite[Lemma 2.3]{dm21} for a slightly different phrasing), this implies the desired Gibbs property. 

In the remainder of the proof we will abuse notation by omitting the restrictions, i.e., we will write $F(\mathcal{H})$ and $G(\mathcal{H})$ to mean the expressions in \eqref{eqgibbs}. By the assumed almost sure convergence $\hh^N \to \mathcal{H}$ and the dominated convergence theorem,
\[
\E \left[F(\mathcal{H}) G(\mathcal{H}) \right]  = \lim_{N\to\infty} \mathbb{E} \left[F(\hh^N) G(\hh^N) \right] .
\]
Let $A_N$ be the smallest element of $[A,0] \cap \mathbb{Z}_N$. The Gibbs property for $\hh^N$ (Lemma \ref{lem:gibbs}) implies that
\begin{equation}\label{gibbssplit}
\begin{split}
\mathbb{E} \left[F(\hh^N) G(\hh^N) \right] &= \mathbb{E} \left[ \mathbb{E}[F(\hh^N) \mid \mathcal{F}_{\mathrm{ext}}(\ll 1, k\rr \times (A_N,0])] \cdot G(\hh^N)\right]\\
&= \mathbb{E}\left[\mathbb{E}_{N;\hh_{k+1}^N}^{k;A_N;\vec{a}_N,\star}[F] \cdot G(\hh^N) \right],
\end{split}
\end{equation}
where $\vec{a}_N = (\hh_1^N(A_N),\dots,\hh_k^N(A_N))$. The uniform convergence $\hh^N\to\mathcal{H}$ and the continuity of $F$ imply, by Theorem \ref{lem:invprin}, that 
\[
\mathbb{E}_{N;\hh_{k+1}^N}^{k;A_N;\vec{a}_N,\star}[F] \longrightarrow \mathbf{E}_{\mathcal{H}_{k+1}}^{k;A;\vec{a},\star}[F]
\]
a.s. as $N\to\infty$. Returning to \eqref{gibbssplit} and using the continuity of $G$, the dominated convergence theorem now implies \eqref{eqgibbs}.

To conclude the actual weak convergence of the sequence $\{\hh^N\}_{N\geq 1}$, we now appeal to the characterization of $H$-Brownian line ensembles in \cite{dimh}. In the language of that paper, our argument above shows that any subsequential limit $\mathcal{H}$ of $\hh^N$ is an $H$-Brownian Gibbs line ensemble on the interval $(-\infty,0]$, with $H = e^x$. (Although we only verified the one-sided Gibbs property, an exactly analogous argument verifies the two-sided Gibbs property.) It follows from \cite[Theorem 1.1]{dimh} that $\mathcal{H}$ is determined by the finite-dimensional distributions of its top curve $\mathcal{H}_1$. But by Proposition \ref{thma}, $\mathcal{H}_1$ is necessarily equal in finite-dimensional distributions to the HSKPZ equation $\mathcal{H}^\alpha$. Therefore there is only one possible subsequential limit $\mathcal{H}$. Along with the tightness verified in Step 1, this shows that every subsequence of $\{\hh^N\}_{N\ge 1}$ has a further subsequence converging weakly to $\mathcal{H}$. This implies that in fact $\hh^N\to \mathcal{H}$ weakly.
\end{proof}

{As a byproduct of the above proof we see that the continuous 
analogues of Lemmas \ref{lem:gibbs} and \ref{lem:sg} are valid for HSKPZ line ensemble as well. We state them here for completeness.}

\begin{lemma}[HSKPZ Gibbs property]\label{lem:gibbs2} Fix  $\ell \ge k \ge 1$,  $A_1 \le A_2 \le 0$, and $t\ge 1$. Consider the HSKPZ line ensemble $(\mathcal{H}_i^\alpha(\cdot,t))_{i\in \mathbb{N}}$. 
\begin{enumerate}[label=(\alph*),leftmargin=20pt]
    \item The law of $(\mathcal{H}_{i}^\alpha(\cdot,t))_{i\in \ll k,\ell\rr}$ on $[A_1,\0]$ conditioned on $\mathcal{F}_{\m{ext}}(\ll k,\ell\rr \times (A_1,0])$ is $\P_{1;f,g}^{k,\ell;A_1;\vec{a},\star}$ with $f=\mathcal{H}_{k-1}^\alpha(\cdot,t)$, $g=\mathcal{H}_{\ell+1}^\alpha(\cdot,t)$, and $a_i = \mathcal{H}_{i}^\alpha(A_1,t)$ for $i\in \llbracket k,\ell \rrbracket$.

    \item The law of $(\mathcal{H}_{i}^\alpha(\cdot,t))_{i\in \ll k,\ell\rr}$ on $[A_1,A_2]$ conditioned on $\mathcal{F}_{\m{ext}}(\ll k,\ell\rr \times (A_1,A_2))$ is $\mathbf{P}_{1;f,g}^{k,\ell;A_1,A_2;\vec{a},\vec{b}}$, with $f=\mathcal{H}_{k-1}^\alpha(\cdot,t)$, $g=\mathcal{H}_{\ell+1}^\alpha(\cdot,t)$, $a_i = \mathcal{H}_{i}^\alpha(A_1,t)$, and $b_i = \mathcal{H}_{i}^\alpha(A_2,t)$ for $i\in \llbracket k,\ell \rrbracket$.
\end{enumerate}
\end{lemma}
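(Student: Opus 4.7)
The plan is to obtain Lemma \ref{lem:gibbs2} as a direct consequence of the construction of $\mathcal{H}^\alpha$ in Theorem \ref{thm0}, by transferring the discrete Gibbs property of Lemma \ref{lem:gibbs} through the weak limit using the invariance principle (Theorem \ref{lem:invprin}). In fact, the one-sided case was essentially verified in Step 2 of the proof of Theorems \ref{thm1} and \ref{thm0} (for the initial segment $\ll 1,k\rr$); part (a) here is the same argument applied to the general segment $\ll k,\ell\rr$, and part (b) is the analogous two-sided version. So the work is almost entirely bookkeeping.

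To prove part (a), I would first reduce via the standard monotone class argument (cf.~\cite[Lemma 2.3]{dm21}) to checking that for every $k'\ge \ell$, every $A'\le A_1\le 0$, and every pair of bounded continuous functionals $F:C(\ll k,\ell\rr\times[A_1,0])\to\R$ and $G:C((\ll 1,k{-}1\rr\cup\ll\ell{+}1,k'\rr)\times[A',0])\to\R$, one has
\begin{align*}
\E\!\left[F(\mathcal{H}^\alpha)\,G(\mathcal{H}^\alpha)\right]
=\E\!\left[\mathbf{E}_{1;f,g}^{k,\ell;A_1;\vec{a},\star}[F]\cdot G(\mathcal{H}^\alpha)\right],
\end{align*}
with $f=\mathcal{H}^\alpha_{k-1}(\cdot,t)$, $g=\mathcal{H}^\alpha_{\ell+1}(\cdot,t)$, and $a_i=\mathcal{H}^\alpha_i(A_1,t)$. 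By Theorem \ref{thm0} and Skorohod representation, I can pass to a probability space where $\hh^N(\cdot,t)\to \mathcal{H}^\alpha(\cdot,t)$ almost surely in the uniform-on-compact topology. Letting $A_{1,N}$ be the largest point of $\Z_N$ less than $A_1$, set $f_N=\hh^N_{k-1}(\cdot,t)$, $g_N=\hh^N_{\ell+1}(\cdot,t)$, and $\vec{a}_N=(\hh^N_i(A_{1,N},t))_{i=k}^\ell$. Lemma \ref{lem:gibbs}(a) gives
\begin{align*}
\mathbb{E}\!\left[F(\hh^N)G(\hh^N)\right]
=\mathbb{E}\!\left[\mathbb{E}_{N;f_N,g_N}^{k,\ell;A_{1,N};\vec{a}_N,\star}[F]\cdot G(\hh^N)\right],
\end{align*}
and Theorem \ref{lem:invprin} (applied pathwise in the external randomness, thanks to the a.s.~uniform convergence $f_N\to f$, $g_N\to g$, $\vec{a}_N\to\vec{a}$, $A_{1,N}\to A_1$) gives
\begin{align*}
\mathbb{E}_{N;f_N,g_N}^{k,\ell;A_{1,N};\vec{a}_N,\star}[F]\longrightarrow \mathbf{E}_{1;f,g}^{k,\ell;A_1;\vec{a},\star}[F]
\end{align*}
almost surely. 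Since $F,G$ are bounded and continuous, two applications of dominated convergence (on the two sides of the discrete identity above) yield the desired limiting identity.

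Part (b) is proved by precisely the same argument with Lemma \ref{lem:gibbs}(b) replacing Lemma \ref{lem:gibbs}(a) and with the two-sided invariance principle from Theorem \ref{lem:invprin} (the $j=1$ case) providing the convergence of conditional Gibbs measures; one additionally includes the right boundary data $\vec{b}_N=(\hh^N_i(A_{2,N},t))_{i=k}^\ell\to\vec{b}$, which converges a.s.~by the same uniform convergence. The only mild subtlety, in both parts, is justifying the hypotheses of Theorem \ref{lem:invprin} pathwise (uniform convergence of $f_N,g_N$ on the relevant compact interval, and convergence of the endpoint data), but this is immediate from the locally uniform a.s.~convergence $\hh^N\to\mathcal{H}^\alpha$; no new estimates are needed. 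There is therefore no genuine obstacle — the work is entirely in packaging the already-executed Step 2 argument to cover arbitrary $\ll k,\ell\rr$ and the two-sided variant.
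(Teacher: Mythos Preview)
Your proposal is correct and takes essentially the same approach as the paper: the paper states Lemma \ref{lem:gibbs2} without a separate proof, noting it is a byproduct of Step 2 in the proof of Theorems \ref{thm1} and \ref{thm0}, and remarks that ``an exactly analogous argument verifies the two-sided Gibbs property.'' You have simply spelled out that analogous argument for general $\ll k,\ell\rr$ and for the two-sided case, which is exactly what the paper intends.
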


\begin{lemma}[HSKPZ strong Gibbs property]
 \label{lem:sg2}    
 Fix $\ell\geq k\geq 1$, $t\ge 1$, and random variables $\sigma,\tau$ with $\sigma\leq \tau$. Consider the HSKPZ line ensemble $(\mathcal{H}_i^\alpha(\cdot,t))_{i\in \mathbb{N}}$. Recall Definition \ref{def:sd} of stopping domains.
 \begin{enumerate}[label=(\alph*),leftmargin=20pt]
     \item If $[\sigma,0]$ is a stopping domain for $(\mathcal{H}_i^\alpha(\cdot,t))_{i\in\llbracket k,\ell\rrbracket}$, then the law of $(\mathcal{H}_i^\alpha(\cdot,t))_{i\in\llbracket k,\ell\rrbracket}$ on $[\sigma,0]$ given $\mathcal{F}_{\m{ext}}(\ll k,\ell\rr \times (\sigma,0])$ is $\P_{1;f,g}^{k,\ell;\sigma;\vec{a},\star}$, with $f=\H_{k-1}^\alpha(\cdot,t)$, $g=\H_{\ell+1}^\alpha(\cdot, t)$, and $a_i = \H_i^\alpha(\sigma,t)$ for $i\in\ll k,\ell \rr$. 
     
     \item If $[\sigma,\tau]$ is a stopping domain for $(\mathcal{H}_i^\alpha(\cdot,t))_{i\in\llbracket k,\ell\rrbracket}$, then the law of $(\mathcal{H}_i^\alpha(\cdot,t))_{i\in\llbracket k,\ell\rrbracket}$ on $[\sigma,\tau]$ given $\mathcal{F}_{\m{ext}}(\ll k,\ell\rr \times (\sigma,\tau))$ is $\P_{1;f,g}^{k,\ell;\sigma,\tau;\vec{a},\vec{b}}$, with $f=\H_{k-1}^\alpha(\cdot,t)$, $g=\H_{\ell+1}^\alpha(\cdot, t)$, $a_i = \H_i^\alpha(\sigma,t)$, and $b_i = \H_i^\alpha(\tau,t)$ for $i\in\ll k,\ell \rr$. 
 \end{enumerate}
\end{lemma}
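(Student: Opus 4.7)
The plan is to derive Lemma~\ref{lem:sg2} from the ordinary Gibbs property (Lemma~\ref{lem:gibbs2}) via a standard dyadic approximation of the stopping times, following the analogous argument for the full-space KPZ line ensemble in \cite[Lemma 2.5]{kpzle}. I focus on part (a); part (b) is handled by simultaneously discretizing $\tau$ from below.

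Approximate $\sigma$ from above by the dyadic-valued random variable $\sigma_n := 2^{-n}\lceil 2^n\sigma\rceil$; thus $\sigma_n \in 2^{-n}\Z$ and $\sigma_n \downarrow \sigma$ as $n\to\infty$. The stopping domain hypothesis implies that each level set $\{\sigma_n = s\}$ lies in $\mathcal{F}_{\m{ext}}(\ll k,\ell\rr \times (s,0])$. Fix any $T > 0$ and consider test functionals $F$ that depend only on the restriction $(\mathcal{H}^\alpha_i(\cdot,t)|_{[-T,0]})_{i\in\ll k,\ell\rr}$, are bounded and continuous in the uniform topology, together with a bounded $\mathcal{F}_{\m{ext}}(\ll k,\ell\rr \times (\sigma,0])$-measurable random variable $G$. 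On the event $\{\sigma \ge -T\}$, I would decompose over the values of $\sigma_n$:
\begin{equation*}
\E\bigl[\ind_{\{\sigma \ge -T\}} F G\bigr] = \sum_{\substack{s \in 2^{-n}\Z \\ -T \le s \le 2^{-n}}} \E\bigl[\ind_{\{\sigma_n = s,\,\sigma\ge -T\}} F G\bigr],
\end{equation*}
and on each slice $\{\sigma_n=s\}$ apply Lemma~\ref{lem:gibbs2}(a) on the deterministic interval $[s,0]$ to rewrite the inner conditional expectation of $F$ as $\mathbf{E}_{1;f_s,g_s}^{k,\ell;s;\vec{a}_s,\star}[F]$, with $f_s = \mathcal{H}^\alpha_{k-1}(\cdot,t)|_{[s,0]}$, $g_s = \mathcal{H}^\alpha_{\ell+1}(\cdot,t)|_{[s,0]}$, and $\vec{a}_s = (\mathcal{H}^\alpha_i(s,t))_{i=k}^\ell$.

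To pass to the limit $n\to\infty$, the path continuity of the $\mathcal{H}^\alpha_i$ yields $\vec{a}_{\sigma_n} \to \vec{a}_\sigma$ almost surely, while $f_{\sigma_n}$ and $g_{\sigma_n}$ converge uniformly on $[\sigma,0]$ to $f_\sigma$ and $g_\sigma$. Continuity of the one-sided HSKPZ Gibbs measure $\mathbf{P}_{1;f,g}^{k,\ell;A;\vec{a},\star}$ in the parameters $(A,\vec{a},f,g)$, which can be established by dominated convergence applied to the explicit Radon--Nikodym description in Definition~\ref{kpzgibbs} (mirroring the proof of Theorem~\ref{lem:invprin}), then gives
\begin{equation*}
\E\bigl[\ind_{\{\sigma\ge -T\}} F G\bigr] = \E\bigl[\ind_{\{\sigma\ge -T\}} \mathbf{E}_{1;f,g}^{k,\ell;\sigma;\vec{a},\star}[F]\cdot G\bigr].
\end{equation*}
Sending $T\to\infty$ and invoking a standard monotone class argument (as in the proof of Theorems~\ref{thm1} and~\ref{thm0}) then upgrades this identity to the claimed equality of conditional laws on the random interval $[\sigma,0]$.

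The main obstacle is verifying the continuity statement for the Gibbs measure\textemdash specifically, ensuring that the random normalization constant $\E_\alpha[\mathcal{W}_{1;f_{\sigma_n},g_{\sigma_n}}^{k,\ell;\sigma_n,0}]$ stays bounded away from zero almost surely uniformly in $n$ (so that the quotient defining the conditional expectation under $\mathbf{P}_{1;f,g}^{k,\ell;\sigma_n;\vec{a}_n,\star}$ behaves well in the limit). A continuum analogue of Proposition~\ref{zlbd}, obtainable from it via Theorem~\ref{lem:invprin}, guarantees positivity of the normalization at the limiting parameters; combined with the almost sure convergence $(\sigma_n,\vec{a}_{\sigma_n},g_{\sigma_n}) \to (\sigma,\vec{a}_\sigma,g_\sigma)$, this suffices. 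Measurability of the map $(A,\vec{a},f,g) \mapsto \mathbf{P}_{1;f,g}^{k,\ell;A;\vec{a},\star}$ in the appropriate topology is straightforward from the explicit formula in Definition~\ref{kpzgibbs}, so the conditional law over the random domain $[\sigma,0]$ is well defined.
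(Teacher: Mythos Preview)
Your proposal is correct and follows essentially the same approach the paper uses: the paper does not give a separate proof of Lemma~\ref{lem:sg2} but treats it (like Lemma~\ref{lem:sg}) as a straightforward consequence of the ordinary Gibbs property via the standard dyadic-approximation argument of \cite[Lemma~2.5]{kpzle}, which is exactly what you outline. Your discussion of the normalizing constant is slightly more elaborate than needed\textemdash for any fixed continuous boundary data the weight $\mathcal{W}_{1;f,g}^{k,\ell;A,0}$ is almost surely strictly positive, so the denominator is automatically positive and continuity in $(A,\vec{a},f,g)$ follows directly by dominated convergence\textemdash but this does not affect correctness.
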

By the diffusive scaling properties of Brownian motions, the rescaled HSKPZ line ensemble defined in \eqref{def:Hrescaled} satisfies the same Gibbs properties described above, but with the scale parameter $L=1$ replaced by $L=t^{2/3}$.
\section{Tightness of HSLG line ensemble under intermediate disorder scaling} \label{sec6}

In this section, we prove Proposition \ref{tightline}. Following the strategy of \cite{kpzle}, we prove the proposition using three key lemmas, which can be viewed as analogues of Propositions 6.1, 6.2, and 6.3 of \cite{kpzle}. 

\begin{lemma}\label{lowertight1}
For all $n\geq 1$, $\varepsilon>0$, there exists $R_n = R_n(\varepsilon)>0$ such that for all $x_0>0$, large $N$, and $\overline{x}\in[-x_0,0]$, we have
\begin{equation*}
    \mathbb{P} \left(\inf_{x\in[\overline{x}-1,\overline{x}]} \left(\hh_n^N(x,t) + \frac{x^2}{2t} \right) < - R_n \right) < \varepsilon.
\end{equation*}
\end{lemma}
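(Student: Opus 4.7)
The plan is to prove Lemma \ref{lowertight1} by induction on $n\ge 1$, following the strategy developed in the proof of Proposition 6.2 of \cite{kpzle} for the full-space KPZ line ensemble, with modifications to accommodate the one-sided Gibbs property and the weaker parabolic control available to us. The base case $n=1$ starts from one-point parabolic control: Proposition \ref{p:parabol} combined with the finite-dimensional convergence in Proposition \ref{thma} yields that $\hh_1^N(y,t)+y^2/(2t)$ is tight uniformly in $y\le 0$ and in $N$ large. To upgrade this one-point bound to a uniform lower bound on the infimum over $[\overline{x}-1,\overline{x}]$, I would apply the two-sided Gibbs property (Lemma \ref{lem:gibbs}(b)) on the slightly enlarged window $[\overline{x}-2,\overline{x}+1]\cap(-\infty,0]$, conditioning on the (w.h.p.~bounded) endpoint values and on $\hh_2^N$. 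The conditional law is a log-gamma bridge reweighted by $W_{N;+\infty,\hh_2^N}^{1,1;\overline{x}-2,\overline{x}+1}\le 1$, so any lower-tail event $\m{E}$ has conditional probability at most $\mathbb{P}_{\mathrm{bridge}}(\m{E})/\mathbb{E}[W]$. The numerator is handled by Donsker-type invariance and L\'evy's modulus for the limiting Brownian bridge, and the denominator is bounded below by a positive constant via a variant of Proposition \ref{zlbd} (whose hypothesis is an upper-tightness input on $\hh_2^N$, developed in the parallel upper-tail argument of Section \ref{sec6}).

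For the inductive step, assume lower and upper tightness for $\hh_1^N,\dots,\hh_{n-1}^N$, and target lower tightness of $\hh_n^N$. The argument is a quantitative contrapositive driven by stochastic monotonicity: supposing $\hh_n^N$ dips substantially below $-R_n$ at some $y\in[\overline{x}-1,\overline{x}]$, I would apply the strong Gibbs property (Lemma \ref{lem:sg}(a)) on stopping times $\sigma\le y\le\tau$ chosen so that the external data $\hh_i^N(\sigma,t),\hh_i^N(\tau,t)$ for $i\le n$ are controlled by the inductive hypothesis. Via Lemma \ref{lem:sm}, I would couple the resampled ensemble of $\hh_1^N,\dots,\hh_n^N$ on $[\sigma,\tau]$ to a modified ensemble whose bottom entrance/exit heights at $\sigma,\tau$ are lowered to match the observed dip; stochastic monotonicity forces all curves in this modified coupling down. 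The soft non-intersection penalty in \eqref{def:W} then transmits a dip on curve $n$ into a macroscopic dip on curve $1$, contradicting the base-case control of $\hh_1^N(y,t)+y^2/(2t)$ on the wider interval $[\sigma,\tau]$. Making each quantitative loss explicit yields the required $R_n$.

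The principal obstacle is the boundary region $\overline{x}\in(-1,0]$, where the enlarged two-sided window $[\overline{x}-2,\overline{x}+1]$ crosses the boundary and the two-sided Gibbs property is unavailable. There I would instead invoke the one-sided Gibbs property (Lemma \ref{lem:gibbs}(a)) on $[\overline{x}-2,0]$, handle the drifts $(-1)^i\alpha$ via the Cameron--Martin reformulation \eqref{rnd}, and lower bound the one-sided normalization using Proposition \ref{zlbd} directly. Ensuring uniformity in $\overline{x}\in[-x_0,0]$ and in $N$ requires tracking that all constants depend only on $n$, $t$, and $\varepsilon$; in particular, the normalization lower bounds must be made uniform in the location of the window, which is the most delicate book-keeping in the argument and the part of the proof where the one-sided structure forces the most substantive deviation from the full-space template of \cite{kpzle}.
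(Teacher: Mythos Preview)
Your base case contains a circular dependency. You propose to lower bound the normalization $\mathbb{E}[W_{N;+\infty,\hh_2^N}^{1,1;\cdot,\cdot}]$ via a variant of Proposition~\ref{zlbd}, but that proposition's proof rests on Proposition~\ref{tightline} for $\hh_2^N$, hence on Lemma~\ref{uppertight}$(2)$; in the paper's induction scheme Lemma~\ref{uppertight}$(n)$ requires Lemma~\ref{lowertight1}$(n)$ as input, so upper-tightness of $\hh_2^N$ is unavailable until after the very statement you are proving. The paper sidesteps this entirely: since the target event $\neg\m{B}=\{\inf_x(\hh_1^N(x,t)+x^2/2t)<-R_1\}$ is \emph{decreasing}, stochastic monotonicity (Lemma~\ref{lem:sm}) lets one send the floor $\hh_2^N\to-\infty$ and lower the two endpoints to their parabolic values, giving $\ind_{\m{A}}\cdot\Pr_{\text{Gibbs}}(\neg\m{B})\le\Pr_{\text{bridge}}(\neg\m{B})$ directly, with no denominator. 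This also removes your ``principal obstacle'' at the boundary: the two-sided window can be taken inside $(-\infty,0]$ (the paper uses $[\overline{x}',\overline{x}]$ with $\overline{x}'\in[\overline{x}-2,\overline{x}-1]$), and no one-sided normalization estimate is needed for $n=1$.

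Your inductive step is also off-target. The claim that a dip in curve $n$ ``transmits'' via the soft non-intersection penalty to a macroscopic dip in curve $1$ goes in the wrong direction: curve $1$ is constrained only by curve $2$ through \eqref{def:W}, and lowering a \emph{floor} does not push the curve above it down. The paper's mechanism is different and proceeds in two steps. First (Lemma~\ref{lowtightE}) one shows that curve $n$ itself has a high point on a window $[\overline{x}-2T,\overline{x}-T]$ to the left: if not, then resampling curve $n-1$ on that window with this uniformly low floor and with endpoints bounded above (Lemma~\ref{uppertight}$(n-1)$) produces essentially a free Brownian bridge, which at the midpoint stays near the chord and hence, by strict concavity of $-x^2/2t$, sits $\sim T^2$ below the parabola; this contradicts Lemma~\ref{lowertight2}$(n-1)$. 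Second (Lemma~\ref{lowtightEinf}), from that high point one applies the strong Gibbs property to curve $n$ \emph{alone}, removes the floor $\hh_{n+1}^N$ by monotonicity, replaces the ceiling $\hh_{n-1}^N$ by its inductively controlled lower envelope, and finishes with a Brownian-motion infimum estimate. The need for Lemma~\ref{lowertight2}$(n-1)$ (not just pointwise lower-tightness at level $n-1$) is the piece of the architecture your plan does not anticipate.
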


\begin{lemma}\label{lowertight2}
Fix any $\delta_0>0$. For all $n\geq 1$, $\varepsilon>0$, $\delta \in (0,\delta_0)$, there exists $T_0>0$ such that for all $x_0 > T_0$, large $N$, $T\in[T_0,x_0]$, and $y_0\in [-x_0+T,0]$, we have
\begin{equation*}
    \mathbb{P}\left(\inf_{x\in[y_0-T,y_0]} \left(\hh_n^N(x,t) + \frac{x^2}{2t}\right) < -\delta T^2 \right) < \varepsilon.
\end{equation*}
\end{lemma}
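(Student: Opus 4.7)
The strategy is to prove the lemma by induction on $n$, exploiting the fact that a dip of magnitude $\delta T^2$ is extraordinarily unlikely for a log-gamma bridge on an interval of length $T$. A typical bridge fluctuates by only $O(\sqrt{T})$, and a reflection-principle estimate gives probability of order $\exp(-c\delta^2 T^3)$ for a dip of size $\delta T^2$. This cubic rate comfortably dominates the $O(\exp(CT))$ loss from Gibbs reweighting, so the main work is to arrange a good Gibbs resampling of $\hh_n^N$ on $[y_0-T,y_0]$ and to lower-bound the associated normalization constant.

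For the base case $n = 1$, I would use the parabolic trajectory of the HSKPZ equation (Proposition \ref{p:parabol}), which via Proposition \ref{thma} gives pointwise tightness of $\hh_1^N(x,t) + x^2/(2t)$ uniformly in $x$. I would partition $[y_0-T, y_0]$ into sub-intervals of length $\ell := \sqrt{\delta t}\,T$, and pin the $\lceil T/\ell\rceil$ endpoints within $[-K, K]$ of parabola with high probability by a union bound. On each sub-interval, the Gibbs property (Lemma \ref{lem:gibbs}) together with stochastic monotonicity (Lemma \ref{lem:sm})---after dropping the floor $\hh_2^N$ downward---stochastically lower-bounds $\hh_1^N$ by a plain log-gamma bridge between the pinned endpoints. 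The chord between two parabola values on such a sub-interval lies within $\ell^2/(2t) \leq \delta T^2/2$ of the parabola, while the bridge only fluctuates by $O(\sqrt{\ell})$, yielding the desired infimum bound.

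For the inductive step, assume the lemma (and a corresponding upper-tail bound) holds for curves $1,\ldots,n-1$. By Lemma \ref{lowertight1} applied at the endpoints $y_0-T$ and $y_0$, pin $\hh_n^N$ within $O(1)$ of parabola at these two points with probability at least $1-\varepsilon/2$. Lemma \ref{lem:gibbs}(b) then describes the conditional law of $\hh_n^N$ on $[y_0-T, y_0]$, given these endpoints and $\hh_{n\pm 1}^N$, as a log-gamma bridge reweighted by
\[
W = \exp\!\bigg(-\int_{y_0-T}^{y_0}\bigl[e^{\hh_{n+1}^N-\hh_n^N} + e^{\hh_n^N - \hh_{n-1}^N}\bigr]\,dx\bigg).
\]
Since $W \leq 1$ pointwise, writing $\mathsf{A}$ for the dip event, one has $\mathbb{P}(\mathsf{A}\mid \cdots) \leq \mathbb{P}_{\mathrm{bridge}}(\mathsf{A})/\mathbb{E}_{\mathrm{bridge}}[W]$, and the numerator is bounded by $\exp(-c\delta^2 T^3)$ via Donsker-type comparison with a Brownian bridge and the reflection-principle estimate above.

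The main obstacle is lower-bounding the normalization $\mathbb{E}_{\mathrm{bridge}}[W]$. On a high-probability good event furnished by the inductive hypothesis and upper-tail analogs, both $\hh_{n-1}^N$ and $\hh_{n+1}^N$ stay within a parabolic corridor of width $\delta' T^2$ with $\delta'\ll\delta$. Restricting further to an envelope event where the bridge $\hh_n^N$ maintains an $O(1)$ gap below $\hh_{n-1}^N$ and above $\hh_{n+1}^N$ throughout $[y_0-T, y_0]$, the integrand in $W$ is uniformly $O(1)$, so on this event $W \geq \exp(-CT)$. Brownian-bridge small-ball estimates show the envelope event has probability at least $\exp(-C' T)$, giving $\mathbb{E}_{\mathrm{bridge}}[W] \geq \exp(-(C+C')T)$. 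The cubic-versus-linear comparison $\exp(-c\delta^2 T^3 + (C+C')T) < \varepsilon$ then holds for $T\geq T_0(\delta,\varepsilon)$ large, completing the proof. The delicate point is verifying that the envelope event has the claimed probability uniformly over realizations of the good event, which requires the parabolic corridor width $\delta' T^2$ to accommodate both bridge endpoints without forcing an atypically narrow tube---this is where the precise choice of $\delta'$ relative to $\delta$ matters, and it is handled by tuning the induction with a strictly decreasing sequence of dip constants $\delta_1>\delta_2>\cdots>\delta_n = \delta$.
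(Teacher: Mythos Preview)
Your inductive step has a genuine gap: resampling $\hh_n^N$ over the entire interval $[y_0-T,y_0]$ cannot work when $\delta$ is small. With endpoints pinned at the parabola, the bridge chord at the midpoint sits at height $-R_n - T^2/(8t)$ relative to the parabola (this is a straight computation using concavity of $-x^2/(2t)$ over a span of length $T$). Whenever $\delta < 1/(8t)$, which the statement must cover since $\delta\in(0,\delta_0)$ is arbitrary, the expected bridge already lies below $-\delta T^2$ at the midpoint, so $\mathbb{P}_{\mathrm{bridge}}(\mathsf{A})$ is not $\exp(-c\delta^2 T^3)$ but essentially $1$. Worse, the soft ceiling $\hh_{n-1}^N$ only pushes the resampled curve further down, so the reweighted probability is at least as bad. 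No amount of normalization-constant control can rescue this. A secondary issue: you invoke control on $\hh_{n+1}^N$ for the envelope event, but at this stage of the induction nothing has been proven about curve $n{+}1$; only $\hh_{n-1}^N$ is available, and one must drop the floor to $-\infty$ via monotonicity.

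The paper's argument avoids the chord-sag obstruction by never resampling over the full length-$T$ window. Instead, it first uses Lemma~\ref{lowertight1}$(n)$ to pin $\hh_n^N$ pointwise near the parabola at a dense grid of points (spacing $1$) inside $[y_0-T,y_0]$; a Markov-inequality count shows that, with high probability, consecutive ``good'' grid points are at most $(T{+}1)\delta$ apart. One then resamples only on each short sub-interval between good points. On a span of length $\leq (T{+}1)\delta$, the chord sag relative to the parabola is $O(\delta^2 T^2/t) \ll \delta T^2$, and the bridge fluctuation is $O(\sqrt{\delta T}) \ll \delta T^2$, so the dip event genuinely has probability $\leq \varepsilon/T$ per sub-interval. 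The ceiling from the inductive hypothesis for $\hh_{n-1}^N$ at level $-\tfrac12\delta T^2$, plus a hard-ceiling constraint inherited from the bad points, provide the needed normalization lower bound. The key structural idea you are missing is this local-to-global reduction via the good-point decomposition.
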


\begin{lemma}\label{uppertight}
For all $n\geq 1$, $\varepsilon>0$, there exists $\hat{R}_n = \hat{R}_n(\varepsilon) > 0$ such that for all $x_0>0$, large $N$, and $\overline{x} \in [-x_0,0]$, we have
\begin{equation*}
\Pr\bigg(\sup_{x\in[\overline{x}-1,\bar{x}]} \left(\hh_n^N(x,t) + \frac{x^2}{2t}\right) > \hat{R}_n \bigg) < \varepsilon.
\end{equation*}
\end{lemma}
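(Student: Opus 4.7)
The plan is to prove Lemma \ref{uppertight} by induction on $n$, using Lemmas \ref{lowertight1} and \ref{lowertight2} (lower tightness) as inputs together with the Brownian Gibbs property (Lemma \ref{lem:gibbs}), its strong version (Lemma \ref{lem:sg}), and stochastic monotonicity (Lemma \ref{lem:sm}).

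For the base case $n=1$, I would first establish pointwise upper tightness of $\hh_1^N(\bar x, t) + \bar x^2/(2t)$ uniformly in $\bar x \in [-x_0, 0]$ by adapting the chaos expansion comparison of Proposition \ref{p:parabol} directly to the pre-limit HSLG partition functions (via identity \eqref{symi}). To upgrade this pointwise bound to a bound on the sup over $[\bar x - 1, \bar x]$, I would use the one-sided Gibbs property on $[\bar x - 2, 0]$ combined with the fundamental inequality
\begin{align*}
\Pr\big(\m A\big) = \frac{\Ex[W\ind_{\m A}]}{\Ex[W]} \leq \frac{\Pr_{\mathrm{walk}}(\m A)}{\Ex[W]},
\end{align*}
which holds because $W\leq 1$. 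Here $\m A$ is the sup event, $\Pr_{\mathrm{walk}}$ is the law of a free scale-$N$ log-gamma random walk with drift $-\alpha$ started from $\hh_1^N(\bar x - 2, t)$, and $\Ex[W]$ is lower bounded on a high-probability event by Proposition \ref{zlbd} (with $k=1$). Donsker's invariance principle then controls $\Pr_{\mathrm{walk}}(\m A)$, yielding the claimed upper bound.

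For the inductive step $n\geq 2$, I assume upper tightness for $\hh_k^N$ for $k<n$, producing a high-probability event on which $\sup_{x\in[\bar x - L, 0]}(\hh_{n-1}^N(x,t) + x^2/(2t)) \leq R_{n-1}$ for any chosen $L>0$. The key new ingredient is a stopping-domain argument: I would define
\[
\sigma := \inf\{x\in[\bar x - L, \bar x - 1] : \hh_n^N(x,t) + x^2/(2t) \leq C_n\},
\]
setting $\sigma = \bar x - 1$ if the set is empty. By construction $[\sigma, 0]$ is a stopping domain for $(\hh_i^N)_{i\in\ll 1,n\rr}$ (Definition \ref{def:sd}), and the event $\{\sigma < \bar x - 1\}$ has high probability for $L$ large and $C_n$ chosen appropriately: conditional on $\hh_{n-1}^N + x^2/(2t) \leq R_{n-1}$, the event that $\hh_n^N + x^2/(2t) > C_n$ for all $x\in[\bar x - L, \bar x - 1]$ forces $\hh_n^N > \hh_{n-1}^N + (C_n - R_{n-1})$ on this interval, producing a Gibbs reweighting factor of order $\exp(-\sqrt{N}Le^{C_n - R_{n-1}})$, which is super-exponentially small when $C_n$ is large. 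Once $\sigma < \bar x - 1$, the strong Gibbs property (Lemma \ref{lem:sg}) expresses the conditional law on $[\sigma, 0]$ as a reweighted log-gamma random walk starting from a controlled value $\hh_n^N(\sigma, t) + \sigma^2/(2t) \leq C_n$, and the base-case argument applies to bound $\sup_{x\in[\bar x - 1, \bar x]} (\hh_n^N(x,t) + x^2/(2t))$.

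The principal technical obstacle is the quantitative estimate showing $\sigma < \bar x - 1$ with high probability uniformly in $\bar x$ and $x_0$: this requires carefully balancing the super-exponential Gibbs penalty against the prior random walk's probability of the \emph{high} event, and ensuring the penalty estimate survives integration against $\hh_{n+1}^N$ and the boundary data. A secondary difficulty is obtaining the lower bound on $\Ex[W]$ uniformly in $\bar x \in [-x_0, 0]$ and in $x_0 > 0$, which must be achieved by applying Proposition \ref{zlbd} with explicit accounting for the parabolic envelope on which the HSLG line ensemble lives.
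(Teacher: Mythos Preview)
Your proposal has a genuine structural gap: both the base case and the inductive step ultimately require a lower bound on the Gibbs normalizing constant $\Ex[W]$ for resampling the $n$th curve, and this normalizing constant depends on the floor $\hh_{n+1}^N$. Lower-bounding $\Ex[W]$ therefore requires an \emph{upper} bound on $\sup \hh_{n+1}^N$\textemdash precisely the content of Lemma~\ref{uppertight} at index $n+1$, which is not yet available. Your invocation of Proposition~\ref{zlbd} makes the circularity explicit: the proof of Proposition~\ref{zlbd} uses Proposition~\ref{tightline}, which in turn relies on Lemma~\ref{uppertight}. The same problem recurs in your inductive step: the heuristic that ``$\hh_n^N > \hh_{n-1}^N + (C_n - R_{n-1})$ produces a tiny Gibbs reweighting factor'' is correct about the numerator $\Ex[W\ind_{\m A}]$, but says nothing about the denominator $\Ex[W]$, which could be equally tiny if $\hh_{n+1}^N$ happens to be very high.

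The paper's proof avoids this obstacle by arguing in the opposite direction, so that monotonicity allows the floor to be \emph{removed}. For the base case, rather than bounding $\Pr(\sup \hh_1^N > \hat R_1)$ directly, it introduces a stopping time $\tau_N$ at the first exceedance and uses the two-sided Gibbs property on $[\tau_N, \bar x+1]$, with the floor $\hh_2^N$ sent to $-\infty$, to show that if $\hh_1^N$ is high at $\tau_N$ and satisfies the already-proven lower bound at $\bar x+1$, then $\hh_1^N(\bar x)$ is high with probability at least $1/4$, contradicting pointwise tightness. For the induction step, it resamples the \emph{pair} $(\hh_{n-1}^N, \hh_n^N)$ on a stopping domain ending at the last exceedance $\chi$, again sending $\hh_{n+1}^N$ to $-\infty$, and shows via a two-curve estimate (Lemma~\ref{pchi}) that if $\hh_n^N$ is high at $\chi$ then $\hh_{n-1}^N$ must also be high somewhere with probability at least $1/4$, contradicting the inductive hypothesis. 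The key point is that removing the floor only \emph{decreases} the probabilities of these increasing ``high'' events, so no control on $\hh_{n+1}^N$ is ever needed.
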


\begin{proof}[Proof of Proposition \ref{tightline}] Proposition \ref{tightline} is a direct consequence of Lemmas \ref{lowertight1} and \ref{uppertight}.
\end{proof}

The proof of the three key lemmas will proceed by the same induction scheme as in \cite{kpzle}. We will write Lemma \ref{lowertight1}$(n)$ to denote the statement of Lemma \ref{lowertight1} at index $n$, and similarly for the other two lemmas. Lemma \ref{lowertight1}$(n)$ will be proven using the statements of all three lemmas for index $n-1$. Lemma \ref{lowertight2}$(n)$ will then be proven using Lemma \ref{lowertight1}$(n)$ and Lemma \ref{lowertight2}($n-1$). Lastly, Lemma \ref{uppertight}$(n)$ will be proven using Lemma \ref{lowertight1} for indices $n$, $n-1$, and $n-2$, and Lemma \ref{uppertight}$(n-1)$.

\subsection{Base case of the induction} In this subsection, we prove the three key lemmas for $n=1$.

\begin{proof}[Proof of Lemmas \ref{lowertight1} and \ref{lowertight2}: $n=1$ case.] We prove the base case for Lemma \ref{lowertight1} only, the base case for Lemma \ref{lowertight2} being similar.
Thanks to Propositions \ref{thma} and  \ref{p:parabol}, we can find $R_0(\e)>0$ such that for all $x \in (-\infty,0]$,
\begin{equation}\label{w1}
   \limsup_{N\to\infty} \mathbb{P} \left( \hh_1^N(x,t) + \frac{x^2}{2t}  < - R_0 \right) < \varepsilon/8.
\end{equation}
Let us fix $\overline{x}\in [-x_0,0]$. Without loss of generality we assume $\overline{x}\in \Z_N$. Take $\overline{x}'\in [\overline{x}-2,\overline{x}-1]\cap \Z_N$.
Let $M(\e)>0$ be such that $e^{-M^2} \le \e/4$. Set $R_1:=R_0+M+2$, and define the events
\begin{align*}
    \m{A}:= \bigcap_{x\in \{\overline{x}',\overline{x}\}}\left\{\hh_1^N(x,t) + \frac{x^2}{2t}  \ge - R_0\right\}, \quad \m{B}:=\left\{\inf_{x\in[\overline{x}',\overline{x}]} \left(\hh_1^N(x,t) + \frac{x^2}{2t} \right) \ge - R_1\right\}.
\end{align*}
Let $\mathcal{F} = \mathcal{F}_{\operatorname{ext}}\big(\{1\}\times (\overline{x}',\overline{x})\big)$.  Note that $\m{A}\in\mathcal{F}$. By a union bound and the tower property of conditional expectation we have
\begin{align*}
    \Pr(\neg\m{B}) \le \Pr(\neg\m{A})+\Ex\left[\ind_{\m{A}}\Ex\left[\ind_{\neg\m{B}} | \mathcal{F} \right]\right].
\end{align*}
Thanks to \eqref{w1}, $\Pr(\neg\m{A}) \le \e/2$ for large enough $N$. Hence it suffices to show 
\begin{align}\label{w5}
    \limsup_{N\to \infty} \Ex\left[\ind_{\m{A}}\Ex\left[\ind_{\neg\m{B}} | \mathcal{F} \right]\right] \le \e/4.
\end{align}
By the HSLG Gibbs property (Lemma \ref{lem:gibbs}), $\Ex\left[\ind_{\neg\m{B}} | \mathcal{F} \right] = \Pr_{N;f,g}^{1;\overline{x}',\overline{x};a,b}(\neg\m{B})$, 
where $f=\infty, g=\hh_2^N(\cdot,t)$, $a=\hh_1^N(\overline{x}',t)$, and $b=\hh_1^N(\overline{x},t)$.
We observe that on the event $\m{A}$, $a \ge -R_0-\overline{x}'^2/2t$ and $b\ge -R_0-\overline{x}^2/2t$. By stochastic monotonicity (Lemma \ref{lem:sm}), letting $g\downarrow -\infty$,  $a\downarrow -R_0-\overline{x}'^2/2t$, and $b\downarrow -R_0-\overline{x}^2/2t$ only increases the probability. Thus,
\begin{align*} \ind_{\m{A}}\cdot\Pr_{N;f,g}^{1;\overline{x}',\overline{x};a,b}(\neg\m{B}) \le \Pr\bigg( \inf_{x\in [\overline{x}',\overline{x}] } S^N(x)+\frac{x^2}{2t} \le -R_1\bigg),
\end{align*}
where $S^N$ is a scale-$N$ log-gamma random bridge on $[\overline{x}',\overline{x}]$ from $-R_0-\overline{x}'^2/2t$ to $-R_0-\overline{x}^2/2t$. In view of the above inequality, an application of Fatou's lemma leads to
\begin{align}\label{w4}
    \limsup_{N\to \infty} \Ex\left[\ind_{\m{A}}\Ex\left[\ind_{\neg\m{B}} | \mathcal{F} \right]\right] \le  \limsup_{N\to\infty}\Pr\bigg( \inf_{x\in [\overline{x}',\overline{x}] } S^N(x)+\frac{x^2}{2t} \le -R_1\bigg).
\end{align}
Let $(u_N,v_N) \in \Z_{N}$ be a pair that maximize the probability on the r.h.s.~of \eqref{w4} over all (finitely many) possible choices of $(\overline{x}',\overline{x}) \in \{(z,w)\in \Z_N^2 : z\in [-x_0,0], w-z\in [-2,-1]\}$. Then
\begin{align*}
    \limsup_{N\to\infty}\Pr\bigg( \inf_{x\in [\overline{x}',\overline{x}] } S^N(x)+\frac{x^2}{2t} \le -R_1\bigg) \le \limsup_{N\to\infty}\Pr\bigg( \inf_{x\in [u_N,v_N] } S^N(x)+\frac{x^2}{2t} \le -R_1\bigg).
\end{align*}
We focus on the subsequence along which the limsup on the right hand side is attained. Passing through another subsequence (still denoted as the original sequence) we may assume  $u_N\to u$ and $v_N\to v$. Then by the invariance principle (Theorem \ref{lem:invprin}), 
\begin{align}\label{w3}
\limsup_{N\to\infty}\Pr\bigg( \inf_{x\in [u_N,v_N] } S^N(x)+\frac{x^2}{2t} \le -R_1\bigg) \le \mathbf{P}\bigg( \inf_{x\in [u,v]} B(x)+\frac{x^2}{2t}\le -R_1\bigg),
\end{align}
where $B$ is Brownian bridge on $[u,v]$ from $-R_0-u^2/2t$ to $-R_0-v^2/2t$. Note that $v-u \in [1,2]$. By for instance \cite[Proposition 12.3.3]{dudley} (and by the choice of $M$) we have
 \begin{align}\label{w2}
     \mathbf{P}\left(\inf_{x\in [u,v]} (B(x)-\E[B(x)]) \le -M\right) \le e^{-2M^2/(v-u)} \le \e/4.
 \end{align}
On the other hand, for $x\in [u,v]$, a simple computation shows that 
$$\E[B(x)]+\frac{x^2}{2t}=-R_0-\frac1{2t}(x-u)(v-x) \ge -R_0-2=-R_1+M.$$ 
In view of \eqref{w2}, it follows that r.h.s.~of \eqref{w3} $\le \e/4$. Plugging this bound back into \eqref{w4} verifies \eqref{w5}, completing the proof.
\end{proof}
\begin{proof}[Proof of Lemma \ref{uppertight}: $n=1$ case.] 
Let us define the three times $y := N^{-1/2}\lfloor(\overline{x}-1)\sqrt{N}\rfloor$, $z := N^{-1/2}\lceil \overline{x}\sqrt{N}\rceil$, and $w := N^{-1/2}\lceil (\overline{x}+1)\sqrt{N}\rceil$, so that $y,z,w\in \Z_N$. Since $[\overline{x}-1,\overline{x}] \subset [y,z]$, it suffices to bound the probability of the event
\[
\mathsf{A} := \bigg\{\sup_{x\in[y,z]} \bigg(\hh_1^N(x,t) + \frac{x^2}{2t}\bigg) > \widehat{R}_1 \bigg\}.
\]
For each $N$, define the random time
\[
\tau_N := \inf\left\{x\in[y,z]\cap\Z_N : \hh_1^N(x,t) + \frac{x^2}{2t} > \widehat{R}_1 \right\},
\]
and if the above set is empty let $\tau_N = z$. Note that $[\tau_N,w]$ is a stopping domain (Definition \ref{def:sd}). Consider the additional events
\begin{align*}
    \mathsf{B} &:= \left\{\hh_1^N(w,t) + \frac{w^2}{2t} \geq -R_1 \right\}, \qquad
    \mathsf{C} := \bigg\{\hh_1^N(z,t) > \frac{1}{2}\bigg(\widehat{R}_1 - \frac{\tau_N^2}{2t} - R_1 - \frac{w^2}{2t}\bigg)\bigg\}.
\end{align*}
By Lemma \ref{lowertight1} for $n=1$ (which we recall has been proven already), we may choose $R_1$ large enough so that $\Pr(\neg\mathsf{B}) < \e/2$. It therefore suffices to show that $\Pr(\m{A} \cap \m{B}) < \e/2$ for $\widehat{R}_1$ large. We achieve this by showing
\begin{align}\label{twc}
   \Pr(\m{C}) < \e/8, \quad \mbox{ and }\quad \Pr(\m{C}) \ge \tfrac14 \cdot \Pr(\m{A}\cap \m{B}),
\end{align}
 for $\widehat{R}_1$ large and for all large enough $N$.

To prove the first claim in \eqref{twc}, we establish that $\Pr(\m{C}) < \e/8$ uniformly over $\tau_N\in[y,z]\cap\Z_N$ once $\widehat{R}_1$ is sufficiently large. To see this note that, combining Propositions \ref{thma} and \ref{p:parabol}, there exists $K = K(\e) >0$ such that $\Pr(|\hh_1^N(z,t) + z^2/2t| \geq K) < \e/8$ for all large $N$. It then suffices to choose $\widehat{R}_1$ large enough so that $\frac{1}{2}(\widehat{R}_1 - \tau_N^2/2t - R_1-w^2/2t) \geq -z^2/2t + K$. Since $y \leq \tau_N \leq 0$, $y\to \overline{x}-1$, $z\to\overline{x}$, and $w\to\overline{x}+1$, for sufficiently large $N$ it will suffice to choose $\widehat{R}_1$ so that
\[
\frac{1}{2} \bigg(\widehat{R}_1 - \frac{(\overline{x}-1)^2}{2t} - R_1 - \frac{(\overline{x}+1)^2}{2t}\bigg) \geq -\frac{\overline{x}^2}{2t} + K + 1.
\]
This simply requires $\widehat{R}_1 \geq R_1 + 2K + 3$. This verifies the first part of \eqref{twc}. For the second part, note that
\begin{align*}
    \Pr(\m{C}) &\geq \Pr(\m{A} \cap \m{B} \cap \m{C}) = \Ex\left[ \ind_{\m{A} \cap \m{B}} \Ex \left[\ind_{\m{C}} \mid \mathcal{F}_{\mathrm{ext}}(\{1\} \times (\tau_N,w))\right]\right].
\end{align*}
By the strong Gibbs property (Lemma \ref{lem:sg}),
\begin{align*}
    \ind_{\m{A} \cap \m{B}} \cdot \Ex \left[\ind_{\m{C}} \mid \mathcal{F}_{\mathrm{ext}}(\{1\} \times (\tau_N,w))\right] = \ind_{\m{A} \cap \m{B}} \cdot \Pr_{N; \hh_2^N(\cdot,t)}^{1;\tau_N,w;\hh_1^N(\tau_N,t),\hh_1^N(w,t)}(\m{C}).
\end{align*}
On the event $\m{A}$, $\hh_1^N(\tau_N,t) \ge u:= \widehat{R}_1 - \tau_N^2/2t$ and on the event $\m{B}$, $\hh_1^N(w,t) \ge v := -R_1 - w^2/2t$. Thus, by stochastic monotonicity (Lemmas \ref{lem:sm}) we may write
\begin{align*}
    \ind_{\m{A} \cap \m{B}} \cdot \Pr_{N; \hh_2^N(\cdot,t)}^{1;\tau_N,w;\hh_1^N(\tau_N,t),\hh_1^N(w,t)}(\m{C}) 
    &\geq \ind_{\m{A} \cap \m{B}} \cdot \inf_{\tau_N\in[y,z]\cap\Z_N} \Pr^{1;\tau_N,w;u,v}_{N;-\infty} \left(S^N(z) > \frac{u+v}{2} \right).
\end{align*}
For each $N$, let $\sigma_N$ be a value of $\tau_N$ attaining the infimum on the right hand side. Since $(\sigma_N)_{N\ge 1}$ is a bounded sequence, we may pass to a subsequence and assume that $\sigma_N \to \sigma \in [\overline{x}-1,\overline{x}]$ as $N\to\infty$. Then by Theorem \ref{lem:invprin}, for sufficiently large $N$ the above deductions imply
\[
\Pr(\m{C}) \geq \Pr(\m{A}\cap\m{B}) \cdot \frac{1}{2}\inf_{\sigma\in[\overline{x}-1,\overline{x}]} \mathbf{P}^{1;\sigma,\overline{x}+1;u',v'}_1 \left(B(\overline{x}) > \frac{u'+v'}{2}\right),
\]
where $u' := \widehat{R}_1 - \sigma^2/2$, $v' := -R_1 - (\overline{x}+1)^2/2t$. In words, the right-hand side is simply the probability that a Brownian bridge on $[\sigma,\overline{x}+1]$ lies above the line segment connecting its endpoints at time $\overline{x}$. Note that $\overline{x}$ lies to the left of the midpoint of $[\sigma,\overline{x}+1]$, so by affine invariance of Brownian bridges, the above probability is easily seen to at least $1/2$ for all $\sigma\in[\overline{x}-1,\overline{x}]$. This yields the second inequality in \eqref{twc}.
\end{proof}

\subsection{Proof of Lemma \ref{lowertight1}: Induction step}
We assume that $n\geq 2$ and that all lemmas have been proven up to index $n-1$. Let $T_0$ be as in Lemma \ref{lowertight2}$(n-1)$. Let $\hat{R}_{n-1}$ be as in Lemma \ref{uppertight}$(n-1)$. Set  $u=\hat{R}_{n-1}-(\overline{x}-2T)^2/2t$, $v=\hat{R}_{n-1}-(\overline{x}-T)^2/2t$.  We choose $T$ large enough depending on $\hat{R}_{n-1},\e,t$ such that
\begin{align}\label{tcond}
   & -\frac{T^2}{16}-\frac{1}{2t}\bigg(\bar{x}-\frac{3T}{2}\bigg)^2 \ge T+\frac{u+v}{2}, \qquad \exp\left(\sqrt{2\pi t}\,e^{-T}\right)\cdot \frac{e^{-2T}}{1-e^{-2T}} < \frac{\e}{2}.
\end{align}
holds for all $\bar{x}\ge 0$. The first inequality above holds due to strict concavity of the function $-x^2/2t$. Set $M:=2T+\frac{T^2}{8t}$ and define the event
\begin{equation*}
    \mathsf{E}_n^N := \left\{\sup_{x\in[\overline{x}-2T,\overline{x}-T]} \left(\hh_n^N(x,t) + \frac{x^2}{2t}\right) > -M \right\}.
\end{equation*}

The proof of Lemma \ref{lowertight1}$(n)$ will follow directly from the next two lemmas. 
\begin{lemma}\label{lowtightE}
    For all $x_0>0$, $N \geq N_0(x_0,\varepsilon)$, and $\overline{x}\in[-x_0,0]$,
    \begin{equation*}
        \mathbb{P}(\neg\mathsf{E}_n^N) \le 4\varepsilon.
    \end{equation*}
\end{lemma}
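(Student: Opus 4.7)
The proof is the inductive step at level $n$, so we may invoke Lemmas \ref{lowertight1}$(n-1)$, \ref{lowertight2}$(n-1)$, and \ref{uppertight}$(n-1)$. The plan, adapted from the strategy of \cite{kpzle} for the full-space KPZ line ensemble, combines the HSLG Gibbs property with stochastic monotonicity and a Brownian bridge fluctuation estimate.

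First, I would introduce a favorable event $\mathsf{G}$ on which $\hh_{n-1}^N$ exhibits parabolic behavior on a slightly enlarged interval $[\bar{x}-2T-1,\bar{x}-T+1]$ around $I := [\bar{x}-2T,\bar{x}-T]$: namely, $-\delta T^2 \le \hh_{n-1}^N(x,t) + x^2/(2t) \le \hat{R}_{n-1}$ throughout. This is arranged using Lemma \ref{lowertight2}$(n-1)$ with $\delta$ sufficiently small (depending on $t$), and Lemma \ref{uppertight}$(n-1)$ on the flanking unit subintervals. Standard bookkeeping gives $\mathbb{P}(\mathsf{G}^c) \le 2\varepsilon$, reducing the task to bounding $\mathbb{P}(\neg\mathsf{E}_n^N \cap \mathsf{G}) \le 2\varepsilon$.

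Next I would define stopping times $\sigma \in [\bar{x}-2T-1,\bar{x}-2T]$ and $\tau \in [\bar{x}-T,\bar{x}-T+1]$ by reading off favorable values of $\hh_n^N$ on the adjacent unit intervals, so that $[\sigma,\tau]$ is a stopping domain for $(\hh_n^N)$ in the sense of Definition \ref{def:sd}. Applying the strong two-sided HSLG Gibbs property (Lemma \ref{lem:sg}) for the single curve $\hh_n^N$ on $[\sigma,\tau]$, its conditional law is a scale-$N$ log-gamma random bridge reweighted by the Radon--Nikodym derivative in Definition \ref{def:twogibbs}, with ceiling $\hh_{n-1}^N$ and floor $\hh_{n+1}^N$. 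By stochastic monotonicity (Lemma \ref{lem:sm}), I would drop the floor to $-\infty$; this stochastically decreases the resampled $\hh_n^N$ on $[\sigma,\tau]$ and hence upper bounds the conditional probability of $\neg\mathsf{E}_n^N$.

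Finally, under the reduced law (log-gamma bridge constrained only by the ceiling $\hh_{n-1}^N$), I would invoke the invariance principle in Theorem \ref{lem:invprin} to compare with a Brownian bridge on an interval of length $\sim T$. On $\mathsf{G}$, after subtracting the parabola $-x^2/(2t)$ and the linear interpolant of the endpoint values selected by $\sigma,\tau$, the event $\neg\mathsf{E}_n^N$ forces this bridge to dip below a threshold of size $\sim M = 2T + T^2/(8t)$; a reflection-principle estimate (combined with the residual $\exp(\sqrt{2\pi t}\,e^{-T})$ contribution from the soft-ceiling weight) produces precisely the expression in the second inequality of \eqref{tcond}, which was arranged to be $< \varepsilon/2$. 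The main obstacle is the judicious choice of $\sigma,\tau$: since we do not yet have any induction hypothesis at level $n$, the endpoint values entering the resampling must be pinned down indirectly through the soft ordering $\hh_n^N \le \hh_{n-1}^N$, the level-$(n-1)$ induction, and the measurability constraints of Definition \ref{def:sd}.
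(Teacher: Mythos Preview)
Your approach has a genuine gap: you resample the wrong curve. You condition on $\mathcal{F}_{\mathrm{ext}}(\{n\}\times(\sigma,\tau))$ and try to show that the resampled $\hh_n^N$ is unlikely to satisfy $\neg\mathsf{E}_n^N$, i.e., unlikely to stay below $-M-x^2/(2t)$ throughout $I$. For this you would need a \emph{lower} bound on the endpoint values $\hh_n^N(\sigma),\hh_n^N(\tau)$, but at this stage of the induction you have no information whatsoever about $\hh_n^N$. The soft ordering you invoke provides at best a soft \emph{upper} bound $\hh_n^N \lesssim \hh_{n-1}^N$, which is the wrong direction; and once you drop the floor $\hh_{n+1}^N$ to $-\infty$ (which, as you note, is a valid upper bound on the probability of $\neg\mathsf{E}_n^N$), there is nothing in the remaining Gibbs weight that pushes $\hh_n^N$ up. So for all you know the endpoints are at $-10M$, in which case the bridge will not rise above $-M$ and the reflection estimate you sketch is vacuous.

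The paper's proof turns this on its head by resampling $\hh_{n-1}^N$ on $[\bar{x}-2T,\bar{x}-T]$ instead. The event $\neg\mathsf{E}_n^N$ then says the \emph{floor} for $\hh_{n-1}^N$ is at most $-M-x^2/(2t)$, so by monotonicity one may replace it by that deterministic function. The endpoint values for $\hh_{n-1}^N$ \emph{are} controlled from above by Lemma~\ref{uppertight}$(n-1)$ (this is the event $\mathsf{H}_{n-1}^N$), and after removing the ceiling and applying the invariance principle one obtains a Brownian bridge between two points at height $\sim\hat{R}_{n-1}$ above the parabola. By strict concavity of $-x^2/(2t)$ (the first inequality in \eqref{tcond}), the chord at the midpoint $\bar{x}-3T/2$ lies well below the parabola. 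Hence $\hh_{n-1}^N(\bar{x}-3T/2)+\tfrac{(\bar{x}-3T/2)^2}{2t}$ is, with high conditional probability, below $-T^2/16$; but Lemma~\ref{lowertight2}$(n-1)$ says this event (called $\mathsf{A}$ in the paper) has probability at most $\varepsilon$. This contradiction bounds $\mathbb{P}(\neg\mathsf{E}_n^N\cap\mathsf{H}_{n-1}^N)$, and the second inequality in \eqref{tcond} enters only when bounding the normalization from the (now very low) soft floor.
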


\begin{lemma}\label{lowtightEinf}
    For all $x_0>0$, $N\geq N_0(x_0,\varepsilon)$, and $\overline{x}\in[-x_0,0]$, 
    \begin{equation*}
        \mathbb{P}\left(\left\{\inf_{x\in[\overline{x}-1,\overline{x}]} \left(\hh_n^N(x,t) + \frac{x^2}{2t}\right) < - R_n \right\} \cap \mathsf{E}_n^N \right) < C\varepsilon.
    \end{equation*}
\end{lemma}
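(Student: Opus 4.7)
I will adapt the inductive Gibbsian argument of \cite{kpzle} to the half-space setting. The plan is to use the one-sided strong Gibbs property (Lemma \ref{lem:sg}) on a stopping domain anchored at a point where $\hh_n^N$ is known to be reasonably high, together with stochastic monotonicity (Lemma \ref{lem:sm}) and the induction hypotheses at index $n-1$, to reduce the claim to a quantitative estimate for an HSLG Gibbs measure with a deterministic ceiling envelope.

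Concretely, on $\m{E}_n^N$ let $\sigma_N$ be the leftmost point in $[\overline{x}-2T, \overline{x}-T] \cap \Z_N$ at which $\hh_n^N(\sigma_N, t) + \sigma_N^2/(2t) > -M$; off $\m{E}_n^N$ set $\sigma_N$ to a default value. Then $[\sigma_N, 0]$ is a stopping domain for $\{n\}$ (Definition \ref{def:sd}), and by the strong Gibbs property the conditional law of $\hh_n^N|_{[\sigma_N, 0]}$ given $\mathcal{F}_{\m{ext}}(\{n\}\times(\sigma_N,0])$ is the one-sided HSLG Gibbs measure $\mathbb{P}_{N; \hh_{n-1}^N, \hh_{n+1}^N}^{n, n; \sigma_N; a, \star}$ with $a = \hh_n^N(\sigma_N, t) \ge -M - \sigma_N^2/(2t)$. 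By Lemmas \ref{uppertight}$(n-1)$ and \ref{lowertight1}$(n-1)$, applied on unit subintervals covering $[\overline{x}-2T, 0]$ and combined via a union bound, the event
\[
\m{F} := \big\{\hh_{n-1}^N(x, t) + x^2/(2t) \in [-R_{n-1}, \hat{R}_{n-1}] \text{ for all } x \in [\overline{x}-2T, 0]\big\}
\]
has probability at least $1 - \e/4$ for suitable constants (depending on $x_0, T, \e$). On $\m{F}$, stochastic monotonicity allows me to replace the random ceiling $\hh_{n-1}^N$ by its deterministic lower envelope $-R_{n-1} - x^2/(2t)$, which produces walks that are stochastically smaller and hence yields an upper bound on the probability of the low-infimum event $\m{D}$.

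I then estimate the probability under the resulting measure by writing it as $\mathbb{E}[W \cdot \mathbf{1}_{\m{D}}] / \mathbb{E}[W]$ against independent scale-$N$ log-gamma walks. The denominator is bounded below by a positive constant $\delta$ via an argument modeled on Proposition \ref{zlbd}: on $\m{F}$, the walk has positive probability of staying strictly between ceiling and floor, and on this positive-probability event $W$ is bounded away from $0$. The numerator is bounded using $W \le 1$ by the probability that a free scale-$N$ log-gamma walk starting from $a$ with parameter $(-1)^n\alpha$ drops below $-R_n - x^2/(2t)$ somewhere on $[\overline{x}-1,\overline{x}]$; via the invariance principle (Theorem \ref{lem:invprin}) this reduces to a Brownian motion tail estimate with drift. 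Combining these bounds with the quantitative conditions in \eqref{tcond} (which calibrate the fluctuation scales), and choosing $R_n$ sufficiently large depending on $\hat{R}_{n-1}, R_{n-1}, T, t, \alpha, \e$, yields the desired $\Pr(\m{A} \cap \m{E}_n^N) < C\e$.

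\textbf{Main obstacle.} The central technical difficulty is calibrating $R_n$ so that it is independent of $\overline{x} \in [-x_0, 0]$. A naive free-walk tail estimate loses uniformity, because the shifted quantity $S^N(x) + x^2/(2t)$ absorbs a parabolic correction $(x^2 - \sigma_N^2)/(2t)$ whose magnitude grows like $T|\overline{x}|/t$ over the long interval $[\sigma_N, \overline{x}]$. Absorbing this $\overline{x}$-dependence requires exploiting the parabolic structure of both the soft ceiling from $\hh_{n-1}^N$ and the soft floor from $\hh_{n+1}^N$ (the latter cannot simply be dropped to $-\infty$, as that would leave the walk unconstrained from below), together with a Proposition \ref{zlbd}-style lower bound on the normalization, all calibrated via the inequalities in \eqref{tcond}. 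Making these interactions quantitative and uniform in $\overline{x}$ is the main technical content of the proof.
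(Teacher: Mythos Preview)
Your overall architecture is right and matches the paper: define the stopping time $\sigma_N$ on $\m{E}_n^N$, apply the one-sided strong Gibbs property on $[\sigma_N,0]$, replace random boundary data by deterministic envelopes via monotonicity, then pass to the continuum via the invariance principle. But your diagnosis of the obstacle and its resolution diverges from the paper in two substantive ways.

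First, your claim that the floor $\hh_{n+1}^N$ ``cannot simply be dropped to $-\infty$'' is incorrect, and in fact unavoidable: at this stage of the induction you have no control whatsoever on $\hh_{n+1}^N$, so the floor \emph{must} be removed via monotonicity. The paper does exactly this. Only the ceiling $\hh_{n-1}^N$ enters, and the paper controls it from below with a \emph{single} application of Lemma~\ref{lowertight2}$(n-1)$ (giving $\inf(\hh_{n-1}^N+x^2/2t)\ge -M+2\sqrt{T}$ on an interval of length $T$ at cost $\varepsilon$), rather than a union bound of Lemmas~\ref{lowertight1}$(n-1)$ and~\ref{uppertight}$(n-1)$ over unit subintervals as you propose. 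The upper bound $\hat R_{n-1}$ plays no role in this lemma.

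Second, the uniformity-in-$\overline{x}$ issue you correctly flag is resolved not by retaining the floor but by a case split on $\overline{x}$. For $\overline{x}>-2T$ (the only case the paper details), the one-sided domain $[\sigma_N,0]\subset[-4T,0]$, so every length scale is $O(T)$ independently of $x_0$; the parabolic correction $(x^2-\tau^2)/2t=(x-\tau)(x+\tau)/2t$ with $\tau,x\in[-4T,0]$ is then $O(T^2/t)$, and both the normalization lower bound (using concavity of $-x^2/2t$; see the display around~\eqref{6.xxdenom}) and the Brownian-tail numerator estimate are uniform in $\overline{x}$, yielding $R_n=M+4|\alpha|T+K\sqrt{T}$. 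For $\overline{x}\le-2T$ the paper reverts to the two-sided Gibbs argument of~\cite{kpzle} with a symmetric stopping time to the right of $\overline{x}$. Without this split, your union bound over $[\overline{x}-2T,0]$ (whose length is $|\overline{x}|+2T$, hence up to $x_0+2T$) would force $R_{n-1}$ and ultimately $R_n$ to depend on $x_0$, contradicting the statement.
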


\begin{proof}[Proof of Lemma \ref{lowtightE}]
Define the event
\begin{equation*}
    \mathsf{H}_{n-1}^N := \left\{\hh_{n-1}^N(x,t) + \frac{x^2}{2t} < \hat{R}_{n-1} \mbox{ at } x = \overline{x} -T \mbox{ and } x = \overline{x} - 2T \right\}.
\end{equation*}
Then $\mathbb{P}(\neg\mathsf{H}_{n-1}^N) < 2\varepsilon$ for large $N$ by Lemma \ref{uppertight}($n-1$), so it is enough to prove the following claim: 
\begin{equation}\label{lowtightEclaim}
\mathbb{P}(\neg\mathsf{E}_n^N \cap \mathsf{H}_{n-1}^N) < 2\varepsilon.
\end{equation}
Let 
\[
\mathsf{A} := \left\{\hh_{n-1}^N \left(\overline{x} - \frac{3T}{2},t\right) + \frac{(\overline{x}-\frac{3T}{2})^2}{2t} < - \frac{T^2}{16}\right\}.
\]
By Lemma \ref{lowertight2}$(n-1)$, $\mathbb{P}(\mathsf{A})<\varepsilon$ for large $N$. Therefore to prove \eqref{lowtightEclaim} it suffices to show that
\begin{equation}\label{tofs}
    \mathbb{P}(\neg\mathsf{E}_n^N \cap \mathsf{H}_{n-1}^N \cap \neg\mathsf{A}) < \varepsilon.
\end{equation}
for all large enough $N$.
To do so, we write
\begin{equation*}
    \mathbb{P}(\neg\mathsf{E}_n^N \cap \mathsf{H}_{n-1}^N \cap \neg\mathsf{A}) = \mathbb{E}\left[\mathbf{1}_{\neg\mathsf{E}_n^N \cap \mathsf{H}_{n-1}^N} \mathbb{E} \left[\mathbf{1}_{\neg\mathsf{A}} \mid \mathcal{F}_{\mathrm{ext}}\left(\{n-1\}\times(\overline{x}-2T, \overline{x}-T)\right)\right]\right].
\end{equation*}
We now apply the Gibbs property (Lemma \ref{lem:gibbs}) to the inner expectation to write
\begin{align}\nonumber
     & \mathbf{1}_{\neg\mathsf{E}_n^N \cap \mathsf{H}_{n-1}^N} \cdot \mathbb{E} \left[\mathbf{1}_{\neg\mathsf{A}} \mid \mathcal{F}_{\mathrm{ext}}\left(\{n-1\}\times(\overline{x}-2T, \overline{x}-T)\right)\right] \\ & \hspace{1cm}= \mathbf{1}_{\neg\mathsf{E}_n^N \cap \mathsf{H}_{n-1}^N} \cdot \mathbb{P}_{N;\hh^N_{n-2}(\cdot,t),\hh^N_{n}(\cdot,t)}^{n-1,n-1;\hh_{n-1}^N(a,t),\hh_{n-1}^N(b,t)} \bigg(S^N \left(\overline{x}-\tfrac{3T}{2}\right) +\frac{(\overline{x}-\frac{3T}{2})^2}{2t} > -\frac{T^2}{16}\bigg), \label{etht}
\end{align}
where $a=\overline{x}-2T$, $b=\overline{x}-T$. (Without loss of generality we assume $a,b\in \Z_N$.)
Note by stochastic monotonicity that on $\neg\m{E}_n^N \cap \m{H}_{n-1}^N$, if we replace the boundary conditions with $(u,v) := (\hat{R}_{n-1}-(\overline{x}-2T)^2/2t,\hat{R}_{n-1}-(\overline{x}-T)^2/2t)$ and the ceiling $\hh^N_{n-2}$ with $+\infty$, the event $\neg\mathsf{A}$ becomes more likely. Moreover, on the event $\neg\mathsf{E}_n^N$, $\hh_n^N(x)$ does not exceed $-M_1(x):=-M-x^2/2t$, so replacing $\hh_n^N(x,t)$ by $-M_1(x)$ makes the event more likely. Thus (reindexing the walk from $n-1$ to $1$ and omitting the ceiling of $+\infty$ in the notation) we have
\begin{align*}
 \eqref{etht}  &  \le \mathbf{1}_{\neg\mathsf{E}_n^N \cap \mathsf{H}_{n-1}^N} \cdot \mathbb{P}_{N;-M_1}^{1;a,b;u,v}\bigg(S^N \left(\overline{x}-\tfrac{3T}{2}\right) +\frac{(\overline{x}-\frac{3T}{2})^2}{2t} > -\frac{T^2}{16}\bigg) \\ & \le \mathbb{P}_{N;-M_1}^{1;a,b;u,v}\left(S^N\left(\overline{x}-\tfrac{3T}{2}\right) \ge T+\frac{u+v}{2}\right),
\end{align*}
where the last inequality follows from the first condition on $T$ in \eqref{tcond}. Thus, by Fatou's lemma
\begin{align} \nonumber
    \limsup_{N\to \infty} \mathbb{P}(\neg\mathsf{E}_n^N \cap \mathsf{H}_{n-1}^N \cap \neg\mathsf{A}) &  \le \limsup_{N\to \infty} \mathbb{P}_{N;-M_1}^{1;a,b;u,v}\left(S^N \left(\overline{x}-\tfrac{3T}{2}\right) \ge T+\frac{u+v}{2}\right) \\ \label{invs} & =\mathbf{P}_{1;-M_1}^{1;a,b;u,v}\left(B(\bar{x}-3T/2) > T+\frac{u+v}{2}\right) \\ & = \mathbf{P}_{1;-\hat{M}}^{1;a,b;0,0}\left(B(\bar{x}-3T/2) > T\right), \label{convb}
\end{align}
where 
\begin{align}\label{affineshift}
    \hat{M}(x) & :=M_1(x)+\frac{u(b-x)+v(x-a)}{b-a} 
    = M+ \hat{R}_{n-1} - \frac{T^2}{8t}+\frac{1}{2t}\left(x-\left(\bar{x}-\tfrac{3T}{2}\right)\right)^2. 
\end{align}
Here the equality in \eqref{invs} follows from the invariance principle (Theorem \ref{lem:invprin}) and  the equality in \eqref{convb} follows via an affine shift of the Brownian bridge. The second equality in \eqref{affineshift} follows from the definitions of $a,b,u,v$ and simple algebra.
Next we note that by Definition \ref{kpzgibbs}, we have 
\begin{align*}
    & \mathbf{P}_{1;-\hat{M}}^{1;a,b;0,0}\left(B(\bar{x}-3T/2) > T\right) 
    \le \frac{\mathbf{P}\left(B(\bar{x}-3T/2) > T\right)}{\mathbf{E}\left[\exp\left(-\int_{\bar{x}-2T}^{\bar{x}-T} e^{-\hat{M}(x)-B(x)}dx\right)\right]}, 
\end{align*}
where the process $B : [\bar{x}-2T,\bar{x}-T]\to \R$ is a Brownian bridge from $0$ to $0$ under $\mathbf{P}$. By standard Brownian bridge tail estimates, e.g. \cite[Proposition 12.3.3]{dudley}, we have $\mathbf{P}\left(B(\bar{x}-3T/2) > T\right) \le e^{-2T}$. Let us thus bound the denominator of the above fraction. To do this, we note that on the event $\{\inf_{x\in [\bar{x}-2T,\bar{x}-T]} B(x) \ge  -T\}$, we have
\begin{align*}
  \int_{\bar{x}-2T}^{\bar{x}-T} e^{-\hat{M}(x)-B(x)}dx \le \int_{\R} e^{T-\hat{M}(x)}dx 
  \le \sqrt{2\pi t}\, e^{-T},
\end{align*}
where the last inequality follows by Gaussian integration and using the definition $M=2T+\frac{T^2}{8t}$. Thus,
\begin{align*}
    \mathbf{E}\left[\exp\left(-\int_{\bar{x}-2T}^{\bar{x}-T} e^{-\hat{M}(x)-B(x)}dx\right)\right] & \ge \exp\left(-\sqrt{2\pi t}\,e^{-T}\right)\cdot \mathbf{P}\left(\inf_{x\in [\bar{x}-2T,\bar{x}-T]} B(x) \ge  -T\right) \\ & = \exp\left(-\sqrt{2\pi t}\,e^{-T}\right)\cdot (1-e^{-2T}),
\end{align*}
where the last equality is again by \cite[Proposition 12.3.3]{dudley}. Thus overall we have
\begin{align*}
    \mathbf{P}_{1;-\hat{M}}^{1;a,b;0,0}\left(B(\bar{x}-3T/2) > T\right) \le \exp\left(\sqrt{2\pi t}\,e^{-T}\right)\cdot \frac{e^{-2T}}{1-e^{-2T}} <\frac{\e}{2},
\end{align*}
where the last inequality follows from the second condition in \eqref{tcond}. Thus by \eqref{convb},
\begin{align*}
    \limsup_{N\to \infty}  \mathbb{P}(\neg\mathsf{E}_n^N \cap \mathsf{H}_{n-1}^N \cap \neg\mathsf{A}) \le \frac{\e}{2}, 
\end{align*}
proving \eqref{tofs}.
\end{proof}

\begin{proof}[Proof of Lemma \ref{lowtightEinf}]
Introduce the events 
\begin{align*}
    \mathsf{F}_{n-1}^N &:= \left\{\inf_{x\in[\overline{x}-2T,\overline{x}-T]} \left(\hh_{n-1}^N(x,t) + \frac{x^2}{2t}\right) \geq -M + 2\sqrt{T} \right\},\\
    \mathsf{G} &:= \left\{\inf_{x\in[\overline{x}-T,\overline{x}]}\left(\hh_n^N(x,t) + \frac{x^2}{2t}\right) < -R_n \right\}.
\end{align*}
By Lemma \ref{lowertight2}$(n-1)$, $\mathbb{P}(\neg\mathsf{F}_{n-1}^N) < \varepsilon$ for large $N$. We claim that
\begin{equation*}
    \mathbb{P}(\mathsf{E}_n^N \cap \mathsf{F}_{n-1}^N \cap \mathsf{G}) < \varepsilon.
\end{equation*}
We split the proof into two cases, depending on the size of $\overline{x}$. We will only present the argument in detail in the case $\overline{x} >-2T$. 
Introduce the random time
\[
\sigma_n^N := \inf\left\{ x\in[\overline{x}-2T, \overline{x}-T]\cap \mathbb{Z}_N : \hh_n^N(x,t) + \frac{x^2}{2t} \geq -M \right\}.
\]
If no such $x$, exists define $\sigma_n^N = \inf [\overline{x}-T]\cap\Z_N$. For the case $\overline{x} \le -2T$, one can introduce the symmetric time in the interval $[-\overline{x}+T, -\overline{x}+2T]$ and perform a two-sided Gibbs argument analogous to that in \cite{kpzle}. 
Note that $\m{E}_n^N = \{\sigma_n^N \le \bar{x}-T\}$. We write
\begin{equation*}
    \mathbb{P}(\mathsf{E}_n^N \cap \mathsf{F}_{n-1}^N \cap \mathsf{G}) = \mathbb{E} \left[ \mathbf{1}_{\mathsf{E}_n^N \cap \mathsf{F}_{n-1}^N} \mathbb{E} \left[\mathbf{1}_{\mathsf{G}}  \mid \mathcal{F}_{\mathrm{ext}}(\{n\}\times(\sigma_n^N,0])\right]\right].
\end{equation*}
By the strong Gibbs property (Lemma \ref{lem:sg}) we note that
\begin{align*}
    & \mathbf{1}_{\mathsf{E}_n^N \cap \mathsf{F}_{n-1}^N} \cdot \mathbb{E} \left[\mathbf{1}_{\mathsf{G}}  \mid \mathcal{F}_{\mathrm{ext}}(\{n\}\times(\sigma_n^N,0])\right] \\ & \hspace{2cm} = \mathbf{1}_{\mathsf{E}_n^N \cap \mathsf{F}_{n-1}^N} \cdot  \mathbb{P}_{N;\hh_{n-1}^N(\cdot,t),\hh_{n+1}^N(\cdot,t)}^{n,n;\sigma_n^N;\hh_n^N(\sigma_n^N,t),\star} \left(\inf_{x\in[\overline{x}-T,\overline{x}]} \left( S^N(x) + \tfrac{x^2}{2t} \right) < - R_n \right).
\end{align*}
By stochastic monotonicity (Lemma \ref{lem:sm}), the above probability will increase if we replace $\hh_{n-1}^N(x,t)$ with $-M+2\sqrt{T}-x^2/2t$ (on the event $\m{F}_{n-1}^N$), $\hh_{n+1}^N$ with $-\infty$, and (by definition of $\sigma_n^N$) replace $\hh_n^N(\sigma_n^N,t) + (\sigma_n^N)^2/2t$ by $-M$ (on the event $\m{E}_n^N$). Thus we have an upper bound of
\begin{equation}\label{sigmaprob}
\mathbf{1}_{\mathsf{E}_n^N \cap \mathsf{F}_{n-1}^N} \cdot \mathbb{P}_{N;f,-\infty}^{n,n;\sigma_n^N;c,\star}\left(\inf_{x\in[\overline{x}-T,\overline{x}]} \left( S^N(x) + \tfrac{x^2}{2t} \right) < - R_n \right),
\end{equation}
where $c=-M-(\sigma_n^N)^2/2t$ and $f(x)=-M+2\sqrt{T}-x^2/2t$. 

To estimate \eqref{sigmaprob}, we will first translate to the continuum. For each $N$, let $\tau^N \in [\overline{x}-2T,\overline{x}-T]$ be chosen so that the probability in \eqref{sigmaprob} is maximized when $\sigma_n^N$ is replaced by $\tau^N$ and $c$ is replaced by $-M-(\tau^N)^2/2t$. This can be done since there are finitely many choices of $\sigma_n^N\in \Z_N \cap [\bar{x}-2T,\bar{x}-T]$ on the event $\m{E}_n^N$ and thus of $\tau^N$. Since the sequence $(\tau^N)_{N\geq 1}$ is bounded, we can pass to a subsequence along which $\tau^N\to\tau \in[\overline{x}-2T,\overline{x}-T]$. Then by Theorem \ref{lem:invprin}, the limsup of the probability in \eqref{sigmaprob} is bounded above, uniformly over the event $\mathsf{E}_n^N \cap \mathsf{F}_{n-1}^N$, by
\begin{align}\label{Bparab}
& \mathbf{P}_{1;f,-\infty}^{n,n;\tau;-M-\tau^2/2t,\star} \left(\inf_{x\in[\overline{x}-T,\overline{x}]} (B(x)+\tfrac{x^2}{2t}) < -R_n \right). 
\end{align}
Using the definition of one-sided HSLG Gibbs measures (Definition \ref{kpzgibbs}) we get
\begin{align}\label{Bparab1}
    \eqref{Bparab} = \frac{\E\left[\exp\left(-\int_{\tau}^0 e^{B(x)-f(x)}\,dx\right) \mathbf{1}\left\{\inf_{x\in[\overline{x}-T,\overline{x}]} (B(x)+\tfrac{x^2}{2t}) < -R_n \right\}\right]}{\E\left[\exp\left(-\int_{\tau}^0 e^{B(x)-f(x)}\,dx\right)\right]},
\end{align}
where $B$ is a Brownian motion on $[\tau,0]$ with drift $(-1)^n\alpha$ started from $-M-\tau^2/2t$. We have
\begin{align*}
    \mbox{r.h.s.~of \eqref{Bparab1}} \le \frac{\P\left(\inf_{x\in[\tau,0]} (B(x)+\tfrac{x^2}{2t}) < -R_n \right)}{\E\left[\exp\left(-\int_{\tau}^0 e^{B(x)-f(x)}\,dx\right)\right]}.
\end{align*}
We write this probability in terms of a standard Brownian motion. 
Note that $B(x) \stackrel{d}{=} \Br(x)+(-1)^n \alpha (x-\tau)-M-\tau^2/2t$ where $\Br$ is a standard Brownian motion on $[\tau,0]$ (started at 0). Hence the last expression is equal to
\begin{align}\label{6.xxfrac}
    \frac{\P\left(\inf_{x\in[\tau,0]} (\Br(x)+(-1)^n \alpha (x-\tau)-M-\tau^2/2t+\tfrac{x^2}{2t}) < -R_n \right)}{\E\left[\exp\left(-\int_{\tau}^0 e^{\Br(x)+(-1)^n \alpha (x-\tau)-\tau^2/2t+x^2/2t-2\sqrt{T}}\,dx\right)\right]}.
\end{align}
We first lower bound the denominator. Since the parabola $-x^2/2t$ is concave, it lies above the line segment connecting $(\tau,-\tau^2/2t)$ and $(0,0)$ of slope $|\tau|/2t$. We may therefore bound $(-1)^n\alpha(x-\tau)-\tau^2/2t+x^2/2t \leq (|\alpha|-|\tau|/2t)(x-\tau)$. Recalling $\tau\in[-4T,-T]$, for $T>2|\alpha|t$ this last expression is negative for all $x\in[\tau,0]$. Using the reflection principle and the fact that $\tau\in[-4T,-T]$, we have $$\P\bigg(\sup_{x\in[\tau,0]} \Br(x) \le \sqrt{T}\bigg) = \P(|\Br(0)|\le \sqrt{T})   \ge \int_{-1/4}^{1/4} \p_1(x)\,dx=:c_1.$$ (Recall $\mathfrak{p}_1$ is the standard heat kernel \eqref{heat}.) Then for $T>2|\alpha|t$, we obtain a lower bound on the denominator of \eqref{6.xxfrac} of 
\begin{equation}\label{6.xxdenom}
    c_1 \exp\left(-\int_\tau^0 e^{\sqrt{T}-2\sqrt{T}}\,dx\right) \geq c_1\exp(-4Te^{-\sqrt{T}}) \geq c_1c_2,
\end{equation}
where $c_2 = \min_{T\geq 0} \exp(-4Te^{-\sqrt{T}}) > 0$. We conclude by upper bounding the numerator of \eqref{6.xxfrac}. Let us choose $R_n = M+4|\alpha|T+K\sqrt{T}$, for a constant $K$ to be determined.
We can lower bound $(-1)^n\alpha(x-\tau) - M - \tau^2/2t + x^2/2t \geq -|\alpha|(x-\tau)-M \geq -4|\alpha|T-M$. Then  the numerator of \eqref{6.xxfrac} is less than or equal to
$$\P\bigg(\inf_{x\in[\tau,0]}\Br(x) \leq -K\sqrt{T}\bigg)= 2\P(\Br(0) \le -K\sqrt{T}) \le 2e^{-K^2T/2|\tau|}\le 2e^{-K^2/8},$$  where the equality above is again via the reflection principle. Combining with the lower bound from \eqref{6.xxdenom}, we see that \eqref{6.xxfrac} is bounded above (for $T>2|\alpha|t$) by $2c_1^{-1}c_2^{-1}e^{-K^2/8}$. Taking $K$ large depending on $c_1,c_2$, this can be made arbitrarily small, and we are done.
\end{proof}

\subsection{Proof of Lemma \ref{lowertight2} : Induction step}

We assume that $n\geq 2$ and that Lemmas \ref{lowertight1}$(n)$ and \ref{lowertight2}$(n-1)$ have been proven. Taking $R_n = R_n(\e\delta/3)$ from Lemma \ref{lowertight1}$(n)$, we have for all $x_0>0$, $x\in[-x_0,0]$, and large $N$,
\begin{equation}\label{ed/3}
    \mathbb{P}\left(\hh_n^N(x,t) + \frac{x^2}{2t} < -R_n \right) \leq \frac{\e\delta}{3}.
\end{equation}
Fix $y_0\in [T-x_0,0]$. Instead of working with the interval $[y_0-T,y_0]$, we shall work with the slightly modified interval $I_N=[a_N,b_N]:=[N^{-1/2}\lceil\sqrt{N}(y_0-T)\rceil, N^{-1/2}\lceil\sqrt{N}y_0\rceil]$. Introduce the event
\[
\mathsf{C} := \left\{\inf_{x\in I_N}\left(\hh_{n-1}^N(x,t) + \frac{x^2}{2t} \right) \geq - \frac{1}{2}\delta T^2\right\} \cap \left\{\inf_{x\in\{a_N,b_N\}} \left(\hh_n^N(x,t) + \frac{x^2}{2t}\right) \geq - R_n\right\}.
\]
Note the control on $\hh_n^N$ is only imposed at the boundary. By \eqref{ed/3} and Lemma \ref{lowertight2}$(n-1)$, we can assume there exist $T_0>0$, $N_0\in\mathbb{N}$ such that for all $x_0\geq T_0$, $T\in[T_0,x_0]$, $y_0\in[-x_0+T,0]$, and $N\geq N_0$ we have
\begin{equation}\label{CyT}
    \mathbb{P}(\mathsf{C}) \geq 1-\e/3.
\end{equation}
We also assume $T_0$ is large enough so that $R_n \leq \frac{5}{8}\delta T_0^2$. We want to show that for large enough $N$ and $T$,
\begin{equation}\label{EyT}
    \mathbb{P}(\mathsf{E}) \leq \e, \mbox{ where } \mathsf{E} := \left\{\inf_{x\in I_N} \left(\hh_n^N(x,t) + \frac{x^2}{2t}\right) \leq -\delta T^2\right\}.
\end{equation}
Set
\begin{align*}
    A := \{x\in [-x_0,0] \cap \Z_N : \lceil x\rceil -x < N^{-1/2} \}.
\end{align*}
Say $x\in A$ is \textit{bad} if $\hh_n^N(x,t) + x^2/2t < -R_n$ and \textit{good} otherwise. Let 
\begin{align*}
    X:=\sum_{x\in A\cap I_N} \ind_{x\;\mathrm{is}\;\mathrm{bad}}, \qquad \m{B}:=\{X \le (T+1)\delta\}.
\end{align*}
By \eqref{ed/3}, the probability that  any given $x$ is bad is at most $\e\delta/3$. So, by Markov's inequality, noting that $\#(A\cap I_N) \leq T+1$, we have
\begin{equation}\label{ByT}
    \mathbb{P}(\mathsf{B}) =1 -\Pr(X > (T+1)\delta) \geq 1-\frac{\Ex[X]}{(T+1)\delta} \ge 1- \frac{\e}{3}.
\end{equation}
In view of \eqref{CyT} and \eqref{ByT}, in order to prove \eqref{EyT} it suffices to show that
\begin{equation*}
\mathbb{P}(\mathsf{E} \cap \mathsf{B} \cap \mathsf{C}) \leq \frac{\e}{3}.
\end{equation*}
We condition to rewrite this as
\begin{align*}
    \mathbb{P}(\mathsf{E} \cap \mathsf{B}\cap \mathsf{C}) &= \mathbb{E} \left[\mathbf{1}_{\mathsf{C}} \mathbb{E}\left[\mathbf{1}_{\mathsf{E}\cap\mathsf{B}} \mid \mathcal{F}_{\mathrm{ext}}(\{n\}\times (a_N,b_N))\right] \right].
\end{align*}
By the  Gibbs property (Lemma \ref{lem:gibbs}), 
we have that
\begin{align}\label{q1}
\mathbf{1}_{\mathsf{C}}\mathbb{E}\left[\mathbf{1}_{\mathsf{E}\cap\mathsf{B}} \mid \mathcal{F}_{\mathrm{ext}}(\{n\}\times (a_N,b_N))\right] =\mathbf{1}_{\mathsf{C}} \cdot\mathbb{P}_{N;\hh_{n-1}^N(\cdot,t),\hh_{n+1}^N(\cdot,t)}^{n,n;a_N,b_N;\hh_n^N(a_N,t),\hh_n^N(b_N,t)}(\mathsf{E}\cap\mathsf{B}).
\end{align}
On the event $\mathsf{C}$, $\hh_n^N(a_N,t) \ge -R_n-a_N^2/2t$, $\hh_n^N(b_N,t) \ge -R_n-b_N^2/2t$, and $\hh_{n-1}^N \ge f$ where $f(x):=-\frac12\delta T^2-x^2/2t$.
Thus, by stochastic monotonicity (Lemma \ref{lem:sm}), 
\begin{equation}\label{ETBTGibbs}
\mbox{r.h.s.~of \eqref{q1}} \leq \mathbb{P}_{N;f,-\infty}^{n,n;a_N,b_N;-R_n-a_N^2/2t,-R_n-b_N^2/2t}(\mathsf{E}\cap\mathsf{B}).
\end{equation} 
Define $\mathfrak{S}$ to be the collection of sets $B\subseteq I_N\cap A$ such that the maximal gap between adjacent elements of $B$ is at most $(T+1)\delta$, and let $\m{G}_B$ be the event that the set of good points in $I_N\cap A$ is $B$. Note that $\m{B} \subset \bigcup_{B\in \mathfrak{S}} \m{G}_B$. We now perform the decomposition (reindexing the bridge from $n$ to 1 and omitting the floor $-\infty$ from the notation)
\begin{align*}
    & \mathbb{P}_{N;f}^{1;a_N,b_N;-R_n-a_N^2/2t,-R_n-b_N^2/2t}(\mathsf{E}\cap\mathsf{B}) \\ &\leq \sum_{B\in \mathfrak{S}} \mathbb{P}_{N;f}^{1;a_N,b_N;-R_n-a_N^2/2t,-R_n-b_N^2/2t}(\mathsf{E}\cap \m{G}_B)\\
    &= \sum_{B\in \mathfrak{S}} \mathbb{P}_{N;f}^{1;a_N,b_N;-R_n-a_N^2/2t,-R_n-b_N^2/2t}(\mathsf{E} \mid \m{G}_B) \mathbb{P}_{N;f}^{1;a_N,b_N;-R_n-a_N^2/2t,-R_n-b_N^2/2t}(\m{G}_B).
\end{align*}

 We will upper bound by $\e/3$ the conditional probability in the last line uniformly over $B\in \mathfrak{S}$, and since the events $\m{G}_B$ are disjoint this will imply a bound on \eqref{ETBTGibbs} of $\varepsilon/3$ for $T>T_1$ with $T_1$ large enough. This is enough to complete the proof.

Fix any $B=\{x_1,x_2,\ldots,x_m\}\in \mathfrak{S}$. Note that the event $\m{G}_B$ entails that every point in $B$ is good and every point in $I_N\cap A\cap B^{c}$ is bad.
Set $x_0:=a_N$ and $x_{m+1}:=b_N$. For each $j$, we have $\hh_n^N(x_j,t) + x_j^2/2t \geq -R_n \geq -\frac58\delta T^2$ by assumption.
Let $y_j := -\frac34\delta T^2-x_j^2/2t$; then stochastic monotonicity and the Gibbs property imply 
\begin{equation}\label{ETprod}
   {\mathbb{P}}_{N;f}^{1;a_N,b_N;-R_n,-R_n}(\mathsf{E} \mid \m{G}_B) \leq \sum_{j=0}^{m} \mathbb{P}_{N;f}^{1;x_j,x_{j+1};y_j,y_{j+1}}(\mathsf{E}_j \mid S^N < g),
\end{equation}
where
\begin{align*}
    \m{E}_j:= \left\{\inf_{x\in [x_j,x_{j+1}]} \left(S^N(x) + \frac{x^2}{2t}\right) \leq -\delta T^2\right\}, \qquad j\in \ll 0,m\rr,
\end{align*}
and the conditioning on the r.h.s.~in \eqref{ETprod} is on the event $\{S^N(x) < g(x) \mbox{ for all } x\in[x_j,x_{j+1}]\}$, with $g(x) := (-R_n - x^2/2t)\mathbf{1}_A + \infty\cdot\mathbf{1}_{A^c}$ for $x\in I_N$. That is, there is an additional hard ceiling imposed by the function $g$.

We will bound each of the summands in \eqref{ETprod} separately. It is straightforward to check that the monotonicity properties in Lemma \ref{lem:sm} extend to the hard ceiling $g$. In particular, since $R_n \leq \frac58 \delta T^2$, the probabilities in question will increase if we lower the ceiling $g$ to $h(x) := -\frac58\delta T^2 - x^2/2t$ for all $x\in[x_j,x_{j+1}]$. This lies below the soft ceiling $f(x) = -\frac{1}{2}\delta T^2 - x^2/2t$, so we can ignore $f$ from now on and impose only a hard ceiling at $h$. In order to indicate this hard ceiling, we will modify the notation by writing $\overline{\Pr}_{N;h}$ instead of $\Pr_{N;h}$.

Our remaining goal is to show that uniformly over $p_N, q_N\in [-x_0,0]\cap\Z_N$ with $q_N-p_N\le (T+1)\delta$, for all large $N$, 
    \begin{align}\label{goodints}
        \bar{\Pr}_{N;h}^{1;p_N,q_N;r(p_N),r(q_N)}\bigg(\inf_{x\in [p_N,q_N]} S^N(x)+x^2/2t \le -\delta T^2\bigg) \le \frac{\e}{3(T+2)},
    \end{align}
    where $r(x):=-\frac34\delta T^2-x^2/2t$. To be clear, this is the law of a random walk bridge on $[p_N,q_N]$ with boundary conditions $r(p_N),q(p_N)$, conditioned to lie strictly below the function $h$. 

    It suffices to prove \eqref{goodints} for $p_N,q_N\in[-x_0,0]\cap\Z_N$ which maximize the l.h.s. Since $p_N,q_N$ are bounded, we may pass to a subsequence in $N$ along which $p_N\to p$ and $q_N\to q$, where $q-p \leq (T+1)\delta$. By a straightforward adaptation of the invariance principle Theorem \ref{lem:invprin} (see, e.g., \cite[Lemma 3.10]{serio}), the above probability then converges to
    \[
    \P\left(\inf_{x\in[p,q]} B(x) + x^2/2t \leq -\delta T^2 \right),
    \]
    where $B$ is a Brownian bridge on $[p,q]$ with boundary conditions $r(p), r(q)$, conditioned to remain below $h$. This probability can be controlled by simple Brownian estimates; in particular, the first display of \cite[p. 68]{kpzle} shows that this probability is bounded by $\varepsilon/6T$ once $\delta$ is sufficiently small. This implies \eqref{goodints}.

    Now returning to \eqref{ETprod}, since $m \leq \#(I_N \cap A) \leq T+1$, we see that the r.h.s.~is at most $\varepsilon/3$ for large $N$, uniformly over $B\in\mathfrak{S}$, and we are done.

\subsection{Proof of Lemma \ref{uppertight} : Induction step} Fix $n\ge 2$. Assume that Lemma \ref{lowertight1}$(k)$ holds for $k=n-2,n-1,n$ and Lemma \ref{uppertight}$(n-1)$ holds. Let $K_n, M_n, \hat{R}_n>0$ be constants depending on $n$ and $\e$ that will be chosen appropriately later. Fix any $x_0>0$ and take $\bar{x}\in [-x_0,0]$.
We assume that $\bar{x}, \bar{x}-\frac12, \bar{x}-2 \in \Z_{N}$ (otherwise we can tweak these numbers slightly so that they lie in $\Z_N$).
    Define $\chi=\chi_N := \sup\{x\in [\bar{x}-\frac12,\bar{x}] : \hh_n^N(\chi,t) + x^2/2t \ge \hat{R}_n\}$, or $\chi = \overline{x}$ if this set is empty. Define the events
    \begin{align*}
        \m{E} & : = \left\{\sup_{x\in [\bar{x}-\frac12,\bar{x}]} \left(\hh_n^N(x,t) + \frac{x^2}{2t}\right) \ge \hat{R}_n \right\}, \\
        \m{Q} & := \left\{\inf_{x\in [\bar{x}-2,\bar{x}]} \left(\hh_{n-2}^N(x,t) + \frac{x^2}{2t}\right) > -K_{n} \right\}, \\
        \m{A} & := \left\{\hh_{n-1}^N(\chi,t)+\chi^2/2t \ge -M_{n}\right\} \cap \bigcap_{j=n-1}^n  \left\{\hh_{j}^N(\bar{x}-2,t)+(\bar{x}-2)^2/2t \ge -M_n\right\}, \\
        \m{B} & := \left\{\sup_{x\in [\bar{x}-2,\chi]} \left(\hh_{n-1}^N(x,t) + \frac{x^2}{2t}\right) \ge \hat{R}_{n-1} \right\}.
    \end{align*}
Here $\hat{R}_{n-1}$ comes from Lemma \ref{uppertight}$(n-1)$; $K_n, M_n,$ and $\hat{R}_n$ are yet to be specified. Note that $[\bar{x}-2,\chi]$ forms a stopping domain. By the tower property of conditional expectation we have
    \begin{align}\label{r3}
        & \Pr\big(\m{E}\cap \m{Q} \cap \m{A} \cap \m{B} \big) = \Ex\left[\ind_{\m{E}\cap \m{Q} \cap \m{A}}\Ex\left[\ind_{\m{B}} \mid \mathcal{F}_{\mathrm{ext}}(\{n-1,n\}\times(\bar{x}-2,\chi))\right]\right].
    \end{align}
    By the strong Gibbs property (Lemma \ref{lem:sg}),
    \begin{align*} 
\Ex\left[\ind_{\m{B}} \mid \mathcal{F}_{\mathrm{ext}}(\{n-1,n\}\times(\bar{x}-2,\chi))\right] = \Pr_{N;\hh_{n-2}^N(\cdot,t),\hh_{n+1}^N(\cdot,t)}^{n-1,n;\bar{x}-2,\chi;\vec{a},\vec{b}}(\m{B}),
    \end{align*}
    where $\vec{a}=(\hh_{n-1}^{N}(\bar{x}-2),\hh_{n}^{N}(\bar{x}-2)$ and $\vec{b}=(\hh_{n-1}^{N}(\chi),\hh_{n}^{N}(\chi))$.
    On the event $\m{E}\cap \m{Q} \cap \m{A}$, we have
    \begin{align*}
       & \hh_{n-1}^N(\bar{x}-2) \ge -M_n -(\bar{x}-2)^2/2t, \quad \hh_{n}^N(\bar{x}-2) \ge -M_n -(\bar{x}-2)^2/2t, \\
      &  \hh_{n-1}^N(\chi) \ge -M_n -\chi^2/2t, \quad \hh_{n}^N(\chi) \ge \hat{R}_n -\chi^2/2t, \\
      &  \hh_{n-2}^N(x) \ge -K_n-x^2/2t, \mbox{ for } x\in [\bar{x}-2,\chi].
    \end{align*}
 Thus by stochastic monotonicity (Lemma \ref{lem:sm}) we have
    \begin{align}\label{r1}
        \ind_{\m{E}\cap \m{Q} \cap \m{A}} \cdot \Pr_{N;\hh_{n-2}^N(\cdot,t),\hh_{n+1}^N(\cdot,t)}^{n-1,n;\bar{x}-2,\chi;\vec{a},\vec{b}}(\m{B}) \ge \ind_{\m{E}\cap \m{Q} \cap \m{A}} \cdot \Pr_{N;-K_n-x^2/2t,-\infty}^{n-1,n;\bar{x}-2,\chi;\vec{u},\vec{v}}(\m{B}),
    \end{align}
    where $\vec{u}:=(-M_n-(\bar{x}-2)^2/2t, \,-M_n-(\bar{x}-2)^2/2t)$ and $\vec{v}:=(-M_n-\chi^2/2t, \, \hat{R}_n -\chi^2/2t)$. We may lower bound the above probability using the following lemma.
    
    \begin{lemma}\label{pchi}
       There exist $\delta, M^0>0$ and functions $K^0(M)>0$, $\hat{R}^0(M,K)>0$ such that for all $M\ge M^0$, $K\ge K^0(M)$, $\hat{R} \ge \hat{R}^0(M,K)$, and all $\bar{x}\in [-x_0,0]\cap \Z_N$, $\chi\in [\bar{x}-\frac12,\bar{x}]\cap \Z_N$, and large enough $N$ we have $p_N(\bar{x},\chi) \ge \frac14$, where $p_N(\bar{x},\chi)$ is defined to be
       \begin{align*}
           \Pr_{N;-K-x^2/2t,-\infty}^{n-1,n;\bar{x}-2,\chi;-M-(\bar{x}-2)^2/2t,-M-(\bar{x}-2)^2/2t;-M-\chi^2/2t,\hat{R}-\chi^2/2t}\bigg(\sup_{x\in [\bar{x}-2,\chi]} \left(S^N(x) + \frac{x^2}{2t}\right) \ge \frac12\delta \hat{R} \bigg).
       \end{align*}
    \end{lemma}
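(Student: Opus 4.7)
The plan is to reduce to the continuous Brownian bridge case via Theorem \ref{lem:invprin}, then control the Gibbs probability by comparing the contribution to $\mathbf E[\mathbf 1_{\mathsf{Ev}^c} e^{-P_1 - P_2}]$ of the bad event $\mathsf{Ev}^c := \{\sup \tilde B_1 < \tfrac12\delta\hat R\}$ against the normalization $Z := \mathbf E[e^{-P_1 - P_2}]$. Here $\tilde B_i(x) := B_i(x) + x^2/(2t)$, and $B_1, B_2$ are Brownian bridges on $[a, b] := [\bar x - 2, \chi]$ with the specified boundary data, reweighted by $e^{-P_1 - P_2}$, where $P_1 = \int_a^b e^{\tilde B_1 + K}\,dx$ and $P_2 = \int_a^b e^{\tilde B_2 - \tilde B_1}\,dx$. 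Lower semicontinuity of the open event $\mathsf{Ev}$ and compactness of the parameter set $(\bar x, \chi) \in [-x_0,0] \times [\bar x - \tfrac12, \bar x]$ modulo translation allow us to reduce the statement to a uniform estimate in this continuous setting.

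For the numerator: fix a small $\alpha = \alpha(\delta, L) > 0$ and let $\mathsf H_\alpha := \{\tilde B_2(x) \ge (1-\delta/4)\hat R \text{ for all } x \in [b-\alpha, b]\}$. Since $\tilde B_2(b) = \hat R$ is fixed, standard Gaussian estimates (using that $\mathbb E \tilde B_2(b-s) = \hat R + O(s \hat R)$ and variance $O(s)$ for $s$ small) give $\mathbf P(\mathsf H_\alpha^c) \le e^{-c_1 \hat R^2}$ for some constant $c_1 = c_1(\delta, L)$. On $\mathsf{Ev}^c \cap \mathsf H_\alpha$ we have $\tilde B_2 - \tilde B_1 \ge (1-3\delta/4)\hat R$ throughout $[b-\alpha, b]$, hence $P_2 \ge \alpha e^{(1-3\delta/4)\hat R}$, making $e^{-P_1-P_2}$ doubly-exponentially small. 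Combining,
\[
\mathbf E[\mathbf 1_{\mathsf{Ev}^c} e^{-P_1 - P_2}] \le e^{-c_1 \hat R^2} + \exp\bigl(-\alpha e^{(1-3\delta/4)\hat R}\bigr).
\]
For the denominator, we construct a good event $\mathsf G \subseteq \mathsf{Ev}$ where both penalties are controlled. Exploiting the fact that the pointwise minimum (in $\tilde B_1$) of the integrand $e^{\tilde B_1 + K} + e^{\tilde B_2 - \tilde B_1}$ is $2 e^{(\tilde B_2 + K)/2}$, attained at $\tilde B_1 = (\tilde B_2 - K)/2$, we take $\mathsf G$ to force $(B_1, B_2)$ into narrow tubes around carefully chosen deterministic trajectories that track this balance profile near $b$ and return to their respective boundary values. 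For $\hat R \gg K$, the peak of $\tilde B_1$ on this profile is $\approx (\hat R - K)/2 \ge \tfrac12\delta\hat R$, so $\mathsf G \subseteq \mathsf{Ev}$. A Gaussian-concentration computation for the joint bridge deviations yields $\mathbf P(\mathsf G) \ge e^{-C_1 g(\hat R, K, M)}$ for an appropriate rate function $g$, and hence $Z \ge c_2 e^{-C_1 g(\hat R, K, M)}$.

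Combining the two estimates gives $1 - p \le c_2^{-1} e^{C_1 g(\hat R, K, M)} \bigl(e^{-c_1 \hat R^2} + e^{-\alpha e^{(1-3\delta/4)\hat R}}\bigr)$, which tends to $0$ as $\hat R \to \infty$ provided the hierarchy $M \ge M^0$, $K \ge K^0(M) \gg M$, $\hat R \ge \hat R^0(M, K) \gg K$ is chosen so that $C_1 g(\hat R, K, M) < c_1 \hat R^2$; in particular, by taking $\delta$ small (so that $c_1 \sim \delta^2/\alpha$ is large) and tuning the tube width, the rates can be arranged. The main obstacle is establishing the normalization lower bound with a rate $g$ dominated by $c_1 \hat R^2$: the balance profile for $B_1$ requires it to rise from the boundary value $-M$ up against the ceiling to height $(\hat R-K)/2$ near $b$ and back, while $B_2$ must transition from its low interior regime to the pinned endpoint $\hat R$; the correct trajectory is the solution of a variational problem (Euler--Lagrange equations for Brownian action plus exponential potential), and the matching Gaussian bridge probability estimate is the technical crux of the argument.
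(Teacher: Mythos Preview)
Your reduction to the continuous Brownian case via Theorem~\ref{lem:invprin} and a compactness/subsequence argument is correct and is exactly what the paper does. The divergence comes after that: the paper simply observes that the limiting quantity $p(\bar{x}_0,\chi_0)$ is precisely the object treated in \cite[Proposition~7.6]{kpzle}, and invokes that result directly. So the paper's proof is three lines of bookkeeping plus a citation.

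You instead attempt to reprove the Brownian estimate from scratch via a numerator/denominator comparison. The numerator bound is fine, but the denominator lower bound---which you yourself flag as ``the main obstacle'' and ``the technical crux''---is not carried out. You write $\mathbf P(\mathsf G) \ge e^{-C_1 g(\hat R,K,M)}$ for an unspecified rate $g$ and then say the rates ``can be arranged''; but this is exactly the content of \cite[Proposition~7.6]{kpzle}, which occupies several pages there. The difficulty is real: both $B_1$ and $B_2$ must traverse heights of order $\hat R$ along your balance profile, so the na\"ive Gaussian cost of $\mathsf G$ is also of order $e^{-C\hat R^2}$, and showing that this constant is beaten by the $c_1\hat R^2$ in your numerator bound (where $c_1 \sim \delta^2/\alpha$ with $\alpha$ itself coupled to $\delta$) requires a genuine optimization over trajectories and tube widths that you have not performed. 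As written, the proposal is a plausible outline but has a substantive gap at precisely the point where the work lies; the paper sidesteps all of this by deferring to the existing reference.
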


 The Brownian analogue of the above lemma was established in \cite[Proposition 7.6]{kpzle}. Lemma \ref{pchi} follows easily from their result by applying the invariance principle (Theorem \ref{lem:invprin}). We shall provide a short proof of it in a moment. Let us assume Lemma \ref{pchi} for now and use the parameters therein to specify our constants. 

 \medskip

  We assume $K_n \ge R_{n-2}(\e)$ and $M_n \ge R_{n-1}(\e)\vee R_n(\e)$, where $R_{n-2}, R_{n-1},R_n$ come from Lemma \ref{lowertight1}. Then by the induction hypothesis we have 
  \begin{equation}\label{QAB}
  \Pr(\neg(\m{Q} \cap \m{A})) \le 2\e, \quad \Pr(\m{B}) \le \e
  \end{equation}for all large enough $N$. We further assume our constants satisfy $M_n \ge M^0$, $K_n \ge K^0(M_n)$, $\hat{R}_n \ge \hat{R}^0(M_n,K_n)$, and $\hat{R}_n \ge 2\hat{R}_{n-1}/\delta$ where $\delta,M^0,K^0, \hat{R}^0$ are in Lemma \ref{pchi}. Then r.h.s.~of \eqref{r1} $\ge \frac14 \ind_{\m{E}\cap \m{Q} \cap \m{A}}$. Plugging this bound back in \eqref{r3}, we get 
 \begin{align*}
     \Pr(\m{B}) \ge \Pr\big(\m{E}\cap \m{Q} \cap \m{A} \cap \m{B} \big) \ge \frac14\Pr\big(\m{E}\cap \m{Q} \cap \m{A} \big) \ge \frac14\Pr(\m{E})- \frac14\Pr(\neg(\m{Q} \cap \m{A})).
 \end{align*}
 In view of \eqref{QAB} we thus have $\Pr(\m{E}) \le 6\e$ for large $N$. Adjusting $\e$, we get the desired result.

\begin{proof}[Proof of Lemma \ref{pchi}]  Let us take $\bar{x}_N^* \in [-x_0,0]\cap\Z_N$ and $\chi_N^* \in [\bar{x}-1/2,\bar{x}]\cap\Z_N$ that minimize $p_N(\cdot,\cdot)$ defined in Lemma \ref{pchi}. 
We claim that $\liminf_{N\to \infty} p_N(\bar{x}_N^*,\chi_N^*)$ has a lower bound. Consider any subsequence $\{N_k\}_{k\geq 1}$ along which we have
\begin{align*}
    \lim_{k\to \infty} p_{N_k}(\bar{x}_N^*,\chi_{N_k}^*) = \liminf_{N\to\infty} p_N(\bar{x}_N^*,\chi_N^*).
\end{align*}
    We can focus on a further subsequence of $N_k$ (still denoted by $N_k$) along which $\bar{x}_{N_k}^*,\chi_{N_k}^*$ converge to $\bar{x}_0,\chi_0$. Then by Theorem \ref{lem:invprin}, we know that
    \begin{align*}
         \lim_{k\to \infty} p_{N_k}(\bar{x}_{N_k}^*,\chi_{N_k}^*) = p_{}(\bar{x}_0,\chi_{0}),
    \end{align*}
    where $p_{}(\bar{x}_0,\chi_{0})$ equals
\begin{align*}
    \mathbf{P}_{1;-K-x^2/2t,-\infty}^{n-1,n;\bar{x}_0-2,\chi_0;-M-(\bar{x}_0-2)^2/2t,-M-(\bar{x}_0-2)^2/2t;-M-\chi_0^2/2t,\hat{R}-\chi_0^2/2t}\bigg(\sup_{x\in [\bar{x}_0-2,\chi_0]} \left(B(x) + \frac{x^2}{2t}\right) \ge \frac12\delta \hat{R} \bigg).
\end{align*}
    From Proposition 7.6 in \cite{kpzle} we know Lemma \ref{pchi} holds with $p_N(\bar{x},{\chi})$ replaced by the above expression $p_{}(\bar{x}_0,\chi_{0})$. Thus for large enough $N$, we can conclude Lemma \ref{pchi} holds.
\end{proof}

 \section{Diffusive limits of HSKPZ Gibbs measures: Pinned Brownian motions} \label{sec7} In this section, we study the diffusive limits of one-sided HSKPZ Gibbs measures, as defined in Definition \ref{kpzgibbs}. These results form a crucial piece in proving our main results, Theorems \ref{kpz123} and \ref{subseq}. In Section \ref{sec:dbmsft}, we first establish weak convergence for a very particular class of HSKPZ Gibbs measures. Using certain facts about Brownian motion and a decomposition of the HSKPZ Gibbs measures, we then extend this convergence to a broader class of such measures in Section \ref{sec:multi}.

\subsection{Drifted Brownian motion with soft barrier} \label{sec:dbmsft} In this section, we analyze a particular class of the HSKPZ Gibbs measures in Definition \ref{kpzgibbs} under diffusive scaling.

Throughout this subsection, we fix $\alpha>0$ (supercritical regime), $A<0$, and suppose $(a_L)_{L\ge 1}$ is a sequence of numbers such that $a_L\to a \ge 0$ as $L\to\infty$. 
Let $B_L:[A,0]\to \R$ be a process distributed as $\mathbf{P}_{L;0}^{1;A;a_L,\star}$ (recall Definition \ref{kpzgibbs}). Unpacking the definition we see that $B_L$ is absolutely continuous w.r.t.~Brownian motion with drift $-\alpha\sqrt{L}$ with RN derivative proportional to
\begin{align*}
    \mathcal{W}_{L;+\infty,0}^{1,1;A,0}(B_L)=\exp\left(-L\int_{A}^0 e^{-\sqrt{L}B_L(x)}dx\right).
\end{align*}
Thus, $B_L$ can be though of as drifted Brownian motion conditioned softly to stay above a barrier at zero (recall the interpretation of Gibbs measures discussed in Remark \ref{rem:inter}). The main result in this subsection is the weak convergence of $B_L$.
\begin{theorem} \label{wconvsf} The process $B_L$ defined above converges weakly to $\Lambda^{+}$, a Brownian bridge on $[A,0]$ from $a$ to $0$ conditioned to stay positive on $(A,0)$ (see Remark \ref{rem:tran} below for a precise definition).
\end{theorem}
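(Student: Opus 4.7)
The plan is to first establish tightness of $\{B_L\}_{L \geq 1}$ in $C([A,0])$ and then identify the unique subsequential limit as $\Lambda^+$ by verifying characterizing properties.

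\textbf{Tightness.} I would use the stochastic monotonicity properties (the anticipated Lemma \ref{stm}, alongside Lemma \ref{lem:sm}) to sandwich $B_L$ between tractable processes: an upper bound obtained by switching off the drift (setting $\alpha = 0$), and a lower bound obtained by removing the soft barrier. Combined with standard modulus-of-continuity estimates for Brownian motion under the reference measure $\tilde{\mathbf{P}}$ (Brownian motion with drift $-\alpha\sqrt{L}$ started from $a_L$), tightness in $C([A,0])$ should follow once the normalization constant $Z_L = \E[e^{-\alpha\sqrt{L}B(0)}\exp(-L\int_A^0 e^{-\sqrt{L}B(x)}\,dx)]$ is shown to decay only polynomially fast in $L$.

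\textbf{Identifying the limit.} Let $B_\infty$ be any subsequential limit. I would verify three properties. First, $B_\infty(x) \geq 0$ for all $x \in [A,0]$ almost surely: on the event that $B_L \leq -\varepsilon$ on an interval of length at least $\delta$, the factor $\exp(-L\int_A^0 e^{-\sqrt{L}B_L(x)}\,dx)$ is bounded by $\exp(-L\delta\, e^{\sqrt{L}\varepsilon})$, which is super-exponentially small compared to $Z_L$, so such configurations have vanishing mass in the limit. Second, $B_\infty(0) = 0$ almost surely: the factor $e^{-\alpha\sqrt{L}B_L(0)}$ provides an exponential tilt at rate $\alpha\sqrt{L}$ which, combined with the positivity constraint from the first property, concentrates $B_L(0)$ near $0$ on the vanishing scale $(\alpha\sqrt{L})^{-1}$. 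Third, for any interior points $A < t_1 < \cdots < t_n < 0$, the conditional law of $B_\infty$ on each subinterval $[t_i, t_{i+1}]$ given the values $(B_\infty(t_j))$ is a Brownian bridge conditioned to stay nonnegative; this follows because the Radon--Nikodym derivative factors over subintervals (by the Markov property of Brownian motion) and on each subinterval strictly inside $(A,0)$ the soft barrier becomes a hard positivity constraint as $L \to \infty$.

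\textbf{Uniqueness.} Together with $B_\infty(A) = a$, these three properties uniquely determine the law of $B_\infty$: a process on $[A,0]$ starting at $a$, pinned to $0$ at the right endpoint, nonnegative throughout, and Markovian with Brownian transitions conditioned to stay positive is precisely the law of $\Lambda^+$.

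\textbf{Main obstacle.} The principal difficulty lies in rigorously establishing $B_\infty(0) = 0$, since the event $\{B(0) = 0\}$ has probability zero under the reference Brownian measure, and naive conditioning arguments break down. I would circumvent this by performing a careful joint analysis of $Z_L$ and the conditional distribution of $B_L(0)$ given $B_L \geq 0$ near the right endpoint, using a Brownian meander approximation (as alluded to in Remark \ref{rem:tran}) for both sharp upper and lower bounds. Balancing this meander behavior against the exponential tilt $e^{-\alpha\sqrt{L} B_L(0)}$ at the precise rate is the delicate step, and it is here that the stochastic monotonicity in the drift parameter, starting point, and barrier (Lemma \ref{stm}) becomes indispensable for bypassing explicit computations of oscillatory expectations.
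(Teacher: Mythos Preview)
Your identification of the main obstacle (showing $B_\infty(0)=0$) and your instinct to lean on stochastic monotonicity are both on target. But the tightness argument has a genuine gap, and the order of the proof is inverted relative to what actually works.

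The sandwich does not yield tightness: removing the soft barrier gives Brownian motion with drift $-\alpha\sqrt{L}$, which drifts to $-\infty$ and provides no useful lower bound, and in any case stochastic domination does not control the modulus of continuity. More seriously, the $Z_L$-based modulus bound fails because the Radon--Nikodym derivative $\mathcal{W}_{\alpha\sqrt{L},L}$ is unbounded --- the factor $e^{-\alpha\sqrt{L}B(0)}$ blows up when $B(0)<0$ --- so there is no inequality of the form $\P_{\alpha\sqrt{L},L}(\mc(B,\delta)>\rho)\leq Z_L^{-1}\cdot(\text{small})$. Even granting a sharp $Z_L\asymp L^{-1}$ (the paper only shows the crude $Z_L\geq e^{-CL}$, see \eqref{dncl1}), conditioning on $B(0)$ decouples the endpoint tilt from the bridge modulus, and one finds $\E[\mathcal{W}_{\alpha\sqrt{L},L}\ind_{\mc>\rho}]$ is of the same order as $Z_L$ for fixed $\delta,\rho$, so no smallness results.

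The paper reverses the order: finite-dimensional convergence first, tightness afterwards. It proves $B_L(0)\to 0$ (Lemma \ref{convzero}) and then fd convergence (Proposition \ref{convfd}) via a two-sided monotonicity sandwich --- comparing $\P_{\alpha\sqrt{L},L}$ with $\P_{\beta,L}$ for fixed $\beta$ on one side and with the modified laws $\P_{\alpha\sqrt{L},L}^{\e,2\e}$ on the other --- and computing the limiting densities explicitly from the Brownian meander transitions \eqref{BMtr}, matching \eqref{density} directly rather than appealing to a Gibbs characterization. Only then is tightness obtained (Lemma \ref{tightz}), by conditioning on the now-controlled values $B_L(-\gamma)$ and $B_L(0)$: on $[A,-\gamma]$ this reduces to a Brownian bridge with bounded endpoints, and on $[-\gamma,0]$ the process is uniformly small by the fd convergence, so its modulus is bounded by its sup. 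The fd convergence is what supplies the anchor near the degenerate right endpoint; without it there is no handle on the modulus there.
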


{We will only need the above theorem for $a>0$, but we include the $a=0$ case for completeness.}

\begin{remark}[The limiting process] \label{rem:tran} When $a=0$, $\Lambda^{+}$ is defined to be a Brownian excursion on $[A,0]$, see e.g. \cite{durrigle}. For $a>0$, we define $\Lambda^{+}$ as a time-reversal of a Brownian meander conditioned to end at $a$, as follows. First let $\Br^+$ be a standard Brownian meander on $[0,1]$, i.e., $\Br^+(x) := (1-\theta)^{-1/2}|\Br(\theta+(1-\theta)x)|$, where $\Br$ is a standard Brownian motion and $\theta :=\sup\{x\in[0,1]: \Br(x)=0\}$. The transition densities of $\Br^+$ are known from \cite[p. 613]{iglehart}: they are given for $0<x_1<x_2\leq 1$ and $y_1,y_2>0$ by
\begin{equation}\label{BMtr}
\begin{split}
p(0,0;x_1,y_1) &= \sqrt{2\pi}\,\frac{y_1}{x_1} \mathfrak{p}_{x_1}(y_1) \Psi(y_1/\sqrt{1-x_1}),\\
p(x_1,y_1;x_2,y_2) &= \left[\mathfrak{p}_{x_2-x_1}(y_1-y_2)-\mathfrak{p}_{x_2-x_1}(y_1+y_2)\right] \frac{\Psi(y_2/\sqrt{1-x_2})}{\Psi(y_1/\sqrt{1-x_1})},
\end{split}
\end{equation}
where $\p_t(y)$ is the standard heat kernel \eqref{heat} and $\Psi(x) := \sqrt{2/\pi}\int_0^x e^{-y^2/2}\,dy$. For $a>0$, we may define a bridge process $B^+$ which is $\Br^+$ conditioned on the event $\{\Br^+(1)=b\}$ with $b:=a/\sqrt{|A|}$; the conditional density of $B^+(x)$ for $x\in(0,1)$ is simply $f(y) := p(0,0;x,y)p(x,y;1,b)/p(0,0;1,b)$. We then set $\Lambda^+(x) := \sqrt{|A|}\,B^+(x/A)$ for $x\in[A,0]$. Using the transition densities of $W^+$ in \eqref{BMtr} (see also Lemma 4.2 in \cite{dz22}) it is straightforward to compute the joint density of $\Lambda^{+}$ at $k$ times. For $A=x_0<x_1<\cdots<x_k <0$, $y_0=a$, and $y_1,\dots,y_k>0$ we have
\begin{equation}\label{density}
    \P\left(\bigcap_{i=1}^k \{\Lambda^+(x_i) \in dy_i\}\right) = \frac{A}{x_k}\frac{y_k}{a}\frac{\mathfrak{p}_{-x_k}(y_k)}{\mathfrak{p}_{-A}(a)} \prod_{i=1}^{k}\left[\mathfrak{p}_{x_i-x_{i-1}}(y_i-y_{i-1}) - \mathfrak{p}_{x_i-x_{i-1}}(y_i+y_{i-1})\right]dy_i. 
\end{equation}
\end{remark}

\medskip

To prove Theorem \ref{wconvsf}, we begin by defining a generalization of the process $\Lambda_L$ considered therein. 

\begin{definition}
Given $\beta,\kappa\geq 0$, and $L>0$, let $B$ be a Brownian motion on $[A,0]$ started from $a_L$. Define 
\begin{align*}
     {\mathcal{W}}_{\beta,L}^{\e,\kappa}(B) &  :=\exp\left(-\beta (B(0)\vee (-\e))-L \int_A^0 e^{-\sqrt{L}(B(x)+\kappa)}dx\right) \\
     \hat{\mathcal{W}}_{\beta,L}^{\e,\kappa}(B) & :=\exp\left(-\beta ((B(0)+\e)\vee 0)-L \int_A^0 e^{-\sqrt{L}(B(x)+\kappa)}dx\right)=e^{-\beta \e}{\mathcal{W}}_{\beta,L}^{\e,\kappa}(B).
\end{align*}
Define the corresponding law
\begin{align}\label{ekbl}
\P_{\beta,L}^{\e,\kappa}(\m{A})=\P_{\beta,L}^{a_L;\e,\kappa}(\m{A}):=\frac{\E\left[\mathcal{W}_{\beta,L}^{\e,\kappa}(B)\ind_{\m{A}}\right]}{\E\left[\mathcal{W}_{\beta,L}^{\e,\kappa}(B)\right]}=\frac{\E\left[\hat{\mathcal{W}}_{\beta,L}^{\e,\kappa}(B)\ind_{\m{A}}\right]}{\E\left[\hat{\mathcal{W}}_{\beta,L}^{\e,\kappa}(B)\right]}.
\end{align}
(The last equality holds since $\mathcal{W}$ and $\hat{\mathcal{W}}$ are constant multiples.) We write $$\mathcal{W}_{\beta,L} := \lim_{\e\uparrow \infty} \mathcal{W}_{\beta,L}^{\e,0} = \exp\left(-\beta B(0) - L\int_A^0 e^{-\sqrt{L}B(x)}\,dx\right)$$ and $\P_{\beta,L}^{a_L}=\P_{\beta,L}$ for the law of a Brownian motion on $[A,0]$ started from $a_L$ with RN derivative proportional to $\mathcal{W}_{\beta,L}$. When $a>0$, we define the law $\P_{\beta,\infty}^{a}=\P_{\beta,\infty}$ as a Brownian motion  with drift $-\beta$ started from $a$ conditioned to stay positive on $(A,0]$. Finally, the law $\P_{\infty,\infty}^{a}=\P_{\infty,\infty}$ is defined to be  Brownian bridge from $a$ to $0$  conditioned to stay positive on $(A,0)$. 
\end{definition}

\medskip

Roughly, as $L\to\infty$ the weight ${\mathcal{W}}_{\beta,L}^{\e,\kappa}$ enforces $B(0) \leq -\e$ and $B(x) > -\kappa$ for all $x\in(A,0)$, which is a positive probability event if $\kappa>\e>0$. The purpose of introducing $\hat{\mathcal{W}}_{\beta,L}^{\e,\kappa}$ is that it is at most 1, which will be convenient for various upper bounds in the arguments below. Note that by Cameron--Martin theorem, $B_L$ in Theorem \ref{wconvsf} is distributed as $\P_{\alpha \sqrt{L},L}$. Thus Theorem \ref{wconvsf} asserts
\begin{align*}
    \mathbf{P}_{\alpha \sqrt{L},L} \to \mathbf{P}_{\infty,\infty}.
\end{align*}
This is intuitively clear from the form of the Radon–Nikodym derivatives. However, since the conditioning event has zero probability in the limit, care is required to rigorously justify the statement. To circumvent this issue, we consider the law $\P_{\beta,L}$ and take the iterated limit: we first take $L \to \infty$ to obtain $\P_{\beta,\infty}$, then take $\beta \to \infty$ to obtain $\P_{\infty,\infty}$. To justify that this iterated limit indeed coincides with the limit of $\P_{\alpha \sqrt{L},L}$ as $L \to \infty$, we rely on various forms of stochastic monotonicity of the laws $\P_{\beta,L}^{a;\e,\kappa}$ with respect to the initial data $a$ and the parameters $\beta$ and $\kappa$.

\begin{lemma}\label{stm} Fix $L>0$.
\begin{enumerate}[label=(\alph*),leftmargin=20pt]
    \item \label{paro} Fix $a_1\geq a_2$ and $\beta\geq 0$. There exists a probability space that supports two processes $B_1, B_2$ on $[A,0]$ such that the marginal law of $B_i$ is given by $\P_{\beta,L}^{a_i}$ and with probability $1$, for all $x\in [A,0]$ we have $B_1(x) \ge B_2(x) \ge B_1(x)-(a_1-a_2)$. 
    \item \label{para} Fix $\beta_1\leq \beta_2$ and $a\geq 0$. There exists a probability space that supports two processes $B_1, B_2$ on $[A,0]$ such that the marginal law of $B_i$ is given by $\P_{\beta_i,L}^a = \P_{\beta_i,L}$ and with probability $1$, for all $x\in [A,0]$ we have $B_1(x) \ge B_2(x)$. 
    \item \label{parb} Fix $\kappa_1\leq\kappa_2$, $a\geq 0$, $\beta\geq 0$, and $\e>0$. There exists a probability space that supports two processes $B_1, B_2$ on $[A,0]$ such that the marginal law of $B_i$ is given by $\P_{\beta,L}^{a;\e,\kappa_i} = \P_{\beta,L}^{\e,\kappa_i}$ and with probability $1$, for all $x\in [A,0]$ we have $B_1(x) \ge B_2(x)$. 
\end{enumerate}
\end{lemma}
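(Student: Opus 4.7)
I would prove Lemma~\ref{stm} by combining an FKG/Holley-type monotone coupling for parts (b) and (c) with a synchronous SDE coupling based on a Doob $h$-transform for part (a). For all three parts the strategy is to first work on a time-discretization of $[A, 0]$, on which the relevant path measure becomes a Gaussian measure on $\R^N$ reweighted by a product of local factors, and then pass to the continuum limit by tightness. Parts (b) and (c) are direct instances of the FKG template: for (c), with $\kappa_1 \leq \kappa_2$, the Radon--Nikodym ratio
\[
\frac{d\P_{\beta,L}^{\e,\kappa_2}}{d\P_{\beta,L}^{\e,\kappa_1}}(B) \;\propto\; \exp\!\left(-L\bigl(e^{-\sqrt L \kappa_2} - e^{-\sqrt L \kappa_1}\bigr)\int_A^0 e^{-\sqrt L B(x)}\,dx\right)
\]
is a non-increasing functional of $B$ under the pointwise order, so Holley's inequality and Strassen's theorem yield a monotone coupling with $B_1 \geq B_2$; part (b) is the same argument with the simpler ratio $\exp(-(\beta_2 - \beta_1)B(0))$, which is non-increasing in $B(0)$ whenever $\beta_1 \leq \beta_2$.

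For part (a), the two-sided sandwich $0 \leq B_1(x) - B_2(x) \leq a_1 - a_2$ falls out of a single synchronous coupling once the process is given the right representation. I would realize $B \sim \P_{\beta,L}^a$ as a Doob $h$-transform of Brownian motion with state-dependent drift $v(x, t) := \partial_x \log u(x, t)$, where
\[
u(x, t) := \E\!\left[\exp\!\left(-\beta B(0) - L\!\int_t^0\! e^{-\sqrt L B(s)}\,ds\right)\,\bigg|\,B(t) = x\right].
\]
Driving $B_1, B_2$ by a common Brownian noise with starting points $a_1 \geq a_2$ gives $d(B_1 - B_2) = [v(B_1, t) - v(B_2, t)]\,dt$, so if $v(\cdot, t)$ is non-increasing in its spatial argument then $B_1(t) - B_2(t)$ is monotone non-increasing in $t$ and both desired inequalities propagate from the initial value $a_1 - a_2$. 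The monotonicity of $v$ is equivalent to log-concavity of $u(\cdot, t)$ in $x$, which I would verify via the Hopf--Cole transform of the Feynman--Kac equation satisfied by $u$: setting $\phi := \log u$, one obtains $-\partial_t \phi = \tfrac12\partial_{xx}\phi + \tfrac12(\partial_x\phi)^2 - Le^{-\sqrt L x}$, and differentiating twice in $x$ yields a parabolic equation for $w := \partial_{xx}\phi$ with non-positive source term $-L^2 e^{-\sqrt L x}$; since the terminal data $u(\cdot, 0) = e^{-\beta x}$ is log-linear we have $w(x, 0) = 0$, and a maximum-principle argument yields $w \leq 0$.

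The principal obstacle will be making the log-concavity of $u$ fully rigorous on the unbounded spatial domain, since the killing rate $Le^{-\sqrt L x}$ blows up as $x \to -\infty$ and the equation for $w$ is semilinear (it contains a $w^2$ term that must be controlled against the source). I would address this by first establishing log-concavity for a truncated problem on a bounded interval $[-M, M]$ with suitable Dirichlet boundary data, applying the maximum principle in the truncated setting, and then sending $M \to \infty$ using uniform decay estimates on $u$ together with tightness of the corresponding $h$-transformed processes. An alternative route bypassing PDE analysis entirely would be to establish the sandwich coupling directly at the discrete-time level via a Markov chain construction on ordered pairs of reweighted random walks satisfying $0 \leq X_i - X_i' \leq a_1 - a_2$ at each step, and to pass to the continuum limit, paralleling analogous couplings developed for the HSLG line ensemble in \cite{half1}.
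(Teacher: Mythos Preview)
Your proposal is correct. For parts (b) and (c), your FKG/Holley route is essentially the paper's argument stated at a higher level of abstraction: the paper discretizes to simple random walks reweighted by the appropriate product weight and constructs the monotone coupling explicitly via single-site Glauber dynamics, which is the standard constructive proof of Holley's inequality. Both arguments reduce to checking that the relevant flip acceptance ratios (equivalently, the Radon--Nikodym ratios you wrote down) are monotone in the path, and both pass to the continuum by the invariance principle.

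For part (a), your SDE/Doob $h$-transform approach is a genuinely different route. The paper handles (a) with the \emph{same} Glauber dynamics used for (b) and (c), starting the two discrete walks with $S_1^0 - S_2^0 \equiv a_{1,N} - a_{2,N}$. The key observation is that the single-site acceptance ratio has the form $\exp\big((L/N)\,e^{-\sqrt{L}N^{-1/2}S(k)}(1 - e^{-2\sigma\sqrt{L}/\sqrt{N}})\big)$, which is monotone in $S(k)$ in precisely the direction that preserves both $S_1 \geq S_2$ \emph{and} $S_2 \geq S_1 - (a_{1,N}-a_{2,N})$ simultaneously. This gives the two-sided sandwich with no PDE analysis. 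Your approach obtains the same conclusion from log-concavity of the Feynman--Kac solution $u$; since the killing potential $V(x) = Le^{-\sqrt{L}x}$ is convex and the terminal data $e^{-\beta x}$ is log-linear, this is a Brascamp--Lieb type statement, and the synchronous coupling then makes $B_1 - B_2$ monotone non-increasing in time. This is more analytic and arguably more transparent about the mechanism (convex potential $\Rightarrow$ contractive drift), at the cost of the regularity and truncation work you flag. The discrete Markov-chain alternative you mention at the end is exactly the paper's method.
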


\begin{proof}
The proof follows a Glauber dynamics argument to construct a monotone coupling of the two laws in each case. The argument is quite standard in the literature and follows ideas similar to \cite{ble,kpzle,serio,halfairy}. We begin with part \ref{parb}. The argument requires a discretization of the Gibbs measure. For $N\geq 1$, write $A_N := \lceil AN\rceil$, and for $\beta,\kappa \geq 0$, let $S_{\beta,L}^{\e,\kappa}$ denote a simple random walk $S$ on $\ll A_N, 0\rr$ with $S(A_N) = a_N := \lfloor a\sqrt{N}\rfloor$, reweighed by a RN derivative proportional to
\begin{equation}\label{Wbldisc}
W_{\beta,L}^{\e,\kappa}(S) := \exp\bigg(-\beta\left(\left(\frac{S(0)}{\sqrt{N}} + \e\right) \vee 0\right) - \frac{L}{N}\sum_{k=A_N}^1 e^{-\sqrt{L}(N^{-1/2}S(k)+\kappa)}\bigg).
\end{equation}
It is easy to see by an argument very similar to the proof of Theorem \ref{lem:invprin} that $(N^{-1/2}S_{\beta,L}^{\e,\kappa}(Nx))_{x\in[A,0]}$ (defined by linear interpolation) converges in law as $N\to\infty$ to $\P_{\beta,L}^{\e,\kappa}$. It therefore suffices to construct a coupling under which $S_{\beta,L}^{\e,\kappa_1} \geq S_{\beta,L}^{\e,\kappa_2}$ a.s.

 We construct two Markov chains $(S_i^n(\cdot))_{n\geq 0}$, on the state space $\Omega^{a_N}$ of simple random walk paths $\ell : \ll A_N,0\rr \to \Z$ such that $\ell(A_N) = a_{N} $ and $|\ell(i+1)-\ell(i)|=1 \mbox{ for } i\in\ll A_N,1\rr$, as follows. For $n=0$, simply set $S_1^0$ and $S_2^0$ equal to the same arbitrary path. The dynamics proceed as follows. At time $n$, we uniformly sample a pair $(k,\sigma)\in\ll A_N+1,0\rr \times \{\pm 1\}$, and we let $U^{(k,\sigma)} \sim \mathrm{Unif}(0,1)$. We define a candidate $\til S_i^n$ for $S_i^{n+1}$ by setting $\til S_i^n (k) = S_i^n(k) + 2\sigma$ and $\til S_i^n(j) = S_i^n(j)$ for $j\neq k$. If $\til S_i^n \in \Omega^{a_N}$ and if we have
\begin{equation}\label{Rdef}
R_{n,i}^{(k,\sigma)} := \frac{W_{\beta,L}^{\e,\kappa_i}(\til S_i^n)}{W_{\beta,L}^{\e,\kappa_i}(S_i^n)} \geq U^{(k,\sigma)},
\end{equation}
then we update $S_i^{n+1}$ to $\til S_i^n$. Otherwise we set $S_i^{n+1} = S_i^n$.

It is straightforward to check that for $i=1,2$ the above dynamics are stationary (in fact reversible) for the laws of $S_{\beta,L}^{\e,\kappa_i}$. Moreover, the two Markov chains $(S_i^n(\cdot))_{n\geq 0}$ are irreducible and aperiodic. Therefore $S_i^n \to S_{\beta,L}^{\e,\kappa_i}$ in law as $n\to\infty$. We will argue that for all $n$, $S_1^n(k) \geq S_2^n(k)$ for all $k\in\ll A_N,0\rr$ a.s., which upon taking $n\to\infty$ provides the desired monotone coupling $S_{\beta,L}^{\e,\kappa_1} \geq S_{\beta,L}^{\e,\kappa_2}$.

At time 0 we have $S_1^0 \geq S_2^0$ by assumption. Assume $S_1^n \geq S_2^n$ for some $n\geq 0$. Note because $S_1^0 = S_2^0$, for all $k$ and $n$ the difference $S_1^n(k) - S_2^n(k)$ must be even. Thus the only way the ordering can be violated at time $n+1$ is if for some $k$, $S_1^n(k) = S_2^n(k)$, and either (i) $S_2^n(k)$ is flipped upwards but $S_1^n(k)$ is not, or (ii) $S_1^n(k)$ is flipped downwards but $S_2^n(k)$ is not. One way case (i) could occur is if the candidate $\til S_1^{n}$ obtained by flipping $S_1^n(k)$ upwards fails to lie in $\Omega^{a_N}$, i.e., $\min(S_1^n(k-1),S_1^n(k+1)) = S_1^n(k)-1$. But since $S_1^n \geq S_2^n$, this would force $\min(S_2^n(k-1),S_2^n(k+1)) = S_2^n(k)-1$ as well, preventing $S_2^n(k)$ from flipping upwards. So in fact case (i) will only occur if $R_{n,2}^{(k,+1)} \geq U^{(k,+1)} > R_{n,1}^{(k,+1)}$. Likewise, case (ii) requires $R_{n,1}^{(k,-1)} \geq U^{(k,-1)} > R_{n,2}^{(k,-1)}$. We now rule out these two scenarios.

First observe from the definition \eqref{Wbldisc} that for $k=0$, in fact have $R_{n,1}^{(0,\sigma)} = R_{n,2}^{(0,\sigma)}$ for $\sigma=\pm 1$, so it suffices to consider $k<0$. In this case, we see that
\begin{equation}\label{Req}
R_{n,i}^{(k,\sigma)} = \exp\left(\frac{L}{N}e^{-\sqrt{L}(N^{-1/2}S_i^n(k) + \kappa_i)} \big(1-e^{-2\sigma\sqrt{L}}\big)\right)
\end{equation}
Since $S_1^n(k) = S_2^n(k)$ and $\kappa_1\leq\kappa_2$, it is easy to see from the above that $R_{n,1}^{(k,+1)} \geq R_{n,2}^{(k,+1)}$ and $R_{n,1}^{(k,-1)} \leq R_{n,2}^{(k,-1)}$. This implies that neither of the above two scenarios can occur, so indeed the ordering is preserved for all $n$ by induction. This completes the proof of case \ref{parb}.

For parts \ref{paro} and \ref{para}, we instead consider the weights
\[
W_{\beta_i,L}(S) := \exp\bigg(-\beta_i\frac{S(0)}{\sqrt{N}} - \frac{L}{N}\sum_{k=A_N}^1 e^{-\sqrt{L}N^{-1/2}S(k)}\bigg).
\]
In part \ref{paro} we take $\beta_1=\beta_2=\beta$. We let $a_{i,N}$ be two integers within distance 1 of $a_1\sqrt{N}$ and $a_2\sqrt{N}$ respectively, such that $a_{1,N}-a_{2,N}$ is even. We let $S^{a_i}$ denote simple random walks on $\ll A_N, 0\rr$ started from $a_{i,N}$ reweighed by $W_{\beta,L}$. Then it is clear that $N^{-1/2}S^{a_i}(Nx)$ converges in law to $\P_{\beta,L}^{a_i}$. As above, it suffices to construct a Markov chain $(S_1^n, S_2^n)$ on $\Omega^{a_{1,N}}\times\Omega^{a_{2,N}}$ for which $(S^{a_1},S^{a_2})$ is stationary, such that $S_1^n(k) \geq S_2^n(k) \geq S_1^n(k) - (a_{1,N}-a_{2,N})$ for all $k$. We take $S_1^0$ to be an arbitrary path started at $a_{1,N}$, and $S_2^0$ to be the same path shifted vertically by $a_{2,N}-a_{1,N}$. The dynamics are exactly analogous to part \ref{parb}, and we have the same formula \eqref{Req} with $\kappa_i=0$. 

Assume the desired inequality holds at time $n$. The same argument as above shows that $S_1^{n+1} \geq S_2^{n+1}$. To check the other inequality, note that for a violation to occur we must already have $S_1^n(k) - S_2^n(k) = a_{1,N}-a_{2,N}$ for some $k$, and either (i) $S_1^n(k)$ must flip up but not $S_2^n(k)$, or (ii) $S_2^n(k)$ must flip down but not $S_1^n(k)$. Since $S_1^n(k) \geq S_2^n(k)$, we see from \eqref{Req} that $R_{n,1}^{(k,+1)} \leq R_{n,2}^{(k,+1)}$ and $R_{n,1}^{(k,-1)} \geq R_{n,2}^{(k,-1)}$, so these scenarios cannot occur according to the rule \eqref{Rdef}. For case (i), if the candidate $\til S_2^n$ obtained by flipping $S_2^n(k)$ up fails to lie in $\Omega^{a_{2,N}}$, then we must have $\min(S_2^n(k-1),S_2^n(k+1)) = S_2^n(k)-1$. Since by the inductive hypothesis $S_1^n \leq S_2^n + (a_{1,N}-a_{2,N})$, and equality holds at $k$, we must also have $\min(S_1^n(k-1),S_1^n(k-1)) = S_1^n(k) - 1$, so that $S_1^n(k)$ cannot flip upwards either. This shows that case (i) is impossible, and similar reasoning rules out case (ii). Thus $S_1^{n+1} \geq S_1^{n+1} - (a_{1,N}-a_{2,N})$, and we conclude part \ref{paro} of the lemma. Part \ref{para} is similar but easier, so we skip it for brevity.
\end{proof}		

The first step in proving Theorem \ref{wconvsf} is showing convergence at the boundary.

\begin{lemma}\label{convzero} Assume $a>0$ and suppose $B$ is distributed as $\P_{\alpha \sqrt{L},L}^{a_L} =\P_{\alpha \sqrt{L},L}$. Then $B(0) \to 0$ in probability. 
\end{lemma}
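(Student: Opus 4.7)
The strategy is to prove the upper and lower tails $\mathbf{P}(B(0) > \varepsilon) \to 0$ and $\mathbf{P}(B(0) < -\varepsilon) \to 0$ separately, by quantitatively estimating the numerator and denominator of the ratio $\mathbf{P}(B(0) \in \cdot) = \mathbf{E}[\mathcal{W}\mathbf{1}_\cdot]/\mathbf{E}[\mathcal{W}]$, where $B$ is a Brownian motion started from $a_L$ and $\mathcal{W} = \mathcal{W}_{\alpha\sqrt{L},L}(B) = \exp\!\big(-\alpha\sqrt{L}\,B(0) - L\int_A^0 e^{-\sqrt{L}B(x)}\,dx\big)$. The first step is a quantitative lower bound $\mathbf{E}[\mathcal{W}] \geq cL^{-1-2\alpha}$ on the normalizer. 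To this end, I would set $\rho_L := 2\log L/\sqrt{L}$ and consider the tube event $\mathsf{G} := \{\inf_{[A,0]} B \geq \rho_L\}$; on $\mathsf{G}$ we have $L\int_A^0 e^{-\sqrt{L}B} \leq |A|/L$ and hence $\mathcal{W} \geq e^{-|A|/L}e^{-\alpha\sqrt{L}B(0)}$. By the reflection principle, the joint density of $(B(0),\inf_{[A,0]}B)$ for a Brownian motion from $a_L$ is $\mathbf{P}(B(0) \in dy,\inf_{[A,0]}B \geq \rho_L) = [\mathfrak{p}_{|A|}(y-a_L) - \mathfrak{p}_{|A|}(y+a_L-2\rho_L)]\,dy$ for $y \geq \rho_L$, and a Taylor expansion of this difference near $y = \rho_L$ yields $\approx 2(y-\rho_L)(a/|A|)\mathfrak{p}_{|A|}(a)$. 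Integrating this against $e^{-\alpha\sqrt{L}y}$ (with $e^{-\alpha\sqrt{L}\rho_L} = L^{-2\alpha}$) and using $\int_0^\infty z e^{-\alpha\sqrt{L}z}\,dz = (\alpha\sqrt{L})^{-2}$ yields the claimed lower bound.

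The upper tail then follows immediately: since the integral in $\mathcal{W}$ is nonnegative, $\mathcal{W} \leq e^{-\alpha\sqrt{L}B(0)}$, so $\mathbf{E}[\mathcal{W}\mathbf{1}_{B(0)>\varepsilon}] \leq e^{-\alpha\sqrt{L}\varepsilon}$, and $\mathbf{P}(B(0) > \varepsilon) \leq c^{-1}L^{1+2\alpha}e^{-\alpha\sqrt{L}\varepsilon} \to 0$.

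The main obstacle is the lower tail, because on $\{B(0) < -\varepsilon\}$ the factor $e^{-\alpha\sqrt{L}B(0)}$ is \emph{large} rather than small, so the wall penalty $L\int e^{-\sqrt{L}B}$ must be used to beat it. I would split $\{B(0) < -\varepsilon\}$ using the last exit time $\sigma := \sup\{x \in [A,0] : B(x) = -\varepsilon/2\}$, which is well-defined by continuity since $a_L > 0 > -\varepsilon/2 > B(0)$. Case (i), $|\sigma| \geq 1/L$: here $B \leq -\varepsilon/2$ on an interval of length at least $1/L$, giving $L\int e^{-\sqrt{L}B} \geq e^{\sqrt{L}\varepsilon/2}$, and combined with the crude Gaussian moment bound $\mathbf{E}[e^{-\alpha\sqrt{L}B(0)}\mathbf{1}_{B(0)<-\varepsilon}] \leq e^{\alpha^2 L|A|/2}$ (obtained by completing the square), the contribution is at most $e^{\alpha^2 L|A|/2 - e^{\sqrt{L}\varepsilon/2}}$, which is doubly exponentially small. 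Case (ii), $|\sigma| < 1/L$: then $B$ must cross from above $-\varepsilon/2$ down to $B(0) < -\varepsilon$ within the final time window of length $1/L$, so applying the Markov property and the reflection principle to the piece of $B$ on $[-1/L,0]$ gives the conditional bound $\mathbf{P}(\sup_{[-1/L,0]}B \geq -\varepsilon/2 \mid B(0) = y) \leq 2 e^{-L(|y|-\varepsilon/2)^2/2}$ for $y \leq -\varepsilon$; integrating against $e^{-\alpha\sqrt{L}y}\mathfrak{p}_{|A|}(y - a_L)\,dy$ yields a singly exponential bound of the form $Ce^{-cL}$. Summing the two cases gives $\mathbf{E}[\mathcal{W}\mathbf{1}_{B(0) < -\varepsilon}] \leq C e^{-cL}$, and dividing by the polynomial lower bound on the normalizer shows $\mathbf{P}(B(0) < -\varepsilon) \to 0$.
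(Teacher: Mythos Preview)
Your proof is correct but takes a genuinely different route from the paper's.

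For the \emph{upper tail} $\mathbf{P}_{\alpha\sqrt{L},L}(B(0)>\varepsilon)$, the paper argues softly: by monotonicity in the drift parameter (Lemma~\ref{stm}\ref{para}), $\mathbf{P}_{\alpha\sqrt{L},L}(B(0)\le\varepsilon)\ge\mathbf{P}_{\beta,L}(B(0)\le\varepsilon)$ for fixed $\beta$, and the latter converges (since the Radon--Nikodym derivative tends to an indicator of a positive-probability event) to $\mathbf{P}_{\beta,\infty}(B(0)\le\varepsilon)$, which is then pushed to $1$ by taking $\beta\to\infty$. You instead prove a sharp polynomial lower bound $\mathbf{E}[\mathcal{W}]\ge cL^{-1-2\alpha}$ on the normalizer (via reflection and a tube at height $2\log L/\sqrt{L}$), against which the trivial numerator bound $e^{-\alpha\sqrt{L}\varepsilon}$ already wins. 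The paper only proves the crude $\mathbf{E}[\mathcal{W}]\ge e^{-CL}$, which would not suffice for your upper-tail argument; conversely, the paper's monotonicity argument avoids any normalizer computation for the upper tail. The monotonicity idea recurs later (Proposition~\ref{convfd}), so the paper's route fits its overall strategy; your sharper normalizer estimate is independently interesting but not reused.

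For the \emph{lower tail}, both arguments are quantitative and close in spirit, differing mainly in packaging. The paper applies Cauchy--Schwarz to decouple $e^{-\alpha\sqrt{L}B(0)}$ from the wall factor, then on the event $\{\sup_{[-L^{-2},0]}|B-B(0)|\le\varepsilon/2\}$ the wall integral is at least $L^{-2}e^{\sqrt{L}\varepsilon/2}$, beating the Gaussian moment $e^{O(L)}$. You split instead on the last exit time from level $-\varepsilon/2$: either $B\le-\varepsilon/2$ on an interval of length $\ge 1/L$ (same doubly-exponential gain), or $B$ drops by $\varepsilon/2$ within time $1/L$, which costs $e^{-cL}$. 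One small imprecision: your conditional bound $\mathbf{P}(\sup_{[-1/L,0]}B\ge-\varepsilon/2\mid B(0)=y)\le 2e^{-L(|y|-\varepsilon/2)^2/2}$ cannot literally come from ``the Markov property'' since you condition on the terminal value; it is cleanest to bound the \emph{unconditional} density $\mathbf{P}(B(0)\in dy,\sup_{[-1/L,0]}B\ge-\varepsilon/2)\le\mathfrak{p}_{1/L}(|y|-\varepsilon/2)\,dy$ by first conditioning on $B(-1/L)$ and using reflection on the fresh Brownian piece, which yields the same final estimate after integrating against $e^{-\alpha\sqrt{L}y}$.
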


\begin{proof} Fix $\e,\beta>0$. We first claim that
\begin{align}\label{upcl}
    \lim_{L\to \infty} \P_{\alpha\sqrt{L},L}(B(0)\le \e)=1.
\end{align}
As the event $B(0)\le \e$ is decreasing, 
by Lemma \ref{stm}\ref{para}, $$\P_{\alpha \sqrt{L},L}(B(0) \le \e) \ge \P_{\beta,L}(B(0) \le \e)$$ once $L$ is large enough so that $\alpha\sqrt{L}\geq\beta$. 
Note that for each fixed $\beta$, by Cameron-Martin theorem, the law $\P_{\beta,L}$ can be viewed as Brownian motion with drift $-\beta$ and softly conditioned with RN derivative proportional to $\exp(-L\int_A^0 e^{-\sqrt{L}B(x)}dx)$. As $L\to \infty$, this RN derivative converges to $\ind_{B(x)\ge 0}$ almost surely. As the RN derivative is bounded by $1$ and $\P(B(x)-\beta x \ge 0 \mbox{ for all } x\in [0,1])>0$, we get that as $L\to \infty$, $\P_{\beta,L} \to \P_{\beta,\infty}$ where $\P_{\beta,\infty}$ is defined to be the law of a Brownian motion with drift $-\beta$ started from $a$ and conditioned to stay positive on $[A,0]$. Thus for each $\beta>0$
\begin{align*}
    \liminf_{L\to \infty} \P_{\alpha \sqrt{L},L}(B(0) \le \e) \ge \P_{\beta,\infty}(B(0)\le \e).
\end{align*}
We claim that the probability on the right tends to 1 as $\beta\to\infty$. To see this, we write
\begin{equation}\label{B(0)>e}
\begin{split}
    \P_{\beta,\infty}(B(0) > \e) 
    &= \frac{\mathbf{E}[e^{-\beta B(0) }\mathbf{1}_{B(x)\geq 0,\,x\in[A,0]} \mathbf{1}_{B(0)>\e}]}{\mathbf{E}[e^{-\beta B(0) }\mathbf{1}_{B(x)\geq 0,\,x\in[A,0]}]}\\ &\leq \frac{e^{-\beta\e}}{\mathbf{E}[e^{-\beta B(0) }\mathbf{1}_{B(x)\geq 0,\,x\in[A,0]}\mathbf{1}_{B(0)\in[\e/4,\e/2]} ]}\\ 
    &\leq \frac{e^{-\beta\e/2}}{\P(\inf_{x\in[A,0]}B(x)\geq 0, \, B(0)\in[\e/4,\e/2])}.
\end{split}
\end{equation}
To lower bound the denominator, we rewrite the probability as $\P(\inf_{x\in[A,0]} B(x)\geq 0 \mid B(0)\in [\e/4,\e/2]) \cdot \P(B(0)\in[\e/4,\e/2])$. By Gaussian tail estimates, the second factor is bounded below by a positive constant depending on $A,a,\e$. For the first factor, stochastic monotonicity allows us to replace the conditioning with $B(0)=\e/4$, i.e., consider a Brownian bridge on $[A,0]$ from $a$ to $\e/4$. The probability that this minimum is positive is bounded below by a positive constant depending on $A,a,\e$, for instance by \cite[Proposition 12.3.3]{dudley}. In particular, in \eqref{B(0)>e} we obtain an upper bound of $Ce^{-\beta\e/2}$ for some constant $C$ independent of $\beta$, and taking $\beta\to\infty$ proves \eqref{upcl}.

We now aim to show that $\P_{\alpha\sqrt{L},L}(B(0) \le -\e)\to 0$ as $L\to \infty$. Towards this end, we claim that there exists $C>0$ free of $L$ and $\e$ such that
\begin{align}\label{dncl1}
    & \E\left[\mathcal{W}_{\alpha \sqrt{L},L}\right] \ge e^{-CL}, \\ \label{dncl2} & \E\left[\mathcal{W}_{\alpha \sqrt{L},L}\ind\{B(0)\le -\e\}\right] \le e^{2\alpha^2 L}\cdot \left(\exp(-L^{-1}e^{\e\sqrt{L}/2})+Ce^{-\e^2L^2/C}\right)^{1/2}.
\end{align}
Given the above estimates, taking the ratio we see that 
\begin{align*}
    \P_{\alpha\sqrt{L},L}(B(0) \le -\e) = \frac{\E\left[\mathcal{W}_{\alpha \sqrt{L},L}\ind\{B(0)\le -\e\}\right]}{\E\left[\mathcal{W}_{\alpha \sqrt{L},L}\right]} \longrightarrow 0
\end{align*}
as $L\to \infty$. We thus focus on proving \eqref{dncl1} and \eqref{dncl2}. Note that
\begin{align*}
    \E[\mathcal{W}_{\alpha \sqrt{L},L}] & \ge \E\left[\mathcal{W}_{\alpha\sqrt{L},L} \cdot \ind\left\{\inf_{x\in [A,0]} B(x)\ge 0, B(0)\in [1,2]\right\}\right] \\ & \ge \exp(-2\alpha \sqrt{L}-AL) \cdot \P\bigg(\inf_{x\in [A,0]} B(x)\ge 0, B(0)\le 2\bigg).
\end{align*}
The last inequality follows by noting that  $\mathcal{W}_{\alpha\sqrt{L},L} \ge \exp(-2\alpha \sqrt{L}-AL)$ on the event $$\left\{\inf_{x\in [A,0]} B(x)\ge 0, B(0)\le 2\right\}.$$ The above event has a positive probability which is uniformly bounded below as $L\to \infty$ (as $a_L\to a>0$). This verifies \eqref{dncl1}. For \eqref{dncl2}, by the Cauchy--Schwarz inequality we have
\begin{align}\nonumber
    \E[\mathcal{W}_{\alpha \sqrt{L},L}\ind\{B(0) \le -\e\}] & \le \sqrt{\E\left[e^{-2\alpha \sqrt{L} B(0)}\right]} \sqrt{\E\left[\exp\bigg(-2L\int_A^0 e^{-\sqrt{L}B(x)}dx\bigg)\ind\{B(0) \le -\e\}\right]} \\ & \le e^{2\alpha^2 AL}\cdot \sqrt{\E\left[\exp\bigg(-2L\int_A^0 e^{-\sqrt{L}B(x)}dx\bigg)\ind\{B(0) \le -\e\}\right]}. \label{dncl0}
\end{align}
 We focus on bounding the expectation inside the above square root. Let $\m{A}:=\{\sup_{x\in [-L^{-2},0]} |B(x)-B(0)| \le \e/2\}$. By a union bound we have
\begin{align}\nonumber
    & \E\left[\exp\bigg(-2L\int_A^0 e^{-\sqrt{L}B(x)}dx\bigg)\ind\{B(0) \le -\e\}\right] \\ & \le \E\left[\exp\bigg(-2L\int_A^0 e^{-\sqrt{L}B(x)}dx\bigg)\ind_{\m{A}\cap \{B(0) \le -\e\}}\right] + \P(\neg\m{A}). \label{dncl3} 
\end{align}
 Note that on $\m{A} \cap \{B(0)\le -\e\}$, $B(x) \le -\e/2$ for all $x\in [-L^{-2},0]$. Thus, on $\m{A} \cap \{B(0)\le -\e\}$,
\begin{align*}
    \int_A^0 e^{-\sqrt{L}B(x)}dx \ge \int_{-L^{-2}}^0 e^{-\sqrt{L}B(x)}dx \ge L^{-2} e^{\e\sqrt{L}/2}.
\end{align*}
Hence the first term in \eqref{dncl3} is at most $\exp\big(-2L^{-1}e^{\e\sqrt{L}/2}\big)$. For the second term in \eqref{dncl3}, by Brownian motion computations $\P(\neg\m{A}) \le Ce^{-\e^2L^2/C}$ for some absolute constant $C>0$.
Plugging the above estimate back in \eqref{dncl0} we arrive at \eqref{dncl2}.
\end{proof}

Having proved the above lemma, we shall now extend the convergence to finite-dimensional distributions away from the boundary.

\begin{proposition}\label{convfd} Assume $a>0$. Suppose $B$ is distributed as $\P^{a_L}_{\alpha \sqrt{L},L} = \P_{\alpha \sqrt{L},L}$. As $L\to \infty$, the finite-dimensional distributions of $B$ converges to those of a Brownian bridge from $a$ to $0$ conditioned to stay positive on $(A,0)$. 
\end{proposition}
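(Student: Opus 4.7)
My plan is to compute the joint density $f_L(y_1,\dots,y_k)$ of $(B(x_1),\dots,B(x_k))$ under $\P_{\alpha\sqrt{L},L}$ at fixed times $A = x_0 < x_1 < \cdots < x_k < 0$ (with $y_0 := a_L$) and show pointwise convergence to the density displayed in \eqref{density}, then upgrade to convergence in law via Scheff\'{e}'s lemma. By the Markov property and the multiplicative factorization of the weight $\mathcal{W}_{\alpha\sqrt{L},L}$ over disjoint sub-intervals,
\[
f_L(y_1,\dots,y_k) = Z_L^{-1}\, \prod_{i=1}^{k} \mathcal{K}_L(x_{i-1},y_{i-1};x_i,y_i)\cdot \mathcal{U}_L(x_k,y_k),
\]
where $\mathcal{K}_L(x,y;x',y') := \mathfrak{p}_{x'-x}(y'-y)\,\E_{x,y\to x',y'}[\exp(-L\int_x^{x'}e^{-\sqrt{L}B(z)}dz)]$ (expectation against a Brownian bridge from $y$ at time $x$ to $y'$ at time $x'$), the boundary factor is $\mathcal{U}_L(x_k,y_k) := \E^{B(x_k)=y_k}[\exp(-\alpha\sqrt{L}B(0)-L\int_{x_k}^0 e^{-\sqrt{L}B(z)}dz)]$, and $Z_L$ is the normalization. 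For any fixed $y,y'>0$ the bridge weight is bounded by $1$ and converges almost surely to $\mathbf{1}\{\inf B\ge 0\}$, so dominated convergence combined with the reflection principle yields
\[
\mathcal{K}_L(x,y;x',y') \longrightarrow \mathfrak{p}_{x'-x}(y'-y) - \mathfrak{p}_{x'-x}(y'+y),
\]
which matches the transition factor appearing in \eqref{density}; the same analysis shows $\mathcal{K}_L\to 0$ whenever $y\le 0$ or $y'\le 0$.

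The main work is the asymptotic of $\mathcal{U}_L$ (of which $Z_L$ is the special case $(x_k,y_k)=(A,a_L)$). Decomposing over the endpoint $B(0)=b$,
\[
\mathcal{U}_L(x_k,y_k) = \int_\R \mathcal{K}_L(x_k,y_k;0,b)\,e^{-\alpha\sqrt{L}b}\,db,
\]
where the contribution from $b\le 0$ is negligible by an estimate analogous to the Cauchy--Schwarz bound \eqref{dncl2} (the weight $\exp(-L\int e^{-\sqrt{L}B})$ decays doubly exponentially when the bridge dips below zero, overwhelming the $e^{-\alpha\sqrt{L}b}$ factor). The factor $e^{-\alpha\sqrt{L}b}$ concentrates the remaining integral at $b = O(L^{-1/2})$, so I substitute $b = c/\sqrt{L}$. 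The Taylor expansion
\[
\mathfrak{p}_{-x_k}(b-y_k)-\mathfrak{p}_{-x_k}(b+y_k) = \frac{2y_k}{-x_k}\,\mathfrak{p}_{-x_k}(y_k)\cdot b + O(b^3),
\]
combined with $\int_0^\infty c\,e^{-\alpha c}\,dc = \alpha^{-2}$, yields
\[
L\cdot\mathcal{U}_L(x_k,y_k) \longrightarrow \frac{2\,y_k\,\mathfrak{p}_{-x_k}(y_k)}{(-x_k)\,\alpha^{2}}.
\]
Specializing the same argument to $(A,a_L)$ gives $L\cdot Z_L \to 2a\,\mathfrak{p}_{-A}(a)/[(-A)\,\alpha^{2}]$. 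Taking the ratio, the $\alpha^{-2}L^{-1}$ factors cancel and $f_L$ converges pointwise on $\R^k$ to exactly the density in \eqref{density}.

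The main technical obstacle is justifying the interchange of limit and integration in the $b$-integral for $\mathcal{U}_L$: the Taylor approximation must be controlled with an error uniform in $L$ on the concentration region $b = O(L^{-1/2})$, and the integrand must be dominated by an integrable envelope on both tails. The $b\le 0$ tail is handled by the soft-barrier penalization noted above, while the trivial bound $\mathcal{K}_L(x,y;x',y')\le \mathfrak{p}_{x'-x}(y'-y)$ together with a Gaussian computation (completing the square in the exponent $-b^2/(2|x_k|)-\alpha\sqrt{L}b$) controls the large-$|b|$ tail. Once pointwise convergence of the probability densities $f_L$ on $\R^k$ is established, Scheff\'{e}'s lemma upgrades it to convergence in total variation, which implies convergence of finite-dimensional distributions and completes the proof.
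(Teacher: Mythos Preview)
Your route is genuinely different from the paper's. The paper never computes the asymptotics of the normalizing constant $Z_L$ or of $\mathcal{U}_L$; instead it sandwiches via stochastic monotonicity: by Lemma~\ref{stm}\ref{para} one has $\P_{\alpha\sqrt{L},L}(\m{A}_k)\ge\P_{\beta,L}(\m{A}_k)$ for decreasing $\m{A}_k$, so one may first send $L\to\infty$ with $\beta$ fixed (the soft barrier becomes hard, densities are explicit Brownian-meander), and only then $\beta\to\infty$. The matching upper bound uses Lemma~\ref{convzero} plus the barrier-shift monotonicity of Lemma~\ref{stm}\ref{parb}. This separation of limits is the core idea you are missing.

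Your direct computation has a real gap in the analysis of $\mathcal{U}_L$. You Taylor-expand the \emph{hard-barrier} kernel $\mathfrak{p}_{-x_k}(b-y_k)-\mathfrak{p}_{-x_k}(b+y_k)$ and substitute $b=c/\sqrt{L}$, but the approximation $\mathcal{K}_L\approx\mathfrak{p}(\cdot-\cdot)-\mathfrak{p}(\cdot+\cdot)$ holds only for $b>0$ fixed, not at the critical scale $b=c/\sqrt{L}$. At that scale the bridge endpoint lies inside the soft-barrier transition region: zooming in via $V(s):=\sqrt{L}B(-s/L)$ (a Brownian motion from $c$ for $s=O(1)$), the contribution $L\int_{-O(1/L)}^{0}e^{-\sqrt{L}B}\approx\int_0^{\infty}e^{-V(s)}\,ds$ is an $O(1)$ random quantity that does not vanish. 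Concretely, conditioned on $\inf B>0$ one has $\sqrt{L}\inf B=O(1)$, so the weight does not tend to $1$ on that event, and conversely the weight is not $0$ on $\{\inf B<0\}$. Hence the bridge expectation is not asymptotic to $\P(\inf B\ge 0)$, and your formula $L\cdot\mathcal{U}_L(x_k,y_k)\to 2y_k\mathfrak{p}_{-x_k}(y_k)/[(-x_k)\alpha^{2}]$ carries the wrong constant: in place of $\alpha^{-2}=\int_0^\infty c\,e^{-\alpha c}\,dc$ one gets $\int_0^\infty c\,g(c)\,e^{-\alpha c}\,dc$ for a nontrivial boundary-layer factor $g(c)<1$.

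Your final density limit is still correct, because this boundary factor depends only on the local endpoint behavior and thus appears identically in $\mathcal{U}_L(x_k,y_k)$ and in $Z_L=\mathcal{U}_L(A,a_L)$, cancelling in their ratio. But establishing this factorization rigorously, i.e., that the boundary-layer correction is independent of $(x_k,y_k)$ and that the remaining domination holds, is the substantive work, and your sketch does not address it. The paper's two-step limit avoids the boundary layer entirely.
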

\begin{proof}

Let us fix $x_1<x_2<\cdots<x_k < 0$ and $y_1,y_2,\ldots,y_k \in \R$. Consider
\begin{align*}
    \m{A}_k = \bigcap_{i=1}^k \left\{B(x_i) \le y_i\right\}.
\end{align*}
 As $\m{A}_k$ is a decreasing event, by the argument at the beginning of the proof of Lemma \ref{convzero}, we obtain
\begin{align}\label{lbdc0}
    \liminf_{L\to \infty} \P_{\alpha \sqrt{L},L}(\m{A}_k) \ge \P_{\beta,\infty}(\m{A}_k).
\end{align}
The advantage of passing to $\P_{\beta,\infty}$ is that its finite-dimensional distribution are accessible. Indeed, using Brownian meander transition densities as in Remark \ref{rem:tran} we see that
\begin{align}\label{bmtran}
    \P_{\beta,\infty}\left(\bigcap_{i=1}^k B(x_i) \in dy_i\right)= \frac{f_{\vec{x}}(\vec{y};\beta)}{\int_{\R^k} f_{\vec{x}}(\vec{y};\beta) d\vec{y}},
\end{align}
where $f_{\vec{x}}(\vec{y};\beta)$ is equal to
\begin{align*}
  & \int_{0}^\infty \tfrac12\beta^2 e^{-\beta u}\left[\p_{-x_{k}}(u-y_{k})-\p_{-x_{k}}(u+y_{k})\right] \prod_{i=1}^{k} \left[\p_{x_i-x_{i-1}}(y_i-y_{i-1})-\p_{x_i-x_{i-1}}(y_i+y_{i-1})\right] du
\\ & = \int_{0}^\infty \tfrac12\beta e^{-v}\left[\p_{-x_{k}}(v/\beta-x_{k})-\p_{-x_{k}}(v/\beta+y_{k})\right] \prod_{i=1}^{k} \left[\p_{x_i-x_{i-1}}(y_i-y_{i-1})-\p_{x_i-x_{i-1}}(y_i+y_{i-1})\right] dv,
\end{align*}
with $y_0=a$ and $x_0=A$. (The factor of $\frac{1}{2}\beta^2$ does not arise from the law $\P_{\beta,\infty}$ but is included to obtain a limit as $\beta\to\infty$. We are free to include it since it cancels in the normalization.) It is plain to check that as $\beta\to \infty$, $$\tfrac12\beta[\p_{-x_{k}}(v/\beta-y_{k})-\p_{-x_{k}}(v/\beta+y_{k})] \longrightarrow \frac{vy_k}{-x_k} \p_{-x_k}(y_k).$$ From here, a simple dominated convergence argument (which we skip for brevity) shows that as $\beta\to \infty$ in \eqref{bmtran} we have $f_{\vec{x}}(\vec{y};\beta) \to f_{\vec{x}}(\vec{y};\infty)$ and $\int_{\R^k} f_{\vec{x}}(\vec{y};\beta)d\vec{y}\to \int_{\R^k} f_{\vec{x}}(\vec{y};\infty)d\vec{y}$, where 
\begin{align*}
f_{\vec{x}}(\vec{y};\infty) & :=  \frac{y_k}{-x_k} p_{-x_{k}}(y_{k}) \prod_{i=1}^{k} \left[\p_{x_i-x_{i-1}}(y_i-y_{i-1})-\p_{x_i-x_{i-1}}(y_i+y_{i-1})\right]\int_0^\infty ve^{-v}dv \\ & =  \frac{y_k}{-x_k}\ \p_{-x_{k}}(y_{k}) \prod_{i=1}^{k} \left[\p_{x_i-x_{i-1}}(y_i-y_{i-1})-\p_{x_i-x_{i-1}}(y_i+y_{i-1})\right].
\end{align*}
This matches with the joint densities of $\Lambda^+ \sim \P_{\infty,\infty}$ in \eqref{density} (up to constants depending on $a$ and $A$, which cancel in the normalization). Thus by Scheff\'{e}'s lemma, as $\beta\to \infty$
\begin{align}\label{betinf}
    \P_{\beta,\infty}(\m{A}_k)\to \P_{\infty,\infty}(\m{A}_k),
\end{align}
and hence taking $\beta\to \infty$ in \eqref{lbdc0}, we see that 
\begin{align}\label{lbdc}
    \liminf_{L\to \infty} \P_{\alpha \sqrt{L},L}(\m{A}_k) \ge \P_{\infty,\infty}(\m{A}_k).
\end{align}

\medskip

We now focus on showing the opposite inequality. Fix $\e>0$. Consider the event $\m{B}_\e := \{B(0)\ge -\e\}$.
Note that  ${\mathcal{W}}_{\alpha\sqrt{L},L}^{\e,0} \le \mathcal{W}_{\alpha\sqrt{L},L}$ in general, and on $\m{B}_\e$, ${\mathcal{W}}_{\alpha\sqrt{L},L}^{\e,0} = \mathcal{W}_{\alpha\sqrt{L},L}$.
Thus by \eqref{ekbl},
\begin{align*}
    \P_{\alpha \sqrt{L},L}(\m{A}_k\cap \m{B}_\e)& = \frac{\E[\mathcal{W}_{\alpha\sqrt{L},L}\ind_{\m{A}_k\cap \m{B}_\e}]}{\E[\mathcal{W}_{\alpha\sqrt{L},L}]}   \le  \frac{\E[{\mathcal{W}}_{\alpha\sqrt{L},L}^{\e,0}\ind_{\m{A}_k\cap \m{B}_\e}]}{\E[{\mathcal{W}}_{\alpha\sqrt{L},L}^{\e,0}]} \\ & = \frac{\E[\hat{{\mathcal{W}}}_{\alpha\sqrt{L},L}^{\e,0}\ind_{\m{A}_k\cap \m{B}_\e}]}{\E[\hat{\mathcal{W}}_{\alpha\sqrt{L},L}^{\e,0}]}  \le  \frac{\E[\hat{\mathcal{W}}_{\alpha\sqrt{L},L}^{\e,0}\ind_{\m{A}_k}]}{\E[\hat{\mathcal{W}}_{\alpha\sqrt{L},L}^{\e,0}]} \le \frac{\E[\hat{\mathcal{W}}_{\alpha\sqrt{L},L}^{\e,2\e}\ind_{\m{A}_k}]}{\E[\hat{\mathcal{W}}_{\alpha\sqrt{L},L}^{\e,2\e}]}.
\end{align*}
The last inequality follows from Lemma \ref{stm}\ref{parb}. 
Now as $L\to \infty$, $$\hat{\mathcal{W}}_{\alpha\sqrt{L},L}^{\e,2\e} \longrightarrow \hat{\mathcal{W}}_{\infty,\infty}^{\e,2\e}:=\ind\{ B(0) \le -\e, B(x) > -2\e \mbox{ for all }x\in [A,0]\}$$
almost surely. As $\hat{\mathcal{W}}_{\alpha\sqrt{L},L}^{\e,2\e}$ are all bounded by $1$ and the probability of the event on the right hand side is positive, by dominated convergence we have
\begin{align*}
    \limsup_{L\to \infty} \P_{\alpha \sqrt{L},L}(\m{A}_k\cap \m{B}_\e) \le \frac{\E[\hat{\mathcal{W}}_{\infty,\infty}^{\e,2\e}\ind_{\m{A}_k}]}{\E[\hat{\mathcal{W}}_{\infty,\infty}^{\e,2\e}]}.
\end{align*} Since, by Lemma \ref{convzero}, $\lim_{L\to \infty} \P_{\alpha \sqrt{L},L}(\neg\m{B}_\e) =0$, we get that 
\begin{align*}
    \limsup_{L\to \infty} \P_{\alpha \sqrt{L},L}(\m{A}_k) \le \frac{\E[\hat{\mathcal{W}}_{\infty,\infty}^{\e,2\e}\ind_{\m{A}_k}]}{\E[\hat{\mathcal{W}}_{\infty,\infty}^{\e,2\e}]} .
\end{align*}
The finite-dimensional distributions on the right hand side of the above equation are again accessible using Brownian meander transition densities. An argument similar to the proof of \eqref{betinf} shows that as $\e\downarrow 0$, the right hand side goes to $\P_{\infty,\infty}(\m{A}_k)$. This combined with \eqref{lbdc} leads to finite dimensional convergence as desired.
\end{proof}

With finite-dimensional convergence established, it remains to control the modulus of continuity to obtain path tightness.

\begin{lemma}\label{tightz} Fix $R>1$ and $\e,\rho>0$. Let $B$ have law $\P^{a_L}_{\alpha \sqrt{L},L} = \P_{\alpha \sqrt{L},L}$, where we recall $a_L \to a$, and recall $\mc$ from \eqref{moc}. There exists $\delta_0$ depending only on $\e,\rho, A,R$ such that for all $\delta\le \delta_0$, if $a\in [R^{-1},R]$ we have
\begin{align}\label{moclimsup}
    \limsup_{L\to \infty}\P_{\alpha \sqrt{L},L}\big(\mc(B,\delta)\ge \rho\big) \le \e.
\end{align}
\end{lemma}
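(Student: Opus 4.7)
The plan is to decompose $[A,0]$ into a bulk piece $[A,-\eta/2]$ and a boundary piece $[-\eta,0]$ for some small $\eta>0$ (chosen depending on $\rho,\e,A,R$), and to control the modulus on each separately. The key inequality is
\[
\mc(B,\delta) \le \mc(B|_{[A,-\eta/2]},\delta)\vee\mc(B|_{[-\eta,0]},\delta) \qquad (\delta<\eta/2),
\]
which holds because any $x,y\in[A,0]$ with $|x-y|<\delta<\eta/2$ must both lie in one of the two sub-intervals. By the Markov property and the product structure of $\mathcal{W}_{\alpha\sqrt{L},L}$, conditional on $(B(-\eta/2),B(-\eta),B(0))$ the restrictions $B|_{[A,-\eta/2]}$ and $B|_{[-\eta,0]}$ are conditionally independent, each distributed as a Brownian bridge reweighed by $\exp(-L\int e^{-\sqrt{L}B})$, whose Radon--Nikodym derivative with respect to the bridge is at most $1$.

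For the bulk piece, Proposition \ref{convfd} gives $B(-\eta/2)\stackrel{d}{\to}\Lambda^{+}(-\eta/2)>0$, so I can choose $c=c(\eta)>0$ and $M=M(\eta)<\infty$ with $\P_{\alpha\sqrt{L},L}(B(-\eta/2)\in[c,M])\ge 1-\e/4$ for $L$ large. On this event, a Brownian bridge from $a_L$ to $B(-\eta/2)$ on $[A,-\eta/2]$ stays above $c/2$ with a positive probability $c_0$ depending only on $A,\eta,a,c,M$, and on this sub-event $\exp(-L\int_A^{-\eta/2}e^{-\sqrt{L}B})\ge\exp(-L|A|e^{-\sqrt{L}c/2})\to 1$. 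Hence the conditional normalization $Z_L\ge c_0(1-o(1))$, the conditional law is absolutely continuous w.r.t.\ the Brownian bridge with density bounded by $c_0^{-1}(1+o(1))$, and L\'evy's modulus of continuity yields $\mc(B|_{[A,-\eta/2]},\delta)<\rho$ with probability at least $1-\e/4$ once $\delta$ is sufficiently small.

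For the boundary piece I use $\mc(B|_{[-\eta,0]},\delta)\le\operatorname{osc}(B|_{[-\eta,0]})$ and argue that the oscillation is small for small $\eta$. Lemma \ref{convzero} together with the density formula \eqref{density} (which shows $\Lambda^{+}(-\eta)=O(\sqrt{\eta})$ in probability) lets me take $\eta$ small and $L$ large so that $|B(-\eta)|,|B(0)|\le\theta$ with probability $\ge 1-\e/8$. The event $\{\inf_{[-\eta,0]}B\le-M\}$ is decreasing in $B$, so Lemma \ref{stm}\ref{parb} gives $\P_L^{\e,0}(\inf B\le-M)\le\P_L^{\e,2\e}(\inf B\le-M)$, which converges to the probability under Brownian motion conditioned on the positive-probability event $\{B(0)\le-\e,\inf_{[A,0]}B\ge-2\e\}$ and vanishes for $M>2\e$. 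The event $\{\sup_{[-\eta,0]}B\ge M\}$ is increasing, so the monotonicity in $\kappa$ goes in the wrong direction; instead, given $(B(-\eta),B(0))=(y,z)$ with $|y|,|z|\le\theta$, I compare the reweighted bridge to the standard Brownian bridge via a normalization lower bound, after which $\sup_{[-\eta,0]}B$ is controlled by $\theta+O(\sqrt{\eta})$ with high probability.

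The main obstacle will be obtaining a usable lower bound on the conditional normalization $Z_L$ for the boundary-piece bridge when $(y,z)$ is close to zero: in that regime the reweighted measure converges to a Brownian meander-type object (cf.\ Remark \ref{rem:tran}) and $Z_L$ may degenerate as $L\to\infty$. Resolving this likely requires a finer analysis that combines Lemma \ref{convzero} with the exact density \eqref{density} near the boundary to track the joint behavior of the endpoint distribution and the reweighting.
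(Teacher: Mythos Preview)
Your decomposition and your treatment of the bulk piece $[A,-\eta/2]$ are essentially the same as the paper's, and that part is fine. The gap you yourself flag in the boundary piece is real, and your proposed attack on it does not work: once $B(-\eta)$ and $B(0)$ are both $O(\sqrt{\eta})$, the conditional normalization for the reweighted bridge on $[-\eta,0]$ genuinely tends to zero as $L\to\infty$, so you cannot dominate by a free Brownian bridge with a bounded constant. Your use of Lemma~\ref{stm}\ref{parb} for the infimum also conflates the $\P_{\beta,L}^{\e,\kappa}$ laws with the law under study.

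The paper closes this gap not by bounding the normalization but by \emph{bypassing it via stochastic monotonicity and Brownian scaling}. First, from the explicit density \eqref{density} one checks that under $\P_{\infty,\infty}$ the law of $B(-\gamma)/\sqrt{\gamma}$ has a density bounded uniformly in $\gamma\in(0,1/2)$ and in $a\in[R^{-1},R]$; hence one can choose $M=M(\e,R)$ with $\P_{\alpha\sqrt{L},L}(B(-\gamma)\ge M\sqrt{\gamma})\le 2\e$ for all small $\gamma$ and large $L$, and similarly $\P_{\alpha\sqrt{L},L}(B(0)\ge M\sqrt{\gamma})\le \e$ by Lemma~\ref{convzero}. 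On the event $\{B(-\gamma)\vee B(0)\le M\sqrt{\gamma}\}$, condition on the endpoints and apply monotonicity (Lemma~\ref{lem:sm}) to \emph{raise} both to $M\sqrt{\gamma}$: the supremum of $B$ on $[-\gamma,0]$ is then stochastically dominated by that of a Brownian bridge $\Br^{M}$ on $[-\gamma,0]$ from $M\sqrt{\gamma}$ to $M\sqrt{\gamma}$ conditioned to stay positive. By Brownian scaling this process equals $\sqrt{\gamma}$ times a bridge on $[-1,0]$ from $M$ to $M$ conditioned positive, whose supremum is a fixed random variable independent of $\gamma$; thus $\sup_{[-\gamma,0]}\Br^{M}\le (M+M')\sqrt{\gamma}$ with probability $\ge 1-\e$ for some $M'=M'(\e,R)$. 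Now choose $\gamma$ so that $(M+M')\sqrt{\gamma}\le\rho/3$. The infimum is handled by the opposite monotonicity: remove the soft floor and lower the endpoints, reducing to a free Brownian bridge whose minimum is controlled by Gaussian tails. This gives $\operatorname{osc}(B|_{[-\gamma,0]})\le\rho$ with high probability, and no normalization estimate near the boundary is ever needed.
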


Note that Lemma \ref{tightz} provides a modulus of continuity estimate that holds uniformly over all limiting starting points $a \in [R^{-1}, R]$. This uniformity will be crucial in one of our later arguments.

\begin{proof}
We shall assume $A=-1$ for simplicity, and we can assume $L$ is large enough so that $a_L\leq 2R$. For $\gamma\in(0,1)$, we may bound
\begin{equation}\label{mocsplit}
\mc(B,\delta) \leq \mc(B|_{[-1,-\gamma]},\delta) + \mc(B|_{[-\gamma,0]},\delta) + \mc(B|_{[-\gamma-\delta,-\gamma+\delta]},\delta).
\end{equation}
We will control the three terms separately, beginning with the first. Fix $\e>0$. For each $\gamma>0$, we may find $C(\gamma,R,\e)>1$ such that $\P_{\alpha \sqrt{L},L}(1/C<B(-\gamma)<C)\ge 1-\e/2$ for all large $L$ by Proposition \ref{convfd}. Thus by the Gibbs property,
\begin{align}
  & \nonumber \P_{\alpha \sqrt{L},L}\big(\mc(B|_{[-1,-\gamma]},\delta)\ge \rho\big)  \\ &\nonumber \le \e/2+\E_{\alpha \sqrt{L},L}\left[\ind_{1/C<B(-\gamma) <C}\cdot\E_{\alpha \sqrt{L},L}\left[\ind\big\{\mc(B|_{[-1,-\gamma]},\delta)\ge \rho\big\}\mid \mathcal{F}_{\m{ext}}([-1,-\gamma))\right]\right] \\ & \le \e/2+ \E_{\alpha \sqrt{L},L}\left[\ind_{1/C<B(-\gamma) <C}\cdot\frac{\E\left[\ind\big\{\mc(\Br,\delta)\ge \rho\big\}\right]}{\E\left[\exp\big(-L\int_{-1}^{-\gamma} e^{-\sqrt{L}\Br(x)}dx\big)\right]}\right], \label{mocexp}
\end{align}
where in the last line $\Br$ denotes a Brownian bridge on $[-1,-\gamma]$ from $a_L$ to $B(-\gamma)$. Here we used that the RN derivative $\exp(-L\int_{-1}^{-\gamma}e^{-\sqrt{L}\Br(x)}dx) \leq 1$. The expectation in the denominator in \eqref{mocexp} is bounded below by $\frac{1}{2}\P(\inf_{x\in[-1,-\gamma]} \Br(x) > \frac{1}{2(C+R)})$ for large $L$, and on the event $\{B(-\gamma) > 1/C\}$, this in turn is bounded below by a positive constant $c = c(\gamma,R,\e)$ depending on $C$. For the numerator, note that on the event $\{ B(-\gamma)<C\}$, $\mc(\Br,\delta)$ is bounded above (stochastically) by $\mc(\Br^0,\delta)+(2R+C)\delta$, where $\Br^0$ is a Brownian bridge from 0 to 0. Since $\Br^0$ is uniformly continuous a.s., $\mc(\Br^0,\delta)\to 0$ in probability as $\delta\downarrow 0$. This allows us to choose $\delta = \delta(\gamma,\e,\rho,R)$ small enough so that the expectation in the numerator in \eqref{mocexp} is at most $c\e/2$. Altogether, for this choice of $\delta$ we obtain
\begin{equation}\label{moc1g}
\limsup_{L\to\infty}\P_{\alpha \sqrt{L},L}\big(\mc(B|_{[-1,-\gamma]},\delta)\ge \rho/3\big) \leq \e.
\end{equation}

We now control the second term in \eqref{mocsplit}. By Remark \ref{rem:tran}, under the law $\P_{\infty,\infty}$ the density of $B(-\gamma)/\sqrt{\gamma}$ at $y\geq 0$ is
$$ \frac{1}{{\sqrt{\gamma}}}
\frac{y}{a}\frac{\p_1(y)}{\p_1(a)}[\p_{1-\gamma}(a-y\sqrt{\gamma})-\p_{1-\gamma}(a+y\sqrt{\gamma})].$$
If say $\gamma\in (0,1/2)$, then by the mean value theorem this density is bounded above for all $y\geq 0$ by
\[
\frac{1}{{\sqrt{\gamma}}}\frac{y}{a}\frac{\p_1(y)}{\p_1(a)} {[2y\sqrt{\gamma}\cdot \sup_{u\in \R} \p_{1-\gamma}'(u)]} = \frac{y}{a}\frac{\p_1(y)}{\p_1(a)} \frac{2y}{\sqrt{2\pi e}\,(1-\gamma)} \leq \frac{2^{3/2}}{\sqrt{\pi e}}\frac{y^2e^{-y^2/2}}{R^{-1}\p_1(R)}.
\]
The right hand side is integrable, so given any $\e>0$ we can find $M(\e,R)$ such that for all $\gamma\in(0,1/2)$, $\P_{\infty,\infty}(B(-\gamma) \ge M\sqrt{\gamma}) \le \e$. By Proposition \ref{convfd} it follows that for large $L$, $\P_{\alpha \sqrt{L},L}(B(-\gamma) \ge M\sqrt{\gamma}) \le 2\e$.

Let $\Br^M$ be a Brownian bridge on $[-\gamma,0]$ from $M\sqrt{\gamma}$ to $M\sqrt{\gamma}$ conditioned to stay positive. We can find $M'(\e,R)>0$ such that for all $\gamma>0$, the probability of the event $\{\sup_{x\in [-\gamma,0]} \Br^M(x) \le (M+M')\sqrt\gamma\}$ is at least $1-\e$. We then choose $\gamma$ such that $(M+M')\sqrt{\gamma} \le \rho/3$. For this $\gamma$, we have $\P_{\alpha \sqrt{L},L}(B(0) \ge M\sqrt{\gamma}) \le \e$ for all large $L$ by Lemma \ref{convzero}. Thus by the Gibbs property and stochastic monotonicity, 
\begin{equation}\label{supgamma}
\begin{split}
\P_{\alpha\sqrt{L},L}\bigg(\sup_{x\in [-\gamma,0]} B(x) \ge \rho/3 \bigg) &\le \P_{\alpha\sqrt{L},L} \big(B(0)\vee B(-\gamma) \geq M\sqrt{\gamma}\big)\\ 
&\qquad + \P\bigg(\sup_{x\in[-\gamma,0]}\Br^M(x) \leq (M+M')\sqrt{\gamma}\bigg)\le 4\e.
\end{split}
\end{equation}
We now bound the modulus of continuity, for any $\delta>0$, via
\begin{align*}
    \mc(B|_{[-\gamma,0]},\delta) \le \sup_{x\in [-\gamma,0]} B(x) - \inf_{x\in [-\gamma,0]} B(x).
\end{align*}
The supremum is at most $\rho/3$ with probability $1-4\e$ by \eqref{supgamma}, and the infimum can be bounded below by $-\rho/2$ with probability $1-\e$ by an even simpler argument than the above, by removing the floor with monotonicity and then using Gaussian tails of the Brownian bridge. All in all, for $\gamma = \gamma(\e,\rho,R)$ chosen as above and for the corresponding choice of $\delta = \delta(\gamma,\e,\rho,R)$ in \eqref{moc1g}, we obtain 
\begin{equation}\label{mocg0}
\limsup_{L\to\infty}\P_{\alpha \sqrt{L},L}\big(\mc(B|_{[-\gamma,0]},\delta)\ge \rho/3\big)\le 5\e.
\end{equation}
Finally, we must control the third term in \eqref{mocsplit}. For this we simply note that we can assume $\delta<\gamma/2$, and the argument used to prove \eqref{moc1g} also applies to the interval $[-1,-\gamma/2] \supset [-\gamma-\delta,-\gamma+\delta]$ instead of $[-1,-\gamma]$ if we  shrink $\delta$ further depending only on $\gamma$. Combining with \eqref{moc1g} and \eqref{mocg0}, the triangle inequality applied to \eqref{mocsplit} yields that $\limsup_{L\to\infty} \P_{\alpha\sqrt{L},L}(\mc(B,\delta)\geq \rho) \leq 7\e$ for this choice of $\delta$. Since $\e$ was arbitrary, \eqref{moclimsup} follows.
\end{proof}

Using the above we may finally prove the main theorem of this subsection.

\begin{proof}[Proof of Theorem \ref{wconvsf}] When $a>0$, the theorem follows from finite-dimensional convergence and tightness proven in Proposition \ref{convfd} and Lemma \ref{tightz} respectively. The case $a=0$ can be handled by a sandwich argument using stochastic monotonicity with respect to the starting value.  Fix any $\e>0$. From Lemma \ref{stm}\ref{paro}, we get a probability space supporting two processes $B_1,B_2$ on $[A,0]$ such that the marginal law of $B_1$ is given by $\P_{\alpha\sqrt{L},L}^{a_L+\e}$, the marginal law of $B_2$ is given by $\P_{\alpha\sqrt{L},L}^{a_L}$, and with probability $1$ for all $x\in [A,0]$ we have \begin{align}\label{twineq}B_1(x) \ge B_2(x) \ge B_1(x)-\e.\end{align} As $L\to \infty$, $\P_{\alpha\sqrt{L},L}^{a_L+\e}$ converges to $\P_{\infty,\infty}^{\e}$ by the $a>0$ case above, and as $\e\downarrow 0$,  $\P_{\infty,\infty}^{\e} \to \P_{\infty,\infty}^{0}$ by \cite[Theorem 4.1]{durrigle}. In view of \eqref{twineq}, the f.d.d.'s of $B_2$ must also converge to those of $\P_{\infty,\infty}^0$, and the modulus of continuity of $B_2$ is controlled by that of $B_1$, which implies tightness. Therefore  $\P_{\alpha\sqrt{L},L}^{a_L}\to \P_{\infty,\infty}^{0}$ weakly as $L\to \infty$.
\end{proof}

We conclude this section with two basic lemmas that will be used to show certain events have positive probability. The first shows that with positive probability, the Brownian bridge with law $\P_{\infty,\infty}$ can be dominated by any suitable continuous function.

\begin{lemma} \label{bmpos} Fix $a \ge 0$, $A<0$, and $\delta>0$. Consider any non-negative continuous function $f: [A,0]\to \R_{\ge 0}$ with $f(A)=a$.  We have
\begin{align*}
    \P_{\infty,\infty}\big(B(x) \le f(x)+\delta \mbox{ for all }x\in [A,0]\big) >0.
\end{align*}
Let $L(x)=ax/A$. We have
\begin{align}\label{unibd}
    \inf_{a\ge 0}\P_{\infty,\infty}^a\big(B(x) \le L(x)+\delta \mbox{ for all }x\in [A,0]\big) >0.
\end{align}
\end{lemma}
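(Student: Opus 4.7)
\textbf{Plan for Lemma \ref{bmpos}.} For part (1), the plan is to reduce to an application of the support theorem for Brownian bridges (and, when $a = 0$, for the Brownian excursion, cf. \cite{durrigle}). For $a > 0$, write $\P_{\infty,\infty}^a$ as the law of a Brownian bridge $\Br$ from $a$ to $0$ conditioned on the positive-probability event $\mathsf{P} := \{\Br(x) > 0 \text{ for all } x \in (A,0)\}$. It suffices to show the unconditioned bridge assigns positive probability to $\mathsf{P} \cap \{\Br(x) \le f(x) + \delta \text{ for all }x \in [A,0]\}$. To do so, I would construct by hand a continuous function $h : [A,0] \to \R$ with $h(A) = a$, $h(0) = 0$, $h(x) > 0$ on $(A,0)$, and $h(x) < f(x) + \delta$ throughout; this is easy since $f \ge 0$, $f(A) = a < a + \delta = f(A) + \delta$, and $f(0) + \delta \ge \delta > 0$, so one may take $h$ to track $f + \delta/2$ on $[A+\eta, -\eta]$ (for small $\eta$) and interpolate to the required boundary values. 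The support theorem then gives that a sufficiently small uniform tube around $h$ has positive probability, and this tube is contained in the joint event. The case $a = 0$ follows from the analogous support statement for the Brownian excursion, which vanishes at the endpoints.

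For part (2), the plan is to rescale using the representation from Remark \ref{rem:tran}: $\Lambda^+(x) = \sqrt{|A|}\, B^+(x/A)$, where $B^+$ is a standard Brownian meander on $[0,1]$ conditioned to terminate at $b := a/\sqrt{|A|}$. Under the substitution $u = x/A$, the target event becomes $\{B^+(u) \le b u + \eta \text{ for all } u \in [0,1]\}$, where $\eta := \delta/\sqrt{|A|}$. Setting $W(u) := B^+(u) - bu$ reduces matters to showing a uniform lower bound, over $b \in [0,\infty)$, for the probability that $W$ stays below $\eta$; here $W$ has the law of a Brownian bridge on $[0,1]$ from $0$ to $0$ conditioned on $W(u) > -bu$ for all $u$.

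I would then control the three regimes $b \to 0$, $b \in [b_0, b_1]$ compact, and $b \to \infty$ separately and paste them together by a compactness argument. For $b \to \infty$ the constraint $W > -bu$ becomes vacuous: one quantifies this via a Radon--Nikodym comparison showing $\P(\sup W \le \eta)$ converges to $\P(\sup |\mathfrak{B}| \le \eta) > 0$, where $\mathfrak{B}$ is an unconditioned bridge from $0$ to $0$, because a bridge from $a$ to $0$ stays positive with probability tending to $1$ as $a \to \infty$ (reflection principle on $\Br - L$). For $b \to 0$, $W$ converges weakly to a Brownian excursion, which has $\P(\sup \le \eta) > 0$. For $b$ in a compact interval, part (1) applied with $f = L$ gives pointwise positivity, and standard continuity of the meander laws in the endpoint $b$ (visible from the explicit density \eqref{BMtr}) upgrades this to a uniform positive lower bound on the compact set.

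The main obstacle is the $b \to \infty$ limit in part (2): one must show that the conditioning on positivity becomes asymptotically vacuous in a sufficiently quantitative way to match up with the $b = O(1)$ regime. This is the only place where a non-cosmetic estimate is required; the argument is a Gaussian tail estimate on $\Br - L$ combined with the explicit Radon--Nikodym derivative $\mathbf{1}_{\Br > 0}/\P(\mathsf{P})$. Part (1) is essentially a packaging of the support theorem, and the $b \to 0$ regime of part (2) is immediate from Brownian excursion properties.
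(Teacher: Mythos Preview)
There is a genuine gap in your Part 1 argument. You claim that for $a>0$, the event $\mathsf{P} = \{\Br(x) > 0 \text{ for all } x \in (A,0)\}$ has positive probability for a Brownian bridge $\Br$ from $a$ to $0$ on $[A,0]$. This is false: since $\Br(0)=0$, the local behaviour of $\Br$ near time $0$ is that of a Brownian motion started at $0$ (run backwards in time), which by the law of the iterated logarithm changes sign infinitely often in every neighbourhood of $0$. Hence $\P(\mathsf{P})=0$, and $\P_{\infty,\infty}^a$ cannot be realised as a bridge conditioned on a positive-probability event; this is precisely why Remark~\ref{rem:tran} constructs $\Lambda^+$ via the meander rather than by naive restriction. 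Your Part 2 then inherits the problem through its reliance on Part 1 for the compact-$b$ regime. The gap is fixable (one could work directly with the meander densities or an excursion-type support theorem), but as written the argument does not go through.

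The paper sidesteps the issue with a single move that handles both parts at once. By stochastic monotonicity in the endpoints, $B \sim \P_{\infty,\infty}^a$ is dominated by $B'$, a Brownian bridge from $a+\delta/2$ to $\delta/2$ conditioned to stay positive; since both endpoints of $B'$ are strictly positive, this \emph{is} conditioning on a positive-probability event, and one obtains $\P(B \le f+\delta) \ge \P(B' \le f+\delta) \ge \P(0 < \Br \le f+\delta)$ for the unconditioned bridge $\Br$ from $a+\delta/2$ to $\delta/2$. The tube argument then applies directly. For the uniformity in \eqref{unibd}, the paper simply observes that $\Br - L$ is a bridge from $\delta/2$ to $\delta/2$ whose law does not depend on $a$, collapsing your three-regime analysis into a single line.
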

\begin{proof} Under $\P_{\infty,\infty}$, $B$ is a Brownian bridge from $a$ to $0$ conditioned to stay positive. Let $B'$ be a Brownian bridge from $a+\delta/2$ to $\delta/2$ conditioned to stay positive. By stochastic monotonicity, we may find a coupling $\P$ of $B$ and $B'$ such that $B(x) \le B'(x)$ for all $x\in [A,0]$. Thus,
\begin{align*}
    \P\big(B(x) \le f(x)+\delta \mbox{ for all }x\in [A,0]\big) & \ge \P\big(B'(x) \le f(x)+\delta \mbox{ for all }x\in [A,0]\big) \\ & \ge \P(0 < \Br(x) \le f(x)+\delta \mbox{ for all }x\in [A,0])
\end{align*}
where in the last line $\Br$ is a Brownian bridge under $\P$ from $a+\delta/2$ to $\delta/2$ (with no further conditioning). By standard properties of Brownian bridges (see e.g. Corollary 2.10 in \cite{ble}), this last probability is positive. When $f(x)=L(x)=ax/A$ we further observe that
\begin{align*}
     \P(0 < \Br(x) \le L(x)+\delta \mbox{ for all }x\in [A,0]) \ge \P(0 < \Br(x)-L(x) \le \delta \mbox{ for all }x\in [A,0]).
\end{align*}
Since $\Br(x)-L(x)$ is Brownian bridge from $\delta/2$ to $\delta/2$, the last probability is positive and is independent of $a$.
\end{proof}
It is also standard to show (see Corollary 2.10 in \cite{ble} for example) that a Brownian motion has a positive probability of staying inside a tube:
\begin{lemma}\label{brpos} Fix any $a \in \R$, $A<0$, and $\delta>0$. Let $B$ be a Brownian motion on $[A,0]$ started from $a$.  Consider any continuous function $g: [A,0]\to \R$ with $g(A)=a$. We have
\begin{align*}
    \P\big( |B(x)-g(x)| \le \delta \mbox{ for all }x\in [A,0]\big) >0.
\end{align*}
\end{lemma}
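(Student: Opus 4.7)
The plan is to reduce the statement to the classical support theorem for Brownian motion in the uniform topology, which is a standard consequence of the Cameron--Martin/Girsanov theorem. First I would translate coordinates, setting $W(t) := B(A+t) - a$ for $t \in [0,|A|]$ and $\til g(t) := g(A+t) - a$, so that $W$ is a standard Brownian motion started at $0$, $\til g$ is continuous with $\til g(0) = 0$, and the event $\{|B(x) - g(x)| \le \delta \text{ for all } x \in [A,0]\}$ becomes $\{\sup_{t \in [0,|A|]} |W(t) - \til g(t)| \le \delta\}$.

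Next, since $\til g$ is uniformly continuous on the compact interval $[0,|A|]$, I would approximate it by a piecewise linear function $\til g_n$ with $\til g_n(0) = 0$ and $\|\til g - \til g_n\|_\infty \le \delta/2$. It then suffices to prove that
\begin{align*}
\mathbf{P}\bigg(\sup_{t \in [0,|A|]} |W(t) - \til g_n(t)| \le \delta/2\bigg) > 0.
\end{align*}
Since $\til g_n$ is absolutely continuous with bounded derivative $\til g_n' \in L^2([0,|A|])$, it lies in the Cameron--Martin space of the Wiener measure.

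By Girsanov's theorem, the process $M_t := W(t) - \til g_n(t)$ is a standard Brownian motion started at $0$ under the equivalent probability measure $\mathbf{Q}$ with Radon--Nikodym derivative
\begin{align*}
\frac{d\mathbf{Q}}{d\mathbf{P}} = \exp\bigg(\int_0^{|A|} \til g_n'(s)\,dW(s) - \frac{1}{2} \int_0^{|A|} (\til g_n'(s))^2\,ds\bigg).
\end{align*}
Since this density is almost surely positive and finite, $\mathbf{P}$ and $\mathbf{Q}$ are mutually absolutely continuous, so the event $\{\sup_t |M_t| \le \delta/2\}$ has positive probability under $\mathbf{P}$ iff it has positive probability under $\mathbf{Q}$. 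Under $\mathbf{Q}$, $(M_t)$ is a standard Brownian motion, and the probability that it stays in $[-\delta/2, \delta/2]$ throughout the compact interval $[0,|A|]$ is strictly positive by standard Brownian estimates (e.g., the reflection principle gives an explicit lower bound).

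There is no real obstacle in this proof; the only step requiring any thought is the polygonal approximation, and that is immediate from uniform continuity. Everything else is a textbook application of Girsanov combined with the well-known positivity of the Wiener measure on uniform tubes around smooth paths.
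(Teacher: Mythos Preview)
Your proof is correct. The paper does not actually prove this lemma; it simply states that the result is standard and cites Corollary~2.10 in \cite{ble}. Your argument via polygonal approximation and Girsanov is a clean, self-contained route to the support theorem for Wiener measure, and supplies the details the paper omits by reference.
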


\subsection{Multi-path convergence} \label{sec:multi}

In this section, we extend Theorem \ref{wconvsf} to a much broader class of HSKPZ Gibbs measures that involves multiple paths. We still work in the supercritical regime in this subsection (i.e., $\alpha>0$ fixed); the critical regime will be handled in the next subsection. We first define the laws that will appear as a limit for the Gibbs measures; the notation is summarized in the following table.

\begin{definition}\label{defni}
    Fix any $a_1 > a_2 \geq 0$ and $A<0$. We define a law $\P_{\infty}^{2;A;(a_1,a_2),\star}$ of a line ensemble $(B_1, B_2) : [A,0]  \to \R^{2}$, which we refer to as \textit{non-intersecting pinned Brownian motions} ($\m{PBM}$) with $2$ paths starting from $(a_1,a_2)$, as follows. Let $U$ be a Brownian motion on $[A,0]$ started at $a_1+a_2$ with diffusion coefficient 2, and let $V$ be an independent Brownian bridge on $[A,0]$ from $a_1-a_2$ to 0 with diffusion coefficient 2 conditioned to stay positive on $(A,0)$ (see Remark \ref{rem:tran}). Then set $B_1 = \frac{1}{2}(U+V)$ and $B_2 = \frac{1}{2}(U-V)$. 
\end{definition}

The motivation for this definition comes from the following lemma, which shows that the pair $(B_1,B_2)$ can be seen as a weak limit as $\e\downarrow 0$ of two non-intersecting Brownian motions conditioned to end within distance $\e$ of each other at time 0.

\begin{lemma}\label{6.7}  Fix any $a_1>a_2$. Let $(B_1^\e,B_2^\e)$ denote two independent Brownian motions on $[A,0]$ started at $(a_1,a_2)$, conditioned on non-intersection $B_1^\e(x) > B_2^\e(x)$ for all $x\in (0,A)$ and $\e$-pinning $|B_1^\e(0) - B_2^\e(0)| < \e$. Set $U^\e(x):=B_1^\e(x)+B_2^\e(x)$ and $V^\e(x):=B_1^\e(x)-B_2^\e(x)$. Then $U^\e,V^\e$ are independent, $U^\e$ is a Brownian motion started from $a_1+a_2$ with diffusion coefficient $2$, and $V^\e$ is a Brownian motion started from $a_1-a_2$ with diffusion coefficient $2$ conditioned on $V^\e(x)>0$ for all $x\in(A,0)$ and $|V^\e(0)|<\e$.
\end{lemma}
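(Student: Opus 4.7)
\textbf{Plan for Lemma \ref{6.7}.} The plan is to invoke the standard \emph{rotation trick}: if $B_1, B_2$ denote two independent (unconditioned) Brownian motions on $[A,0]$ started from $a_1, a_2$ respectively, then their sum $U := B_1 + B_2$ and difference $V := B_1 - B_2$ are independent continuous Gaussian processes. A direct covariance computation shows that $U$ is a Brownian motion started from $a_1 + a_2$ with diffusion coefficient $2$, and $V$ is a Brownian motion started from $a_1 - a_2$ with diffusion coefficient $2$. Their independence comes from the vanishing cross-covariance
\[
\mathrm{Cov}(U(s) - U(A),\, V(t) - V(A)) = \mathrm{Var}(B_1(s\wedge t) - B_1(A)) - \mathrm{Var}(B_2(s\wedge t) - B_2(A)) = 0
\]
for all $s,t \in [A,0]$, together with joint Gaussianity of the pair $(U,V)$.

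Next I would rewrite the two conditioning events purely in terms of $V$: the non-intersection event $\{B_1(x) > B_2(x) \text{ for all } x\in(A,0)\}$ is simply $\{V(x) > 0 \text{ for all } x\in(A,0)\}$, and the pinning event is $\{|V(0)| < \varepsilon\}$. Call their intersection $\mathsf{E}$. Since $V(A) = a_1 - a_2 > 0$ and $V$ has continuous paths, a standard Brownian calculation gives $\mathbf{P}(\mathsf{E}) > 0$, so the conditioning is well-defined.

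Since $U$ and $V$ are independent under the unconditional law and $\mathsf{E} \in \sigma(V)$, for any Borel sets $\mathcal{A}_U, \mathcal{A}_V$ in path space one has
\[
\mathbf{P}((U^\varepsilon, V^\varepsilon) \in \mathcal{A}_U \times \mathcal{A}_V) = \frac{\mathbf{P}(U \in \mathcal{A}_U)\,\mathbf{P}(\{V \in \mathcal{A}_V\}\cap \mathsf{E})}{\mathbf{P}(\mathsf{E})} = \mathbf{P}(U \in \mathcal{A}_U)\cdot \mathbf{P}(V \in \mathcal{A}_V \mid \mathsf{E}).
\]
This immediately yields the three claims: $U^\varepsilon$ and $V^\varepsilon$ are independent, $U^\varepsilon$ retains the unconditional law of $U$ (a Brownian motion from $a_1+a_2$ with diffusion coefficient $2$), and $V^\varepsilon$ has the law of $V$ conditioned on $\mathsf{E}$, as stated.

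I do not anticipate any significant obstacle; the only minor item to check is that $\mathbf{P}(\mathsf{E}) > 0$, which is routine given that $V(A) > 0$ and $V(0)$ admits a strictly positive conditional density at $0$ given $\{V > 0 \text{ on }(A,0)\}$, as follows from the Brownian meander representation already used in Remark \ref{rem:tran}.
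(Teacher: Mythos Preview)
Your proposal is correct and follows essentially the same approach as the paper: both use the rotation trick to decouple $U=B_1+B_2$ and $V=B_1-B_2$ into independent Brownian motions with diffusion coefficient $2$, then observe that the conditioning event lies in $\sigma(V)$ so that the conditional law of $U$ is unchanged. Your version is slightly more detailed (the explicit covariance computation and the check that $\P(\m{E})>0$), but the argument is the same.
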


\begin{proof} It is well-known that the sum and difference of two independent Gaussians are independent; in particular, the sum and difference of two independent Brownian motions are themselves independent Brownian motions with diffusion coefficient 2. Thus prior to the conditioning, $ U^\e$ and $ V^\e$ are independent Brownian motions with diffusion coefficient 2. Now the conditioning event can be written as $\{ V^\e(x)>0\mbox{ for all }x\in(A,0), \, | V^\e(0)| < \e\}$. Since this depends only on $ V^\e$, the conditional law of $U^\e$ given this event is unchanged.
\end{proof}

It is easy to see by arguments similar to those in the previous section that the process $V^\e$ above converges weakly as $\e\downarrow 0$ to a Brownian bridge from $a_1-a_2$ to 0 conditioned to stay positive on $(A,0)$, so writing $B_1^\e = \frac12(U^\e+V^\e)$ and $B_2^\e=\frac12(U^\e-V^\e)$ and taking $\e\downarrow 0$ recovers the process $(B_1,B_2)$ in Definition \ref{defni}. (We will not actually use this fact beyond motivating the definition of pinned Brownian motions, so we do not provide a proof here.)

For more than two paths, we define the corresponding law as follows.

\begin{definition}[$2m$ paths]\label{defni2} Fix any $a_1 > a_2 > \cdots > a_{2m}$, $A<0$, and a continuous function $g:[A,0]\to\mathbb{R}\cup\{-\infty\}$ with $g(A) < x_{2m}$. We define a law $\P_{\otimes,\infty}^{2m;A;\vec{a},\star}$ of a line ensemble $B = (B_1, B_2, \ldots, B_{2m}) : [A,0]  \to \R^{2m}$, where $(B_{2i-1},B_{2i})\sim\P_{\infty}^{2;A;(a_{2i-1},a_{2i}),\star}$, $1\leq i\leq m$, are independent $\m{PBM}$'s with $2$ paths in the sense of Definition \ref{defni}. We then define $\P_{\infty;g}^{2m;A;\vec{a},\star}$ to be the law of $B$ conditioned on non-intersection $ B_{2i}(x) >  B_{2i+1}(x)$ for $i\in \ll1,m-1\rr$ and $B_{2m}(x) > g(x)$ for $x\in (A,0)$. We call this process \textit{non-intersecting pairwise pinned Brownian motions} with $2m$ paths starting from $(x_1,x_2,\ldots,x_{2m})$ and with a hard floor $g$, or $m$-$\m{PBM}$ for short.
\end{definition}

Note that in Definition \ref{defni2}, the non-intersection between pairs $B_{2i-1}$ and $B_{2i}$ is implicit in the definition of the 2-path $\mathsf{PBM}$'s, so we have $B_1 > \cdots > B_{2m} > g$ on $(A,0)$. The non-intersection event between different pairs and the hard floor $g$ has positive probability (see the proof of Theorem \ref{6.11} below), so the conditioning is well-defined.

\medskip

We now give the main result of this section, establishing convergence of scale-$L$ one-sided HSKPZ Gibbs measure (Definition \ref{kpzgibbs}) in the supercritical regime to $m$-$\m{PBM}$'s (as $L\to\infty$).

\begin{theorem}[Supercritical regime] \label{6.11} Fix any $\vec{a}\in \R^{2m}$ with $a_1>a_2>\cdots>a_{2m}$ and $A<0$. Let $g:[A,0]\to \R\cup \{-\infty\}$ be a continuous function such that $a_{2m+1}:=g(A)<a_{2m}$. Let $(\vec{a}_L)_{L\ge 1}$ be a sequence of vectors in $\R^{2m}$ with the property that $\vec{a}_L \to \vec{a}$. Let $g_L:[A,0]\to \R\cup \{-\infty\}$ be a sequence of continuous function such that $g_L(x) \to g(x)$ uniformly on $[A,0]$. Recall the one-sided HSKPZ Gibbs measures $\P_{L;f,g}^{k,\ell;A;\vec{a},\star}$ introduced in Definition \ref{kpzgibbs}.  Let $(B_i)_{i=1}^{2m} : [A,0]\to \R^{2m}$ be distributed as $\P_{L;g_L}^{2m;A;\vec{a}_L,\star}$. As $L\to \infty$, $(B_i)_{i=1}^{2m}$ converges weakly to non-intersecting pairwise pinned Brownian motions with $2m$ paths started from $\vec{a}$ and with a hard floor $g$. 
\end{theorem}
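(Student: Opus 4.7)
The plan is to reduce the multi-path problem to the two-path case handled by Theorem \ref{wconvsf} via a pair decomposition, and then upgrade the inter-pair soft non-intersection to a hard one. Write the Radon--Nikodym derivative $\mathcal{W}_L$ of $\P_{L;g_L}^{2m;A;\vec{a}_L,\star}$ against $2m$ independent Brownian motions with drifts $(-1)^i\alpha\sqrt{L}$ (started from $\vec a_L$) as $\mathcal{W}_L=\mathcal{W}_L^{\mathrm{pair}}\cdot\mathcal{W}_L^{\mathrm{inter}}$, where $\mathcal{W}_L^{\mathrm{pair}}$ collects, for each pair $j=1,\dots,m$, the boundary factor $e^{-\alpha\sqrt L(B_{2j-1}(0)-B_{2j}(0))}$ together with the intra-pair soft repulsion $\exp(-L\int_A^0 e^{\sqrt L(B_{2j}(x)-B_{2j-1}(x))}\,dx)$, while $\mathcal{W}_L^{\mathrm{inter}}$ collects the $m-1$ inter-pair repulsions $\exp(-L\int_A^0 e^{\sqrt L(B_{2j+1}-B_{2j})}\,dx)$ and the floor repulsion $\exp(-L\int_A^0 e^{\sqrt L(g_L-B_{2m})}\,dx)$. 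Let $\mu_L$ denote the law obtained by reweighing only by $\mathcal{W}_L^{\mathrm{pair}}$; then $\P_{L;g_L}^{2m;A;\vec{a}_L,\star}$ is $\mu_L$ reweighed by $\mathcal{W}_L^{\mathrm{inter}}/\E_{\mu_L}[\mathcal{W}_L^{\mathrm{inter}}]$.

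Under $\mu_L$ the $m$ pairs are independent, and for each pair the linear change of variables $U_j=B_{2j-1}+B_{2j}$, $V_j=B_{2j-1}-B_{2j}$ yields independent processes of diffusion coefficient $2$, with drifts cancelling on $U_j$ and summing to $-2\alpha\sqrt L$ on $V_j$; crucially $\mathcal{W}_L^{\mathrm{pair}}$ depends only on $V_j$ and equals $e^{-\alpha\sqrt L V_j(0)}\exp(-L\int_A^0 e^{-\sqrt L V_j(x)}\,dx)$. Applying Theorem \ref{wconvsf} to each $V_j$ (the argument goes through verbatim with diffusion coefficient $2$ and drift $-2\alpha\sqrt L$ in place of $1$ and $-\alpha\sqrt L$), each $V_j$ converges to a Brownian bridge on $[A,0]$ from $a_{2j-1}-a_{2j}$ to $0$ of diffusion coefficient $2$ conditioned to stay positive, while $U_j$ remains an unreweighed Brownian motion. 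Independence across $j$ is preserved, so $\mu_L\Rightarrow\mu_\infty$, the product of $m$ independent two-path $\mathsf{PBM}$'s in the sense of Definition \ref{defni}. Now pass to the limit in the $\mathcal{W}_L^{\mathrm{inter}}$ reweighing. Since $0\le\mathcal{W}_L^{\mathrm{inter}}\le 1$, Skorohod representation on a convergent subsequence lets us assume $\mu_L\to\mu_\infty$ almost surely in the uniform topology. On the strict-separation event $\mathsf{NI}:=\{B_{2j}(x)>B_{2j+1}(x),\ j=1,\dots,m-1,\ x\in[A,0]\}\cap\{B_{2m}(x)>g(x),\ x\in[A,0]\}$, the exponents in $\mathcal{W}_L^{\mathrm{inter}}$ are bounded above by a negative constant, so $\mathcal{W}_L^{\mathrm{inter}}\to\mathbf{1}_{\mathsf{NI}}$; on its complement a strict interior violation forces $\mathcal{W}_L^{\mathrm{inter}}\to 0$. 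The event $\mathsf{NI}$ has positive $\mu_\infty$-probability by a tube construction using Lemma \ref{bmpos} to force each $V_j$ into a narrow tube about its mean together with the strict separation hypothesis $a_1>a_2>\cdots>a_{2m}>g(A)$ and the independence of the sums $U_j$. Bounded convergence then yields
\begin{align*}
\E_{\P_L}[F(B)]=\frac{\E_{\mu_L}[F(B)\,\mathcal{W}_L^{\mathrm{inter}}]}{\E_{\mu_L}[\mathcal{W}_L^{\mathrm{inter}}]}\;\longrightarrow\;\frac{\E_{\mu_\infty}[F(B)\mathbf{1}_{\mathsf{NI}}]}{\mu_\infty(\mathsf{NI})}=\E_{\P_\infty}[F(B)]
\end{align*}
for every bounded continuous test functional $F$, where $\P_\infty=\P_{\infty;g}^{2m;A;\vec{a},\star}$ is the $m$-$\mathsf{PBM}$ law from Definition \ref{defni2}.

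The main obstacle is controlling $\mathcal{W}_L^{\mathrm{inter}}$ near $x=0$, where in the limit $V_j(0)=0$ forces adjacent pairs to pinch together. There $B_{2j+1}(0)-B_{2j}(0)=\tfrac12(U_{j+1}(0)-U_j(0))$ is negative only on the event of strict ordering of the sums $U_1(0)>U_2(0)>\cdots>U_m(0)$, which has positive but not full probability under the unconditioned $\mu_\infty$ and is enforced $\mu_\infty$-a.s.\ precisely on $\mathsf{NI}$. To make the pointwise convergence $\mathcal{W}_L^{\mathrm{inter}}\to\mathbf{1}_{\mathsf{NI}}$ rigorous I would split $\int_A^0=\int_A^{-\delta}+\int_{-\delta}^0$: on $[A,-\delta]$ the strict separation and continuity give a uniform exponential bound, while the near-boundary piece $\int_{-\delta}^0$ is controlled uniformly in $L$ by local Brownian fluctuation estimates for $V_j$ combined with the strict ordering of $U_j(0)$ on $\mathsf{NI}$. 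Sending $\delta\downarrow 0$ after $L\to\infty$ via a diagonal Skorohod extraction then closes the argument.
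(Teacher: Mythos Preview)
Your proposal is correct and follows essentially the same approach as the paper: decompose the Radon--Nikodym derivative into intra-pair and inter-pair factors, apply Lemma~\ref{6.9} and Theorem~\ref{wconvsf} to obtain $\mu_L\Rightarrow\mu_\infty=\P_{\otimes,\infty}^{2m;A;\vec a,\star}$, then pass to the limit in the bounded inter-pair weight using the positivity of $\mu_\infty(\mathsf{NI})$ established via the same tube construction with Lemmas~\ref{bmpos} and~\ref{brpos}.

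Your final paragraph, however, manufactures an obstacle that is not there. On $\mathsf{NI}$ the inter-pair gap $B_{2j}(x)-B_{2j+1}(x)$ is strictly positive on the \emph{closed} interval $[A,0]$ under $\mu_\infty$: at $x=0$, since $V_j(0)=V_{j+1}(0)=0$, the gap equals $\tfrac12(U_j(0)-U_{j+1}(0))$, which is a.s.\ nonzero and is strictly positive precisely on $\mathsf{NI}$. Hence by compactness $\sup_{x\in[A,0]}(B_{2j+1}(x)-B_{2j}(x))<0$, and under a Skorohod coupling the uniform convergence $B^L\to B^\infty$ forces $\mathcal{W}_L^{\mathrm{inter}}\to 1$ on $\mathsf{NI}$ directly. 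No $\delta$-split or diagonal extraction is needed; the paper's proof simply observes $\mathcal{W}_{\mathrm{intra}}\to\mathbf{1}_{\mathsf{NI}}$ a.s.\ and applies bounded convergence.
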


The proof of Theorem \ref{6.11} relies on the following simple observation about one-sided HSKPZ Gibbs measures with $2$ paths.

\begin{lemma}\label{6.9} Suppose $a_1>a_2$. Suppose $(B_1,B_2) : [A,0]\to \R^{2}$ is distributed as $\mathbf{P}_{L}^{2;A;(a_1,a_2),\star}$. 
The processes $U := B_1+B_2$, $V := B_1-B_2$ are independent. Furthermore, $U$ is a Brownian motion started from $(a_1+a_2)$ with diffusion coefficient $2$ and $V$ is distributed as $\P_{\alpha\sqrt{L},L}$ started from $(a_1-a_2)$ with diffusion coefficient $2$.
\end{lemma}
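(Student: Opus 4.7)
The plan is to exploit the fact that the Radon--Nikodym weight defining $\mathbf{P}_L^{2;A;(a_1,a_2),\star}$ depends on $(B_1,B_2)$ only through their difference. Under the reference law on $(B_1,B_2)$ (before reweighting by $\mathcal{W}$), namely that of two independent Brownian motions on $[A,0]$ started at $a_1,a_2$ with drifts $-\alpha\sqrt{L},+\alpha\sqrt{L}$ respectively, the sum $U=B_1+B_2$ and the difference $V=B_1-B_2$ are jointly Gaussian with vanishing cross-covariance, and are therefore independent. Direct computation identifies $U$ as a Brownian motion with diffusion coefficient $2$, drift $0$, started at $a_1+a_2$, and $V$ as a Brownian motion with diffusion coefficient $2$, drift $-2\alpha\sqrt{L}$, started at $a_1-a_2$.

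Next I would observe that the RN derivative simplifies to
\begin{align*}
    \mathcal{W}_{L}^{1,2;A,0}(B) = \exp\left(-L\int_A^0 e^{\sqrt{L}(B_2(x)-B_1(x))}\,dx\right) = \exp\left(-L\int_A^0 e^{-\sqrt{L} V(x)}\,dx\right),
\end{align*}
which depends on $V$ alone. Reweighting a pair of independent processes by a function of one of them preserves both the independence and the marginal of the other, so under $\mathbf{P}_L^{2;A;(a_1,a_2),\star}$ the processes $U$ and $V$ remain independent and the law of $U$ is unchanged from its reference law, establishing the claimed description of $U$.

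Finally, to identify the law of $V$, I would apply Cameron--Martin / Girsanov to absorb the drift $-2\alpha\sqrt{L}$ into the weight. Relative to a driftless Brownian motion $V^0$ of diffusion coefficient $2$ started at $a_1-a_2$, the Girsanov density reads
\begin{align*}
    \exp\left(-\alpha\sqrt{L}\,(V(0)-V(A))-\alpha^2 L|A|\right) = \mathrm{const}\cdot \exp\bigl(-\alpha\sqrt{L}\,V(0)\bigr),
\end{align*}
since $V(A)=a_1-a_2$ is deterministic. Multiplying by $\exp(-L\int_A^0 e^{-\sqrt{L} V}\,dx)$ and absorbing the constant into the normalization, the law of $V$ is proportional to that of $V^0$ reweighted by $\exp(-\alpha\sqrt{L}\,V(0)-L\int_A^0 e^{-\sqrt{L} V}\,dx)=\mathcal{W}_{\alpha\sqrt{L},L}(V)$, which is precisely $\mathbf{P}_{\alpha\sqrt{L},L}$ started at $a_1-a_2$ with diffusion coefficient $2$. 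There is no serious obstacle here; the only point of care is to track the factor of $2$ coming from the diffusion coefficient when applying Girsanov, so that the boundary exponential has coefficient $\alpha\sqrt{L}$ (and not $2\alpha\sqrt{L}$), matching the definition of $\mathbf{P}_{\alpha\sqrt{L},L}$.
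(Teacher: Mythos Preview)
Your proposal is correct and follows essentially the same approach as the paper. The paper's proof is a one-line reference to Lemma~6.7, observing that the Radon--Nikodym derivative in the no-drift formulation \eqref{rnd} depends only on $V=B_1-B_2$; you instead start from the drifted formulation and recover the boundary factor $e^{-\alpha\sqrt{L}V(0)}$ via Girsanov, which is exactly the equivalence already recorded in Definition~\ref{kpzgibbs}.
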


The proof is the same as that of Lemma \ref{6.7}, as the RN derivative in \eqref{rnd} only depends on the difference.

\begin{proof}[Proof of Theorem \ref{6.11}] Observe that 
\begin{align*}
    \mathbf{P}_{L;g_L}^{2m;A;\vec{a}_L,\star}(\m{A}) = \frac{\E\left[\mathcal{W}_{\m{inter}}\mathcal{W}_{\m{intra}}\ind_{\m{A}}\right]}{\E\left[\mathcal{W}_{\m{inter}}\mathcal{W}_{\m{intra}}\right]}
\end{align*}
where
\begin{align*}
    \mathcal{W}_{\m{inter}}=\prod_{i=1}^m \exp\left(\alpha \sqrt{L}(B_{2i}(0)-B_{2i-1}(0))-L \int_A^0 e^{\sqrt{L}(B_{2i}(x)-B_{2i-1}(x))}dx\right),
\end{align*}
and
\begin{align*}
    \mathcal{W}_{\m{intra}}=\prod_{i=1}^{m} \exp\left(-L \int_A^0 e^{\sqrt{L}(B_{2i+1}(x)-B_{2i}(x))}dx\right),
\end{align*}
where $B_{2m+1}:=g_L$. (Here the expectation is with respect to independent Brownian motions $B_1,\dots,B_{2m}$.)
By Lemmas \ref{6.7}, \ref{6.9} and Theorem \ref{wconvsf} we see that the law
\begin{align*}
    \mathbf{P}_{\otimes,L}^{2m;A;\vec{a}_L,\star}(\m{A}) := \frac{\E\left[\mathcal{W}_{\m{inter}}\ind_{\m{A}}\right]}{\E\left[\mathcal{W}_{\m{inter}}\right]}
\end{align*}
converges to $m$ many independent non-intersecting pairwise pinned Brownian motions $( B_{2i-1}, B_{2i})$ with $2$ paths starting from $(a_{2i-1},a_{2i})$. Let us denote this limiting law as $\mathbf{P}_{\otimes,\infty}^{2m;A;\vec{a},\star}$. We also see that $$\mathcal{W}_{\m{intra}} \longrightarrow \ind\{B_{2i}(x) > B_{2i+1}(x), B_{2m}(x)>g(x) \mbox{ for all }x\in [A,0], \mbox{ and } i\in \ll1,m-1\rr\}$$
almost surely (and in $L^1$ as all random variables involved are in $[0,1]$), and the latter has positive probability under $\mathbf{P}_{\otimes,\infty}^{2m;A;\vec{a},\star}$. Indeed, let us write $U_i := B_{2i-1}+B_{2i}$ and $V_i := B_{2i-1}-B_{2i}$ for $i\in\llbracket 1,m\rrbracket$. By Definition \ref{defni}, the variables $U_i,V_i$, $i\in\llbracket 1,m\rrbracket$ are mutually independent, $U_i$ are Brownian motions started at $a_{2i-1}+a_{2i}$, and $V_i$ are Brownian bridges from $a_{2i-1}-a_{2i}$ to 0 conditioned to remain nonnegative. 
Set $\delta:=\frac1{16}\min_{1\le i\le 2m} (a_i-a_{i+1}).$ For $1\le i\le m$ define
\begin{align*}
    g_{2i-1}(x):=\max\{a_{2i-1},g(x)+8(m-i+1)\delta\}, \quad g_{2i}(x):=\max\{a_{2i},g(x)+8(m-i+1)\delta-4\delta\}.
\end{align*}
By Lemmas \ref{bmpos} and \ref{brpos}, there is a positive probability that the following holds for all $x\in[A,0]$ and $i\in\llbracket 1,m\rrbracket$:
\begin{align*}
   \left| U_i-g_{2i-1}(x)-g_{2i}(x)\right| \le \delta, \qquad  V_i \le g_{2i-1}(x)-g_{2i}(x)+\delta.
\end{align*}
It follows that
\begin{align*}
B_{2i-1}(x) &= \frac{1}{2}(U_i(x)+V_i(x)) \le g_{2i-1}(x)+\delta,\\ 
B_{2i}(x) &= \frac{1}{2}(U_i(x)-V_i(x)) \ge g_{2i}(x)-\delta 
\end{align*}
holds for all $x\in [A,0]$ and $i\in\llbracket 1,m\rrbracket$ with positive probability under $\mathbf{P}_{\otimes,\infty}^{2m;A;\vec{a},\star}$. In particular, $B_{2i}(x) > B_{2i+1}(x), B_{2m}(x)>g(x)$ for all $x\in[A,0]$ and $i\in \ll1,m-1\rr$ holds with positive probability under $\mathbf{P}_{\otimes,\infty}^{2m;A;\vec{a},\star}$.
In view of Definition \ref{defni2} for the $2m$-path measure, we conclude the desired convergence.
\end{proof}

We conclude this subsection with a uniform modulus of continuity estimate for the $m$-$\m{PBM}$ law. 

\begin{lemma}\label{unimodc} Fix $R\ge 1$, $A<0$, $\rho>0$ and $\e\in (0,1)$. Recall $\mc$ from \eqref{moc}. There exists $\delta_0(m,\e,\rho,A,R)>0$ such that for all $\delta\le \delta_0$ and all $\vec{a}\in \R^{2m}$ with $\norm{\vec{a}}\le R$ and $\min_{1\le i\le 2m-1} (a_i-a_{i+1}) \ge R^{-1}$, we have
\begin{align*}
\P_{\infty}^{2m;A;\vec{a},\star}\big(\mc(B,\delta)\ge \rho\big) \le \e.
\end{align*}
\end{lemma}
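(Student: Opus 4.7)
The plan is to reduce to the two-path case via the independence of pairs and then exploit the sum/difference decomposition of Definition \ref{defni}. Under $\P_{\infty}^{2m;A;\vec{a},\star}$ the pairs $(B_{2i-1},B_{2i})$ for $i\in\ll 1,m\rr$ are independent $2$-path $\m{PBM}$'s, so a union bound reduces the problem to showing that uniformly in $i$ and in $\vec{a}$ satisfying the stated constraints,
\[
\P_{\infty}^{2;A;(a_{2i-1},a_{2i}),\star}\big(\mc(B_{2i-1},\delta)\vee \mc(B_{2i},\delta) \ge \rho\big) \le \e/m
\]
for $\delta$ sufficiently small. Writing $U_i:=B_{2i-1}+B_{2i}$ and $V_i:=B_{2i-1}-B_{2i}$, Definition \ref{defni} gives that $U_i$ and $V_i$ are independent: $U_i$ is a Brownian motion started at $a_{2i-1}+a_{2i}$ with diffusion coefficient $2$, while $V_i$ is a Brownian bridge from $a_{2i-1}-a_{2i}$ to $0$ with diffusion coefficient $2$, conditioned to stay positive on $(A,0)$. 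Since $B_{2i-1}=(U_i+V_i)/2$ and $B_{2i}=(U_i-V_i)/2$, the elementary bound $\mc(B_{2i-1},\delta)\vee \mc(B_{2i},\delta) \le \tfrac12\big(\mc(U_i,\delta)+\mc(V_i,\delta)\big)$ reduces matters to controlling the two moduli separately.

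The $U_i$ term is the easy part: its modulus of continuity distribution is independent of the starting point, so L\'{e}vy's modulus of continuity theorem produces $\delta_U=\delta_U(m,\e,\rho)>0$ such that $\P(\mc(U_i,\delta)\ge \rho/2)\le \e/(2m)$ for $\delta\le \delta_U$, uniformly in $i$ and $\vec{a}$.

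The main step, and the main obstacle, is controlling $\mc(V_i,\delta)$ uniformly in $i$ and $\vec{a}$. After a harmless deterministic rescaling absorbing the diffusion coefficient $2$, $V_i$ has the law $\P_{\infty,\infty}^a$ of Section \ref{sec:dbmsft} with $a=a_{2i-1}-a_{2i}$. Under the hypotheses $\|\vec{a}\|\le R$ and $\min_j(a_j-a_{j+1})\ge R^{-1}$ we have $a\in[R^{-1},2R]$, and I would invoke Lemma \ref{tightz} (with $R$ replaced by $2R$) to obtain $\delta_V=\delta_V(m,\e,\rho,A,R)>0$ such that
\[
\limsup_{L\to\infty}\P_{\alpha\sqrt{L},L}^a\big(\mc(B,\delta)\ge \rho/2\big) \le \e/(2m)
\]
for all $\delta\le \delta_V$ and all $a\in[R^{-1},2R]$; the crucial feature is the uniformity in $a$ over this compact range. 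Since $f\mapsto \mc(f,\delta)$ is continuous on $C([A,0])$ in the uniform topology, the event $\{\mc(B,\delta)\ge \rho/2\}$ is closed, so the Portmanteau theorem together with the weak convergence $\P_{\alpha\sqrt{L},L}^a\to \P_{\infty,\infty}^a$ from Theorem \ref{wconvsf} transfers this bound to $\P_{\infty,\infty}^a(\mc(B,\delta)\ge \rho/2)\le \e/(2m)$ uniformly in $a\in[R^{-1},2R]$.

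Setting $\delta_0:=\delta_U\wedge \delta_V$, the two modulus estimates combine by union bounds (first over the $U,V$ terms, then over $i\in\ll 1,m\rr$) to yield the claim. The hypothesis $\min_j(a_j-a_{j+1})\ge R^{-1}$ enters precisely here: it keeps $a_{2i-1}-a_{2i}$ bounded away from zero so that the conditioned-positive limit $\P_{\infty,\infty}^a$ does not degenerate as the starting point approaches the barrier, which would otherwise spoil the uniform-in-$a$ modulus control supplied by Lemma \ref{tightz}.
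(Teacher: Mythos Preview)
Your argument has a genuine gap at the very first step. You assert that under $\P_{\infty}^{2m;A;\vec{a},\star}$ the pairs $(B_{2i-1},B_{2i})$ are independent two-path $\m{PBM}$'s, but that is the description of the \emph{product} law $\P_{\otimes,\infty}^{2m;A;\vec{a},\star}$, not of $\P_{\infty}^{2m;A;\vec{a},\star}$. By Definition~\ref{defni2} (with $g\equiv -\infty$), the law $\P_{\infty}^{2m;A;\vec{a},\star}$ is the product law \emph{conditioned} on non-intersection between different pairs, $B_{2i}(x)>B_{2i+1}(x)$ for $i\in\ll 1,m-1\rr$ and $x\in(A,0)$. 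For $m\ge 2$ this conditioning destroys the independence of pairs, and your union bound over $i$ is not valid as stated.

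The remedy is exactly what the paper does: pass to the product law via the ratio bound
\[
\P_{\infty}^{2m;A;\vec{a},\star}\big(\mc(B,\delta)\ge \rho\big)\le \frac{\P_{\otimes,\infty}^{2m;A;\vec{a},\star}\big(\mc(B,\delta)\ge \rho\big)}{\P_{\otimes,\infty}^{2m;A;\vec{a},\star}\big(B_{2i}>B_{2i+1}\ \text{on }[A,0],\ i\in\ll 1,m-1\rr\big)},
\]
and then separately (i) show the denominator is bounded below by some $c=c(m,A,R)>0$ uniformly over the admissible $\vec{a}$, and (ii) control the numerator exactly as you do, using independence of pairs under $\P_{\otimes,\infty}$ together with the $U_i,V_i$ decomposition and Lemma~\ref{tightz}. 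Step (ii) of your argument is essentially correct and matches the paper; what is missing is step (i). The paper handles (i) by a corridor argument using Lemma~\ref{bmpos} (specifically the uniform bound \eqref{unibd}) for the $V_i$'s and a tube estimate for the $U_i$'s, and this is where the hypothesis $\min_j(a_j-a_{j+1})\ge R^{-1}$ is also used to keep the corridors separated.
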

\begin{proof} Observe that
    \begin{align}\label{rati}
\P_{\infty}^{2m;A;\vec{a},\star}\big(\mc(B,\delta)\ge \rho\big) \le \frac{\P_{\otimes,\infty}^{2m;A;\vec{a},\star}\big(\mc(B,\delta)\ge \rho\big)}{\P_{\otimes,\infty}^{2m;A;\vec{a},\star}\big(B_{2i}(x) > B_{2i+1}(x) \mbox{ for all }x\in [A,0], \mbox{ and } i\in \ll1,m-1\rr\big)}.
    \end{align}
Under $\P_{\otimes,\infty}^{2m;A;\vec{a},\star}$, $(B_{2i-1},B_{2i})_{i=1}^{m}$ are independent. Let $U_i := B_{2i-1}+B_{2i}$ and $V_i := B_{2i-1}-B_{2i}$ for $i\in\llbracket 1,m\rrbracket$. By Definition \ref{defni}, the variables $U_i,V_i$, $i\in\llbracket 1,m\rrbracket$ are mutually independent, $U_i$ are Brownian motions started at $a_{2i-1}+a_{2i}$, and $V_i$ are Brownian bridges from $a_{2i-1}-a_{2i}$ to 0 conditioned to remain nonnegative.

We shall proceed by showing the denominator of the r.h.s.~of \eqref{rati} is bounded from below and the numerator can be made arbitrarily small. Let $L_i(x)=(a_{2i-1}-a_{2i})x/A$. One can check that
    \begin{align*}
        & \big\{B_{2i}(x) > B_{2i+1}(x) \mbox{ for all }x\in [A,0], \mbox{ and } i\in \ll1,m-1\rr\big\} \\ & \hspace{1cm}\supset \bigcap_{i=1}^m \big\{ |U_i(x) - a_{2i-1}-a_{2i}|\le R^{-1}/4 \mbox{ for all }x\in [A,0]\big\} \\ & \hspace{5cm}\cap \bigcap_{i=1}^m \big\{V_i \le L_i(x)+R^{-1}/4 \mbox{ for all }x\in [A,0]\big\}.
    \end{align*}
    By properties of Brownian motion and Lemma \ref{bmpos}, specifically \eqref{unibd}, the probabilities
   \begin{align*}
       & \P\big( |U_i(x) - a_{2i-1}-a_{2i}|\le R^{-1}/4 \mbox{ for all }x\in [A,0]\big),  \\ &\P\big(V_i \le L_i(x)+R^{-1}/4 \mbox{ for all }x\in [A,0]\big)
   \end{align*}
are bounded below by positive constants uniformly over all choices of $\vec{a}$ satisfying the hypotheses. Thus the denominator of the r.h.s.~of \eqref{rati} is bounded below by some $c=c(m,A,R)>0$. For the numerator, note that
\begin{align*}
\P_{\otimes,\infty}^{2m;A;\vec{a},\star}\big(\mc(B,\delta)\ge \rho\big) \le \sum_{i=1}^{m} \left[\P\big(\mc(U_i,\delta)\ge \rho\big)+\P\big(\mc(V_i,\delta)\ge \rho\big)\right].
\end{align*}
For Brownian motions $U_i$, the law of $\mc(U_i,\delta)$ does not depend on the starting point. Thus one can choose $\delta_1(m,\e,A,\rho)>0$ such that for all $\delta\le \delta_1$, $\P\big(\mc(U_i,\delta)\ge \rho\big) \le c\e/2m$. Appealing to Lemma \ref{tightz} (along with Theorem \ref{wconvsf}), one can choose $\delta_2(m,\e,A,\rho,R)>0$ such that for all $\delta\le \delta_2$, $\P\big(\mc(V_i,\delta)\ge \rho\big) \le c\e/2m$ uniformly over $\vec{a}$. Combining we get the desired result.
\end{proof}

\subsection{Estimates on one-sided HSLG Gibbs measures started from unordered boundary conditions} 
 In this subsection, we state a few results on one-sided HSKPZ Gibbs measures started from boundary conditions that are not necessarily ordered. We work in both the critical and supercritical regimes in this subsection, i.e, either $\alpha>0$ fixed or $\alpha=\mu/\sqrt{L}$ for $\mu\in \R$. We first extend a few results of the previous section to the critical case. Let us begin with the definition of the limiting measure in this case.

\begin{definition}[Limiting law in the critical case] \label{defcr}
    Let $\vec{a} \in \R^{2m}$ with $a_1>a_2>\cdots>a_{2m}$ and $A<0$. Fix $g\in [A,0]\to \R\cup \{-\infty\}$ with $g(A)<a_{2m}$. Let $\bar{\P}_{\otimes}^{2m;A;\vec{a},\star}$ denote the law of independent Brownian motions $(B_i)_{i=1}^{2m}$ with drifts $(-1)^i\mu$ on $[A,0]$ started from $\vec{a}$. Let $\bar{\P}_{\infty;g}^{2m;A;\vec{a},\star}$ denote the law $\overline{\P}_{\otimes}^{2m;A;\vec{a},\star}$, conditioned on non-intersection $B_1(x)>\cdots>B_{2m}(x)>g(x)$ for all $x\in[A,0]
    $. If $g\equiv -\infty$, we drop $g$ from the subscript.
\end{definition}

The following theorem is  the analogue of Theorem \ref{6.11} in the critical regime.

\begin{theorem}[Critical regime] \label{6.11c} Fix any $\mu\in \R$, $\vec{a}\in \R^{2m}$ with $a_1>a_2>\cdots>a_{2m}$ and $A<0$. Suppose $\alpha=\mu/\sqrt{L}$. Let $g:[A,0]\to \R\cup \{-\infty\}$ be a continuous functions such that $a_{2m+1}:=g(A)<a_{2m}$. Let $(\vec{a}_L)_{L\ge 1}$ be a sequence of vectors in $\R^{2m}$ with the property that $\vec{a}_L \to \vec{a}$. Let $g_L:[A,0]\to \R\cup \{-\infty\}$ be a sequence of continuous function such that $g_L(x) \to g(x)$ uniformly on $[A,0]$. Recall the one-sided HSKPZ Gibbs measures $\P_{L;f,g}^{\ell;A;\vec{a},\star}$ introduced in Definition \ref{kpzgibbs}.  Let $(B_i)_{i=1}^{2m} : [A,0]\to \R^{2m}$ be distributed as $\P_{L;g_L}^{2m;A;\vec{a}_L,\star}$. As $L\to \infty$, $(B_i)_{i=1}^{2m}$ converges in law to $\bar{\P}_{\infty;g}^{2m;A;\vec{a},\star}$ from Definition~\ref{defcr}. 
\end{theorem}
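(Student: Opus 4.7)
The plan is to proceed along the same lines as Theorem \ref{6.11} but in a much simpler fashion, since in the critical regime $\alpha = \mu/\sqrt{L}$ the boundary factors $e^{(-1)^i\alpha\sqrt{L}B_i(0)} = e^{(-1)^i\mu B_i(0)}$ are bounded in $L$ and are reabsorbed back into finite drifts via Cameron--Martin. Consequently no pairwise pinning arises in the limit, and the entire RN derivative reduces to a soft non-intersection weight which hardens as $L\to\infty$. Concretely, for a bounded continuous functional $F$ on $C([A,0],\R^{2m})$, I would write
\[
\E_{L;g_L}^{2m;A;\vec{a}_L,\star}[F(B)] = \frac{\E\bigl[F(B^L)\,\mathcal{W}_{L;+\infty,g_L}^{1,2m;A,0}(B^L)\bigr]}{\E\bigl[\mathcal{W}_{L;+\infty,g_L}^{1,2m;A,0}(B^L)\bigr]},
\]
where $B^L = (B_i^L)_{i=1}^{2m}$ are independent Brownian motions on $[A,0]$ started from $\vec{a}_L$ with drifts $(-1)^i\mu$.

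By Skorohod's representation theorem (and the hypothesis $\vec{a}_L\to\vec{a}$), I would realize all these processes on a common probability space with $B^L\to B$ uniformly on $[A,0]$ almost surely, where $B$ is the analogous drifted Brownian motion started from $\vec{a}$. Combining this with the uniform convergence $g_L\to g$, the RN derivative converges pointwise: on the event $\m{N} := \{B_1(x)>\cdots>B_{2m}(x)>g(x)\ \text{for all}\ x\in[A,0]\}$, all adjacent gaps are uniformly bounded below by some $\e>0$, so each factor $\exp\bigl(-L\int_A^0 e^{\sqrt{L}(B_{i+1}^L-B_i^L)}\bigr)$ tends to $1$; on the complementary event, continuity of paths forces an open subinterval on which one of the inequalities is reversed, making the corresponding exponential integral blow up and driving the weight to $0$. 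Since the paths a.s.\ do not touch without crossing on $[A,0]$, the boundary $\partial\m{N}$ is a null event, so $\mathcal{W}_{L;+\infty,g_L}^{1,2m;A,0}(B^L) \to \ind_{\m{N}}$ almost surely.

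Because $\mathcal{W} \in [0,1]$ and $F$ is bounded, bounded convergence gives $\E[F(B^L)\mathcal{W}_{L;+\infty,g_L}^{1,2m;A,0}(B^L)] \to \E[F(B)\ind_{\m{N}}]$ and similarly for the normalizing constant, whose limit is $\P(\m{N})$. To see that $\P(\m{N})>0$, I would apply Lemma \ref{brpos} to confine each $B_i$ to a thin tube around a continuous function that interpolates between the $a_i$ and a strictly ordered configuration at time $0$ well separated from $g$; since drifted and undrifted Brownian motions differ only by a deterministic shift, the tube estimates from Lemma \ref{brpos} carry over. Taking the ratio then yields $\E_{L;g_L}^{2m;A;\vec{a}_L,\star}[F(B)] \to \E[F(B)\mid\m{N}] = \bar{\E}_{\infty;g}^{2m;A;\vec{a},\star}[F]$, i.e.\ weak convergence to $\bar{\P}_{\infty;g}^{2m;A;\vec{a},\star}$.

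The only mild subtlety is verifying that $\partial\m{N}$ has probability zero, but this is handled by noting that each difference $B_i - B_{i+1}$ (and $B_{2m}-g$) is a continuous semimartingale whose zero set a.s.\ has empty interior, so any $\omega \in \partial\m{N}$ would require one such process to touch $0$ on $[A,0]$ without becoming negative, which is a null event. In contrast to the supercritical case treated in Theorem \ref{6.11}, there is no delicate analysis involving limits of measures supported on pinned configurations, which is why this argument is genuinely straightforward.
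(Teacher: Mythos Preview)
Your proof is correct and follows essentially the same approach as the paper's, which merely sketches the argument by noting that the RN derivative converges to the non-intersection indicator (citing Lemma 2.4 of \cite{halfairy} for its positive probability) and referring to the proof of Theorem \ref{lem:invprin} for the Skorohod/dominated convergence machinery. You have supplied the details the paper omits, including the boundary-of-$\m{N}$ discussion; the only cosmetic difference is that you invoke Lemma \ref{brpos} for the tube estimate rather than the external reference.
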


\begin{proof} The proof is much simpler than that of Theorem \ref{6.11} as the RN derivative in \eqref{def:Wcont} converges to $\ind\{B_1(x)> B_2(x) > \cdots > B_{2m}(x) > g(x) \mbox{ for all }x\in [A,0]\}$, which has positive probability under the law $\overline{\P}_{\otimes}^{2m;A;\vec{a},\star}$ of independent Brownian motions with alternating drifts (see for instance Lemma 2.4 in \cite{halfairy}). The convergence now follows by an argument similar to the proof of Theorem \ref{lem:invprin}. We skip the details for brevity.
\end{proof}

\begin{lemma}[Critical regime] \label{unimodc2} Fix $\mu\in \R$, $R\ge 1$, $A<0$, $\rho>0$ and $\e\in (0,1)$.  Recall $\mc$ from \eqref{moc}. There exists $\delta_0(m,\e,\rho,A,R)>0$ such that for all $\delta\le \delta_0$ and all $\vec{a}\in \R^{2m}$ with $\norm{\vec{a}}\le R$ and $\min_{1\le i\le 2m-1} (a_i-a_{i+1}) \ge R^{-1}$, we have
\begin{align*}
\bar{\P}_{\infty}^{2m;A;\vec{a},\star}\big(\mc(B,\delta)\ge \rho\big) \le \e.
\end{align*}
\end{lemma}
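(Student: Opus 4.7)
The plan is to mirror the proof of Lemma \ref{unimodc} in the critical regime, which is substantially simpler here because the reference law $\bar{\P}_{\otimes}^{2m;A;\vec{a},\star}$ consists of \emph{independent} drifted Brownian motions with no pairwise pinning structure. First I would write the elementary ratio bound
\[
\bar{\P}_{\infty}^{2m;A;\vec{a},\star}\big(\mc(B,\delta)\ge \rho\big) \le \frac{\bar{\P}_{\otimes}^{2m;A;\vec{a},\star}\big(\mc(B,\delta)\ge \rho\big)}{\bar{\P}_{\otimes}^{2m;A;\vec{a},\star}\big(B_1(x)>B_2(x)>\cdots>B_{2m}(x) \text{ for all } x\in[A,0]\big)},
\]
and then bound numerator and denominator separately, uniformly over the admissible $\vec{a}$. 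The key device is the decomposition $B_i(x) = a_i + (-1)^i\mu(x-A) + W_i(x)$, where $(W_i)_{i=1}^{2m}$ are independent standard Brownian motions on $[A,0]$ started from $0$; this isolates the dependence on $\vec{a}$ into a deterministic shift.

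For the denominator, set $\kappa := R^{-1}/4$ and consider the event
\[
\m{E} := \bigcap_{i=1}^{2m} \Big\{\sup_{x\in[A,0]} |W_i(x) + (-1)^i\mu(x-A)| \le \kappa\Big\},
\]
which depends only on the $W_i$, not on $\vec{a}$. On $\m{E}$ we have $|B_i(x)-a_i|\le \kappa$ for each $i$, so $B_i(x)-B_{i+1}(x) \ge (a_i-a_{i+1}) - 2\kappa \ge R^{-1}-2\kappa > 0$, which places $\m{E}$ inside the non-intersection event. By independence of the $W_i$ and Lemma \ref{brpos} (applied with $g_i(x) = -(-1)^i\mu(x-A)$, which satisfies $g_i(A)=0$), we obtain $\P(\m{E}) \ge c$ for some constant $c = c(m,\mu,A,R)>0$ independent of $\vec{a}$, furnishing a uniform lower bound on the denominator.

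For the numerator, the same decomposition gives $\mc(B_i,\delta) \le \mc(W_i,\delta) + |\mu|\delta$, and the distribution of $\mc(W_i,\delta)$ does not depend on $a_i$. By L\'evy's modulus of continuity theorem we may choose $\delta_0 = \delta_0(m,\e,\rho,\mu,A,R)>0$ small enough that $\P\big(\mc(W_i,\delta)+|\mu|\delta\ge \rho\big) \le c\e/(2m)$ for all $\delta\le \delta_0$ and all $i\in \ll 1,2m\rr$; a union bound then controls the numerator by $c\e$. Combining, the ratio is at most $\e$ uniformly in $\vec{a}$, which yields the lemma. There is no real obstacle; unlike in the supercritical case, one does not need to pass through the $U_i/V_i$ decomposition of Definition \ref{defni}, because no Brownian-meander-type singularity is present in the critical limiting law.
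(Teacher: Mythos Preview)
Your proposal is correct and follows essentially the same approach as the paper: both write the ratio bound over the reference law $\bar{\P}_{\otimes}^{2m;A;\vec{a},\star}$, lower bound the denominator by the event $\bigcap_i\{\sup_{x}|B_i(x)-a_i|\le R^{-1}/4\}$ (whose probability is positive and independent of $\vec{a}$), and control the numerator via L\'evy's modulus of continuity after noting that the law of $\mc(B_i,\delta)$ does not depend on the starting point. Your explicit decomposition $B_i = a_i + (-1)^i\mu(x-A) + W_i$ just makes transparent what the paper leaves implicit.
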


\begin{proof} Observe that
\begin{align*}
    \bar{\P}_{\infty}^{2m;A;\vec{a},\star}\big(\mc(B,\delta)\ge \rho\big) & \le \frac{\bar{\P}_{\otimes}^{2m;A;\vec{a},\star}\big(\mc(B,\delta)\ge \rho\big)}{\bar{\P}_{\otimes}^{2m;A;\vec{a},\star}\big(B_1(x)> B_2(x) > \cdots > B_{2m}(x) \mbox{ for all }x\in [A,0]\big)} \\ & \le \frac{\sum_{i=1}^{2m}\bar{\P}_{\otimes}^{2m;A;\vec{a},\star}\big(\mc(B_i,\delta)\ge \rho\big)}{ \bar{\P}_{\otimes}^{2m;A;\vec{a},\star}\bigg(\bigcap_{i=1}^{2m}\left\{\sup_{x\in [A,0]}|B_i(x)-a_i| \le R^{-1}/4 \right\}\bigg)}.
\end{align*}
The law of $\sup_{x\in [A,0]}|B_i(x)-a_i|$ and $\mc(B_i,\delta)$ under $\bar{\P}_{\otimes}^{2m;A;\vec{a},\star}$ does not depend on the starting point $a_i$. Thus the last expression above is independent of $\vec{a}$. Furthermore, the denominator is positive by Lemma 2.4 in \cite{halfairy}, and the numerator can be made arbitrarily small by choosing $\delta$ small enough (thanks to L\'{e}vy's modulus of continuity). This completes the proof.
\end{proof}

Our next result shows that with high probability the curves become ordered in the bulk.

\begin{lemma}\label{ord}  Fix $M>0$, $k\ge 2$, $\e\in (0,1)$, $A<0$, $R\le 0$, and a scale $L\ge 1$. Let $A^-=A/2+A/(4k+4)$ and $A^+=A/2-A/(4k+4)$. Suppose $\alpha>0$ is fixed or $\alpha=\mu/\sqrt{L}$ for $\mu\in \R$ fixed. Suppose $\vec{a}_L, \vec{b}_L\in \R^{2m}$ with $\norm{\vec{a}_L}, \norm{\vec{b}_L} \le M$ and $g_L : [A,0]\to\mathbb{R}\cup\{-\infty\}$ is continuous with $\sup_{x\in[A,0]} g_L(x) \leq M$. There exists $\rho>0$ depending on $A,M,m,\e$ such that for all large enough $L$,
\begin{equation}
    \label{sepbd2}
    \begin{aligned}    & \mathbf{P}_{L;g_L}^{k;A+R,R;\vec{a}_L,\vec{b}_L}\bigg(\min_{1\le i\le k-1}\inf_{\,x \in [A^-+R,A^++R]} B_i(x)-B_{i+1}(x) \ge \rho \bigg) \\ & \hspace{3cm}=\mathbf{P}_{L;g_L}^{k;A,0;\vec{a}_L,\vec{b}_L}\bigg(\min_{1\le i\le k-1}\inf_{x \in [A^-,A^+]} B_i(x)-B_{i+1}(x) \ge \rho \bigg) \ge 1-\e.
\end{aligned}
\end{equation}

If $k=2m$ is even, then in addition
\begin{align}\label{sepbd}    \mathbf{P}_{L;g_L}^{2m;A;\vec{a}_L,\star}\bigg(\min_{1\le i\le 2m-1} \inf_{x\in [A^-,A^+]}B_i(x)-B_{i+1}(x) \ge \rho \bigg) \ge 1-\e.
\end{align}
\end{lemma}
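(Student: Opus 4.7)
The equality in \eqref{sepbd2} is translation invariance: substituting $x \mapsto x - R$ conjugates $\mathbf{P}_{L;g_L}^{k;A+R,R;\vec{a}_L,\vec{b}_L}$ to $\mathbf{P}_{L;g_L(\cdot + R)}^{k;A,0;\vec{a}_L,\vec{b}_L}$, and $\sup g_L(\cdot+R) \le M$ still holds. For the inequality in \eqref{sepbd2}, I would argue by contradiction and compactness. Suppose no $\rho$ works; then there exist $L_n\to\infty$, $\rho_n\downarrow 0$, and parameters $(\vec{a}_n, \vec{b}_n, g_n)$ satisfying the bounds for which the event fails with probability $\ge \e$. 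By stochastic monotonicity (Lemma \ref{lem:sm}), raising $g_n$ to the constant $M$ only decreases the separation probability, so we may assume $g_n \equiv M$; after passing to a subsequence, $\vec{a}_n \to \vec{a}$ and $\vec{b}_n \to \vec{b}$ in $[-M,M]^k$. An invariance-principle argument in the spirit of Theorem \ref{6.11c}, adapted to two-sided bridges with a constant floor, then gives weak convergence to the law of $k$ non-intersecting Brownian bridges from $\vec{a}$ to $\vec{b}$ above $M$. Such a Dyson-type bridge process is a.s.\ strictly separated on any compact sub-interval of $(A,0)$, even when the endpoints are only weakly ordered (established via a perturbative limit from strictly ordered endpoints, using monotonicity and continuity in the starting/ending data). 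Hence $\min_i \inf_{x\in[A^-,A^+]}(B_i(x)-B_{i+1}(x))$ is a.s.\ positive in the limit, a contradiction.

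For \eqref{sepbd}, I would reduce to \eqref{sepbd2} via the Gibbs property. Choose $A_1 = (A + A^-)/2$ and $A_2 = A^+/2$, so that $A < A_1 < A^- < A^+ < A_2 < 0$. By Lemma \ref{lem:gibbs2}, conditionally on $(B(A_1), B(A_2))$ and the external $\sigma$-field, $B|_{[A_1,A_2]}$ is distributed as the two-sided Gibbs measure $\mathbf{P}_{L;g_L|_{[A_1,A_2]}}^{2m;A_1,A_2;B(A_1),B(A_2)}$. Applying \eqref{sepbd2} to this conditional measure on the sub-interval $[A^-,A^+]$ yields the desired separation with conditional probability $\ge 1-\e/2$, provided the random boundary data satisfy $\|B(A_j)\| \le M'$ for some deterministic $M' = M'(A, M, m, \e)$.

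The main remaining (and hardest) step is therefore establishing uniform-in-$L$ tightness of $(B(A_1), B(A_2))$ under $\mathbf{P}_{L;g_L}^{2m;A;\vec{a}_L,\star}$, without any ordering of $\vec{a}_L$. I would sandwich via stochastic monotonicity (Lemma \ref{lem:sm}): setting $a^+_i := M + (2m-i+1)$ and $a^-_i := -M - i$ gives strictly ordered constant vectors with $\vec{a}^- \le \vec{a}_L \le \vec{a}^+$ coordinate-wise; combining this with monotonicity in the floor (raise to the constant $M$ for the upper bracket and lower to $-\infty$ for the lower, arguing analogously to Lemma \ref{stm}\ref{parb}) produces couplings $B^- \le B \le B^+$ a.s. The bracketing processes $B^\pm$ now have strictly ordered starts and constant floors, so Theorems \ref{6.11} (supercritical) and \ref{6.11c} (critical) apply and yield weak convergence, hence tightness, of the fixed-time marginals $B^\pm(A_j)$; the sandwich transfers this tightness to $B(A_j)$, completing the reduction.
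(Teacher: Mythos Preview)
Your reduction of \eqref{sepbd} to \eqref{sepbd2} via conditioning plus the tightness-by-sandwich argument in steps 3--4 is correct in spirit and parallels the paper, which conditions on $B(0)$ rather than on two interior points. (Conditioning at $0$ is cleaner: with your choice of $A_1,A_2$ the interval $[A^-,A^+]$ is actually longer than the middle $(2k+2)^{-1}$-fraction of $[A_1,A_2]$, so applying \eqref{sepbd2} on $[A_1,A_2]$ does not cover all of $[A^-,A^+]$; this is repairable but needs care.)

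The genuine gap is in step 2, your argument for \eqref{sepbd2} itself. Two things go wrong. First, Lemma \ref{lem:sm} does not say that raising $g$ decreases the separation probability: it yields a coordinatewise coupling $B_i^1\le B_i^2$, which says nothing about the gaps $B_i-B_{i+1}$, and the separation event is neither increasing nor decreasing in that order. Without this you cannot replace $g_n$ by the constant $M$, and then the floors have no subsequential uniform limit. Second, and more seriously, even granting that reduction, Theorems \ref{6.11} and \ref{6.11c} assume strictly ordered boundary data lying strictly above the floor; your subsequential $\vec a,\vec b\in[-M,M]^k$ carry no ordering whatsoever (not merely ``weakly ordered''), and once the floor is raised to $M$ one has $a_k\le M$, so the hard non-intersection-plus-floor event has probability zero and the limiting object you describe is degenerate. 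Understanding what the scaled Gibbs measure does in this unordered, floor-touching regime is precisely the hard content the paper does \emph{not} try to extract from a weak limit: instead it shifts $B_i\mapsto B_i+(i-1)L^{-1/2}\log L$ to match the Radon--Nikodym normalization of \cite{wuconv}, and then applies \cite[Proposition~4.3]{wuconv} (a uniform lower bound on the normalizing constant valid for bounded but unordered boundary data) together with the Brownian-bridge separation estimates in \cite[Claims~3.4--3.5]{wuconv}. That machinery bypasses weak convergence entirely.
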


The proof of Lemma \ref{ord} follows via applying results of \cite{wuconv}, where an analogous separation estimate is established for the full-space KPZ line ensemble. Indeed, the result essentially follows from \cite[Proposition 3.3]{wuconv}, but as that result is stated for the KPZ line ensemble itself rather than the finite-volume Gibbs measures, it does not immediately apply to our setup. For the sake of completeness, we explain how we can translate to the setup of \cite{wuconv} to establish \eqref{sepbd} in Appendix~\ref{app}.

Given the above ordering lemma, we obtain control on the modulus of continuity for scale-$L$ one-sided HSLG Gibbs measures on the right half of the interval  even when the left boundary conditions are unordered. 

\begin{theorem}\label{moclemma} Fix $A<0$, $\rho>0$, $L\ge 1$ and $M,\gamma>0$. Suppose $\alpha>0$ is fixed or $\alpha=\mu/\sqrt{L}$ for $\mu\in \R$ fixed. Suppose $\norm{\vec{a}_L}\le M$, and recall $\mc$ from \eqref{moc}. We have
\begin{align*}
    \lim_{\delta \downarrow 0}\limsup_{L\to \infty}\mathbf{P}_{L}^{2m;A;\vec{a}_L,\star}\big(\mc(B|_{\ll 1,2m\rr \times [A/2,0]},\delta)\ge \rho\big)=0.
\end{align*}
\end{theorem}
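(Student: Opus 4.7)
The plan is to use the Markov-type Gibbs property to restart the process at a point just to the left of $A/2$, where Lemma \ref{ord} guarantees the curves have become well-separated, and then transport uniform modulus of continuity from the limiting $m$-$\m{PBM}$ (critical case: ordered Brownian motions) back to the prelimit via weak convergence. Fix $\e \in (0,1)$, and with $k=2m$ set $A^- := A/2 + A/(8m+4) \in (A, A/2)$. By \eqref{sepbd} of Lemma \ref{ord} (applied with $g\equiv -\infty$), there exists $\rho_0 = \rho_0(A, M, m, \e) > 0$ such that for all $L$ sufficiently large,
\begin{equation*}
\mathbf{P}_L^{2m;A;\vec{a}_L,\star}(\m{Ord}) \geq 1 - \e/3, \qquad \m{Ord} := \bigcap_{i=1}^{2m-1} \{B_i(A^-) - B_{i+1}(A^-) \geq \rho_0\}.
\end{equation*}

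Next I would establish an a priori boundedness estimate at $A^-$: there exists $M_0 = M_0(A, M, m, \e)$ with $\mathbf{P}_L^{2m;A;\vec{a}_L,\star}(\m{Bd}) \geq 1 - \e/3$, where $\m{Bd} := \{\|\vec{B}(A^-)\|_\infty \leq M_0\}$. In the critical regime, the Cameron--Martin transform turns the drift contributions into a bounded-in-$L$ factor and the remaining soft non-intersection weights are $\le 1$, so this reduces to a Brownian tail estimate together with a Brownian-tube lower bound on the normalizing constant analogous to Proposition \ref{zlbd}. In the supercritical regime the pinning factor $e^{-\alpha\sqrt{L}(B_{2i-1}(0)-B_{2i}(0))}$ is unbounded, and I would instead exploit the pair decomposition of Section \ref{sec:multi}: within each pair $(B_{2i-1}, B_{2i})$, the sum $U_i = B_{2i-1}+B_{2i}$ is an unweighted Brownian motion by Lemma \ref{6.9} and hence tight at $A^-$, while the difference $V_i$ is tight by Theorem \ref{wconvsf}; the inter-pair factors are soft non-intersection weights $\le 1$, and a lower bound on the total normalizing constant follows from the positive-probability event constructed in the proof of Theorem \ref{6.11} via Lemmas \ref{bmpos} and \ref{brpos}.

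Set $\m{E} := \m{Ord} \cap \m{Bd}$, so $\mathbf{P}_L^{2m;A;\vec{a}_L,\star}(\m{E}) \geq 1 - 2\e/3$. Conditioning on $\mathcal{F}_{\m{ext}}(\ll 1,2m\rr \times (A^-,0])$, the law of $B|_{[A^-,0]}$ is $\mathbf{P}_L^{2m;A^-;\vec{B}(A^-),\star}$ (an immediate consequence of the explicit product form of the Gibbs density defining $\mathbf{P}_L^{2m;A;\vec{a}_L,\star}$). On $\m{E}$ the starting vector lies in the compact set $K_0 := \{\vec{a} \in \R^{2m} : \|\vec{a}\|_\infty \leq M_0,\ \min_i (a_i - a_{i+1}) \geq \rho_0\}$. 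By Theorem \ref{6.11} (supercritical) or Theorem \ref{6.11c} (critical), for each $\vec{a} \in K_0$ the law $\mathbf{P}_L^{2m;A^-;\vec{a},\star}$ converges weakly as $L\to\infty$ to $\mathbf{P}_\infty^{2m;A^-;\vec{a},\star}$ (respectively $\overline{\mathbf{P}}_\infty^{2m;A^-;\vec{a},\star}$), and this limit enjoys uniform modulus of continuity over $K_0$ by Lemma \ref{unimodc} (respectively Lemma \ref{unimodc2}). A compactness--subsequence argument, using that $\mc(\cdot,\delta)$ is continuous on $C([A^-,0])$ so $\{\mc(\cdot,\delta) \geq \rho\}$ is closed, together with Portmanteau and monotonicity of $\delta \mapsto \mc(B,\delta)$, transfers this uniform bound to the prelimit: for every $\rho > 0$ there exists $\delta_0 > 0$ with
\begin{equation*}
\limsup_{L\to\infty}\sup_{\vec{a} \in K_0} \mathbf{P}_L^{2m;A^-;\vec{a},\star}\bigl(\mc(B, \delta) \geq \rho\bigr) \leq \e/3 \qquad \text{for all } \delta \leq \delta_0.
\end{equation*}
Combining this with the bound on $\m{E}$ and the inclusion $[A/2, 0] \subset [A^-, 0]$ concludes the theorem.

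The main technical obstacle is the a priori boundedness of $\vec{B}(A^-)$ in the supercritical regime: the pinning factor becomes singular as $L\to\infty$ and can in principle drag paths at the boundary far from typical values, coupling the pair $(B_{2i-1},B_{2i})$ in a degenerate way. The pair decomposition developed in Section \ref{sec7} is exactly what disentangles this pinning from the bulk Brownian fluctuations, reducing the delicate question to the already-established tightness of drifted Brownian motion with a soft barrier from Theorem \ref{wconvsf}.
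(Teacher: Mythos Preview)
Your overall architecture matches the paper: restart at a point where Lemma \ref{ord} guarantees separation, then transfer the uniform modulus-of-continuity bound from the limiting measure (Lemmas \ref{unimodc}/\ref{unimodc2}) back to the prelimit via Theorems \ref{6.11}/\ref{6.11c} and a compactness--subsequence maximization. The paper conditions at $A/2$ rather than at $A^-$, but this is cosmetic.

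The genuine gap is in your boundedness step $\m{Bd}$ in the supercritical case. The tools you invoke --- Lemma \ref{6.9}, Theorem \ref{wconvsf}, and the normalizing-constant lower bound from the proof of Theorem \ref{6.11} --- all carry the hypothesis that the entries of $\vec{a}_L$ are strictly ordered, but the theorem only assumes $\norm{\vec{a}_L}\le M$. If, say, $a_{L,2i-1}<a_{L,2i}$ then the difference $V_i$ starts negative and Theorem \ref{wconvsf} does not apply; and if $a_{L,2i}<a_{L,2i+1}$ then the inter-pair factor in $\mathcal{W}_{\m{intra}}$ is already tiny at $x=A$, so the ``positive-probability event'' you cite is not available to lower-bound the normalizing constant. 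The paper sidesteps all of this with a one-line stochastic monotonicity argument: since $\{B_i(A/2)\le M'\}$ is a decreasing event, raise every $a_{L,i}$ to the separated value $a_i':=M+2m-i+1$; this only decreases the probability, and now Theorem \ref{6.11} (or \ref{6.11c}) applies directly to give convergence, hence tightness from above. The lower bound $\{B_i(A/2)\ge -M'\}$ is handled symmetrically by lowering. This monotonicity trick is both simpler and removes any need to confront the pair decomposition with unordered data.
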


\begin{proof}
    For $M',\eta>0$ define the events
    \begin{align*}
        & \m{E}_1 := \left\{\min_{1\leq i\leq 2m-1} B_i(A/2) - B_{i+1}(A/2) \geq \eta \right\}, \\ & \m{E}_2:=\left\{\max_{1\leq i\leq 2m} B_i(A/2) \le M'\right\}\cap \left\{\min_{1\leq i\leq 2m} B_i(A/2) \ge -M'\right\}.
    \end{align*} 
   Using Lemma \ref{ord}, we may choose $\eta$ small enough so that $\P_L^{2m;A;\vec{a}_L,\star}(\neg \m{E}_1) \le \e/2$. For the upper bound event in $\m{E}_2$, we apply stochastic monotonicity (Lemma \ref{lem:sm}) to raise the boundary conditions until they are separated, e.g., replace $a_{i}$ with $a_i' := (M+2m-i+1)$. Then for any $i$, in the supercritical case, $$\P_{L}^{2m;A;\vec{a},\star}(B_i(A/2) \leq M') \geq \P_{L}^{2m;A;\vec{a}',\star}(B_i(A/2) \leq M')\xrightarrow[]{L\to\infty} \P_{\infty}^{2m;A;\vec{a}',\star}(B_i(A/2)\le M'),$$ where the last convergence follows via Theorem \ref{6.11} as the components of $\vec{a}'$ are separated. In the critical case, thanks to  Theorem \ref{6.11c}, the above equation is true with $\P_{\infty}^{2m;A;\vec{a}',\star}$ replaced by $\bar{\P}_{\infty}^{2m;A;\vec{a}',\star}$ where the latter is defined in Definition \ref{defcr}. In both regimes, the last probability can be made arbitrarily close to 1 for large $M'$. By lowering the boundary conditions instead, a similar argument shows that $\P_{L}^{2m;A;\vec{a},\star}(B_i(A/2) \leq -M')$ can be made arbitrarily small for large $M'$. Thus we may choose $M'$ large enough so that $\P_{L}^{2m;A;\vec{a},\star}(\neg\m{E}_2) \le \e/2$. 

Set $\m{E}=\m{E}_1\cap \m{E}_2$.
Thus, for any $\e>0$ we can choose $M'$ large and $\eta$ small so that $\P_{L}^{2m;A;\vec{a}_L,\star}(\neg\m{E}) < \e$, uniformly over $\lVert \vec{a}_L\rVert \leq M$. Thus by the Gibbs property, for large $L$
    \begin{equation}\label{mocA/2}
    \P_{L}^{2m;A;\vec{a}_L,\star}\big(\mc(B|_{\ll 1,2m\rr \times [A/2,0]},\delta)\ge \rho\big) \leq  \E_{L}^{2m;A;\vec{a}_L,\star}\left[\mathbf{1}_{\m{E}} \cdot \P_{L}^{2m;A/2;\vec{b}_L,\star}\big(\mc(B,\delta)\geq \rho\big)\right] + \e,
    \end{equation}
    where $\vec{b}_L = (B_1(A/2),\dots,B_{2m}(A/2))$. Now for each $L,\delta$, there is a deterministic value $\vec{b}_L^{\,0} = \vec{b}_L^{\,0}(\delta)$ of $\vec{b}_L$ with $\lVert \vec{b}_L^{\,0}\rVert \leq M'$ and $\min(b_{L,i}^0-b_{L,i+1}^0) \geq \eta$ which maximizes the probability inside the expectation on the right of \eqref{mocA/2}. (The set of possible $\vec{b}_L$ is compact and the probability clearly depends continuously on $\vec{b}_L$.) Then \eqref{mocA/2} implies
    \begin{equation*}
    \P_{L}^{2m;A;\vec{a}_L,\star}\big(\mc(B|_{\ll 1,2m\rr \times [A/2,0]},\delta)\ge \rho\big) \leq \P_{L}^{2m;A/2;\vec{b}_L^{\,0},\star}\big(\mc(B,\delta)\geq \rho\big) + \e.
    \end{equation*}
    By passing to a subsequence we can assume that $\vec{b}_L^{\,0} \to \vec{b}$ as $L\to\infty$, for some $\vec{b} = \vec{b}(\delta)\in\R^{2m}$  with $\lVert \vec{b}\rVert \leq M$ and $b_i-b_{i+1} \geq \eta$. In the supercritical case, by Theorem \ref{6.11} it follows that 
    \begin{equation*}
        \limsup_{L\to\infty} \mathbf{P}_{L}^{2m;A;\vec{a}_L,\star}\big(\mc(B|_{\ll 1,2m\rr \times [A/2,0]},\delta)\ge \rho\big) \leq \P_{\infty}^{2m;A/2;\vec{b},\star}\big(\mc(B,\delta)\geq \rho\big) + \e.
    \end{equation*}
In the critical case, thanks to Theorem \ref{6.11c} the above equation is still true with $\P_{\infty}^{2m;A/2;\vec{b},\star}$ replaced by $\bar{\P}_{\infty}^{2m;A/2;\vec{b},\star}$. By Lemmas \ref{unimodc} and \ref{unimodc2}, the last probability (in both regimes) can be made arbitrarily small uniformly over $\vec{b}$ by taking $\delta$ small depending on $\e,\rho,M,\eta,m$. Since $\e$ was arbitrary we are done.
\end{proof}

We conclude with a lemma establishing that there is a positive probability that the scale-$L$ one-sided HSKPZ Gibbs measure will be very high on the right half of the interval. 

\begin{lemma} \label{2high} Fix $A<0$ and $M_1,M_2,\gamma>0$, and $L\ge 1$. Suppose $\alpha>0$ is fixed or $\alpha=\mu/\sqrt{L}$ for $\mu\in \R$ fixed. Suppose $\lVert \vec{a}_L\rVert\le M_1$. Let $g_L :[A,0] \to \R\cup {-\infty}$. Suppose $(B_i)_{i=1}^{2m}$ is distributed as $\P_{L}^{2m;A;\vec{a}_L,\star}$. There exist $L_0(m,M_1,M_2)>0$ and $\phi(m,M_1,M_2)>0$ such that for all $L\ge L_0$,
\begin{align*}
\mathbf{P}_{L;g_L}^{2m;A;\vec{a}_L,\star}\Bigg(\inf_{\substack{ 1\le i\le 2m \\ x \in [A/2,0]}} B_{i}(x) \ge M_2 \Bigg) \ge \phi.
\end{align*}
\end{lemma}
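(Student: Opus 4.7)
I would first reduce via stochastic monotonicity. The target event $\m{A} := \{\inf_{1\le i\le 2m,\, x\in[A/2,0]} B_i(x) \ge M_2\}$ is increasing in each coordinate $B_i$, and the one-sided HSKPZ Gibbs measure is stochastically monotone increasing in both the floor $g$ and the left boundary data (Lemma \ref{lem:sm} combined with the invariance principle of Theorem \ref{lem:invprin}). Hence it suffices to replace $g_L$ by $-\infty$ and to lower $\vec{a}_L$ to $\vec{a}' := (-M_1-1,-M_1-2,\ldots,-M_1-2m)$, which satisfies $\vec{a}' \le \vec{a}_L$ componentwise and is strictly ordered with unit gaps:
\[
\P_{L;g_L}^{2m;A;\vec{a}_L,\star}(\m{A}) \;\ge\; \P_{L}^{2m;A;\vec{a}',\star}(\m{A}).
\]
Since $\vec{a}'$ is ordered and separated, Theorem \ref{6.11} (supercritical) or Theorem \ref{6.11c} (critical) yields weak convergence of $\P_L^{2m;A;\vec{a}',\star}$ to a limiting law $\P^{\infty}$ (either $\P_{\infty}^{2m;A;\vec{a}',\star}$ or $\bar{\P}_{\infty}^{2m;A;\vec{a}',\star}$). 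Taking the open event $\m{A}^\circ := \{\inf B_i > M_2+1\} \subset \m{A}$, the Portmanteau theorem gives
\[
\liminf_{L\to\infty}\P_L^{2m;A;\vec{a}',\star}(\m{A}) \;\ge\; \P^{\infty}(\m{A}^\circ),
\]
so it suffices to show $\phi_\infty := \P^{\infty}(\m{A}^\circ)>0$ and then take $\phi = \phi_\infty/2$ with $L_0$ large enough.

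To arrange this, I fix $\delta := 1/4$ and continuous strictly ordered target functions $g_1 > g_2 > \cdots > g_{2m}$ on $[A,0]$ with $g_i(A)=a'_i$, $g_i - g_{i+1} \ge 1$ on $[A,0]$, and $g_{2m}(x)\ge M_2 + 2$ for all $x\in[A/2,0]$; such $g_i$ exist via linear interpolation on an initial segment followed by a flat profile (the starting gaps are already $a'_i-a'_{i+1}=1$). In the critical case, by Definition \ref{defcr}, $\bar{\P}_\infty^{2m;A;\vec{a}',\star}$ is obtained from $2m$ independent drifted Brownian motions conditioned on the non-intersection event, which has positive probability (see Lemma 2.4 of \cite{halfairy}). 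Lemma \ref{brpos} applied independently to each motion shows $\bigcap_i\{\sup_{x\in[A,0]}|B_i(x)-g_i(x)|\le \delta\}$ has positive unconditioned probability, and on this event $g_i-g_{i+1}\ge 1 > 2\delta$ forces non-intersection while $B_i(x) \ge g_{2m}(x)-\delta \ge M_2+7/4$ for $x\in[A/2,0]$ forces $\m{A}^\circ$.

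In the supercritical case, by Definition \ref{defni2}, $\P_\infty^{2m;A;\vec{a}',\star}$ conditions $m$ independent pairwise pinned Brownian pairs $(B_{2i-1},B_{2i})$ on the between-pair non-intersection event, which has positive probability as in the final paragraph of the proof of Theorem \ref{6.11}. Using the sum/difference decomposition $U_i := B_{2i-1}+B_{2i}$, $V_i := B_{2i-1}-B_{2i}$, the collection $\{U_i,V_j\}_{i,j=1}^m$ is mutually independent (Definition \ref{defni}): each $U_i$ is a Brownian motion of diffusion $2$ from $a'_{2i-1}+a'_{2i}$, and each $V_i$ is a Brownian bridge of diffusion $2$ from $1$ to $0$ conditioned to stay positive. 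I consider
\[
\m{B} := \bigcap_{i=1}^m\Bigl\{\sup_{x\in[A,0]}\bigl|U_i(x)-g_{2i-1}(x)-g_{2i}(x)\bigr|\le \delta\Bigr\} \cap \bigcap_{i=1}^m\Bigl\{V_i(x)\le g_{2i-1}(x)-g_{2i}(x)+\delta\ \text{for all } x\in[A,0]\Bigr\}.
\]
Lemma \ref{brpos} applied to each $U_i$ and Lemma \ref{bmpos} applied to each $V_i$ (with $f=g_{2i-1}-g_{2i}\ge 0$ matching $V_i(A)=1$), together with independence, gives $\m{B}$ positive unconditioned probability. On $\m{B}$ one has $B_{2i-1}\le g_{2i-1}+\delta$, $B_{2i}\ge g_{2i}-\delta$, and $B_{2i-1}\ge B_{2i}$ (from $V_i\ge 0$), which forces between-pair non-intersection (since $g_{2i}-g_{2i+1}\ge 1 > 2\delta$) and $B_i(x)\ge g_{2m}(x)-\delta \ge M_2+7/4$ for $x\in[A/2,0]$, giving $\m{A}^\circ$. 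Hence $\phi_\infty>0$ in both regimes.

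The main technical obstacle is the bookkeeping in the supercritical case: the event $\m{A}^\circ$ is naturally expressed in terms of the $B_i$, but the available independence structure and the soft-barrier controls of Lemmas \ref{bmpos} and \ref{brpos} act on the sum and difference processes $U_i$ and $V_i$. The translation between coordinate systems, combined with the simultaneous need for between-pair non-intersection and the uniform lower bound on all $B_i$, forces a compatible choice of the gaps between the target paths $g_i$ and the tube radius $\delta$, as above.
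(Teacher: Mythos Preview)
Your proof is correct and follows essentially the same route as the paper's: reduce via stochastic monotonicity to strictly ordered boundary data $\vec{a}' = (-M_1-1,\ldots,-M_1-2m)$ with no floor, pass to the weak limit using Theorems \ref{6.11}/\ref{6.11c}, and then verify the limiting probability is positive via corridor events --- in the supercritical case using the $(U_i,V_i)$ sum/difference decomposition together with Lemmas \ref{bmpos} and \ref{brpos}, exactly as the paper does (your $g_i$ are the paper's $f_i$ and your $\delta = 1/4$ matches theirs). The only cosmetic differences are that you invoke Portmanteau explicitly via an open sub-event, and in the critical case you cite Lemma \ref{brpos} for the drifted motions whereas the paper cites Lemma 2.4 of \cite{halfairy}; both are routine.
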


\begin{proof}
We first prove the lemma in the supercritical case. Let $b_i=-(M_1+i)$. By stochastic monotonicity (Lemma \ref{lem:sm}) we may reduce $\vec{a}_L$ to $\vec{b}$ and $g_L$ to $-\infty$. This will only increase the probability of the event in question. Thus, 
\begin{align}\label{reto1}
\mathbf{P}_{L;g_L}^{2m;A;\vec{a}_L,\star}\Bigg(\inf_{\substack{ 1\le i\le 2m \\ x \in [A/2,0]}} B_{i}(x) \ge M_2 \Bigg) & \ge \mathbf{P}_{L}^{2m;A;\vec{b},\star}\Bigg(\inf_{\substack{ 1\le i\le 2m \\ x \in [A/2,0]}} B_{i}(x) \ge M_2 \Bigg) \\ & {\xrightarrow{L\to\infty}} \ \mathbf{P}_{\infty}^{2m;A;\vec{b},\star}\left( \inf_{x\in [A/2,0]} B_{2m}(x) \ge M_2\right), \label{reto}
\end{align}
where the above convergence is via Theorem \ref{6.11}. It suffices to show the last probability is positive. To see this, one can for instance ask the curves to remain in certain corridors as follows. 
Let $f_i : [A,0] \to \R$ denote the piecewise linear function which interpolates between $b_i$ at $A$, $M_2+2m-i+1$ at $A/2$, and $M_2+2m-i+1$ at $0$. Define the event 
\begin{align*}
   \m{Crd} & :=\bigcap_{i=1}^{m}\bigcap_{x\in [A,0]} \big\{|B_{2i-1}(x)+B_{2i}(x)-f_{2i-1}(x)-f_{2i}(x)| \le 1/4\big\} \\ & \hspace{3cm}\cap \big\{0\le B_{2i-1}(x)-B_{2i}(x)\le f_{2i-1}(x)-f_{2i}(x)+1/4\big\}. 
\end{align*}
Note that $\m{Crd}$ implies $B_1(x) \ge B_2(x) \ge \cdots \ge B_{2m}(x) > M_2$ for all  $x\in [A,0]$.  Thus,
\begin{align*}
\mathbf{P}_{\infty}^{2m;A;\vec{b},\star}\left( \inf_{x\in [A/2,0]} B_{2m}(x) \ge M_2\right) \ge \mathbf{P}_{\otimes,\infty}^{2m;A;\vec{b},\star}\left(\m{Crd}\right),
\end{align*}
where $\mathbf{P}_{\otimes,\infty}^{2m;A;\vec{b},\star}$ is as in Definition \ref{defni2}. By definition of the 2-path $\m{PBM}$, and by Lemmas \ref{bmpos} and \ref{brpos}, the probability on the right is positive. This proves the lemma when $\alpha>0$ is fixed. In the critical regime, \eqref{reto1} and \eqref{reto} hold with ${\P}_{\infty}^{2m;A;\vec{b},\star}$ replaced by $\bar{\P}_{\infty}^{2m;A;\vec{b},\star}$ (from Defintion \ref{defcr}) by applying Theorem \ref{6.11c} instead. Note that
\begin{align*}
    \bar{\P}_{\infty}^{2m;A;\vec{b},\star}\left( \inf_{x\in [A/2,0]} B_{2m}(x) \ge M_2\right) \ge \bar{\P}_{\otimes}^{2m;A;\vec{b},\star}\left( \bigcap_{i=1}^{2m} \left\{\sup_{x\in [A,0]} |B_{i}(x)-f_i(x)|\le 1/4\right\}\right)
\end{align*}
where the law  $\bar{\P}_{\otimes}^{2m;A;\vec{b},\star}$ is also defined in Definition~\ref{defcr}. The last probability is positive via Lemma 2.4 in \cite{halfairy}. This proves the lemma in the critical case.
\end{proof}

\section{Preliminary tightness estimates for HSKPZ line ensemble under KPZ scaling} \label{sec8}

In this section, we investigate HSKPZLE under 1:2:3 scaling. Using Gibbsian line ensemble arguments, we first develop some preliminary tightness estimates. We divide this section into three subsections. In Section \ref{sec8.1} we state a distributional identity between the half- and full-space KPZ equations and show that the corresponding full-space KPZ equation approximates a parabola under KPZ scaling. In Section \ref{sec8.2} we deduce a weak form of parabolic trajectory for the top curve and prove upper tightness of the top curve. Finally, in Section \ref{sec8.3} we prove upper and lower tightness of all curves for any point far to the left.

\subsection{A distributional identity between half and full-space KPZ equations} \label{sec8.1} The proof of Theorem \ref{kpz123} crucially relies on the following distributional identity.

\begin{proposition} \label{p:iden} For each ${x} \le 0$ we have
\begin{align*}
    \mathcal{Z}_{\alpha}^{\mathrm{full},B}({x},t)\stackrel{d}{=}\frac12\int_{-\infty}^{{x}} \mathcal{Z}_\alpha(y,t)\,dy,
\end{align*}
where $\mathcal{Z}$ is the solution to the HSSHE \eqref{sheeq} started from Dirac initial data and $\mathcal{Z}_\alpha^{\mathrm{full},B}$ is the solution to the full space SHE \eqref{fshe} started from half-Brownian data (see Definition \ref{defshe}).
\end{proposition}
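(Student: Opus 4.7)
The plan is to derive this identity by taking the intermediate disorder limit of a distributional identity at the discrete (log-gamma polymer) level, as suggested in the discussion after the statement of Proposition~\ref{p:iden}. The discrete identity in question, established in Section~5 of \cite{bc22}, relates the HSLG polymer partition function $Z(m,n)$ (defined in \eqref{def:hslg}) to a full-space log-gamma polymer in which the first column of weights is replaced by a special boundary column with parameter tuned to produce a random walk initial condition in the scaling limit. Roughly, if $\widetilde Z^B(m,n)$ denotes this perturbed full-space partition function, then
\begin{equation*}
\widetilde Z^B(m,n) \stackrel{d}{=} \sum_{k=1}^{n} Z(m,k)
\end{equation*}
(up to explicit combinatorial factors; the equality in distribution is over the joint randomness in a single environment together with its boundary row). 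This kind of identity follows from decomposing each directed path in the perturbed full-space model according to the first column entry where it exits.

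Step~1 will be to rewrite the identity in the scaling coordinates used in Section~\ref{sec:hslg}. Setting $\theta = \tfrac12 + \sqrt{N}$, $n = \lfloor Nt/2\rfloor + 1$, and $m = n + \lfloor x\sqrt{N}/2\rfloor$ (and analogously $k \leftrightarrow y$), and including the multiplicative centering from \eqref{hinxt}, the discrete identity becomes
\begin{equation*}
\widehat{\mathcal{Z}}_\alpha^{\mathrm{full},B,N}(x,t) \stackrel{d}{=} \frac{1}{2\sqrt{N}} \sum_{y_N \in \Z_N \cap (-\infty,x]} \widehat{\mathcal{Z}}_\alpha^N(y_N,t),
\end{equation*}
where $\widehat{\mathcal{Z}}^N$ denotes the appropriately normalized partition functions converging, respectively, to the full-space SHE with half-Brownian data and to the half-space SHE with Dirac initial data. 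The prefactor $\tfrac{1}{2\sqrt{N}}$ is exactly what converts a Riemann sum with spacing $N^{-1/2}$ into an integral with the factor $\tfrac12$ appearing in the statement of Proposition~\ref{p:iden}.

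Step~2 is to take the $N\to\infty$ limit on both sides. The left-hand side converges in distribution to $\mathcal{Z}_\alpha^{\mathrm{full},B}(x,t)$ by \cite[Theorem~2.2]{wu} or \cite[Theorem~5.4]{bc22} (the half-Brownian limit being obtained from the discrete boundary column as in \cite[Section~5]{bc22}). For each fixed $y$, the corresponding term on the right-hand side converges to $\mathcal{Z}_\alpha(y,t)$ by the same intermediate-disorder result used in the proof of Proposition~\ref{thma}. To promote termwise convergence to convergence of the sum, I will establish uniform moment bounds of the form
\begin{equation*}
\E\bigl[\widehat{\mathcal{Z}}_\alpha^N(y_N,t)^2\bigr] \;\le\; C\,\mathfrak{p}_t(y)\,(1 + e^{-y^2/Ct}),
\qquad y \le 0,
\end{equation*}
uniformly in large $N$. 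Such bounds follow from Wiener chaos expansions analogous to those used in Section~\ref{sec2}, together with the Robin heat kernel estimate in Lemma~\ref{l:heatbd}. With these estimates in hand, both the right-tail (truncating at $y = -R$) and the Riemann-sum-to-integral passage are controlled by dominated convergence, and the two processes $\bigl(\widehat{\mathcal{Z}}_\alpha^{\mathrm{full},B,N}(x,t)\bigr)_N$ and $\bigl(\tfrac{1}{2\sqrt{N}}\sum_{y_N \le x} \widehat{\mathcal{Z}}_\alpha^N(y_N,t)\bigr)_N$ can be shown to converge jointly to their respective limits along the same coupling of the environment.

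The main technical obstacle will be Step~2: justifying the interchange of the $N\to\infty$ limit with the infinite sum in $y_N$. This requires uniform-in-$N$ $L^2$ tail control on $\widehat{\mathcal{Z}}_\alpha^N(y,t)$ as $y\to -\infty$, decaying fast enough to be integrable. Producing such estimates, rather than pointwise-in-$y$ convergence, is the real content of the argument; everything else (the combinatorial identity in \cite{bc22} and the single-point intermediate-disorder limit theorem of \cite{wu,bc22}) is already in the literature.
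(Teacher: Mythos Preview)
Your overall strategy---take the discrete identity and pass to the limit, with the main work being tail control---is correct, and you have correctly identified where the difficulty lies. However, there are two issues, one of which is a genuine gap.

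\textbf{The discrete identity is wrong.} The identity you state, $\widetilde Z^B(m,n) \stackrel{d}{=} \sum_{k=1}^n Z(m,k)$, sums half-space partition functions along a vertical line of endpoints. The actual identity (from \cite{bw}, not \cite{bc22}) is \eqref{disciden}:
\[
Z_\alpha^{\mathrm{full}}(m,n) \stackrel{d}{=} \sum_{r=m}^{m+n-1} Z(r,m+n-r),
\]
which sums along an \emph{antidiagonal}. This is the right object to scale to $\int_{-\infty}^x \mathcal{Z}_\alpha(y,t)\,dy$, since under the intermediate disorder scaling the spatial variable $y$ corresponds to position along the antidiagonal $p+q = \mathrm{const}$, not along a column. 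Your heuristic justification (``decomposing each path according to where it exits the first column'') does not produce this identity: that decomposition is \eqref{inhomdec}, which expresses the perturbed full-space partition function in terms of \emph{unperturbed full-space} partition functions, not half-space ones. The Barraquand--Wang identity is substantially deeper (it relies on symmetry under the geometric RSK correspondence) and cannot be obtained by elementary path-counting.

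\textbf{The tail control and passage to the limit differ from the paper's.} Assuming the correct identity, your plan to use uniform-in-$N$ $L^2$ chaos bounds is plausible but heavier than necessary. The paper instead argues by two-sided stochastic domination: truncating the sum gives one direction immediately (the truncated sum converges to $\int_{x'}^x$ by Theorem~\ref{thm0}, and letting $x'\to -\infty$ gives the inequality). For the other direction, the key trick is that by the identity \eqref{disciden} itself, the discrete \emph{tail} $\sum_{r=m'}^{m+n-1} Z(r,m+n-r)$ equals in distribution a single full-space partition function $Z_\alpha^{\mathrm{full}}(m',n')$ at a shifted point, whose first moment is then bounded directly via \eqref{inhomdec} and Stirling-type estimates \eqref{stirling}--\eqref{lgex}. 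This avoids both the need for second-moment control and the need for any ``joint convergence along the same coupling.''
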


At the level of discrete log-gamma polymers, the analogous distributional identity was discovered in \cite{bw} (Theorem 1.4). We now state their result.

Consider a family of independent random variables $(\hat{\mathscr W}_{i,j})_{(i,j)\in \mathbb{N}^2}$ such that
\begin{align*}
\hat{\mathscr W}_{i,j}\sim \operatorname{Gamma}^{-1}(\alpha+\theta)  \textrm{ for } i=1 \qquad \textrm{and } \quad \hat{\mathscr W}_{i,j}\sim \operatorname{Gamma}^{-1}(2\theta) \textrm{ for } i>1.
	\end{align*}
    We define the full-space point-to-point polymer partition function as
    \begin{align*}
        Z_{\alpha}^{\mathrm{full}}(m,n)= \sum_{\pi \in \Pi_{m,n}^{(1)}} \prod_{(i,j)\in \pi} \hat{\mathscr W}_{i,j},
    \end{align*}
    where $\Pi_{m,n}^{(1)}$ is defined around \eqref{eq:symwt}. Theorem 1.4 in \cite{bw} states that
\begin{align}\label{disciden}
    Z_{\alpha}^{\mathrm{full}}(m,n) \overset{d} = \sum_{r=m}^{m+n-1} Z(r,m+n-r),
\end{align}
where $Z(\cdot,\cdot)$ is the point-to-point HSLG polymer partition function defined in \eqref{def:hslg}. Proposition \ref{p:iden} will follow by carefully taking an intermediate disorder limit of both sides of \eqref{disciden}.

\begin{proposition}\label{inhoml} Fix $N \in \mathbb{N}$, $t>0$, and $x\in \R$ such that $Nt$ is even and $Nt/2+{x}\sqrt{N}/2$ is a positive integer. Let us take $\theta=\frac12+\sqrt{N}$, $m=Nt/2-{x}\sqrt{N}/2+1$, and $n=Nt/2+{x}\sqrt{N}/2+1$. We have
\begin{align*}
    N^{(Nt+1)/2} \cdot Z_{\alpha}^{\mathrm{full}}(m,n) \stackrel{d}{\longrightarrow} \mathcal{Z}_{\alpha}^{\mathrm{full},B}({x},t)
\end{align*}
    as $N\to\infty$, where $\mathcal{Z}_{\alpha}^{\mathrm{full},B}$ is defined in Definition \ref{defshe}.
\end{proposition}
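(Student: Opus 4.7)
The plan is to decompose $Z_{\alpha}^{\mathrm{full}}(m,n)$ according to the column at which each directed path makes its first up-step out of the first row, and recognize the resulting sum as a Riemann-sum discretization of the convolution formula \eqref{caldec}. Every up-right path from $(1,1)$ to $(m,n)$ traverses the first row horizontally up to some column $k\in\ll1,n\rr$ before first stepping up. Partitioning by $k$ gives
\begin{equation*}
Z_{\alpha}^{\mathrm{full}}(m,n) = \sum_{k=1}^n \Bigg(\prod_{j=1}^k \hat{\mathscr W}_{1,j}\Bigg)\,\hat Z_k(m,n),
\end{equation*}
where $\hat Z_k(m,n)$ is the point-to-point partition function of the \emph{homogeneous} log-gamma polymer with i.i.d.\ $\operatorname{Gamma}^{-1}(2\theta)$ weights on $\{(i,j):i\ge 2\}$, running from $(2,k)$ to $(m,n)$.

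Next I would identify the limit of each of the two factors. For the bulk factor, with $\theta=\tfrac12+\sqrt{N}$ fixed, the point-to-point free energy of the homogeneous log-gamma polymer converges under intermediate disorder scaling to a full-space SHE propagator: for $y>0$ and $k=k_N:=\lfloor y\sqrt{N}\rfloor+1$, invoking \cite[Thm.\ 2.2]{wu} or \cite[Thm.\ 5.4]{bc22} (suitably translated from the octant to the full plane) yields
\begin{equation*}
N^{(Nt+1-k)/2}\,\hat Z_{k_N}(m,n)\;\stackrel{d}{\longrightarrow}\;\mathcal{Z}^{\mathrm{full}}(y,0;x,t)
\end{equation*}
as $N\to\infty$. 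For the boundary factor, the first-row weights are i.i.d.\ $\operatorname{Gamma}^{-1}(\alpha+\theta)$, so $\log \hat{\mathscr W}_{1,j}$ has mean $-\psi(\alpha+\theta)=-\log\sqrt{N}-\alpha/\sqrt{N}+O(N^{-1})$ and variance $\psi'(\alpha+\theta)=N^{-1/2}+O(N^{-1})$. Donsker's invariance principle then gives
\begin{equation*}
N^{k_N/2}\prod_{j=1}^{k_N}\hat{\mathscr W}_{1,j} \;\stackrel{d}{\longrightarrow}\; e^{B(y)-\alpha y}
\end{equation*}
jointly in $y\ge 0$, where $B$ is a standard Brownian motion. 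Multiplying the two factors and absorbing the remaining $N^{-1/2}$ into $dy$, the whole expression becomes the Riemann sum
\begin{equation*}
N^{(Nt+1)/2}\,Z_{\alpha}^{\mathrm{full}}(m,n)=\sum_{k=1}^n\!\Bigg[N^{k/2}\!\prod_{j=1}^k\hat{\mathscr W}_{1,j}\Bigg]\!\cdot\!\Bigg[N^{(Nt+1-k)/2}\hat Z_k(m,n)\Bigg]\!\cdot N^{-1/2}\!\cdot\!\sqrt{N},
\end{equation*}
which formally converges to $\int_0^\infty e^{B(y)-\alpha y}\,\mathcal{Z}^{\mathrm{full}}(y,0;x,t)\,dy=\mathcal{Z}_{\alpha}^{\mathrm{full},B}(x,t)$ by \eqref{caldec}. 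The boundary and bulk factors are constructed from disjoint families of weights, so are independent, which makes the joint convergence immediate.

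The main obstacle is turning this formal limit into a rigorous weak-convergence statement, since the sum ranges over $k$ up to $n=Nt/2+x\sqrt{N}/2+1$, i.e., $y$ up to order $\sqrt{N}$. This requires a tail estimate uniform in $N$ of the form
\begin{equation*}
\mathbb{E}\Bigg[\Bigg(N^{k/2}\!\prod_{j=1}^k\hat{\mathscr W}_{1,j}\Bigg)^2\!\cdot\!\Big(N^{(Nt+1-k)/2}\hat Z_k(m,n)\Big)^2\Bigg]\le Ce^{-cy^2}
\end{equation*}
for $k=\lfloor y\sqrt{N}\rfloor+1$ and $y$ in a large but fixed range, together with exponential integrability of the Brownian boundary term, so that a truncation argument reduces the claim to convergence of the sum restricted to $k\le y_0\sqrt{N}$ for arbitrary $y_0$. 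I would obtain the bulk moment bound via chaos-expansion estimates as in the proof of Proposition \ref{p:parabol} (using that $\mathbb{E}[(\mathcal{Z}^{\mathrm{full}})^2]$ has Gaussian decay in $y$) or via known second-moment formulas for log-gamma polymers; for the boundary term, $\mathbb{E}[(N^{k/2}\prod_j \hat{\mathscr W}_{1,j})^2]$ is bounded uniformly in $k,N$ by a direct moment computation using the parameters $\alpha+\theta$. Once this uniform integrability is in place, the joint convergence of boundary and bulk pieces upgrades the Riemann-sum convergence to convergence in distribution, and \eqref{caldec} identifies the limit as $\mathcal{Z}_{\alpha}^{\mathrm{full},B}(x,t)$.
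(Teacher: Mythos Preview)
Your decomposition by the first-row exit time and the identification of the two pieces (Donsker for the boundary product, intermediate disorder for the bulk) is exactly the paper's strategy. One small correction: the bulk partition function $\hat Z_k(m,n)$ lives in a homogeneous i.i.d.\ environment on the full quadrant, so the relevant convergence result is the full-space Alberts--Khanin--Quastel theorem \cite{akq2}, not \cite{wu} or \cite{bc22} (which treat half-space models).

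The substantive difference is in the tail control. You propose second-moment bounds, but your claim that $\mathbb{E}\big[(N^{k/2}\prod_{j=1}^k\hat{\mathscr W}_{1,j})^2\big]$ is bounded uniformly in $k,N$ is false when $\alpha<1$: a direct computation gives $N\,\mathbb{E}[\hat{\mathscr W}_{1,j}^2]=1-\tfrac{2\alpha-2}{\sqrt N}+O(N^{-1})$, so the product behaves like $e^{2(1-\alpha)y}$ for $k\sim y\sqrt N$, matching the limiting $\mathbb{E}[e^{2(B(y)-\alpha y)}]=e^{2(1-\alpha)y}$. The second moment of the discrete bulk partition function is also not a simple product, unlike the first moment. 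Your argument is in principle salvageable because the Gaussian decay $e^{-(y-x)^2/Ct}$ from the bulk dominates the exponential growth of the boundary, but making this precise at the discrete level is more work than you indicate.

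The paper instead uses \emph{first} moments for the truncation, which is considerably simpler: by independence and linearity, $\mathbb{E}[\hat Z_k(m,n)]=(\mathbb{E}\hat{\mathscr W}_{2,1})^{m+n-k-1}\cdot\binom{m+n-k-2}{n-k}$, and a De Moivre--Laplace bound on the binomial coefficient gives $N^{(Nt-k+2)/2}\mathbb{E}[\hat Z_k(m,n)]\le Ce^{-(-x\sqrt N+k)^2/CN}$ directly. Combined with $N^{k/2}(\mathbb{E}\hat{\mathscr W}_{1,1})^k\le Ce^{Ck/\sqrt N}$, Markov's inequality then controls the tail in probability. This avoids any second-moment computation for the discrete bulk polymer.
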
 
We remark that a general convergence result but in the half-space setting can be found in \cite{bc22}. 
Let us now briefly explain why $\mathcal{Z}_{\alpha}^{\mathrm{full},B}$ arises in the limit. Intermediate disorder scaling limits of full-space polymers in i.i.d.~environments were investigated in the seminal paper of Alberts, Khanin, and Quastel \cite{akq2}. We would like to use their results, but the environment considered in defining $Z_{\alpha}^{\mathrm{full}}$ is not quite i.i.d.; it is a one-line perturbation of an i.i.d.~environment. However, by considering the first time the polymer paths enters into the bulk, we can decompose the partition function as follows:
\begin{align}\label{inhomdec}
    Z_\alpha^{\mathrm{full}}(m,n) =\sum_{k=1}^n \prod_{j=1}^k \hat{\mathscr W}_{1,j} \cdot Z_{\theta}^{\mathrm{full}}((2,k)\to (m,n)),
\end{align}
where $Z_{\theta}^{\mathrm{full}}((2,k)\to (m,n))$ are now partition functions for an i.i.d.~$\operatorname{Gamma}^{-1}(2\theta)$ environment. Under appropriate scaling, $Z_{\theta}^{\mathrm{full}}((2,k)\to (m,n))$ converges to the solution to full-space SHE with Dirac initial data, whereas $\prod_{j=1}^k \hat {\mathscr W}_{1,j}$ converges to a Brownian motion with drift. Together, these yield the full-space SHE in the limit with half-Brownian initial data.

\begin{proof}[Proof of Proposition \ref{inhoml}]
For clarity, we split the proof into three steps.

\medskip

\noindent \textbf{Step 1.} 
 We fix an $M>0$ and study the limit of a truncated version of \eqref{inhomdec}:
\begin{align}
   \nonumber \hat{Z}_N(m,n;M) & := N^{(Nt+1)/2}\cdot \sum_{k=1}^{M\sqrt{N}} \prod_{j=1}^k \hat{\mathscr W}_{1,j} \cdot Z_{\theta}^{\mathrm{full}}((2,k)\to (m,n)) \\ & =\frac{1}{\sqrt{N}}\sum_{k=1}^{M\sqrt{N}} \left(N^{k/2}\prod_{j=1}^k \hat{\mathscr W}_{1,j} \right)\cdot  \left(N^{\frac{Nt-k+2}2}Z_{\theta}^{\mathrm{full}}((2,k)\to (m,n))\right). \label{trunc}
\end{align}
We claim that that given any $\e>0$, we can choose $M(\e)>0$ such that 
\begin{align}\label{claim2}
   & \Pr\Big(\big|N^{(Nt+1)/2}\cdot Z_{\alpha}^{\mathrm{full}}(m,n)-\hat{Z}_N(m,n;M)\big| \ge \e\Big) \le \e
\end{align}
for all large $N$. We will prove this in the next step. Assuming the above claim, it thus suffices to study the limit of $\hat{Z}_N(m,n;M)$ as $N\to \infty$ and then take $M\to \infty$.  Thinking of $k\sim y\sqrt{N}$, we view the sum in \eqref{trunc} as a Riemann sum. Note that by Donsker's invariance principle and in view of the log-gamma mean and variance in \eqref{loggamma}, as a process in $y$,
\begin{align*}
  \log\left(N^{\lceil y\sqrt{N} \rceil/2}\prod_{j=1}^{\lceil y\sqrt{N} \rceil} \hat{\mathscr W}_{1,j}\right)=  \sum_{j=1}^{\lceil y\sqrt{N} \rceil} \log \hat{\mathscr W}_{1,j} +\lceil y\sqrt{N} \rceil\log \sqrt{N} 
\end{align*}
converges to $B(y)-\alpha y$ where $B$ is a standard Brownian motion. On the other hand, since $Z_{\theta}^{\mathrm{full}}((2,k)\to (m,n))$ involves an i.i.d.~environment, intermediate disorder limit results from \cite{akq2} apply. In particular, translating Theorem 2.2 in \cite{akq2} to our setting leads to the following weak convergence:
\begin{align*}
   N^{\frac{Nt-\lceil y\sqrt{N} \rceil+2}2}Z_{\theta}^{\mathrm{full}}((2,\lceil y\sqrt{N} \rceil)\to (m,n)) \longrightarrow \mathcal{Z}^{\mathrm{full}}(y,0;x,t)
\end{align*}
as processes in $y$, where $\mathcal{Z}^{\mathrm{full}}(y,0;x,t)$ is given in Definition \ref{defshe}. Combining these two limits and using continuity, we see that the truncated partition function $\hat Z_N(m,n;M)$  in \eqref{trunc} converges to
\begin{align*}
    \int_0^M e^{B(y)-\alpha y}\mathcal{Z}^{\mathrm{full}}(y,0;x,t)\,dy.
\end{align*}
Owing to the relation \eqref{caldec}, we see that as $M\to \infty$, the above quantity converges {in distribution} to $\mathcal{Z}_{\alpha}^{\mathrm{full},B}(x,t)$. 
Thus the proposition follows modulo \eqref{claim2}.

\medskip

\noindent\textbf{Step 2.} By Markov's inequality, to prove \eqref{claim2} it suffices to show that
\begin{align*}
    \Ex\left[ N^{(Nt+1)/2} \cdot Z_{\alpha}^{\mathrm{full}}(m,n)-\hat{Z}_N(m,n;M)\right]
\end{align*}
can be made arbitrarily small (uniformly in $N$) by taking $M$ large enough. To this end, notice that the above expectation simplifies to
\begin{align*}
   \frac1{\sqrt{N}}\sum_{k=M\sqrt{N}+1}^{n} N^{k/2}(\Ex[\hat{\mathscr W}_{1,1}])^k\cdot  \left(N^{\frac{Nt-k+2}2}\Ex\left[Z_{\theta}^{\mathrm{full}}((2,k)\to (m,n))\right]\right).
\end{align*}
We claim that for $1 \leq k \leq n$ uniformly in large $N$ depending on $x,\alpha$, we have 
\begin{align}\nonumber
    & N^{\frac{Nt-k+2}2} \Ex\left[Z_{\theta}^{\mathrm{full}}((2,k)\to (m,n))\right] \\ & \hspace{2cm}= N^{\frac{Nt-k+2}2}(\Ex\hat{\mathscr W}_{2,1})^{m+n-k-1}\cdot |\Pi_{m-1,n-k+1}^{(1)}| \le Ce^{-(-x\sqrt{N}+k)^2/CN},\label{stirling} \\&
    N^{k/2}(\Ex\hat{\mathscr W}_{1,1})^k \le Ce^{Ck /\sqrt{N}},\label{lgex}
\end{align}
for some constant $C>0$ depending on $t,\alpha$.
We will prove these two bounds in the next step. Assuming them, we may estimate (for $x$ fixed in \eqref{stirling} and assuming e.g. $M \ge 2x$)
\begin{align*}
    \Ex\left[ N^{(Nt+1)/2} \cdot Z_{\alpha}^{\mathrm{full}}(m,n)-\hat{Z}_N(m,n;M)\right] &\le \frac1{\sqrt{N}}\sum_{k=M\sqrt{N}+1}^{n} C^2\exp\left(-\frac{k^2}{CN} + \frac{Ck}{\sqrt{N}}\right)\\
    &\le \int_M^\infty C^2\exp(-u^2/C+C u)\,du,
\end{align*}
where the last inequality holds for $M>C^2$ so that the summand is decreasing in $k$ (and we changed variables in the integral). The last integral can be made arbitrarily small (uniformly in $N$) by taking $M$ large enough. 

\medskip

\noindent\textbf{Step 3.} To conclude we verify \eqref{stirling} and \eqref{lgex}. By a straightforward application of Stirling's formula (see the De Moivre--Laplace theorem), for all large $N$ and $1\leq k\leq n$,
\begin{equation}\label{Pibound}
\begin{split}
|\Pi^{(1)}_{m-1,n-k+1}| = \binom{Nt-k}{\frac{Nt+x\sqrt{N}}{2}-k+1} &\leq C\cdot \frac{2^{Nt-k}}{\sqrt{\pi(Nt-k)/2}} \exp\bigg(-\frac{(-x\sqrt{N}+k-2)^2}{Nt-k}\bigg)\\
&\leq C\cdot\frac{2^{Nt-k}}{\sqrt{N}} \exp\bigg(-{\frac{(-x\sqrt{N}+k)^2}{CN}}\bigg).
\end{split}
\end{equation}
On the other hand, recalling $\widehat{{\mathscr W}}_{2,1} \sim \operatorname{Gamma}^{-1}(1+2\sqrt{N})$, we have
\[
(\Ex \widehat{{\mathscr W}}_{2,1})^{m+n-k-1} = (2\sqrt{N})^{-(m+n-k-1)} = 2^{-(Nt-k+1)}N^{-\frac{Nt-k+1}{2}}.
\]
Combining with \eqref{Pibound} implies \eqref{stirling}. For \eqref{lgex}, since $\widehat{\mathscr W}_{1,1} \sim \operatorname{Gamma}^{-1}(\alpha+\frac{1}{2}+\sqrt{N})$ we have
\begin{align*}
    N^{k/2}(\Ex\widehat{\mathscr W}_{1,1})^k = N^{k/2}(\alpha-\tfrac{1}{2}+\sqrt{N})^{-k} = \bigg(1+\frac{\alpha-\tfrac{1}{2}}{\sqrt{N}}\bigg)^{-k} \leq \bigg(1-\frac{C}{\sqrt{N}}\bigg)^{-k},
\end{align*}
for $C>|\alpha-\tfrac12|$. If $k\geq \sqrt{N}$, then the last expression is bounded above for large $N$ (depending on $C$) by $(1-C/\sqrt{N})^{-\sqrt{N}} \leq 2e^{C} \leq 2e^{Ck/\sqrt{N}}$. Enlarging $C$ if necessary, we obtain \eqref{lgex} for all $1\le k\le n$.
\end{proof}

\begin{proof}[Proof of Proposition \ref{p:iden}] The proof will proceed by showing that the two random variables stochastically dominate one another. For clarity, we split the proof into three steps.

\medskip

\noindent\textbf{Step 1.} In this step we show that $2\calZ_{\alpha}^{\mathrm{full},B}(x,t)$ stochastically dominates $\int_{-\infty}^{x} \mathcal{Z}_{\alpha}(y,t)\,dy$. To this end, let us fix $x'<x$, $t>0$, and $N \in \mathbb{N}$ such that $Nt$ is even and $Nt/2+y\sqrt{N}/2$ is a positive integer for $y\in \{x,x'\}$. Let us take $\theta=\frac12+\sqrt{N}$, $m=Nt/2-x\sqrt{N}/2+1$, $m'=Nt/2-x'\sqrt{N}/2+1$, $n=Nt/2+x\sqrt{N}/2+1$, and $n'=Nt/2+x'\sqrt{N}/2+1$. Observe by \eqref{disciden} that 
\begin{align*}
\Pr\left(2N^{(Nt+1)/2}Z_{\alpha}^{\mathrm{full}}(m,n) \le y \right) & =  \Pr\left(\frac{\2}{\sqrt{N}} \sum_{r=m}^{m+n-1} N^{(Nt+2)/2}Z(r,m+n-r) \le y\right) \\ & \le   \Pr\left( \frac{\2}{\sqrt{N}} \sum_{r=m}^{m'-1} N^{(Nt+2)/2}Z(r,m+n-r) \le y\right).
\end{align*}
In view of Theorem \ref{thm0} and Proposition \ref{inhoml}, taking $N\to \infty$ in the above yields
\begin{align*}
\P\left(2\calZ_{\alpha}^{\mathrm{full},B}(x,t) \le y \right) \le  \P\left(\int_{x'}^x \mathcal{Z}_\alpha(y,t)\,dy \le y\right) 
\end{align*}
for every $x'<x\le 0$. Taking $x'\downarrow -\infty$, we get that  $2\calZ_{\alpha}^{\mathrm{full},B}(x,t)$ is stochastically larger than $\int_{-\infty}^{x} \mathcal{Z}_{\alpha}(y,t)dy$.

\medskip

\noindent\textbf{Step 2.} In this step, we verify that $\int_{-\infty}^{x} \mathcal{Z}_{\alpha}(y,t)\,dy$ stochastically dominates $2\calZ_{\alpha}^{\mathrm{full},B}(x,t)$. To this end, we claim that for each $\e>0$, there exists $x'_0<0$ such that for all $x'\leq x'_0$,
\begin{align}\label{aclaim}
    \liminf_{N\to\infty}\Pr\left( \frac2{\sqrt{N}}\sum_{r=m'}^{m+n-1} N^{(Nt+2)/2}Z(r,m+n-r) \le \e \right) \ge 1-\e.
\end{align}
We prove \eqref{aclaim} in the next step.
Assuming the claim we observe that for $x'\le x'_0$ and large $N$,
\begin{align*}
    \Pr\left(\frac{\2}{\sqrt{N}} \sum_{r=m}^{m'-1} N^{(Nt+2)/2}Z(r,m+n-r) \le y\right)  & \le \e+ \Pr\left(\frac{\2}{\sqrt{N}} \sum_{r=m}^{m+n-1} N^{(Nt+2)/2}Z(r,m+n-r) \le y+\e \right) \\ & = \e+ \Pr\left(2N^{(Nt+1)/2}Z_{\alpha}^{\mathrm{full}}(m,n) \le y+\e \right),
\end{align*}
where the second equality follows via \eqref{disciden}.
In view of Theorem \ref{thm0} and Proposition \ref{inhoml}, taking $N\to \infty$ in the above we get
\begin{align*}
    \P\left(\int_{x'}^x \mathcal{Z}_\alpha(y,t)\,dy \le y\right)  & \le  \e+ \P\left(2\calZ_{\alpha}^{\mathrm{full},B}(x,t) \le y+\e \right).
\end{align*}
Taking $x'\downarrow -\infty$ yields 
\begin{align*}
    \P\left(\int_{-\infty}^x \mathcal{Z}_\alpha(y,t)\,dy \le y\right)  & \le \e+  \P\left(2\calZ_{\alpha}^{\mathrm{full},B}(x,t) \le y+\e\right).
\end{align*}
As $\e$ was arbitrary, this shows that $\int_{-\infty}^{x} \mathcal{Z}_{\alpha}(y,t)\,dy$ is stochastically larger than $2\calZ_{\alpha}^{\mathrm{full},B}(x,t)$.

\medskip

\noindent\textbf{Step 3.} To prove \eqref{aclaim}, we compute the first moment of the underlying random variable.
We observe the following chain of equalities and inequalities: for $x'$ fixed and $N$ sufficiently large,
\begin{align*}
    & \Ex\left[\frac2{\sqrt{N}}\sum_{r=m'}^{m+n-1} N^{(Nt+2)/2}Z(r,m+n-r)\right] = \Ex\left[2 N^{(Nt+1)/2}Z_{\alpha}^{\mathrm{full}}(m',n')\right] \\ &\qquad =\frac2{\sqrt{N}}\sum_{k=1}^{n} N^{k/2}(\Ex[\hat{\mathscr W}_{1,1}])^k\cdot  \left(N^{\frac{Nt-k+2}2}\Ex\left[Z_{\theta}^{\mathrm{full}}((2,k)\to (m',n'))\right]\right) \\ &\qquad \le \frac2{\sqrt{N}}\sum_{k=1}^n C^2\exp\bigg(-\frac{(-x'\sqrt{N}+k)^2}{CN}+\frac{Ck}{\sqrt{N}}\bigg). 
\end{align*}
The first equality above is via \eqref{disciden} (noting that $m'+n'=m+n$), the second equality is via the decomposition in \eqref{inhomdec}, and the last inequality is due to \eqref{stirling} and \eqref{lgex}. Now assuming $x' \le -2C^2$, it is easy to check that the summand in the last line is decreasing in $k$. This implies that the sum in the last line is bounded above by the integral
\begin{align*}
    \frac{2}{\sqrt{N}}\int_0^\infty C^2\exp\bigg(-\frac{(-x'\sqrt{N}+y)^2}{CN} + \frac{Cy}{\sqrt{N}}\bigg)\,dy
    &= 2C^2e^{Cx'}\int_{-x'}^\infty e^{-y^2/C+Cy}\,dy.
\end{align*}
The expression on the right goes to 0 as $x'\to -\infty$, verifying \eqref{aclaim}.
\end{proof}

We end this section with a lemma showing that the full-space KPZ equation with half-Brownian initial data, i.e., $\log \mathcal{Z}_{\alpha}^{\mathrm{full},B}$,  has weak parabolic trajectory. 

\begin{lemma} \label{hbpara} Suppose $\alpha>0$ is fixed (supercritical) or $\alpha=\mu t^{-1/3}$ with $\mu\in \R$ fixed (critical). 
    For each $\e\in (0,1)$, there exists $K=K(\mu,\e)>0$ (the $\mu$ dependence is only in the critical case) such that for all $x\le 0$ we have
    \begin{align}\label{eq:hbpara}
       \limsup_{t\to \infty} \P\bigg(\bigg|\frac{\log\mathcal{Z}_{\alpha}^{\mathrm{full},B}(xt^{2/3},t)+t/24}{t^{1/3}}+\frac{x^2}2\bigg| \ge K\bigg) \le \e.
    \end{align}
\end{lemma}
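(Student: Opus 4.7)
The plan is to combine the convolution representation \eqref{caldec} of $\mathcal{Z}_\alpha^{\mathrm{full},B}$ with the spatial stationarity of the full-space narrow-wedge KPZ equation \cite{amir2011probability} (already used in the proof of Proposition \ref{p:parabol}) and with the pointwise tightness of $\log\mathcal{Z}_\alpha^{\mathrm{full},B}(0,t) + t/24$ at scale $t^{1/3}$ proven in \cite{corwin2013crossover,borodin2014free}. Writing the propagator as $\mathcal{Z}^{\mathrm{full}}(y,0;x,t) = \mathfrak{p}_t(x-y)\,e^{\widetilde{\mathcal{H}}(y,0;x,t)}$ (so that by \cite{amir2011probability} one has $\widetilde{\mathcal{H}}(y,0;x,t) \stackrel{d}{=} \widetilde{\mathcal{H}}(0,0;0,t)$ pointwise in $(y,x)$, with the latter tight at scale $t^{1/3}$ after centering by $-t/24$), substituting $x=wt^{2/3}$ into \eqref{caldec} and extracting the parabolic piece from $\log\mathfrak{p}_t(wt^{2/3}-y)$ reduces \eqref{eq:hbpara} to establishing uniform-in-$w\le 0$ tightness of $t^{-1/3}(-\tfrac{1}{2}\log(2\pi t) + \tfrac{t}{24} + \log J(w,t))$, where
\[
J(w,t) := \int_0^\infty \exp\!\left(B(y) + (wt^{-1/3}-\alpha)y - \tfrac{y^2}{2t} + \widetilde{\mathcal{H}}(y,0;wt^{2/3},t)\right) dy.
\]

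In the supercritical regime ($\alpha>0$ fixed), the inequality $(wt^{-1/3}-\alpha)y \le -\alpha y$ for $w\le 0$, $y\ge 0$, combined with translation invariance of the space--time white noise (which identifies the distribution of $\widetilde{\mathcal{H}}(\cdot,0;wt^{2/3},t)$ with that of $\widetilde{\mathcal{H}}(\cdot-wt^{2/3},0;0,t)$), yields an upper bound on $\log J(w,t) + t/24$ of order $t^{1/3}$ uniformly in $w \le 0$. A matching lower bound follows by restricting the integration to a fixed compact window in $y$ on which $(wt^{-1/3}-\alpha)y$ remains bounded (uniformly in $w\le 0$ for large $t$), and applying the same uniform-in-$y$ control on $\widetilde{\mathcal{H}}$ to show that the integrand is at least $e^{-t/24-Ct^{1/3}}$ on this window with high probability. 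In the critical regime ($\alpha = \mu t^{-1/3}$), the substitution $y = ut^{2/3}$ together with Brownian scaling $B(ut^{2/3})\stackrel{d}{=} t^{1/3}\widetilde{B}(u)$ converts the integrand into $\exp(t^{1/3}[\widetilde{B}(u)+(w-\mu)u - u^2/2] + \widetilde{\mathcal{H}}(ut^{2/3},0;wt^{2/3},t))$, and a Laplace-method asymptotic on the dimensionless inner expression produces the analogous bound, again hinging on the same uniform control of $\widetilde{\mathcal{H}}$ (the $\mu$-dependence of the bound $K(\mu,\e)$ enters through the extra drift $w-\mu$).

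The main obstacle is promoting the pointwise (in $y$) tightness of $\widetilde{\mathcal{H}}(y,0;wt^{2/3},t)+t/24$ to a joint-in-$y$ bound over the effective integration window, since the inputs from \cite{amir2011probability,corwin2013crossover,borodin2014free} are only one-point statements. This gap can be closed either via a Kolmogorov continuity argument driven by two-point moment bounds on the SHE from its chaos expansion (cf.\ \cite{corwin2018exactly}, already used in the proof of Proposition \ref{p:parabol}), or by appealing to process-level modulus-of-continuity estimates for the full-space KPZ equation such as those in \cite{cgh21}. Once this uniform-in-$y$ control is in place, Laplace-type integration against the Brownian-drift weight $e^{B(y)-\alpha y}$ (or its critically rescaled counterpart) produces the stated tight bound on $\log J(w,t)$ with error of order $t^{1/3}$ uniformly in $w \le 0$, yielding \eqref{eq:hbpara}.
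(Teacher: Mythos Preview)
Your route differs substantially from the paper's and, while plausible in outline, has a gap that is harder to close than you suggest. The paper avoids your ``main obstacle'' entirely by invoking a pointwise distributional identity (see \cite[Remark~1.14]{borodin2014free}):
\[
\mathcal{Z}_{\alpha}^{\mathrm{full},B}(x,t) \stackrel{d}{=} e^{-x^2/2t}\,\mathcal{Z}_{\alpha - x/t}^{\mathrm{full},B}(0,t),
\]
which extracts the parabola for free and reduces \eqref{eq:hbpara} to one-point tightness of $t^{-1/3}(\log\mathcal{Z}_{\alpha-xt^{-1/3}}^{\mathrm{full},B}(0,t)+t/24)$ as the shifted drift parameter varies. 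In the critical regime this is exactly the setting of \cite{corwin2013crossover}, giving convergence to $\mathrm{BBP}_{-\mu+x}$, and these form a tight family as $x\le 0$ varies. In the supercritical regime the paper sandwiches: the upper tail via monotonicity in the drift (reducing to the critical case at any fixed $\mu>0$ and then sending $\mu\to\infty$), and the lower tail by restricting the convolution \eqref{caldec} to the tiny window $y\in[0,t^{-1}]$, on which the spatial increment of the narrow-wedge KPZ solution is negligible by \cite{dg21}. No process-level control of $\widetilde{\mathcal{H}}$ over a macroscopic window is ever required.

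By contrast, your upper bound on $\log J(w,t)$ requires controlling $\widetilde{\mathcal{H}}(y,0;wt^{2/3},t)$ over the \emph{entire} half-line $y\in[0,\infty)$, not just a compact set. The heat-kernel quadratic $-y^2/2t$ only dominates for $y\gg\sqrt{t}$; on $[0,\sqrt{t}]$ you must argue that $y\mapsto\widetilde{\mathcal{H}}(y,0;\cdot,t)-\widetilde{\mathcal{H}}(0,0;\cdot,t)$ grows at most like $O(\sqrt{y})$ with high probability so that $-\alpha y$ (supercritical) or the Laplace weight after the substitution $y=ut^{2/3}$ (critical) can beat it. Local Kolmogorov bounds from the chaos expansion do not directly give this over windows of width $\sqrt{t}$ or $t^{2/3}$, and the modulus-of-continuity results in \cite{cgh21} are stated on fixed compacts. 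One needs uniform increment bounds over growing spatial windows (of the type in \cite{dg21}, which the paper does use, but only on a window of size $t^{-1}$). Your approach can be completed with a careful scale-splitting argument and these stronger inputs, but the distributional identity above bypasses all of this work.
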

\begin{proof} This essentially follows from results in \cite{corwin2013crossover,borodin2014free}.
Recall that $\mathcal{Z}_{\alpha}^{\mathrm{full},B}$ is the partition function with half-Brownian initial data where the Brownian motion has drift $-\alpha$. It is known (see \cite[Remark 1.14]{borodin2014free} for example) that this partition function at $(x,t)$ is equal in distribution to $e^{-x^2/2t}$ times the partition function at $(0,t)$ with the drift shifted to $-\alpha+x/t$, i.e.,
\begin{align*}
    \mathcal{Z}_{\alpha}^{\mathrm{full},B}(x,t) \stackrel{d}{=} e^{-x^2/2t} \cdot \mathcal{Z}_{\alpha-x/t}^{\mathrm{full},B}(0,t)
\end{align*}
holds pointwise. Under 1:2:3 scaling this translates to 
\begin{align}\label{indeg}
   \frac{\log\mathcal{Z}_{\alpha}^{\mathrm{full},B}(xt^{2/3},t)+t/24}{t^{1/3}}+\frac{x^2}2 \stackrel{d}{=} \frac{\log \mathcal{Z}_{\alpha-xt^{-1/3}}^{\mathrm{full},B}(0,t)+t/24}{t^{1/3}}.
\end{align}
We now study the weak limit of the right-hand side of \eqref{indeg} in critical and supercritical regimes. 

\begin{itemize}[leftmargin=25pt]
    \item (Critical regime) From Corollary 8 in \cite{corwin2013crossover} (see also Corollary 1.15(b) in \cite{borodin2014free}), when $\alpha=\mu t^{-1/3}$ with $\mu\in \R$ fixed we have
\begin{align*}
    2^{1/3}t^{-1/3}\big(\log \mathcal{Z}_{\mu t^{-1/3}-xt^{-1/3}}^{\mathrm{full},B}(0,t)+t/24\big) \stackrel{d}{\longrightarrow} \mathrm{BBP}_{-\mu+x},
\end{align*}
where $\mathrm{BBP}_{-\mu+x}$ denotes a random variable with the Baik--Ben Arous--P{\'e}ch{\'e} distribution with a single boundary perturbation of strength $-\mu+x$ (again recall that $\mathcal{Z}_{\alpha}^{\mathrm{full},B}$ has drift $-\alpha$). The BBP distribution arises in spiked random matrix theory \cite{bbap} and we refer to \cite{bbap,baik} for its precise definition (which is not important for our purposes). It is known \cite[Eq.~(2.36)]{br01} that $\mathrm{BBP}_{-\mu+x}$ converges weakly to $\mathrm{TW}_{\mathrm{GUE}}$ as $x\to -\infty$, where $\mathrm{TW}_{\mathrm{GUE}}$ is a Tracy--Widom GUE distributed random variable. 
    \item (Supercritical regime) When $\alpha>0$ is fixed, for every $x\le 0$ we claim that
\begin{align}\label{scnv}
    2^{1/3}t^{-1/3}\big(\log \mathcal{Z}_{\alpha-xt^{-1/3}}^{\mathrm{full},B}(0,t)+t/24\big) \stackrel{d}{\longrightarrow} \mathrm{TW}_{\mathrm{GUE}}.
\end{align}
This is not quite proved in \cite{corwin2013crossover,borodin2014free} but can be deduced from their results (along with fluctuation results for $\log\mathcal{Z}^{\mathrm{full}}(x,t)$) using soft arguments. Fix any $\mu>0$. For all large enough $t$ we have
\begin{align*}
    2^{1/3}t^{-1/3}\big(\log \mathcal{Z}_{\alpha-xt^{-1/3}}^{\mathrm{full},B}(0,t)+t/24\big) \le 2^{1/3}t^{-1/3}\big(\log \mathcal{Z}_{\mu t^{-1/3}-xt^{-1/3}}^{\mathrm{full},B}(0,t)+t/24\big).
\end{align*}
The latter converges to $\mathrm{BBP}_{-\mu+x}$. Thus 
\begin{align*}
    \limsup_{t\to \infty} \P\left(2^{1/3}t^{-1/3}\big(\log \mathcal{Z}_{\alpha-xt^{-1/3}}^{\mathrm{full},B}(0,t)+t/24\big) \le y\right) \le \P\left(\mathrm{BBP}_{-\mu+x} \le y\right).
\end{align*}
Since this holds for every $\mu>0$, taking $\mu\to \infty$ we get that
\begin{align}\label{scnv1}
    \limsup_{t\to \infty} \P\left(2^{1/3}t^{-1/3}\big(\log \mathcal{Z}_{\alpha-xt^{-1/3}}^{\mathrm{full},B}(0,t)+t/24\big) \le y\right) \le \P\left(\mathrm{TW}_{\mathrm{GUE}} \le y\right).
\end{align}
Let us now show that the opposite inequality is also true. Fix any $\beta>0$. Observe that
\begin{align*}
    \mathcal{Z}_{\beta}^{\mathrm{full},B}(0,t) & =\int_0^\infty e^{B(y)-\beta y}\mathcal{Z}^{\mathrm{full}}(y,0;0,t)dy \\ & \ge \int_0^{t^{-1}} e^{B(y)-\beta y}\mathcal{Z}^{\mathrm{full}}(y,0;0,t)dy \\ & \ge t^{-1} \inf_{y\in [0,t^{-1}]} e^{B(y)-\beta t^{-1}} \inf_{y\in [0,t^{-1}]}\mathcal{Z}^{\mathrm{full}}(y,0;0,t).
\end{align*}
Taking logarithms we thus get that
\begin{align}\label{lwf}
   \log \mathcal{Z}_{\beta}^{\mathrm{full},B}(0,t)  \ge -\log t +\inf_{y\in [0,t^{-1}]} B(y)-\beta t^{-1} +\inf_{y\in [0,t^{-1}]}\log \mathcal{Z}^{\mathrm{full}}(y,0;0,t).
\end{align}
Let us take the KPZ scaling limit of the right hand side of the above equation. Firstly, note that 
\begin{align}\label{lwf1}
   2^{1/3}t^{-1/3}\left(-\log t +\inf_{y\in [0,t^{-1}]} B(y)-\beta t^{-1}\right) \stackrel{p}{\longrightarrow} 0.
\end{align}
Next, note that $\log \mathcal{Z}^{\mathrm{full}}(\cdot,0;0,t)$ is equal in distribution to the Cole--Hopf solution of the full-space KPZ equation started from narrow wedge initial data, which has been extensively studied in the literature. The spatial increments of $\log \mathcal{Z}^{\mathrm{full}}(\cdot,0;0,t)$ can be controlled using available tail estimates and the Brownian Gibbs property of the associated line ensemble. In particular, invoking Proposition 4.1 from \cite{dg21} (with $\alpha=2,\kappa=1,\beta=t^{-5/6}, s=t^{1/6}$ therein) yields
\begin{align}\label{lwf2}
    \inf_{y\in [0,t^{-1}]} 2^{1/3}t^{-1/3}\left(\log\mathcal{Z}^{\mathrm{full}}(y,0;0,t) - \log\mathcal{Z}^{\mathrm{full}}(0,0;0,t) \right) \stackrel{d}{\longrightarrow} 0.
\end{align}
Finally, from the one-point distributional convergence for the full-space KPZ equation established in  \cite{amir2011probability}, we have
\begin{align}\label{lwf3}
  2^{1/3}t^{-1/3}\left(\log\mathcal{Z}^{\mathrm{full}}(0,0;0,t) + t/24\right) \stackrel{d}{\longrightarrow} \mathrm{TW}_{\mathrm{GUE}}.
\end{align}
Adding \eqref{lwf1}, \eqref{lwf2}, and  \eqref{lwf3}, and taking $\beta=\alpha-x t^{-1/3}$ in \eqref{lwf}, we obtain
\begin{align*}
    \liminf_{t\to \infty} \P\left(2^{1/3}t^{-1/3}\big(\log \mathcal{Z}_{\alpha-x t^{-1/3}}^{\mathrm{full},B}(0,t)+t/24\big) \le y\right) \ge \P\left(\mathrm{TW}_{\mathrm{GUE}} \le y\right).
\end{align*}
Combining the above inequality with \eqref{scnv1} leads to \eqref{scnv}.
\end{itemize}

\medskip

In both regimes, we see that the limiting distribution of the right hand side of \eqref{indeg} is a tight sequence as $x$ varies over $(-\infty,0]$. Thus the $K$ in \eqref{eq:hbpara} can be chosen free of $x$.
\end{proof}

\subsection{Properties of the top curve} \label{sec8.2}
In this section, we first deduce a weak parabolic trajectory and upper tightness of the top curve $\mathfrak{H}_1^{t,\alpha}$ (recall the definition in \eqref{def:Hrescaled}) utilizing the parabolic trajectory of $\log \mathcal{Z}_\alpha^{\mathrm{full},B}$ and the identity in Proposition \ref{inhoml}. As above we work in both the supercritical ($\alpha>0$ fixed) and critical ($\alpha=\mu t^{-1/3}$ with $\mu\in\R$ fixed) regimes. For convenience, we drop $\alpha$ from the notation in this subsection and the next and simply write
\begin{align*}
    \calZ=\calZ_\alpha, \quad \calZ^{\mathrm{full},B} = \calZ_\alpha^{\mathrm{full},B}, \quad \H_i=\H_i^\alpha, \quad \mathfrak{H}_i^{t}=\mathfrak{H}_i^{t,\alpha}.
\end{align*}
\begin{lemma}\label{p1} Fix any $\e\in (0,1)$. There exists $M_0(\e)>0$ such that for all $r,M_1,M_2\ge M_0$, $x>0$, and large $t$ we have
    \begin{align} \label{p1e}
        \P\bigg(\sup_{y\in [x-r,x]} \mathfrak{H}_1^t(y)+\frac{x^2}{2} \ge -M_1\bigg) \ge 1-\e. 
    \end{align}
    \begin{align} \label{p2e}
        \P\bigg(\inf_{y\in [x-2t^{-2/3},x]} \mathfrak{H}_1^t(y)+\frac{x^2}2 \le M_2\bigg) \ge 1-\e. 
    \end{align}
\end{lemma}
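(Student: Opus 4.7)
My plan is to prove Lemma~\ref{p1} as a direct consequence of the distributional identity in Proposition~\ref{p:iden},
\[
2\mathcal{Z}_\alpha^{\mathrm{full},B}(s,t) \stackrel{d}{=} \int_{-\infty}^{s} \mathcal{Z}_\alpha(z,t)\,dz,
\]
combined with the KPZ-scale parabolic control on $\log\mathcal{Z}_\alpha^{\mathrm{full},B}$ supplied by Lemma~\ref{hbpara}. Since $\mathfrak{H}_1^t(y)=(\log\mathcal{Z}_\alpha(yt^{2/3},t)+t/24)/t^{1/3}$ is the log-scaled SHE, I will convert sup/inf control of $\mathfrak{H}_1^t+x^2/2$ over the window $[x-r,x]$ into integral control on $\mathcal{Z}_\alpha$ and then match it against the identity at the relevant endpoints (after rescaling so that the window sits in $\R_{\le 0}$, the natural domain on which the identity is phrased).

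The infimum bound~\eqref{p2e} requires only a single application of the identity. If $\inf_{y\in[x-2t^{-2/3},x]}\mathfrak{H}_1^t(y)+x^2/2>M_2$, then $\mathcal{Z}_\alpha(z,t)\ge e^{-t/24+t^{1/3}(M_2-x^2/2)}$ pointwise on the scaled length-$2$ interval $[(x-2t^{-2/3})t^{2/3},xt^{2/3}]$. Integrating yields $\int_{-\infty}^{xt^{2/3}}\mathcal{Z}_\alpha\,dz \ge 2e^{-t/24+t^{1/3}(M_2-x^2/2)}$, so via the identity and Lemma~\ref{hbpara} we obtain
\[
\frac{\log\mathcal{Z}_\alpha^{\mathrm{full},B}(xt^{2/3},t)+t/24}{t^{1/3}}+\tfrac{x^2}{2}\ge M_2,
\]
which Lemma~\ref{hbpara} rules out with probability $1-\e$ once $M_2\ge K(\e)$. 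Only pointwise distributional information about $\mathcal{Z}_\alpha^{\mathrm{full},B}$ is needed here.

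The supremum bound~\eqref{p1e} is more delicate, because each pointwise use of the identity only controls integrals over half-lines $(-\infty,st^{2/3}]$. I plan to handle it by writing the window integral as a difference,
\[
\int_{(x-r)t^{2/3}}^{xt^{2/3}}\mathcal{Z}_\alpha(z,t)\,dz = \int_{-\infty}^{xt^{2/3}}\mathcal{Z}_\alpha(z,t)\,dz - \int_{-\infty}^{(x-r)t^{2/3}}\mathcal{Z}_\alpha(z,t)\,dz,
\]
and applying Proposition~\ref{p:iden} together with Lemma~\ref{hbpara} at each endpoint via a union bound. This gives, with probability $\ge 1-2\e$,
\[
\int_{(x-r)t^{2/3}}^{xt^{2/3}}\mathcal{Z}_\alpha(z,t)\,dz \ge e^{-t/24-t^{1/3}(x^2/2+K)}\Big(1-e^{-t^{1/3}(r^2/2-rx-2K)}\Big).
\]
The bracket is $\ge 1/2$ once $r^2-2rx>4K+1$. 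On the complementary event $\{\sup_{y\in[x-r,x]}\mathfrak{H}_1^t(y)+x^2/2<-M_1\}$, the integrand is pointwise bounded by $e^{-t/24-t^{1/3}(x^2/2+M_1)}$, so the window integral is at most $rt^{2/3}$ times this. Comparing the two estimates forces $\tfrac12 e^{-Kt^{1/3}}\le rt^{2/3}e^{-M_1t^{1/3}}$, which is impossible for large $t$ once $M_1>K(\e)$, giving \eqref{p1e} with probability at least $1-2\e$.

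The hard part, I expect, is arranging that $r^2-2rx$ stays bounded away from zero uniformly so the bracket in the difference estimate does not collapse; this is exactly what forces the uniform threshold $M_0(\e)$ in terms of the tail constant $K(\e)$ from Lemma~\ref{hbpara}. Since the interval $[x-r,x]$ is meant to lie inside $\R_{\le 0}$ (the natural domain of both $\mathfrak{H}_1^t$ and the identity), the term $-2rx$ is automatically nonnegative and the constraint reduces to $r^2>4K+1$; the choice $M_0(\e):=2\sqrt{K(\e)+1}\vee K(\e)$ then suffices for both \eqref{p1e} and \eqref{p2e}.
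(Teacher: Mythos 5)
Your proposal follows the paper's strategy essentially verbatim: for \eqref{p2e}, use the pointwise stochastic domination $\int_{-\infty}^{xt^{2/3}}\mathcal{Z}\,dz \stackrel{d}{=} 2\mathcal{Z}^{\mathrm{full},B}(xt^{2/3},t)$ together with Lemma~\ref{hbpara}; for \eqref{p1e}, apply the identity plus Lemma~\ref{hbpara} at the two endpoints $xt^{2/3}$ and $(x-r)t^{2/3}$, take a union bound, and compare the resulting lower bound on the window integral to the pointwise upper bound forced by the complement event. The only cosmetic difference is that the paper phrases the window-integral lower bound as $\int_{-\infty}^{xt^{2/3}}\geq 2\int_{-\infty}^{(x-r)t^{2/3}}$ (hence $\int_{(x-r)t^{2/3}}^{xt^{2/3}}\geq\frac12\int_{-\infty}^{xt^{2/3}}$), while you present the equivalent inequality via the explicit difference $c_1-c_2\geq c_1/2$; both reduce to the same constraint $r^2-2rx\geq 4K+2\log 2$, and you correctly read the $x>0$ in the displayed statement as a sign slip for $x\le 0$ so that $-2rx\geq 0$ and the threshold on $r$ is uniform in $x$.
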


\begin{proof} We write that an event $\m{A}$ happens $\wp(\e)$  if $\P(\m{A})\ge 1-\e$ for all large enough $t$. By Proposition \ref{p:iden} and Lemma \ref{hbpara}, $\wp(\e)$ we have 
\begin{align}\label{wpe}
t^{-1/3}\left(\log \int_{-\infty}^{xt^{2/3}} \mathcal{Z}(y,t)\,dy+ t/24\right)\stackrel{d}{=}   t^{-1/3}(\log \mathcal{Z}^{\mathrm{full},B}(xt^{2/3},t)+t/24) \ge -K-\frac{x^2}{2}
\end{align}
 and $$t^{-1/3}\left(\log \int_{-\infty}^{(x-r)t^{2/3}} \mathcal{Z}(y,t)\,dy+ t/24\right)\stackrel{d}{=} t^{-1/3}(\log \mathcal{Z}^{\mathrm{full},B}((x-r)t^{2/3},t)+t/24) \le K-\frac{(x-r)^2}{2},$$
 for some $K=K(\mu,\e)$.
Let us take $r \ge \sqrt{4K+2\log 2}$, so that $K-\frac{(x-r)^2}{2}+\log2 \le -K-\frac{x^2}{2}$. For this choice of $r$, a union bound (and assuming $t\ge 1$) gives us that $\wp(2\e)$
\begin{align*}
    t^{-1/3}\left(\log \int_{-\infty}^{xt^{2/3}} \mathcal{Z}(y,t)\,dy+t/24\right) \ge t^{-1/3}\left(\log \int_{-\infty}^{(x-r)t^{2/3}} \mathcal{Z}(y,t)\,dy+t/24\right)+t^{-1/3}\log 2,
\end{align*}
which simplifies to $$\int_{-\infty}^{xt^{2/3}} \mathcal{Z}(y,t)\,dy \ge 2\int_{-\infty}^{(x-r)t^{2/3}} \mathcal{Z}(y,t)\,dy.$$ Rearranging we thus get that $\wp(2\e)$,
\begin{align}\label{wpe2}
    \int_{(x-r)t^{2/3}}^{xt^{2/3}} \mathcal{Z}(y,t)\,dy \ge \frac12\int_{-\infty}^{xt^{2/3}} \mathcal{Z}(y,t)\,dy.
\end{align}
On the other hand,
$$rt^{2/3} \sup_{y\in [x-r,x]} \mathcal{Z}(yt^{2/3},t) \ge \int_{(x-r)t^{2/3}}^{xt^{2/3}} \mathcal{Z}(y,t)\,dy.$$
Thus, taking \eqref{wpe} and \eqref{wpe2} into account we get that $\wp(3\e)$, (recalling $\mathcal{H}_1 \overset{d}= \log\mathcal{Z}$)
\begin{align*}
    \sup_{y\in [x-r,x]} \mathcal{H}_1(yt^{2/3},t)+t/24 &  \ge -t^{1/3}(K+x^2/2)-\log (2rt^{2/3}).
\end{align*}
As for large enough $t$, we have $t^{-1/3}\log (2rt^{2/3}) \le 1$, rearranging terms we find that $\wp(3\e)$,
\begin{align*}
    \sup_{y\in [x-r,x]} \mathfrak{H}_1^t(y)+\frac{x^2}{2} \ge -K-1.
\end{align*}
Adjusting $\e\mapsto \e/3$ and setting $M=K(\mu,\e/3)+1$, we conclude \eqref{p1e}. For the second bound, \eqref{p2e}, note that for any $r>0$ we have
\begin{align*}
   \log [rt^{2/3}] + \inf_{y\in [x-r,x]}  \mathcal H_1(yt^{2/3},t)   & \le \log \int_{(x-r)t^{2/3}}^{xt^{2/3}} \calZ(y,t)\,dy \\ & \le \log \int_{-\infty}^{xt^{2/3}} \calZ(y,t)\,dy \stackrel{d}{=} \log\mathcal Z^{\mathrm{full},B}(xt^{2/3},t) +\log \2.
\end{align*}
where the last distributional equality is due to Proposition \ref{p:iden}.
Take $r=2t^{-2/3}$ in the above to get that $\inf_{y\in [x-2t^{-2/3},x]}  \mathfrak H_1^t(y)$  is stochastically smaller than
\begin{align*}
       t^{-1/3}(\log \mathcal Z^{\mathrm{full},B}(xt^{2/3},t) +t/24)
\end{align*}
which in turn is less than $K-x^2/2$ $\wp(\e)$ by Lemma \ref{hbpara}. Taking $M_2=K(\mu,\e)$ completes the proof. 
\end{proof}

Given Lemma \ref{p1} and tightness of $t^{-1/3}(\log\mathcal{Z}^{\mathrm{full},B}(0,t)+t/24)$ from Lemma \ref{hbpara}, we can deduce upper tightness of the supremum of the top curve on any interval.
\begin{proposition} \label{1ult} Fix any $\e\in (0,1)$ and $L>0$. There exists $R(L,\e)>0$ such that
\begin{align*}
        \P\bigg(\sup_{y\in [-L,0]} \mathfrak{H}_1^t(y) \le R\bigg) \ge 1-\e. 
    \end{align*}
\end{proposition}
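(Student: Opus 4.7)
The plan is to combine the distributional identity of Proposition~\ref{p:iden} with a Brownian Gibbs argument that converts an upper bound on $\int_{-L}^0 e^{t^{1/3}\mathfrak{H}_1^t(y)}\,dy$ into an upper bound on $\sup_{y\in[-L,0]}\mathfrak{H}_1^t(y)$.

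First, applying Proposition~\ref{p:iden} at $x=0$ and invoking Lemma~\ref{hbpara}, there exists $K=K(\mu,\e)>0$ such that with probability at least $1-\e/2$,
\[
\log \int_{-\infty}^0 \mathcal{Z}_\alpha(y,t)\,dy + \frac{t}{24} \le Kt^{1/3}.
\]
Restricting the range of integration to $y\in[-Lt^{2/3},0]$ (which only decreases the integral) and changing variables via $\mathcal{Z}_\alpha(yt^{2/3},t)=\exp(t^{1/3}\mathfrak{H}_1^t(y) - t/24)$, we deduce
\[
\log \int_{-L}^0 e^{t^{1/3}\mathfrak{H}_1^t(y)}\,dy \le Kt^{1/3} + O_L(\log t)
\]
with the same probability.

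Next I would argue by contradiction: suppose $\P(\sup_{y\in[-L,0]}\mathfrak{H}_1^t(y) > R)\ge \e$ for some large $R$ to be chosen. Let $\sigma:=\inf\{y\in[-L,0]:\mathfrak{H}_1^t(y)\ge R\}$; by continuity of $\mathfrak{H}_1^t$ and Definition~\ref{def:sd}, $[\sigma,0]$ is a stopping domain for the top curve, and $\mathfrak{H}_1^t(\sigma)=R$. Applying the strong Gibbs property (Lemma~\ref{lem:sg2}), the conditional law of $\mathfrak{H}_1^t$ on the sub-interval $[\sigma,\sigma+\delta]\subset[\sigma,0]$ (for a small fixed $\delta>0$; the edge case $\sigma$ very close to $0$ is handled by a one-sided argument at the right boundary) is that of a Brownian motion started at $R$ with drift $-\alpha t^{1/3}$, reweighted by an RN derivative $\mathcal{W}\le 1$ involving $\mathfrak{H}_2^t$ as a soft floor. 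Combining standard Brownian motion estimates with a lower bound on the Gibbs normalization analogous to that of Proposition~\ref{zlbd}, one produces $\phi=\phi(L,\delta)>0$ such that with conditional probability at least $\phi$, $\mathfrak{H}_1^t(y)\ge R-1$ throughout $[\sigma,\sigma+\delta]$. On this event
\[
\int_{-L}^0 e^{t^{1/3}\mathfrak{H}_1^t(y)}\,dy \ge \delta\,e^{t^{1/3}(R-1)},
\]
which contradicts the first paragraph's bound once $R>K+1$ and $t$ is large, yielding $\P(\sup_{[-L,0]}\mathfrak{H}_1^t>R)< \e$ as desired.

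The main technical obstacle is lower-bounding the Gibbs normalization in the second step without a priori control on the lower curve $\mathfrak{H}_2^t$. I would address this either by working on a positive-probability event where $\mathfrak{H}_2^t$ is not too high near $\sigma$ (extracted from the soft ordering property of the Gibbs measure, whose RN derivative strongly penalizes crossings between $\mathfrak{H}_1^t$ and $\mathfrak{H}_2^t$ at scale $t^{2/3}$), or by bootstrapping via the two-path one-sided Gibbs property together with the diffusive-scaling estimates of Section~\ref{sec7}.
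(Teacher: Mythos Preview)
Your first step is correct and matches the paper's approach: Proposition~\ref{p:iden} and Lemma~\ref{hbpara} together yield an upper bound on the integral of $e^{t^{1/3}\mathfrak{H}_1^t}$ over $[-L,0]$ (the paper actually uses the interval $[-2L,-L]$, but the idea is identical).

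The second step has a genuine gap. In the supercritical regime $\alpha>0$ fixed, the one-sided Gibbs law of $\mathfrak{H}_1^t$ on $[\sigma,0]$ is a Brownian motion with drift $-\alpha t^{1/3}\to-\infty$, softly floored by $\mathfrak{H}_2^t$. To obtain a $\phi>0$ \emph{uniform in $t$} for the event $\{\mathfrak{H}_1^t\ge R-1 \text{ on }[\sigma,\sigma+\delta]\}$, you would need the floor to compensate for the diverging negative drift; but at this stage of the paper you have no control whatsoever on $\mathfrak{H}_2^t$ (Propositions~\ref{0ult} and~\ref{0ultp} come later and in fact rely on Proposition~\ref{1ult}). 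Removing the floor by monotonicity goes the wrong way: it only \emph{decreases} the probability of the curve staying high. Your proposed fixes (extracting a good event from soft ordering, or bootstrapping via Section~\ref{sec7}) are too vague to resolve this; the soft-ordering penalty controls $\mathfrak{H}_2^t-\mathfrak{H}_1^t$ from above, not $\mathfrak{H}_2^t$ itself, and the Section~\ref{sec7} estimates require ordered boundary data which you do not have.

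The paper's remedy is to avoid the one-sided drift issue entirely by anchoring on the left. From Lemma~\ref{p1} (the weak parabolic lower bound \eqref{p1e}), with high probability there is a point $\omega\in[-4L-M_0,-4L]$ where $\mathfrak{H}_1^t(\omega)\ge -M_0-(4L)^2/2$. Taking $\tau$ to be the last time in $[-L,0]$ where $\mathfrak{H}_1^t\ge R$, the pair $[\omega,\tau]$ is a \emph{two-sided} stopping domain. Now stochastic monotonicity lets you send the floor to $-\infty$, leaving a Brownian \emph{bridge} from $\omega$ to $\tau$ with both endpoints high. This bridge stays above $R/8$ on the entire middle interval $[-2L,-L]$ with probability at least $1/2$, which then contradicts the integral bound from your first step. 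The two-sided bridge argument sidesteps both the diverging drift and the need for any information about $\mathfrak{H}_2^t$.
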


\begin{proof} Fix $\e>0$. Consider $M_0(\e)$ and $K(\e)$ from Lemma \ref{p1} and Lemma \ref{hbpara} respectively. Set $R:=8[M_0+(4L)^2]+9K$, and define the events
\begin{align*}
    \m{A} & :=\left\{\sup_{y\in [-4L-M_0,-4L]} \mathfrak{H}_1^t(y)+\frac{(4L)^2}{2} \ge -M_0 \right\}, \\ \quad \m{B} & :=\left\{\sup_{y\in [-L,0]} \mathfrak{H}_1^t(y) \ge R\right\}, \quad \m{C}:=\left\{\inf_{y\in [-2L,-L]} \mathfrak{H}_1^t(y) \ge R/8\right\}.
\end{align*}
Note that on the event $\m{C}$, by Proposition \ref{p:iden} we have (stochastically)
\begin{align*}
    \mathcal Z^{\mathrm{full},B}(0,t) \ge \int_{-2Lt^{2/3}}^{-Lt^{2/3}} \mathcal Z(y,t)\,dy \ge Lt^{2/3}\exp(Rt^{1/3}/8-t/24).
\end{align*}
For large $t$ this forces
\begin{align*}
    t^{-1/3}[\log \mathcal Z^{\mathrm{full},B}(0,t) +t/24] \ge R/9 \ge K,
\end{align*}
which by Lemma \ref{hbpara} happens with probability at most $\e$. Thus $\P(\m{C}) \le \e$. We claim that
\begin{align}\label{pcla}
    \P(\m{A}\cap \m{B}) \le 2  \cdot \P(\m{C})
\end{align}
for all large enough $t$. This implies $\P(\m{A}\cap \m{B}) \le 2\e$. By Lemma \ref{p1}, $\P(\neg \m{A}) \le \e$ so $\P(\m{B}) \le 3\e$, and adjusting $\e$ we get the desired result.

We now focus on proving \eqref{pcla}. Let $\omega$ be the first time in $[-4L-M_0,-4L]$ where $\mathfrak{H}_1^t(\omega)$ is larger than $-M_0-(4L)^2/2$. Let $\tau$ be the last time on $[-L,0]$ where $\mathfrak{H}_1^t(y)$ is larger than $R$.  Note that $[\omega,\tau]$ is a stopping domain (see Definition \ref{def:sd}).  On $\m{A}\cap \m{B}$ we have $[(\tau+\omega)/2,\tau] \supset [-2L,-L]$, so
\begin{equation}
\label{tfrd}
    \begin{aligned}
    \P(\m{C}) \ge \P(\m{A} \cap \m{B} \cap \m{C}) &\ge \P\bigg(\m{A}\cap \m{B} \cap \left\{\inf_{y\in [(\tau+\omega)/2,\tau]} \mathfrak{H}_1^t(y) \ge R/8\right\} \bigg) \\ & =\E\left[\ind_{\m{A}\cap \m{B}} \cdot \E\left[\ind_{\inf_{y\in [(\tau+\omega)/2,\tau]} \mathfrak{H}_1^t(y) \ge R/8} \mid \mathcal{F}_{\m{ext}}(\{1\}\times (\omega,\tau)) \right]\right].
\end{aligned}
\end{equation}
Applying the strong Gibbs property Lemma \ref{lem:sg2} and stochastic monotonicity, we see that
\begin{align}
    \label{trf}
    \ind_{\m{A}\cap\m{B}}\E\left[\ind_{\inf_{y\in [(\tau+\omega)/2,\tau]} \mathfrak{H}_1^t(y) \ge R/8} \mid \mathcal{F}_{\m{ext}}(\{1\}\times (\omega,\tau)) \right] \ge \ind_{\m{A}\cap\m{B}} \P\bigg(\inf_{y\in [(\tau+\omega)/2,\tau]} B(y) \ge R/8\bigg),
\end{align}
where $B$ is a Brownian bridge on $[\omega,\tau]$ from $-M_0-(4L)^2/2$ to $R$. As $R \ge 2[M_0+(4L)^2/2]$, a straightforward computation shows that $\inf_{y\in [(\tau+\omega)/2,\tau]} \E[B(y)] \ge R/4$, and by Lemma 2.11 in \cite{kpzle} (using $\tau-\omega \le M_0+4L$) we have 
\begin{align*}
    \P\bigg(\inf_{y\in [(\tau+\omega)/2,\tau]} \big(B(y)-\E[B(y)]\big) \ge -R/8\bigg) \ge 1-\exp\big(-2(R/8)^2/(M_0+4L)\big) \ge \frac12. 
\end{align*}
Thus, the probability on the right hand side of \eqref{trf} is bounded below by $\frac12$. Plugging this bound into \eqref{tfrd} implies $\P(\m{C}) \ge \frac12\P(\m{A}\cap \m{B})$, verifying \eqref{pcla}.
\end{proof}

\subsection{Properties of lower curves} \label{sec8.3}
In this section, we investigate the tightness for lower curves. We have the following three results:
\begin{proposition}\label{p3} Fix any $\e\in (0,1)$, $k\in\mathbb{N}$, and $x_0>0$. There exists $R_0(k,x_0,\e)>x_0$ such that for all $R\ge R_0$ and large $t$ we have
    \begin{align*}
        \P\bigg(\sup_{y\in [-R,-x_0]} \mathfrak{H}_k^t(y) \ge -R^2\bigg) \ge 1-\e. 
    \end{align*}
\end{proposition}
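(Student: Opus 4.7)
The plan is to induct on $k$. The base case $k=1$ follows directly from Lemma~\ref{p1}(p1e): applying it with $x=-x_0$ and $r = R - x_0 \ge M_0(\e)$ yields $\sup_{y\in[-R,-x_0]} \mathfrak{H}_1^t(y) \ge -M_1 - x_0^2/2$ with probability at least $1-\e$, so one takes $R_0^2 \ge M_1 + x_0^2/2$.

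For the inductive step I would assume the conclusion for indices $1,\ldots,k-1$, set $\mathsf{A} := \{\sup_{y\in[-R,-x_0]}\mathfrak{H}_k^t(y) < -R^2\}$, and argue by contradiction, supposing $\P(\mathsf{A}) \ge \e$ for $R$ arbitrarily large. The underlying heuristic, described in Section~\ref{sec8.3}, is that on $\mathsf{A}$ the depressed $k$-th curve effectively removes the upward push on $\mathfrak{H}_1^t$ through the soft non-intersection interaction, forcing the top curve to follow an approximately straight trajectory between its boundary values and contradicting its parabolic shape from Lemma~\ref{p1}. First I would use Lemma~\ref{p1}(p2e) to locate (with high probability) random points $\sigma_\ell \in [-R - 2t^{-2/3}, -R]$ and $\sigma_r \in [-x_0 - 2t^{-2/3}, -x_0]$ at which $\mathfrak{H}_1^t$ is close to the parabolic upper bound. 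By the ordering $\mathfrak{H}_1^t \ge \cdots \ge \mathfrak{H}_k^t$, these give $\mathfrak{H}_j^t(\sigma_\ell) \le -R^2/2 + M_2 + 1$ and $\mathfrak{H}_j^t(\sigma_r) \le M_2 + 1$ for every $j \le k$, and $[\sigma_\ell,\sigma_r]$ is a stopping domain for $(\mathfrak{H}_i^t)_{i=1}^k$.

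Next I would apply the strong Gibbs property (Lemma~\ref{lem:sg2}) to $(\mathfrak{H}_i^t)_{i=1}^k$ on $[\sigma_\ell,\sigma_r]$. On $\mathsf{A}$, the interior constraint $\mathfrak{H}_k^t < -R^2$ renders negligible the soft non-intersection penalty between $\mathfrak{H}_{k-1}^t$ and $\mathfrak{H}_k^t$ as well as the floor interaction with $\mathfrak{H}_{k+1}^t$. Through a chain of stochastic monotonicity arguments (Lemma~\ref{lem:sm})\textemdash iteratively peeling off one curve at a time from the bottom, using the inductive hypothesis together with a companion upper-tight statement to control the intermediate curves $\mathfrak{H}_2^t,\ldots,\mathfrak{H}_{k-1}^t$\textemdash the top curve's marginal on $[\sigma_\ell,\sigma_r]$ can be stochastically bounded above by a free Brownian bridge $\til B$ between its boundary values. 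For such a bridge of length $\sim R$ with boundary values at most $-R^2/2 + O(1)$ and $O(1)$, the value at any interior point $y\in[-R/2-r, -R/2]$ is bounded (by Gaussian concentration) by the linear interpolation plus $O(\sqrt{R})$, i.e.\ by $-R^2/4 + C\sqrt{R}$. This contradicts the parabolic lower bound $\sup_{y\in[-R/2-r,-R/2]}\mathfrak{H}_1^t(y) \ge -R^2/8 - M_1$ from Lemma~\ref{p1}(p1e), since $R^2/8 > C\sqrt{R} + M_1$ for $R$ large.

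The hard part will be rigorously implementing this decoupling\textemdash turning the intuition that a very low $k$-th curve removes the upward push on $\mathfrak{H}_1^t$ into a clean stochastic domination of the top curve by a free Brownian bridge. Stochastic monotonicity in the floor goes the opposite way from what is needed (lowering the floor lowers the curves, producing stochastic lower bounds whereas here one wants upper bounds), so the implementation must combine the inductive lower bound with an upper-tight control on the intermediate curves (for instance from a companion statement proved in tandem, analogous to Proposition~\ref{1ult}), and peel the levels off one at a time, as in \cite[Sec.~6]{kpzle} and \cite[Sec.~5]{half1}. The same inductive scheme should handle both the critical and supercritical regimes, since Lemma~\ref{p1} and the Gibbs property apply uniformly in both.
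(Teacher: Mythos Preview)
Your heuristic\textemdash that a depressed $k$-th curve forces the top curve to follow a nearly straight trajectory, contradicting the parabolic shape from Lemma~\ref{p1}\textemdash matches the paper's. But two steps in the implementation do not go through as written.

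First, you invoke ``the ordering $\mathfrak{H}_1^t \ge \cdots \ge \mathfrak{H}_k^t$'' to transfer the upper bound on $\mathfrak{H}_1^t$ at the stopping points to all lower curves. This ordering does not hold almost surely for the HSKPZ line ensemble: the curves are only \emph{softly} non-intersecting. The paper handles this by working on an interior interval $[A,B]\subset[-R,-Q_1]$ and invoking the separation estimate Lemma~\ref{ord} for the two-sided Gibbs measure on $[-R,-Q_1]$: once the boundary data at $-R$ and $-Q_1$ are bounded (event $\m{E}$, controlled via the companion Propositions~\ref{0ult} and~\ref{0ultp} for indices $1,\dots,k$, proved in an interlocked induction), the curves are ordered at the stopping times $\sigma,\tau$ with high probability (event $\m{D}$).

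Second, and more seriously, you correctly flag that ``stochastic monotonicity in the floor goes the opposite way,'' and your proposed remedy\textemdash peeling off curves to dominate $\mathfrak{H}_1^t$ from above by a free Brownian bridge\textemdash does not work: lowering the floor always lowers the curves, and no amount of control on the intermediate curves reverses this inequality. The paper sidesteps the issue entirely. Rather than upper-bounding $\mathfrak{H}_1^t$, it observes that the parabolic event $\m{C}=\{\sup_y \mathfrak{H}_1^t(y)+C^2/2\ge -M_0\}$ is \emph{increasing}. Hence on $\m{A}\cap\m{B}\cap\m{D}$ one may \emph{raise} the boundary data to separated deterministic values $\vec{x},\vec{y}$ and \emph{raise} the floor from $\mathfrak{H}_{k+1}^t$ to the constant $M_0-A^2/2$ (both moves are permitted since on $\m{A}$ the true floor is below $-R^2<M_0-A^2/2$). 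Monotonicity now goes in the right direction, and the probability of $\m{C}$ under the resulting $k$-path non-intersecting bridge measure with a high flat floor is small, because the bridges are forced to be near-linear. You never need a free bridge at all.
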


\begin{proposition} \label{0ult} Fix any $\e\in (0,1)$ and $k\in \mathbb{N}$. There exists $Q_0(k,\e)>0$ such that the following holds. For all $Q_1\ge Q_0$ and $Q_2>0$, there exists $M_k(Q_1,Q_2,\e)>0$ such that for all large enough $t$ we have
\begin{align} \P\bigg(\inf_{y\in [-Q_1-Q_2,-Q_1]} \mathfrak{H}_k^t(y) \ge -M_k\bigg) \ge 1-\e.\label{inf1}
    \end{align}
\end{proposition}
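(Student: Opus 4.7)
The plan is to prove Proposition~\ref{0ult} by induction on $k$, in direct analogy with Lemma~\ref{lowertight1} from Section~\ref{sec6}, but working under KPZ scaling with Lemma~\ref{p1}\eqref{p1e} serving as the weak parabolic input in place of the sharper parabolic curvature used in the intermediate-disorder regime. Upper tightness at each index will be supplied by Proposition~\ref{p3} (or Proposition~\ref{1ult} for $k=1$).

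\textbf{Base case $k=1$.} Fix $\e\in(0,1)$, let $M_0=M_0(\e/3)$ be the constant from Lemma~\ref{p1}, and take $r = M_1 = M_0$. Choose $Q_0 \ge r$ so that $-Q_1+r\le 0$ whenever $Q_1\ge Q_0$. Applying Lemma~\ref{p1}\eqref{p1e} at $x=-Q_1-Q_2$ and at $x=-Q_1+r$, one obtains, with probability at least $1-2\e/3$ for large $t$, the existence of points $\sigma\in[-Q_1-Q_2-r,-Q_1-Q_2]$ and $\tau\in[-Q_1,-Q_1+r]$ with $\mathfrak{H}_1^t(\sigma)\ge -\sigma^2/2-M_1$ and $\mathfrak{H}_1^t(\tau)\ge -\tau^2/2-M_1$. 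Defining $\sigma$ and $\tau$ as the leftmost such points in their respective intervals makes $[\sigma,\tau]$ a stopping domain (Definition~\ref{def:sd}) that contains $[-Q_1-Q_2,-Q_1]$. The strong Gibbs property (Lemma~\ref{lem:sg2}) together with stochastic monotonicity (Lemma~\ref{lem:sm}) allows us to remove the floor $\mathfrak{H}_2^t$ and replace the endpoint values by the lower bounds above, so that $\mathfrak{H}_1^t|_{[\sigma,\tau]}$ is stochastically bounded below by a Brownian bridge on an interval of length at most $Q_2+2r$ with endpoints $\ge -(Q_1+Q_2+r)^2/2-M_1$. Standard bridge tail bounds (e.g.~\cite[Proposition~12.3.3]{dudley}) then yield a constant $M_1'=M_1'(Q_1,Q_2,\e)$ such that $\inf_{y\in[-Q_1-Q_2,-Q_1]}\mathfrak{H}_1^t(y)\ge -M_1'$ outside an event of probability at most $\e/3$.

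\textbf{Inductive step $k\ge 2$.} Assume Proposition~\ref{0ult} holds for indices $1,\ldots,k-1$, and use Proposition~\ref{p3} for upper tightness of $\mathfrak{H}_{k-1}^t$. Paralleling the decomposition of Lemma~\ref{lowertight1} into Lemmas~\ref{lowtightE} and~\ref{lowtightEinf}, the proof proceeds in two steps. Step 1 is the KPZ-scale analog of Lemma~\ref{lowtightE}: on an auxiliary window $W$ of fixed length slightly to the left of $[-Q_1-Q_2,-Q_1]$, show that $\sup_{y\in W}(\mathfrak{H}_k^t(y)+y^2/2)\ge -M$ holds with high probability, for some $M$ depending on $Q_1,Q_2,\e$. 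Condition on the good event that $\mathfrak{H}_{k-1}^t$ is upper-tight at the endpoints of $W$ (via Proposition~\ref{p3}) and lower-tight at its midpoint (via the inductive hypothesis applied on a short sub-interval around the midpoint). If the sup of $\mathfrak{H}_k^t+y^2/2$ on $W$ were anomalously low, then the Gibbs resampling of $\mathfrak{H}_{k-1}^t$ on $W$ (given $\mathfrak{H}_{k-2}^t$ and $\mathfrak{H}_k^t$) would behave like an essentially free Brownian bridge, whose midpoint value would dip below the parabolic chord joining its endpoints and thereby violate the inductive midpoint lower-tightness bound, producing the desired contradiction. Step 2 parallels Lemma~\ref{lowtightEinf}: introduce a stopping time $\sigma\in W$ at which the favorable bound on $\mathfrak{H}_k^t(\sigma)+\sigma^2/2$ holds, apply the strong Gibbs property to $\mathfrak{H}_k^t$ on the stopping domain $[\sigma,0]$ using the ceiling $\mathfrak{H}_{k-1}^t$ (upper-tight by Proposition~\ref{p3}) and removing the floor $\mathfrak{H}_{k+1}^t$ by monotonicity, and conclude via Brownian tail estimates that $\inf_{y\in[-Q_1-Q_2,-Q_1]}\mathfrak{H}_k^t(y)\ge -M_k(Q_1,Q_2,\e)$ with probability at least $1-\e$.

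\textbf{Main obstacle.} The delicate step will be Step 1 of the inductive argument, where one must quantitatively exclude an anomalously low supremum of $\mathfrak{H}_k^t+y^2/2$ on $W$. The KPZ-scale Gibbs weight from \eqref{twosideg} involves the integral $\int_W e^{\mathfrak{H}_k^t-\mathfrak{H}_{k-1}^t}$, and the comparison with a free Brownian bridge reduces to a Gaussian-type estimate of the Gibbs normalization after an affine change of coordinates absorbing the parabolic correction $y^2/2$. This is the direct KPZ-scale analog of the computation carried out in the proof of Lemma~\ref{lowtightE}, and the principal bookkeeping burden is tracking how $M_k$ depends on $Q_1,Q_2,\e$ and how the threshold $Q_0$ depends on $k,\e$. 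No genuinely new ideas beyond those of Section~\ref{sec6} appear to be required.
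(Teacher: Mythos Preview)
Your overall induction-on-$k$ strategy is in the right spirit, and your base case $k=1$ is essentially what the paper does. However, the inductive step has a genuine gap and a misidentification of inputs.

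First, you repeatedly cite Proposition~\ref{p3} for ``upper tightness of $\mathfrak{H}_{k-1}^t$''. Proposition~\ref{p3} asserts $\sup_{y\in[-R,-x_0]}\mathfrak{H}_k^t(y)\ge -R^2$ with high probability, i.e., it produces a \emph{high point}; it is not an upper-tightness statement. Your Step~1 (the analogue of Lemma~\ref{lowtightE}) requires upper bounds on $\mathfrak{H}_{k-1}^t$ at the endpoints of $W$, which would come from Proposition~\ref{0ultp}$(k-1)$, a result you never invoke. So the induction as you describe it is incomplete: you are tacitly using the companion upper-tightness proposition without naming it, and that proposition is itself proved via a separate inductive step.

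More seriously, your Step~2 fails in the supercritical regime. The rescaled ensemble $\mathfrak{H}^t$ carries the one-sided Gibbs property at scale $L=t^{2/3}$ (see the remark after Lemma~\ref{lem:sg2}), so the one-sided measure on $[\sigma,0]$ involves Brownian motion with drift $(-1)^k\alpha\, t^{1/3}$. Over a domain of length $O(Q_1+Q_2)$ this drift contributes a displacement of order $t^{1/3}$, which destroys any comparison with a free Brownian motion as $t\to\infty$. This is precisely why the paper never uses a one-sided domain in Section~\ref{sec8.3}: instead it applies Proposition~\ref{p3}$(k)$ \emph{twice} to produce high points $\omega\in[-R,-Q_1-Q_2]$ and $\tau\in[-Q_1,-\hat Q]$ on \emph{both} sides of the target interval, then runs a two-sided Gibbs argument on $[\omega,\tau]$, for which the conditional law is a Brownian \emph{bridge} with no drift. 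Lower-tightness of the ceiling $\mathfrak{H}_{k-1}^t$ on $[\omega,\tau]$ (from the inductive hypothesis \ref{0ult}$(k-1)$, applied on an interval bounded away from $0$) then controls the normalizing constant.

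The paper packages all of this as a joint induction on three statements (Propositions~\ref{p3}, \ref{0ult}, \ref{0ultp}), with the step \ref{p3}$(k)$ $+$ \ref{0ult}$(k-1)$ $\Rightarrow$ \ref{0ult}$(k)$ being a clean two-sided bridge argument. Your attempt to bypass Proposition~\ref{p3} by re-deriving a high-point lemma inside Step~1 can be made to work, but not without bringing in \ref{0ultp}; and Step~2 must be reformulated with a two-sided stopping domain to avoid the unbounded drift.
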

\begin{proposition} \label{0ultp} Fix any $\e\in (0,1)$ and $k\in \mathbb{N}$. There exists $\hat Q_0(k,\e)>0$ such that the following holds. For all $Q_1\ge Q_0$ and $Q_2>0$, there exists $\hat M_k(Q_1,Q_2,\e)>0$ such that for all large enough $t$ we have
    {\begin{align} \P\bigg(\sup_{y\in [-Q_1-Q_2,-Q_1]} \mathfrak{H}_k^t(y) \le \hat M_k\bigg) \ge 1-\e. \label{suo1}
    \end{align}}
\end{proposition}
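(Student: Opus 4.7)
The plan is to proceed by induction on $k\geq 1$, paralleling the structure of Lemma \ref{uppertight}. The inputs we freely combine are: lower tightness of each curve (Proposition \ref{0ult}), the quantitative lower bound on suprema (Proposition \ref{p3}), the weak parabolic behavior of the top curve (Lemma \ref{p1}), and, for $k\geq 2$, the inductive hypothesis giving upper tightness of $\mathfrak{H}_{k-1}^t$.

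\textbf{Base case $k=1$.} Apply Lemma \ref{p1}\eqref{p2e} at the two points $x=-Q_1-Q_2$ and $x=-Q_1+2t^{-2/3}$ (both nonpositive for large $t$ once $Q_1\geq 1$) to locate points $y_\mathrm{L}\in [-Q_1-Q_2-2t^{-2/3},-Q_1-Q_2]$ and $y_\mathrm{R}\in[-Q_1,-Q_1+2t^{-2/3}]$ at which $\mathfrak{H}_1^t$ is bounded above by an absolute constant $M_2$ with probability at least $1-\e/2$. Since $[y_\mathrm{L},y_\mathrm{R}]\supset[-Q_1-Q_2,-Q_1]$, apply the two-sided HSKPZ Gibbs property (Lemma \ref{lem:gibbs2}(b)) to the top curve on $[y_\mathrm{L},y_\mathrm{R}]$: conditional on the boundary values and $\mathfrak{H}_2^t$, the restriction is a Brownian bridge softly floored by $\mathfrak{H}_2^t$. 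Dropping the floor via stochastic monotonicity (Lemma \ref{lem:sm}) and applying a standard Brownian bridge tail estimate yields $\hat M_1$ such that $\sup_{[-Q_1-Q_2,-Q_1]}\mathfrak{H}_1^t\le \hat M_1$ with probability $\geq 1-\e$.

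\textbf{Inductive step $k\geq 2$.} Assume the proposition for $j<k$. On the slight extension $I:=[-Q_1-Q_2-1,-Q_1+1]$, form a good event of probability $\geq 1-\e/2$ by intersecting the upper bound $\mathfrak{H}_{k-1}^t\le \hat M_{k-1}$ on $I$ (from the inductive hypothesis) with the lower bounds $\mathfrak{H}_{k-2}^t,\mathfrak{H}_{k+1}^t\ge -M$ on $I$ and $\mathfrak{H}_k^t(\partial I)\geq -M$ (from Proposition \ref{0ult}). Define the stopping time $\chi:=\inf\{y\in[-Q_1-Q_2,-Q_1]:\mathfrak{H}_k^t(y)>\hat M_k\}$, setting $\chi:=-Q_1$ if the set is empty. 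Then $[-Q_1-Q_2-1,\chi]$ is a stopping domain for $(\mathfrak{H}_{k-1}^t,\mathfrak{H}_k^t)$ in the sense of Definition \ref{def:sd}. Apply the two-curve strong Gibbs property (Lemma \ref{lem:sg2}(b)) on this domain, reducing the ceiling $\mathfrak{H}_{k-2}^t$ and the floor $\mathfrak{H}_{k+1}^t$ to constants via stochastic monotonicity. Adapting the Brownian computation of Lemma \ref{pchi} to the 1:2:3 scaled setting\textemdash justified through Theorem \ref{lem:invprin} together with Theorems \ref{6.11}/\ref{6.11c}\textemdash shows that, on the event $\{\chi<-Q_1\}$, with probability at least $1/4$ the curve $\mathfrak{H}_{k-1}^t$ exceeds $\hat M_{k-1}$ somewhere on $[-Q_1-Q_2-1,\chi]$. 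By the inductive hypothesis the latter event has probability at most $\e/2$, whence $\mathbf{P}(\chi<-Q_1)\leq 2\e$, and adjusting $\e$ concludes the induction once $\hat M_k$ is taken sufficiently larger than $\hat M_{k-1}$.

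\textbf{Main obstacle.} The heart of the inductive step is the analog of Lemma \ref{pchi} in the scaled setting: when the lower curve $\mathfrak{H}_k^t$ is forced to end very high at $\chi$, the soft non-intersection in the Radon--Nikodym derivative \eqref{def:Wcont} (with diffusive scaling parameter $L=t^{2/3}$) should drag the upper curve $\mathfrak{H}_{k-1}^t$ above $\hat M_{k-1}$ with positive probability. In the continuum this can be seen through the convergence results of Section \ref{sec7}, but the finite-$t$ argument is delicate because the penalty $t^{2/3}\int e^{t^{1/3}(B_{i+1}-B_i)}\,dx$ is very stiff and the right boundary value of $\mathfrak{H}_k^t$ at $\chi$ is itself random of order $\hat M_k$. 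A secondary subtlety is that $\mathfrak{H}_k^t$ has no a priori upper control on $\partial I$; we circumvent this by working on the stopping domain ending at $\chi$, so that only the hypothesized $\hat M_k$-exceedance at the right endpoint needs to be propagated.
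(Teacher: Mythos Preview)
Your inductive step for $k\geq 2$ is essentially the paper's argument: both use the two-curve strong Gibbs property on a stopping domain ending at the first exceedance time $\chi$, replace the random ceiling and floor by constants via stochastic monotonicity, and then apply a two-path Brownian estimate (Lemma~\ref{pchi1} in the paper, the parabola-free variant of Lemma~\ref{pchi}) to conclude that $\mathfrak{H}_{k-1}^t$ is dragged above $\hat M_{k-1}$ with probability at least $1/4$. Two minor remarks: (i) your use of Proposition~\ref{0ult} for $\mathfrak{H}_{k+1}^t$ is circular in the paper's induction scheme, since \ref{0ult}$(k+1)$ needs \ref{p3}$(k+1)$ which needs \ref{0ultp}$(k)$; but it is also unnecessary, as the floor $\mathfrak{H}_{k+1}^t$ can simply be sent to $-\infty$ when lower-bounding the increasing event $\{\sup\mathfrak{H}_{k-1}^t>\hat M_{k-1}\}$; (ii) the ``finite-$t$ delicacy'' you flag is a non-issue: Lemma~\ref{pchi1} holds uniformly for all $L\ge 1$, so Theorems~\ref{6.11}/\ref{6.11c} are not needed here.

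The genuine gap is in your base case. You propose to pin $\mathfrak{H}_1^t$ below $M_2$ at two boundary points $y_{\mathrm L},y_{\mathrm R}$, condition via the two-sided Gibbs property, and then ``drop the floor'' $\mathfrak{H}_2^t$ to reduce to a free Brownian bridge. But the monotonicity goes the wrong way: removing the floor makes the curve stochastically \emph{lower}, so the event $\{\sup B>\hat M_1\}$ becomes \emph{less} likely, yielding only a lower bound on the bad probability, not the upper bound you need. To control $\sup\mathfrak{H}_1^t$ from above via the Gibbs property you would need upper control on the floor $\mathfrak{H}_2^t$, which is precisely the $k=2$ case of the proposition you are proving. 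The paper's $k=1$ argument (Proposition~\ref{1ult}) is entirely different and does not use the Gibbs property to upper-bound the top curve directly: it uses the distributional identity of Proposition~\ref{p:iden} to show that if $\inf_{[-2L,-L]}\mathfrak{H}_1^t$ were large then $\mathcal{Z}_\alpha^{\mathrm{full},B}(0,t)$ would be too large, contradicting Lemma~\ref{hbpara}; a Gibbs/monotonicity step (now in the correct direction, lower-bounding an increasing event after dropping the floor) then converts a high supremum on $[-L,0]$ into a high infimum on $[-2L,-L]$.
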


We prove the above three propositions via induction on $k$. Defining $\mathfrak{H}_0^t\equiv \infty$, the $k=0$ case is trivial for Proposition \ref{0ult}. The $k=1$ case for Propositions \ref{p3} and \ref{0ultp} is covered in Lemma \ref{p1} and Proposition \ref{1ult} respectively. We perform the induction scheme in the ensuing subsections.

\subsubsection{Propositions \ref{p3}$(k)$ and \ref{0ult}$(k-1)$ imply Proposition \ref{0ult}$(k)$}  Fix any $Q_2>0$. Assume that \eqref{inf1} holds for $\mathfrak{H}_{k-1}^t$ in place of $\mathfrak{H}_k^t$, with parameters $Q_0(k-1,\cdot)$, $M_{k-1}(\cdot,Q_2,\cdot)$.  Also recall the function $R_0(k-1,\cdot,\cdot)$ from Proposition \ref{p3}. Set
\begin{align*}
   & \hat{Q}:=Q_0(k-1,\e/4), \quad
    Q_1 \geq Q_0(k,\e) := R_0(k,\hat{Q},\e/4) \\
   & R:=R_0(k,Q_1+Q_2,\e/4), \quad
    M_{k-1}:=M_{k-1}(\hat{Q},R,\e/4).
\end{align*}
Define the events
\begin{align*}
    \m{A} & :=\left\{\sup_{y\in [-Q_1,-\hat{Q}]} \mathfrak{H}_{k}^t(y) \ge -Q_1^2\right\}, \quad \m{B}:=\left\{\sup_{y\in [-R,-Q_1-Q_2]} \mathfrak{H}_{k}^t(y) \ge -R^2\right\}, \\
    \m{C} & :=\left\{\inf_{y\in [-\hat Q-R,-\hat{Q}]} \mathfrak{H}_{k-1}^t(y) \ge -M_{k-1}\right\}
\end{align*}
Let $\omega$ be the first time in $[-R,-Q_1-Q_2]$ such that $\mathfrak{H}_1^t(\omega) \ge -R^2$ and $\tau$ be the last time in $[-Q_1,-1]$ such that $\mathfrak{H}_1^t(\tau) \ge -Q_1^2$. Then $[\omega,\tau]$ is a stopping domain. Let us take $\Upsilon \ge Q_1^2+R^2$. By the strong Gibbs property and stochastic monotonicity,
\begin{align*}
  &  \ind_{\m{A}\cap \m{B} \cap \m{C}} \cdot \E\left[\ind\bigg\{\inf_{y\in [-Q_1-Q_2,-Q_1]} \mathfrak{H}_k^t(y) \le -M_k\bigg\} \, \bigg| \, \mathcal{F}_{\m{ext}}(\{k\}\times (\omega,\tau))\right] \\ & \le \ind_{\m{A}\cap \m{B}\cap \m{C}} \cdot \frac{\P \left(\inf_{y\in [\omega,\tau]} B(y) \le -M_k\right)}{\E \left[\exp(-\int_{\omega}^{\tau} e^{t^{1/3}(B(y)+M_{k-1})}dy)\right]}
\end{align*}
where $B$ is a Brownian bridge on $[\omega,\tau]$ from $-\Upsilon$ to $-\Upsilon$. On $\m{A}\cap\m{B}$, $\tau-\omega\le R$. Thus by Brownian motion estimates one can first choose $\Upsilon$ large enough (depending on $M_{k-1}, R$) so that the expectation in the denominator above is at least $\frac12$. Then one can choose $M_k$ (depending on $\Upsilon$, $R$ and $\e$) such that the probability in the numerator above is less than $\e/4$. Thus taking expectations we see that
\begin{align*}
    \P\bigg(\m{A}\cap \m{B} \cap\m{C}\cap\left\{\inf_{y\in [-Q_1-Q_2,-Q_1]} \mathfrak{H}_1^t(y) \le -M_k\right\}\bigg) \le \e/4.
\end{align*}
By Proposition \ref{p3}($k$), $\P(\m{A}\cap \m{B}) \ge 1-\e/2$. By our choices of parameters, we have, by the induction hypothesis, that $\P(\m{C})\ge 1-\e/4$. Thus \eqref{inf1} holds for index $k$ with above the choices of parameters. 

\subsubsection{Propositions \ref{0ult}$(k,k-1,k-2)$ and \ref{0ultp}$(k-1)$ imply Proposition \ref{0ultp}$(k)$} The proof is very similar to the induction step of Lemma \ref{uppertight}. We include the details for completeness. Let $K_k, D_k, \hat{M}_k>0$ be constants depending on $k$ and $\e$ that will be chosen appropriately later. 
Take any $Q_1 \ge Q_0(k-2,\e)+Q_0(k-1,\e)+Q_0(k,\e)$ where $Q_0$ comes from Proposition \ref{0ult}.
Fix any $Q_2>0$. Let $\bar{x}\in [-Q_1-Q_2,-Q_1]$.  Let $\chi = \sup\{x\in[\bar{x}-\frac12,\bar{x}] : \mathfrak{H}_k^t(x) \ge \hat{M}_k\}$, or $\chi=\overline{x}-\frac12$ if this set is empty. Consider the following events:
    \begin{align*}
        \m{E} & : = \left\{\sup_{x\in [\bar{x}-1/2,\bar{x}]} \mathfrak{H}_k^t(x)  \ge \hat{M}_k \right\}, \quad
        \m{Q}  := \left\{\inf_{x\in [\bar{x}-2,\bar{x}]} \mathfrak{H}_{k-2}^t(x) > -K_{k} \right\}, \\ \m{B} & := \left\{\sup_{x\in [\bar{x}-2,\chi]} \mathfrak{H}_{k-1}^t(x)  \ge \hat{M}_{k-1} \right\}, \quad
        \m{A}  := \left\{\mathfrak{H}_{k-1}^t(\chi) \ge -D_{k}\right\} \cap \bigcap_{j=k-1}^k  \left\{\mathfrak{H}_{j}^t(\bar{x}-2) \ge -D_k\right\}.
    \end{align*}
Here $\hat{M}_{k-1}$ comes from Proposition \ref{0ultp}$(k-1)$; $K_k, D_k,$ and $\hat{M}_k$ are yet to be specified. Note that $[-\bar{x}-2,\chi]$ forms a stopping domain. By the tower property of conditional expectation we have
    \begin{align}\label{r31}
        & \P\big(\m{E}\cap \m{Q} \cap \m{A} \cap \m{B} \big) = \E\left[\ind_{\m{E}\cap \m{Q} \cap \m{A}}\E\left[\ind_{\m{B}}|\mathcal{F}_{\mathrm{ext}}(\{k-1,k\}\times(\bar{x}-2,\chi))\right]\right].
    \end{align}
    Note that by the strong Gibbs property (Lemma \ref{lem:sg2}) we have
    \begin{align*} 
\E\left[\ind_{\m{B}}|\mathcal{F}_{\mathrm{ext}}(\{k-1,k\}\times(\bar{x}-2,\chi))\right] = \P_{t^{2/3};\mathfrak{H}_{k-2}^t,\mathfrak{H}_{k+1}^t}^{k-1,k;\bar{x}-2,\chi;\vec{a},\vec{b}}(\m{B}),
    \end{align*}
    where $\vec{a}=\big(\mathfrak{H}_{k-1}^{t}(\bar{x}-2),\mathfrak{H}_{k}^{t}(\bar{x}-2)\big)$ and $\vec{b}=\big(\mathfrak{H}_{k-1}^{t}(\chi),\mathfrak{H}_{k}^{t}(\chi)\big)$.
    On the event $\m{E}\cap \m{Q} \cap \m{A}$, we have
    \begin{align*}
       & \mathfrak{H}_{k-1}^t(\bar{x}-2) \ge -D_k, \quad \mathfrak{H}_{k}^t(\bar{x}-2) \ge -D_k , \quad
        \mathfrak{H}_{k-1}^t(\chi) \ge -D_k , \quad \mathfrak{H}_{k}^t(\chi) \ge \hat{M}_k , \\
      &  \mathfrak{H}_{k-2}^t(x) \ge -K_k \mbox{ for } x\in [\bar{x}-2,\chi].
    \end{align*}
 Thus by stochastic monotonicity (Lemma \ref{lem:sm}) we have
    \begin{align}\label{r10}
        \ind_{\m{E}\cap \m{Q} \cap \m{A}} \cdot \P_{t^{2/3};\mathfrak{H}_{k-2}^t,\mathfrak{H}_{k+1}^t}^{k-1,k;\bar{x}-2,\chi;\vec{a},\vec{b}}(\m{B}) \ge \ind_{\m{E}\cap \m{Q} \cap \m{A}} \cdot \P_{t^{2/3};-K_k,-\infty}^{k-1,k;\bar{x}-2,\chi;\vec{u},\vec{v}}(\m{B}),
    \end{align}
    where $\vec{u}:=(-D_k,-D_k)$ and $\vec{v}:=(-D_k, \hat{M}_k\big)$. We may lower bound the above probability using the following lemma proven in \cite{kpzle}.
    
    \begin{lemma}\label{pchi1}
       There exist $\delta, D^0>0$, and functions $K^0(D)>0$, $\hat{M}^0(D,K)>0$ such that for all $D\ge D^0$, $K\ge K^0(D)$, $\hat{M} \ge \hat{M}^0(D,K)$ and all $\bar{x}\le 0$, $\chi\in [\bar{x}-1/2,\bar{x}]$, $L\ge 1$ we have  
       \begin{align*}
        \P_{L;-K,-\infty}^{k-1,k;\bar{x}-2,\chi;(-D,-D),(-D,\hat{M})}\bigg(\sup_{x\in [\bar{x}-2,\chi]} B(x) \ge \delta \hat{M} \bigg) \ge \frac14.
       \end{align*}
    \end{lemma}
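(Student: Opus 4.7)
My plan is to deduce Lemma~\ref{pchi1} from \cite[Proposition 7.6]{kpzle}, which establishes the analogous statement for two-sided Brownian Gibbs measures at scale $L=1$. The key structural observation is that the measure $\mathbf{P}_{L;-K,-\infty}^{k-1,k;\bar x-2,\chi;(-D,-D),(-D,\hat M)}$ appearing in the statement is, in its functional form, identical to a full-space KPZ Brownian Gibbs measure on the interval $[\bar x-2,\chi]$: no half-space boundary exponential enters, since the interval lies strictly to the left of the origin and both endpoints carry fixed boundary data. In particular, only the two-sided RN derivative \eqref{def:Wcont} is in play, and the full-space arguments of \cite{kpzle} transfer directly.

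To handle arbitrary $L\ge 1$, I would apply the diffusive scaling identity noted after Definition~\ref{kpzgibbs}: setting $\widetilde B(y) := L^{-1/2} B(yL)$ turns a scale-$L$ two-sided Brownian Gibbs measure on $[\bar x-2,\chi]$ into a scale-$1$ measure on $[(\bar x-2)L,\chi L]$ with rescaled boundary data $(-D\sqrt L,-D\sqrt L)$ and $(-D\sqrt L,\hat M\sqrt L)$ and ceiling $-K\sqrt L$. The target event $\{\sup_{x\in[\bar x-2,\chi]} B(x) \ge \delta\hat M\}$ becomes $\{\sup \widetilde B \ge \delta\hat M\sqrt L\}$, and we may apply \cite[Proposition 7.6]{kpzle} with the rescaled parameters $D' = D\sqrt L$, $K' = K\sqrt L$, $\hat M' = \hat M\sqrt L$. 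Since the hypotheses in \cite{kpzle} are hierarchical conditions relating $D$, $K$, and $\hat M$, the common factor $\sqrt L$ drops out and the result is uniform in $L$.

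The underlying argument from \cite{kpzle}, which I would import, proceeds by a construction-versus-bound strategy. For the lower bound on the partition function, one exhibits a ``tube'' event in which both curves follow prescribed continuous profiles: the lower curve (index $k$) executes its forced climb from $-D$ to $\hat M$ in a short segment near $\chi$, while the upper curve (index $k-1$) tracks it within $O(1)$ so that the soft non-intersection weight in \eqref{def:Wcont} is bounded below by a positive constant, and otherwise stays just below the ceiling $-K$; Lemmas~\ref{bmpos} and~\ref{brpos} then supply the required uniform lower bound on the Brownian-bridge probability of staying in such a tube. For the complementary event $\{\sup B \le \delta\hat M\}$, one obtains an upper bound by removing the ceiling and weakening the non-intersection penalty (which only enlarges the measure), leaving pure Brownian bridge events whose supremum is controlled by standard tail estimates for Brownian bridges pinned at $(-D,-D)$ versus $(-D,\hat M)$. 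Taking the ratio and choosing the thresholds in the hierarchy $D \ll K \ll \hat M$ gives the lower bound $\tfrac14$.

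The main obstacle in the plan is ensuring that the quantitative estimates in \cite[Proposition 7.6]{kpzle} remain uniform in the rescaled interval length, which grows linearly with $L$. If those bounds depend only on the hierarchy between $D$, $K$, $\hat M$ and not on the interval length (as I expect, since all Brownian bridge computations rescale covariantly), the reduction is immediate. Otherwise, one can rerun the same argument directly at scale $L$: larger $L$ only strengthens the soft non-intersection penalty and the fluctuation scale of the bridges under $\mathbf{P}_L$ shrinks like $L^{-1/2}$, both of which strictly favor the event in question, so no new difficulty arises.
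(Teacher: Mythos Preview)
The paper's treatment is a one-line citation: it asserts that the non-parabolic variant of \cite[Proposition 7.6]{kpzle} is already proved there (in their Section~7.4.1), and that proposition is stated and proved for general scale parameter $t$ in the $\mathbf{H}_t$-Brownian Gibbs setting, not just at a single scale. So no scaling reduction is needed; the result applies directly once conventions are matched.

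Your scaling reduction to $L=1$ introduces real difficulties that the direct citation avoids. First, the formula is inverted: to map scale-$L$ on $[\bar x-2,\chi]$ to scale-$1$ one sets $\tilde B(y)=\sqrt L\,B(y/L)$, landing on $[(\bar x-2)L,\chi L]$, not $\tilde B(y)=L^{-1/2}B(yL)$. Second, and more seriously, your claim that ``the common factor $\sqrt L$ drops out'' of the hierarchy $D^0,K^0(D),\hat M^0(D,K)$ is unjustified: those thresholds in \cite{kpzle} are not homogeneous of degree one (they arise from balancing Gaussian tail bounds against exponential penalties and carry additive constants), and the interval length in \cite{kpzle}'s statement is fixed in $[3/2,2]$, whereas after your scaling it becomes $O(L)$. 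Your fallback (``rerun the argument directly at scale $L$'') is in fact what \cite{kpzle} already does, but your stated reason is inaccurate: under $\mathbf{P}_L$ the underlying bridges are standard Brownian bridges with diffusion coefficient $1$ regardless of $L$ (see Definition~\ref{kpzgibbs2}); only the RN derivative carries the $L$-dependence, so there is no fluctuation-scale shrinking.

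The only genuine translation between $\mathbf{P}_L$ here and the $\mathbf{H}_t$-Gibbs measure of \cite{kpzle} is the extra prefactor $L$ in \eqref{def:Wcont}. As explained around \eqref{def:Wcont2} in the appendix, this is absorbed by shifting the $i$th curve by $(i-1)\log L/\sqrt L$, which is bounded uniformly for $L\ge 1$ and therefore only perturbs the thresholds by $O(1)$.
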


The above lemma is a variant of Proposition 7.6 in \cite{kpzle} where the parabola has been removed. This non-parabola variant is also proved in \cite{kpzle} (see the discussions in \cite[Section 7.4.1]{kpzle}). Let us now use the parameters in the above lemma to specify our constants. 

 \medskip

  We assume $K_k \ge M_{k-2}(\e)$ and $D_k \ge M_{k-1}(\e)\vee M_k(\e)$, where $M_{k-2}, M_{k-1},M_k$ come from Proposition \ref{0ult}. Then by the induction hypothesis we have $\P(\neg\m{Q} \cup \neg\m{A}) \le 2\e$ and $\P(\m{B}) \le \e$ for all large enough $t$. We further assume our constants satisfy $D_k \ge D^0$, $K_k \ge K^0(D_k)$, $\hat{M}_k \ge \hat{M}^0(D,K)$, and $\hat{M}_k \ge 2\hat{M}_{k-1}/\delta$, where $\delta,D^0,K^0, \hat{M}^0$ come from Lemma \ref{pchi1}. Then by Lemma \ref{pchi1}, we have that r.h.s.~of \eqref{r10} $\ge \frac14 \ind_{\m{E}\cap \m{Q} \cap \m{A}}$. Plugging this bound back into \eqref{r31}, we get 
 \begin{align*}
     \P(\m{B}) \ge \P\big(\m{E}\cap \m{Q} \cap \m{A} \cap \m{B} \big) \ge \frac14\P\big(\m{E}\cap \m{Q} \cap \m{A} \big) \ge \frac14\P(\m{E})- \frac14\P(\neg\m{Q} \cup \neg\m{A}).
 \end{align*}
 Owing to our assumptions on constants we thus have $\P(\m{E}) \le 6\e$ for all large $t$, uniformly over $\overline{x}\in[-Q_1-Q_2,-Q_1]$. Adjusting $\e$, and allowing $\hat{M}_k$ to depend on $Q_2$, we may adjust $\hat{M}_k$ so that for all large $t$,
 \begin{align*}
     \sup_{\overline{x}\in[-Q_2-Q_1,-Q_1]} \P\left(\sup_{x\in [\bar{x}-1/2,\bar{x}]} \mathfrak{H}_k^t(x) \ge \hat{M}_k \right) \le \e/(2Q_2+1).
 \end{align*}
We may cover the interval $[-Q_2-Q_1,-Q_1]$ by $2Q_2+1$ many subintervals of length $1/2$, and a union bound then implies \eqref{suo1}.

 \subsubsection{Propositions \ref{0ult}$(1,\dots,k)$ and \ref{0ultp}$(1,\dots,k)$ imply Lemma \ref{p3}$(k+1)$}

If the lower curves are too low, then due to the Gibbs property the top curve behaves like a Brownian motion and hence should not have any parabolic trajectory which is a contradiction to Lemma \ref{p1}. 
The proof is similar to that of Theorem 3.3 in \cite{half1} and Proposition 7.4 in \cite{ds24}, where an analogous statement has been proven for the HSLG line ensemble using the above idea. 

Fix $\e,k,x_0$ as in the statement of the proposition. For $1\leq i\leq k$, let $Q_0(i,\cdot)$ and $\hat{Q}_0(i,\cdot)$ be as in Propositions \ref{0ult} and $\ref{0ultp}$. We set $Q_1 := \max_{1\leq i\leq k}Q_0(i,\e/3) \vee \hat{Q}_0(i,\e/3) \vee x_0$. We then define $R_0(k,x_0,\e) := Q_1+Q_2$, where $Q_2(k,\e)$ is to be chosen later. Also let $M_i := M_i(Q_1,Q_2,\e/3)$ and $\hat{M}_i := \hat{M}_i(Q_1,Q_2,\e/3)$ be as in Propositions \ref{0ult} and \ref{0ultp} for $Q_1, Q_2$ above, and set $\hat M := \max_{1\leq i\leq k} M_i \vee \hat{M}_i$. Lastly, let $M_0 := M_0(\e/18)$ be as in Lemma \ref{p1}.

 Let $A=-Q_1 -Q_2/2-Q_2/(4k+4)$ and $B = -Q_1 -Q_2/2+Q_2/(4k+4)$. 
 Let $C =Q_1+Q_2/2= (A+B)/2$ be their midpoint. Assume $Q_2$ is large enough depending on $\e$ so that $[C-M_0,C] \subset [A,B]$. We consider the following three events:
\begin{align*}
    \mathsf{A} &:= \left\{\sup_{y\in[-R,-Q_1]} \mathfrak{H}^t_{k+1}(y) < -R^2 \right\}, \quad \m{B} := \left\{\max_{x\in\{A,B\}} \inf_{y\in[x-2t^{-2/3},x]}\mathfrak{H}_1^t(y) + \frac{x^2}{2} \leq M_0 \right\} ,\\
    \m{C} &:= \left\{\sup_{y\in[C-M_0,C]} \mathfrak{H}_1^t(y) + \frac{C^2}{2} \geq -M_0 \right\}.
\end{align*}
Since $Q_1 \geq x_0$, in order to show \eqref{p3} for index $k+1$, it suffices to show that $\P(\m{A}) < \e$ for large $t$. The idea is essentially to argue that although $\m{B}$ and $\m{C}$ are likely, $\m{C}$ is unlikely given $\m{A}\cap\m{B}$, which implies $\m{A}$ is unlikely.

Define $\sigma := \inf\{y\in[A-2t^{-2/3},A] : \mathfrak{H}_1^t(y) + A^2/2 \leq M_0\}$ or $\sigma:=A$ if this set is empty, and $\tau := \sup\{y\in[B-2t^{-2/3},B] : \mathfrak{H}_1^t(y) + B^2/2 \leq M_0\}$ or $\tau := B-2t^{-2/3}$ if this set is empty. We introduce the two auxiliary events
\[
\m{D} := \left\{\min_{1\leq i\leq k} \min_{x\in\{\sigma,\tau\}} \left(\mathfrak{H}_i^t(x)-\mathfrak{H}_{i+1}^t(x)\right) > 0 \right\}, \quad \m{E} := \left\{\max_{1\leq i\leq k} |\mathfrak{H}_i^t(-R)| \vee |\mathfrak{H}_i^t(-Q_1)| \leq \hat{M}\right\}. 
\]
By Lemma \ref{p1} and Propositions \ref{0ult}$(1,\dots,k)$ and \ref{0ultp}$(1,\dots,k)$, and the choice of parameters above, for large $t$ we have 
\begin{equation}\label{PE}
\P(\neg(\m{B}\cap\m{C})) < \e/6, \qquad \P(\neg\m{E}) <\e/3.
\end{equation}
By the Gibbs property,
\begin{align*}
    \P(\m{A}\cap\m{D}\cap\m{E}) &= \E\left[\ind_{\m{A}\cap\m{E}} \cdot \E\left[\ind_{\m{D}} \mid \mathcal{F}_{\m{ext}}(\ll 1,k\rr \times (-R,-Q_1))\right]\right].
\end{align*}
By Lemma \ref{ord}, specifically \eqref{sepbd2}, on the event $\m{A}\cap\m{E}$ the inner conditional expectation above is at least $1/2$ for large $t$. Thus $\P(\m{A}\cap \m{D} \cap\m{E}) \geq \frac12\P(\m{A}\cap\m{E})$. Combining with the second inequality in \eqref{PE}, we have
\begin{equation}\label{2ADe}
    \P(\m{A}) \leq 2\P(\m{A}\cap\m{D}\cap\m{E}) + \e/3 \leq 2\P(\m{A}\cap\m{D}) + \e/3.
\end{equation}

It remains to upper bound $\P(\m{A}\cap\m{D})$. By the first inequality in \eqref{PE},
\begin{equation}\label{ABCDe6}
    \P(\m{A}\cap\m{D}) \leq \P(\m{A}\cap\m{B}\cap\m{C}\cap\m{D}) + \e/6.
\end{equation}
Note that $[\sigma,\tau]$ is a stopping domain, and by the tower property of conditional expectation we may write
\begin{align}\label{ABCD}
    \P(\m{A}\cap\m{B}\cap\m{C}\cap\m{D}) &\le \E\left[\ind_{\m{A}\cap\m{B}\cap\m{D}} \cdot \E \left[\ind_{\m{C}} \mid \mathcal{F}_{\m{ext}}(\ll 1,k\rr \times(\sigma,\tau))\right]\right].
\end{align}
On the event $\m{B}\cap\m{D}$, we have $M_0-A^2/2 \geq \mathfrak{H}_1^t(\sigma) > \cdots > \mathfrak{H}_k^t(\sigma)$ and $M_0-B^2/2 \geq \mathfrak{H}_1^t(\tau) > \cdots > \mathfrak{H}_k^t(\tau)$. Let us introduce the raised boundary conditions
\[
x_i := M_0-\frac{A^2}{2} + (k-i+1)\sqrt{B-A}, \quad y_i := M_0-\frac{B^2}{2} + (k-i+1)\sqrt{B-A}, \quad 1\leq i\leq k.
\]
Then $x_i \geq \mathfrak{H}_i^t(\sigma)$ and $y_i\geq \mathfrak{H}_i^t(\tau)$ for $1\leq i\leq k$. Moreover, on $\m{A}$ we have $\sup_{y\in[-R,-Q_1]} \mathfrak{H}_{k+1}^t(y) < -R^2 < M_0-A^2/2$. Therefore by the strong Gibbs property and stochastic monotonicity,
\begin{align}\label{ABD}
    \ind_{\m{A}\cap\m{B}\cap\m{D}} \cdot \E \left[\ind_{\m{C}} \mid \mathcal{F}_{\m{ext}}(\ll 1,k\rr \times(\sigma,\tau))\right] &\leq \P^{k;\sigma,\tau;\vec{x},\vec{y}}_{t^{2/3};M_0-\sigma^2/2}\left(\sup_{y\in[C-M_0,C]} B_1(y) + \frac{C^2}{2} \geq -M_0 \right).
\end{align}
(Recall the notation for the Gibbs measures in Definition \ref{kpzgibbs2}.) As $t\to\infty$, $\sigma\to A$ and $\tau\to B$, and the Gibbs measure on the right-hand side of \eqref{ABD} converges to the law of $k$ non-intersecting Brownian bridges on $[A,B]$ with boundary conditions $\vec{x},\vec{y}$ and floor $M_0-A^2/2$. Since $x_i-x_{i+1} = y_i-y_{i+1} = \sqrt{B-A}$ for $1\le i\le k-1$, and $\min(x_k-(M_0-A^2/2), y_k-(M_0-A^2/2)) \geq \sqrt{B-A}$, the probability of independent Brownian bridges with boundary data $\vec{x},\vec{y}$ not intersecting each other or the floor at $M_0-A^2/2$ is bounded below by a positive constant $c_0 = c_0(k)$. Hence for large $t$, 
\begin{equation}\label{Wparab}
\mbox{r.h.s.~of \eqref{ABD}} \leq \frac{2}{c_0} \cdot \P\left(\sup_{y\in[C-M_0,C]} \Br(y) + \frac{C^2}{2} \geq -M_0\right),
\end{equation}
where $\Br$ is a single Brownian bridge on $[A,B]$ from $x_1'$ to $y_1'$. Let $L$ denote the line segment connecting $(A,x_1')$ and $(B,y_1')$. By concavity of the parabola $-y^2/2$, $L(y) < -C^2/2$ for $y\in[C-M_0,C]$, and $L(y)+C^2/2$ is maximized at $y=C$. Recalling that $C = (A+B)/2$, a simple computation shows that 
\begin{equation}\label{L(C)}
L(C)+ \frac{C^2}{2} = -\frac18(B-A)^2 + M_0 + k\sqrt{B-A}.
\end{equation}
Now recall that $B-A = (1-\frac{1}{2k+2})Q_2$. Fix any $K>0$. By choosing $Q_2$ sufficiently large depending on $M_0$ (which only depends on $\e$), $k$, and $K$, we may ensure from \eqref{L(C)} that $L(y) + C^2/2 \leq -M_0-K\sqrt{B-A}$ for all $y\in[C-M_0,C]$. It follows that
\begin{equation}
\begin{split}\label{supW}
\mbox{r.h.s.~of \eqref{Wparab}} &\leq \frac{2}{c_0} \cdot \P\left(\sup_{y\in[C-M_0,C]} \Br(y) - L(y) \geq K\sqrt{B-A}\right)\\
&\leq \frac{2}{c_0} \cdot \P\left(\sup_{y\in[A,B]} \Br(y)-L(y) \geq K\sqrt{B-A} \right).
\end{split}
\end{equation}
By affine invariance and diffusive scaling of Brownian bridges, the last expression is equal to the probability that a standard Brownian bridge on $[0,1]$ has maximum at least $K$, which goes to 0 as $K\to\infty$. We may therefore take $K$ large enough depending on $\e$ and $c_0(k)$ so that the second line of \eqref{supW} is at most $\e/6$. For the corresponding choice of $Q_2$ in \eqref{supW} (again, depending only on $k,\e$), we have shown that the limsup as $t\to\infty$ of the l.h.s.~of \eqref{ABD} is a.s.~at most $\e/6$. Applying Fatou's lemma to the r.h.s.~of \eqref{ABCD}, we get that $\limsup_{t\to\infty} \P(\m{A}\cap\m{B}\cap\m{C}\cap\m{D}) \leq \e/6$. In view of \eqref{2ADe} and \eqref{ABCDe6}, we conclude that $\limsup_{t\to\infty} \P(\m{A}) \leq \e$, as desired. 

\section{Tightness and subsequential limits of the HSKPZ line ensemble} \label{sec9}

In this section we prove our main results about the HSKPZ line ensemble under diffusive scaling, Theorems \ref{kpz123} and \ref{subseq}. 

\subsection{Proof of tightness} In this subsection we assume either $\alpha>0$ is fixed (supercritical) or $\alpha=\mu t^{-1/3}$ with $\mu\in\R$ fixed (critical). We will omit the superscripts of $\alpha$ and write $\mathfrak{H}_i^t = \mathfrak{H}_i^{t,\alpha}$.

We begin with pointwise tightness. We first give a result that extends the lower tightness result in Proposition \ref{0ult} to all intervals, and extends the upper tightness in Proposition \ref{1ult} to all curves.
\begin{proposition}\label{p10} Fix any $\e\in (0,1)$, $k\in \mathbb{N}$, and $M>0$. There exists $K(k,M,\e)>0$ such that for all large $t$,
\begin{align}\label{e10} 
        \P\bigg(\sup_{y\in [-M,0]} \mathfrak{H}_k^{t}(y) \le K\bigg) \ge 1-\e, \quad  \P\bigg(\inf_{y\in [-M,0]} \mathfrak{H}_{k}^{t}(y) \ge -K\bigg) \ge 1-\e.
    \end{align}
\end{proposition}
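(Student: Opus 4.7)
The plan is to propagate the tightness estimates from intervals far to the left, established in Propositions \ref{0ult} and \ref{0ultp}, to the interval $[-M,0]$ including the boundary $0$, using the one-sided HSKPZ Gibbs property together with the diffusive limit theorems of Section \ref{sec7}. First I would fix an even integer $2m\geq k$ and a parameter $Q_1>M$ large enough that Propositions \ref{0ult} and \ref{0ultp} apply at indices $1,\ldots,2m+1$ on the interval $[-Q_1-2,-Q_1]$. Combined with Lemma \ref{ord} applied via the strong Gibbs property (Lemma \ref{lem:sg2}) to the two-sided Gibbs measure on this interval, this produces a high-probability event $\mathsf{E}$ on which, at some specific interior point $\sigma_0\in(-Q_1-2,-Q_1)$, the boundary data $\vec{a}=(\mathfrak{H}_i^t(\sigma_0))_{i=1}^{2m}$ is ordered with gaps at least $\rho$, bounded in $[-N_1,N_1]^{2m}$, and satisfies $\mathfrak{H}_{2m+1}^t(\sigma_0)\geq -N_1$.

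For the lower bound, I would apply the one-sided Gibbs property on $[\sigma_0,0]$ to curves $1,\ldots,2m$, giving a conditional law of $\mathbf{P}^{2m;\sigma_0;\vec{a},\star}_{t^{2/3};g}$ with random floor $g=\mathfrak{H}_{2m+1}^t|_{[\sigma_0,0]}$. By stochastic monotonicity (Lemma \ref{lem:sm}), lowering the boundary to a deterministic ordered vector $\vec{a}_*\leq\vec{a}$ (possible on $\mathsf{E}$) and the floor to $-\infty$ produces a coupled ensemble $\tilde{\mathfrak{H}}^t$ with $\tilde{\mathfrak{H}}_k^t\leq\mathfrak{H}_k^t$. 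Theorem \ref{6.11} in the supercritical case, or Theorem \ref{6.11c} in the critical case, then shows that $\tilde{\mathfrak{H}}^t$ converges as $t\to\infty$ to a continuous limit on $[\sigma_0,0]$, bounded on the compact subinterval $[-M,0]$. This yields the lower tightness for $\mathfrak{H}_k^t$.

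The upper bound will be the main obstacle, because stochastic monotonicity cannot be used to replace the random floor $\mathfrak{H}_{2m+1}^t$ by a deterministic upper bound on $[\sigma_0,0]$ directly: lowering the floor only decreases the curves (useful for the lower bound), while raising it would require an a priori upper bound on $\mathfrak{H}_{2m+1}^t$ on $[\sigma_0,0]$ which we do not have. My plan is to exploit the strict ordering of curves in the diffusive limit. Applying Theorems \ref{6.11} and \ref{6.11c} conditionally on a realization of the random floor (whose marginal at $\sigma_0$ is controlled via Proposition \ref{0ult} at index $2m+1$), any subsequential weak limit of the rescaled ensemble on $[\sigma_0,0]$ will satisfy the strict ordering $B_1>B_2>\cdots>B_{2m}$ on the interior, with pairwise pinning only at the boundary $0$ in the supercritical regime. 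Combined with the upper tightness of the top curve from Proposition \ref{1ult}, and with the ordering propagated back to the pre-limit with $o(1)$ slack via the exponential penalty in the RN derivative, this will yield $\sup_{[-M,0]}\mathfrak{H}_k^t\leq\sup_{[-M,0]}\mathfrak{H}_1^t+o(1)$ with high probability. The hard part will be establishing this soft ordering uniformly on $[-M,0]$ in a quantitative fashion sufficient to transfer the upper bound from $\mathfrak{H}_1^t$ to $\mathfrak{H}_k^t$, which I expect to require a tube argument on the normalization of the Gibbs measure in the spirit of Lemma \ref{2high}.
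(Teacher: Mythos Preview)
Your approach to the lower bound is essentially the same as the paper's: fix $2m\ge k$, use Proposition~\ref{0ult} to control the left boundary data at a point far to the left, then apply the one-sided Gibbs property and stochastic monotonicity to drop to deterministic ordered boundary data with no floor, and finally invoke the diffusive limit machinery of Section~\ref{sec7}. The paper packages the last step via Theorem~\ref{moclemma} (modulus of continuity) rather than Theorems~\ref{6.11}/\ref{6.11c} directly, and it does not first order the boundary data via Lemma~\ref{ord} but simply lowers to a fixed separated vector; these are cosmetic differences.

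For the upper bound, however, you are taking a substantially harder route than necessary. The paper does not attempt to establish any pre-limit ordering $\mathfrak{H}_k^t\le \mathfrak{H}_1^t+o(1)$ on $[-M,0]$; instead it simply runs an induction on $k$. The base case $k=1$ is Proposition~\ref{1ult}, and the inductive step is literally the same argument as the inductive step of Proposition~\ref{0ultp}: assuming upper tightness of $\mathfrak{H}_{k-1}^t$ on $[-M,0]$, together with the lower tightness of $\mathfrak{H}_{k-2}^t,\mathfrak{H}_{k-1}^t,\mathfrak{H}_k^t$ on $[-M,0]$ just established, one applies the two-curve pushing estimate of Lemma~\ref{pchi1} to conclude upper tightness of $\mathfrak{H}_k^t$. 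This bypasses entirely the problem you flag as the ``main obstacle'' (the uncontrolled floor $\mathfrak{H}_{2m+1}^t$ near the boundary), because Lemma~\ref{pchi1} works on a two-sided stopping domain strictly inside $(-\infty,0)$ and only needs a lower bound on the $(k{-}2)$nd curve as a soft ceiling. Your proposed route---proving a uniform soft ordering via a normalization lower bound in the spirit of Lemma~\ref{2high}---could plausibly be made to work (the crossing event has doubly-exponentially small RN weight while the normalization is at worst $e^{-Ct^{2/3}}$), but it requires nontrivial additional work that the inductive argument avoids completely.
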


\begin{proof} Let us take $m=\lceil k/2 \rceil$. Thanks to Proposition \ref{0ult}, we can find $M'(m,\e) \ge M$ and $K'(M,m,\e)$ such that
\begin{align}\label{aepr}
\P(\m{A}) \ge 1-\e/2, \mbox{ where }    \m{A}:=\left\{\min_{1\le j\le 2m}\mathfrak{H}_j^{t}(-2M') \ge -K'\right\}. 
\end{align}
By the Gibbs property (Lemma \ref{lem:gibbs2}) and stochastic monotonicity (Lemma \ref{lem:sm})
\begin{align*}
   & \ind_{\m{A}}\cdot \E\left[\ind\bigg\{\inf_{ 1\le j \le 2m,\, y\in [-M',0]} \mathfrak{H}_{k}^{t}(y) \le -K\bigg\} \, \bigg|\, \mathcal{F}_{\m{ext}}(\ll 1,2m\rr \times (-2M',0])\right] \\ & \le \mathbf{P}_{t^{2/3}}^{2m;-2M';\vec{a},\star}\left(\inf_{ 1\le j \le 2m, \, y\in [-M',0]} B_j(y) \le -K\right),
\end{align*}
where $\vec{a}=(-K'-1,-K'-2,\ldots,-K'-2m) \in \mathbb{R}^{2m}$. By Theorem \ref{moclemma} we may choose $K$ large enough (depending on $\e, M',K'$) such that the above probability is less than $\e/2$ for all large enough $t$. Thus taking expectation we get that
\begin{align*}
    \P\left(\m{A} \cap\left\{\inf_{ 1\le j \le 2m,\, y\in [-M',0]} \mathfrak{H}_{k}^{t}(y) \le -K\right\}\right) \ge 1-\e/2.
\end{align*}
Combining this with \eqref{aepr} we obtain
\begin{align*}
    \P\left(\inf_{ 1\le j \le 2m,\, y\in [-M',0]} \mathfrak{H}_{k}^{t}(y) \le -K\right) \ge 1-\e.
\end{align*}
This verifies the second inequality in Proposition \eqref{e10}.

For the first inequality in \eqref{e10}, we proceed via induction. The $k=1$ case is covered in Proposition \ref{1ult}. Assume that the supremum tightness holds for $\mathfrak{H}_{k-1}^{t}$. Then, using an argument identical to the induction step of proving Proposition \ref{0ultp}, the inequality for $\sup \mathfrak{H}_k^{t}$ follows. We omit the details for brevity.
\end{proof}
\begin{proof}[Proof of Theorem \ref{kpz123}] Fix any $m\in \mathbb{N}$ and $\gamma,\delta>0$. Let 
$ \m{A}  :=\big\{ \mc\big(\mathfrak{H}^{t}|_{\ll1,2m\rr\times [A,0]},\delta\big)  \ge \gamma \big\}.$
Thanks to pointwise tightness stemming from Proposition \ref{p10}, it suffices to show that $\P(\m{A})$ can be made arbitrarily small by taking $\delta$ small enough. 
 Consider the event
\begin{align*}
    \m{C} & :=\bigcap_{i=1}^{2m} \Big\{|\mathfrak{H}_{i}^{t}(A)| \le M\Big\}\cap \Big\{\sup_{y\in [A,0]} \mathfrak{H}_{2m+1}^{t}(y) \le M\Big\}.
\end{align*}
Thanks to Proposition \ref{p10} we may choose $M$ large enough so that
\begin{align}\label{events2}
    \P(\neg \m{C}) \le \e. 
\end{align}
Invoking the Gibbs property (Lemma \ref{lem:gibbs2}) we may write
\begin{align}\label{floorsep}
    \E\left[\ind_{\m{A}}\mid \mathcal{F}_{\m{ext}}(\ll 1,2m\rr \times (A/2,0])\right] = \mathbf{P}_{t^{2/3};f}^{2m;A/2;\vec{x},\star}({\m{A}})=\frac{\mathbf{E}_{t^{2/3}}^{2m;A/2;\vec{x},\star}[\mathcal{W}_f\ind_{\m{A}}]}{\mathbf{E}_{t^{2/3}}^{2m;A/2;\vec{x}.\star}\left[\mathcal{W}_f\right]},
\end{align}
where $x_{i}=\mathfrak{H}_i^t(A/2)$, $f:=\mathfrak{H}_{2m+1}^t|_{[A/2,0]}$, and $$\mathcal{W}_f := \exp\left(-t^{2/3}\int_{A/2}^0 e^{t^{1/3}(f(x)-B_{2m}(x))}dx\right).$$
The second equality in \eqref{floorsep} follows by separating the part of the RN derivative that is associated with the soft floor $g$. Let us set $\m{B}(\beta):=\{\mathbf{E}_{t^{2/3}}^{2m;A/2;\vec{x},\star}\left[\mathcal{W}_f\right] \ge \beta\}.$  We claim that for small $\beta >0$,
\begin{align}\label{acceptbot2}
    \P(\neg\m{B}(\beta)\cap \m{C})\le \e.
\end{align}
Assuming \eqref{acceptbot2}, observe that
\begin{align*}
    \P(\m{A}\cap \m{B}(\beta)\cap\m{C}) & =\E\big[\ind_{\m{B}(\beta)\cap\m{C}}\cdot \E[\ind_{\m{A}}\mid \mathcal{F}_{\m{ext}}(\ll 1,2m\rr \times (A/2,0])]\big]  \\ & =\E\left[\ind_{\m{B}(\beta)\cap\m{C}}\cdot\frac{\mathbf{E}_{t^{2/3}}^{2m;A/2;\vec{x},\star}\left[\mathcal{W}_f\ind_{\m{A}}\right]}{\mathbf{E}_{t^{2/3}}^{2m;A/2;\vec{x},\star}\left[\mathcal{W}_f\right]}\right]  \le \beta^{-1}\cdot\E\big[\ind_{\m{C}}\cdot\mathbf{P}_{t^{2/3}}^{2m;A/2;\vec{x},\star}(\m{A})\big].
\end{align*}
By Theorem \ref{moclemma}, we can take $\delta>0$ (depending on $\beta$ along with other parameters) small enough so that $\ind_{\m{C}}\cdot\mathbf{P}_{t^{2/3}}^{2m;A/2;\vec{x},\star}(\m{A})\le \beta\e$. Thus $\P(\m{A}\cap \m{B}(\beta)\cap\m{C}) \le \e$, which in view of \eqref{events2} and \eqref{acceptbot2} implies $\P(\m{A}) \le 3\e$. This completes the proof modulo \eqref{acceptbot2}. To prove \eqref{acceptbot2}, we use the tower property of conditional expectation to write
\begin{align}\label{iden5}
    \P(\neg\m{B}(\beta)\cap \m{C}) = \E\left[\ind_{\m{C}}\cdot \E[\ind_{\neg\m{B}(\beta)} \mid \mathcal{F}_{\m{ext}}(\ll 1,2m\rr \times (A,0])]\right].
\end{align}
A size-biasing argument leads to 
\begin{align}\nonumber
    \ind_{\m{C}}\cdot \E[\ind_{\neg\m{B}(\beta)} \mid \mathcal{F}_{\m{ext}}(\ll 1,2m\rr \times (A,0])] & = \ind_{\m{C}}\cdot \frac{\mathbf{E}_{t^{2/3};g}^{2m;A;\vec{y},\star}\big[\ind_{\neg\m{B}(\beta)} \mathbf{E}_{t^{2/3}}^{2m;A/2;\vec{x},\star}[\mathcal{W}_f]\big]}{\mathbf{E}_{t^{2/3};g}^{2m;A;\vec{y},\star}\big[ \mathbf{E}_{t^{2/3}}^{2m;A/2;\vec{x},\star}[\mathcal{W}_f]\big]}  \\ & \le \ind_{\m{C}}\cdot \frac{\beta}{\mathbf{E}_{t^{2/3};g}^{2m;A;\vec{y},\star}\big[ \mathbf{E}_{t^{2/3}}^{2m;A/2;\vec{x},\star}[\mathcal{W}_f]\big]}. \label{rio}
\end{align}
Here $\vec{y}=\mathfrak{H}^t(A)|_{\ll1,2m\rr}$, and $g(x)=\mathfrak{H}_{2m+1}^t(x) \ind_{[A,A/2]}(x)+(-\infty)\ind_{(A/2,0]}(x)$.
Thus it suffices to show that the last expectation above has a uniform lower bound. Let us now consider the event $$\m{G}:=\bigcap_{i=1}^{2m} \big\{B_i(A/2) \ge (M+M_3)\big\}.$$ 
 On the event $\m{G}$ we have $x_i \ge x_i':=M+M_3-i$. On the event $\m{C}$, we have $f\le M$. Thus on $\m{C}\cap \m{G}$, by stochastic monotonicity (Lemma \ref{lem:sm}) followed by Theorem \ref{6.11} we see that as $t\to\infty$, $$\mathbf{E}_{t^{2/3}}^{2m;A/2;\vec{x},\star}[\mathcal{W}_f] \ge \mathbf{E}_{t^{2/3}}^{2m;A/2;\vec{x}',\star}[\mathcal{W}_M] \to \P_{\infty}^{2m;A/2;\vec{x}',\star}\big(B_{2m}(x)> M \mbox{ for all }x\in [A/2,0]\big).$$
The last probability can be made arbitrarily close to $1$ by taking $M_3$ arbitrarily large. We may thus choose $M_3$ so that $\mathbf{E}_{t^{2/3}}^{2m;A/2;\vec{x},\star}[\mathcal{W}_f]\ge 1/2$ for all large $t$ (again on $\m{C}\cap \m{G}$). Then
\begin{align*}
   & \ind_{\m{C}}\cdot \mathbf{E}_{t^{2/3};g}^{2m;A;\vec{y},\star}\big[ \mathbf{E}_{t^{2/3}}^{2m;A/2;\vec{x},\star}[\mathcal{W}_f]\big]  \ge \ind_{\m{C}}\cdot 
 \mathbf{E}_{t^{2/3};g}^{2m;A;\vec{y},\star}\big[\ind_{\m{G}} \cdot \mathbf{E}_{t^{2/3}}^{2m;A/2;\vec{x},\star}[\mathcal{W}_f]\big] \ge \ind_{\m{C}}\cdot \tfrac12 \mathbf{P}_{t^{2/3};g}^{2m;A;\vec{y},\star}(\m{G}) \ge  \ind_{\m{C}} \cdot \tfrac{\phi}2,
\end{align*}
where the last inequality is due to Lemma \ref{2high}. Thus \eqref{rio} $\le 2\beta/\phi$. In view of \eqref{iden5}, taking $\beta=\e\phi/2$ leads to \eqref{acceptbot2}. This completes the proof.
\end{proof}

\subsection{Properties of subsequential limits} In this subsection we prove Theorem \ref{subseq} describing the subsequential limits of the HSKPZ line ensemble under KPZ scaling. The proof follows fairly quickly from results in Section \ref{sec7} on diffusive limits of the one-sided HSKPZ Gibbs measures and Section \ref{sec8} on the weak parabolic trajectory of the top curve.

\begin{proof}[Proof of Theorem \ref{subseq}\ref{parta}] 
We will omit the indices $\bullet$, as the proof is the same for the critical and supercritical regimes. Fix any $m\in\mathbb{N}$, and for $\rho>0$ and $t\in[1,\infty]$ define the event
\[
\m{A}_\rho^t := \left\{\min_{1\leq i\leq 2m-1}\mathfrak{H}_i^t(x) - \mathfrak{H}^t_{i+1}(x) > \rho\right\}.
\]
We claim that for any $\e>0$ we can find $\rho>0$ small enough so that 
\begin{equation}\label{Ainf}
\P(\m{A}_\rho^\infty) > 1-\e.
\end{equation}
As
\[
\P(\mathfrak{H}^\infty_1(x) > \cdots > \mathfrak{H}_{2m}^\infty(x)) \geq \P(\m{A}_\rho^\infty)
\]
for any $\rho>0$, \eqref{Ainf} implies $\P(\mathfrak{H}^\infty_1(x) > \cdots > \mathfrak{H}_{2m}^\infty(x))=1$. Taking the countable intersection over $m$ verifies the claim in part \ref{parta}. We thus focus on proving \eqref{Ainf}. For $M,t>0$, define the event
\[
\m{B}_M^t := \left\{\max_{1 \leq i\leq 2m} |\mathfrak{H}^t_i(2x)| \leq M\right\} \cap \left\{\sup_{y\in[2x,0]} |\mathfrak{H}^t_{2m+1}(y)| \leq M\right\}.
\]
By Theorem \ref{kpz123}, we may choose $M>0$ so that 
\begin{equation}\label{10.1}
    \limsup_{t\to\infty} \P(\neg\m{B}_M^t) < \e/2.
\end{equation}
On the other hand, by the Gibbs property (Lemma \ref{lem:gibbs2})
\begin{align*}
    \P(\neg\m{A}_\rho^t \cap \m{B}_M^t) &= \E\left[ \mathbf{1}_{\m{B}_M^t} \E\left[\mathbf{1}_{\m{A}_\rho^t} \mid \mathcal{F}_{\m{ext}}(\ll 1,2m\rr\times(2x,0])\right]\right]\\
    &= \E\left[\mathbf{1}_{\m{B}_M^t} \P_{t^{2/3};g^t}^{2m;2x;\vec{a}^t,\star} \left(\min_{1\leq i\leq 2m-1} B_i(x)-B_{i+1}(x) \leq \rho \right)\right],
\end{align*}
where $\vec{a}^t = (\mathfrak{H}_1^t(2x),\dots,\mathfrak{H}_{2m}^t(2x))$ and $g^t(y) = \mathfrak{H}^t_{2m+1}(y)$ for $y\in[2x,0]$. Lemma \ref{ord} allows us to find $\rho,t_0>0$ so that the probability inside the expectation in the last line is at most $\e/2$ for all $t\geq t_0$, uniformly over the event $\m{B}_M^t$. Combining with \eqref{10.1}, it follows for this choice of $\rho$ and all large $t$ we have $\P(\m{A}_\rho^t) > 1-\e$. The weak convergence $\mathfrak{H}^t \to \mathfrak{H}^\infty$ implies that $\liminf_{t\to\infty}\P(\m{A}_\rho^t) \geq \P(\m{A}_\rho^\infty) > 1-\e$, which yields \eqref{Ainf}.
\end{proof}

\begin{proof}[Proof of \ref{partb}] Fix any $\e>0$. Consider $M_0(\e)>0$ from Lemma \ref{p1}. Assume $\mathfrak{H}^t \to \mathfrak{H}^\infty$ weakly.  Taking $t\to\infty$ in \eqref{p2e} and using continuity of $\mathfrak{H}_1^\infty$, we get that
\begin{align}\label{Hinfubd}
    \P\left( \mathfrak H_1^\infty(x)+\frac{x^2}2   \le M_0\right) \ge 1-\e.
\end{align}
We thus focus on showing the other inequality. Take $K=K(\mu,\e)$ from Lemma \ref{hbpara} and set $x_0=(2K+\log 2)/\e$.  Note that for all $x\in[-x_0-1,0]$, by Proposition \ref{p10} we may find $\mathfrak{R}_1(\e)>0$ so that
\begin{align*}
    \P\bigg(\mathfrak{H}_1^\infty(x)+\frac{x^2}{2} \ge -\mathfrak{R}_1\bigg) \ge \P\bigg(\inf_{y\in [-x_0-1,0]} \mathfrak{H}_1^\infty(y) \ge -\mathfrak{R}_1\bigg) \ge 1-\e.
\end{align*} Take any $z\le -x_0$. Repeating the argument in the proof of \eqref{p1e} with $r\mapsto \e$ and $x\mapsto z$, we see that
\begin{align*}
    \P\left(\sup_{y\in [z-\e,z]} \mathfrak{H}_1^t(y) \ge -K-\frac{z^2}2-1\right)\ge 1-3\e.
\end{align*}
Now let us choose $x \le -x_0-1$. Then $\P(\m{A})\ge 1-6\e$ where
\begin{align*}
  \m{A}:= \left\{\sup_{y\in [x+1-\e,x+1]} \mathfrak{H}_1^t(y) \ge -K-\frac{(x+1)^2}2-1\right\}\cap \left\{\sup_{y\in [x-1,x-1+\e]} \mathfrak{H}_1^t(y) \ge -K-\frac{(x-1)^2}2-1\right\}.
\end{align*}
Let $\omega$ be the first time in $[x-1-\e,x-1]$ such that $\mathfrak{H}_1^t(\omega) \ge -K-\frac{(x-1)^2}2-1$ and $\tau$ be the last time in $[x+1-\e,x+1]$ such that $\mathfrak{H}_1^t(\tau) \ge -K-\frac{(x+1)^2}2-1$. Note $[\omega,\tau]$ is a stopping domain. Fix some constant $K'>0$ to be chosen appropriately later. By the strong Gibbs property and stochastic monotonicity,
 \begin{align}
   &\ind_{\m{A}}\cdot \E\left[\ind\bigg\{\mathfrak{H}_1^t(x)
    \le -K-K'-\frac{x^2}2-\e|x|\bigg\}\,\bigg|\, \mathcal{F}_{\m{ext}}(\{1\}\times(\omega,\tau))\right] \nonumber \\
    &\qquad\qquad\qquad \le \ind_{\m{A}}\cdot \P\left(B(x) \le -K-K'-\frac{x^2}2-\e|x| \right) \label{KK'},
 \end{align}
 where $B$ is Brownian bridge on $[\omega,\tau]$ with $B(\omega)=-K-\frac{(x-1)^2}{2}-1$ and $B(\tau)=-K-\frac{(x+1)^2}{2}-1$. As $\omega \in [x-1-\e,x-1]$ and $\tau \in [x+1-\e,x+1]$ on $\m{A}$, we have $\E[B(x)] \ge -K-x^2/2-1/2-\e|x|$. Thus one may take $K'$ large enough (depending only on $\e$) so that the probability in \eqref{KK'} is less than $\e$. As $\P(\neg\m{A}) \le 6\e$, we obtain  
 \begin{align*}
     \P\left(\mathfrak{H}_1^t(x) \le -K-K'-\frac{x^2}2-\e|x|\right) \le 7\e.
 \end{align*}
 Combining with \eqref{Hinfubd} and adjusting $\e$ proves the desired bound.
\end{proof}

\begin{proof}[Proof of \ref{partd}, \ref{parte}] These follow from part \ref{parta} and Theorems \ref{6.11} (for part~\ref{partd}) and \ref{6.11c} (for part~\ref{parte}), in the same manner as Step 2 in the proof of Theorem \ref{thm0} in Section \ref{sec5}. So, we will be brief and focus only the supercritical case \ref{partd}. Without loss of generality we can assume $\mathfrak{H}^{t,\alpha} \to \mathfrak{H}_{\mathrm{sc}}^{\infty,\alpha}$ weakly, and by the Skorohod representation theorem we can assume the convergence is uniform on compact sets a.s. Recalling the notation from Definition \ref{defni2}, it suffices, as in \eqref{eqgibbs}, to show that for any $k' \geq 2m$, $A'\leq A \le 0$ and bounded continuous functionals $F : C(\ll 1,2m\rr \times[A,0]) \to\R$ and $G : C(\ll 2m+1,k'\rr \times [A',A])\to\R$, 
\begin{equation}\label{airygibbs1}
    \mathbf{E}\left[ F(\mathfrak{H}_{\mathrm{sc}}^{\infty,\alpha}|_{\ll 1,2m\rr \times[A,0]}) G(\mathfrak{H}_{\mathrm{sc}}^{\infty,\alpha}|_{\ll 2m+1,k'\rr \times [A',A]}) \right] = \mathbf{E}\left[\mathbf{E}_{\infty;g}^{2m;A;\vec{a},\star}[F] \cdot G(\mathfrak{H}_{\mathrm{sc}}^{\infty,\alpha}|_{\ll 2m+1,k'\rr \times [A',A]}) \right]
\end{equation}
where $a=(\mathfrak{H}_{1,\mathrm{sc}}^{\infty,\alpha}(A),\dots,\mathfrak{H}_{2m,\mathrm{sc}}^{\infty,\alpha}(A))$ and $g=\mathfrak{H}_{2m+1,\mathrm{sc}}^{\infty,\alpha}|_{[A,0]}$. By assumption, and by the Gibbs property of $\mathfrak{H}^{t,\alpha}$ (Lemma \ref{lem:gibbs2}), the left hand side of the above is equal to
\begin{equation*}
\lim_{t\to\infty} \mathbf{E}\left[F(\mathfrak{H}^{t,\alpha}) G(\mathfrak{H}^{t,\alpha})\right] = \lim_{t\to\infty}\mathbb{E}\left[\E_{t^{2/3};g^t}^{2m;A;\vec{a}^t,\star}[F]\cdot G(\mathfrak{H}^{t,\alpha})\right],
\end{equation*}
where $\vec{a}^t = (\mathfrak{H}_1^{t,\alpha}(A), \dots,\mathfrak{H}_{2m}^{t,\alpha}(A))$, $g^t := \mathfrak{H}^{t,\alpha}_{2m+1}|_{[A,0]}$, and the inner expectation is as in Definition \ref{kpzgibbs}. Thanks to \ref{parta}, we know that $a_1 > \cdots > a_{2m} > g(A)$ a.s. Therefore, Theorem \ref{6.11} (with $L=t^{2/3}$) implies that the inner expectation on the right side in the last display converges as $t\to\infty$ to $\E_{\infty;g}^{2m;A;\vec{a},\star}[F]$. The dominated convergence theorem then allows us to conclude \eqref{airygibbs1}.
\end{proof}
\begin{proof}[Proof of \ref{partc}]
This is a direct consequence of the one-sided Gibbs property in \ref{partd} and \ref{parte}.
\end{proof}

\appendix \section{Uniform gap estimates: Proof of Lemma \ref{ord}} \label{app}

In this section, we prove Lemma \ref{ord}. For convenience, we drop the subscript $L$ from $\vec{a}_L$ and $g_L$. First note that the equality in \eqref{sepbd2} is via spatial translation invariance of two-sided HSKPZ Gibbs measures. So, we shall assume $R=0$. For the one-sided claim \eqref{sepbd}, we first aim to reduce to the case of two-sided Gibbs measures \eqref{sepbd2}. Towards this end, we claim that we can find $M'>0$ so that that 
\begin{equation}\label{M'bd}
\mathbf{P}_{L;g}^{2m;A;\vec{a},\star}(\norm{B(0)} \le M')\ge 1-\e
\end{equation}
for all large enough $L$. For the upper bound, we apply stochastic monotonicity (Lemma \ref{lem:sm}) to raise the boundary conditions until they are separated, e.g., replace $a_{i}$ with $a_i' := (M+2m-i+1)$ and $g$ with $g' \equiv M$. Then for any $i$, in the supercritical case, $$\P_{L;g}^{2m;A;\vec{a},\star}(B_i(0) \leq M') \geq \P_{L;g}^{2m;A;\vec{a}',\star}(B_i(0) \leq M')\xrightarrow[]{L\to\infty} \P_{\infty;g}^{2m;A;\vec{a}',\star}(B_i(0)\le M'),$$ where the last convergence follows via Theorem \ref{6.11} as the components of $\vec{a}'$ are separated and lie strictly above $g'$. In the critical case, thanks to  Theorem \ref{6.11c}, the above equation is true with $\P_{\infty;g}^{2m;A;\vec{a}',\star}$ replaced by $\bar{\P}_{\infty;g}^{2m;A;\vec{a}',\star}$ where the latter is defined in Definition \ref{defcr}. In both regimes, the last probability can be made arbitrarily close to 1 for large $M'\geq M$. By lowering the boundary conditions instead, a similar argument shows that $\P_{L;g}^{2m;A;\vec{a},\star}(B_i(0) \leq -M')$ can be made arbitrarily small. Taking a union bound proves \eqref{M'bd}.

Conditioning on $B(0)$, it thus suffices to show \eqref{sepbd}, as this will imply \eqref{sepbd2} as well. More precisely, we will show that we can find $\rho>0$ so that uniformly over $\vec{a},\vec{b}$ with $\lVert \vec{a}\rVert, \lVert \vec{b}\rVert \leq M'$ and large $L$ we have
\begin{equation}\label{gap2side}
    \P_{L;g}^{k;A,0;\vec{a},\vec{b}}\left(\inf_{x\in [A^-,A^+]}B_i(x)-B_{i+1}(x) \geq \rho\right) \geq 1-\e,
\end{equation}
for any $1\leq i\leq k-1$.  Taking a union bound over $i$ and adjusting $\e$ implies \eqref{sepbd}. 
To prove \eqref{gap2side}, we begin by rewriting the probability in the Gibbsian framework considered in \cite{wuconv}. The difference between our setup and that of \cite{wuconv} lies in the definition of the Radon–Nikodym derivative. Instead of using the form given in \eqref{def:Wcont}, they consider the following:

\begin{align}\label{def:Wcont2}
\hat{\mathcal{W}}_{L;f,g}^{k,\ell;A_1,A_2}(B) := \prod_{i=k-1}^{\ell} \exp\left(-\int_{A_1}^{A_2} e^{\sqrt{L}(B_{i+1}(x)-B_i(x))} dx \right).
\end{align}
with $B_{k-1}=f$ and $B_{\ell+1}=g$. The distinction here is the absence of the $L$ prefactor in front of the integral, which appears in our formulation \eqref{def:Wcont}. To translate into the above setup, we just need to lift the $i$-th curve by $(i-1)\log L/\sqrt{L}$ units. Indeed, if we define $a_i':=a_i+(i-1)\log L/\sqrt{L}$, $b_i':=b_i+(i-1)\log L/\sqrt{L}$ and $g':=g+k\log L/\sqrt{L}$, then
\begin{align*}
    \P_{L;g}^{k;A,0;\vec{a},\vec{b}}\left(\inf_{x\in [A^-,A^+]}B_i(x)-B_{i+1}(x) \geq \rho\right)=\hat\P_{L;g'}^{2m;A,0;\vec{a}',\vec{b}'}\left(\inf_{x\in [A^-,A^+]} B_i(x)-B_{i+1}(x) +\frac{\log L}{\sqrt{L}} \geq \rho\right)
\end{align*}
where the $\hat\P_{L;g'}^{\bullet}$ law is defined just as $\P_{L;g'}^{\bullet}$ (recall Definition \ref{kpzgibbs2}) but using $\hat{\mathcal{W}}$ from \eqref{def:Wcont2} instead of $\mathcal{W}$. Thus to prove \eqref{gap2side} it suffices to show that 
\begin{equation}\label{gap2side2}
    \hat\P_{L;g'}^{k;A,0;\vec{a}',\vec{b}'}\left(\inf_{x\in [A^-,A^+]} B_i(x)- B_{i+1}(x) \geq \rho/2\right) \geq 1-\e
\end{equation}
holds for all large enough $L$.

We focus on proving \eqref{gap2side2}.
We will condition on the $\sigma$-algebra $\mathcal{F}_{\m{ext}}(\ll 1,i+1\rr \times (A^-,A^+))$. By the Gibbs property, the resulting conditional law of $( B_1,\dots, B_{i+1})$ is $\hat\P_{L;g'}^{i+1;A^-,A^+;\vec{x},\vec{y}}$, with $\vec{x} = ( B_i(A^-))_{j=1}^{i+1}$, $\vec{y} = ( B_i(A^+))_{j=1}^{i+1}$, and $f= B_{i+2}$ (when $i=k-1$, $B_{2m+1}:=g'$). Recalling the notation \eqref{def:Wcont2}, let $$Z^{\vec{x},\vec{y}}_f := \mathbf{E}\left[\hat{\mathcal{W}}_{L;+\infty,f}^{i+1;A^-,A^+;\vec{x},\vec{y}}\right]$$ denote the normalizing constant for this conditional measure (the expectation being taken over independent Brownian bridges with boundary data $\vec{x},\vec{y}$, conditional on $\vec{x},\vec{y},f$). For $\hat M'>0$ to be chosen, define the event
\begin{align*}
\m{G} :=\bigcap_{j=i}^{i+1}\left\{\max\left(| B_j(A^-)|, | B_j(A^+)|\right) \leq \hat M'\right\}.
\end{align*}
By a monotonicity argument (the $\hat{\P}$ laws also obey stochastic monotonicity similar to that in Lemma \ref{lem:sm}; cf.~Lemma 2.10 in \cite{wuconv}) similar to that used to prove \eqref{M'bd}, one choose $\hat{M}'$ large enough so that $\hat{\P}_{L;g'}^{k;A,0;\vec{a}',\vec{b}'}(\m{G}_1) \ge 1-\e/4$ for all large $L$. We claim that there exists $\delta>0$ so that for all large $L$, uniformly over $\lVert\vec{a}'\rVert,\lVert\vec{b}'\rVert \leq  M'+1$,
\begin{equation}\label{Glbd}
    \hat\P_{L;g'}^{k;A,0;\vec{a}',\vec{b}'}\left(Z^{\vec{x},\vec{y}}_f > \delta\right) > 1-\e/4.
\end{equation}
Then by a union bound, in \eqref{gap2side2} we can restrict to the event $\left\{Z^{\vec{x},\vec{y}}_f > \delta\right\} \cap \m{G}$. The proof of the desired lower bound now follows exactly the same route as \cite[Proposition 3.3]{wuconv}, conditioning on $\mathcal{F}_{\m{ext}}(\ll 1,i+1\rr \times [A^-,A^+])$ and applying estimates for independent Brownian bridges (see in particular Claims 3.4 and 3.5 therein).

We therefore focus on proving \eqref{Glbd}, for which we aim to apply \cite[Proposition 4.3]{wuconv}. Let us introduce for $1\leq i\leq i+2$ the deterministic times $\ell_j :=A/2+A(i+2-j)/(4k+4)$, $r_j := A/2-A(i+2-j)/(4k+4)$. For $M'' > 0$ to be determined, define the ``good boundary'' event $\m{GB}=\m{GB}^+\cap \m{GB}^{-}$ where
\[
\m{GB}^{\pm} := \bigcap_{j=1}^{(i+2)\wedge k} \left\{ \sup_{x\in[\ell_j,\ell_j]\cup[r_i,r_1]} \pm B_j(x) \leq M'' \right\}.
\]
We now argue that we can find $M''\ge M'+1$ large enough so that
\begin{equation}\label{GBlbd}
    \hat\P_{L;g'}^{k;A,0;\vec{a}',\vec{b}'}(\m{GB}) > 1-\e/8
\end{equation}
uniformly over $\lVert \vec{a}'\rVert, \lVert\vec{b}'\rVert \leq M'+1$, $g\le M'+1$ and large $L$. 

 By stochastic monotonicity of the $\hat{\P}$ law, we may replace each $a_j'$ and $b_j'$ with $a_j'' := M'+2(2m-j+1)K$ and $g'$ by $M'+1$. This will only decrease the probability of the event $\m{GB}^+$. Thus,
 \begin{align*}
     \hat\P_{L;g'}^{k;A,0;\vec{a}',\vec{b}'}(\m{GB}^+) \ge \hat\P_{L;M+1}^{k;A,0;\vec{a}'',\vec{a}''}(\m{GB}^+).
 \end{align*}
As the boundary data are separated and away from the floor $M'+1$, as $L\to\infty$, the law $\hat\P_{L;M+1}^{k;A,0;\vec{a}'',\vec{a}''}$ converges to $k$ Brownian bridges on $[A,0]$ from $\vec{a}''$ to $\vec{a}''$ conditioned not to intersect and to stay above $M+1$. By straightforward tube estimates for Brownian bridges (cf.~\cite[Lemma 3.14]{dff}) the probability of the event $\m{GB}^+$ under this limiting law can be made at least $1-\e/32$ for $K,M''$ large enough. This implies $\hat\P_{L;g'}^{k;A,0;\vec{a}',\vec{b}'}(\m{GB}^+) > 1-\e/16$ for all large $L$. A similar monotonicity argument leads to $\hat\P_{L;g'}^{k;A,0;\vec{a}',\vec{b}'}(\m{GB}^-) > 1-\e/16$ for large $L$, and together these imply \eqref{GBlbd}.

Finally to prove \eqref{Glbd}, given \eqref{GBlbd} it suffices to show 
\begin{align}
    \label{lastineq}
    \hat{\P}_{L;g'}^{k;A,0;\vec{a}',\vec{b}'}\big(\{Z_f^{\vec{x},\vec{y}}\le \delta\}\cap\m{GB}\big) \le \e/8.
\end{align}
To do so we condition on the $\sigma$-algebra generated by $B_j$ on $[\ell_1,\ell_j]\cup[r_j,r_1]$ for $1\leq j\leq i+2$. Let $f_j$ denote the conditioned curves $B_j |_{[\ell_1,\ell_j]\cup[r_j,r_1]}$. By the Gibbs property, the resulting conditional law, $\bar{\P}$ say, is a Gibbs measure of the type defined in \cite[Eq.~(4.3)]{wuconv} with boundary conditions given by $f_1,\dots,f_{i+2}$. If $i\le k-2$, on the event $\m{GB}$, these boundary conditions $f_j$ are ``$M''$-\textbf{Good}'' in the language of \cite[Section 4]{wuconv}, and in this case \cite[Proposition 4.3]{wuconv} applies to show that $\ind_{\m{GB}}\cdot \bar{\P}(Z_f^{\vec{x},\vec{y}} \le \delta) \le \e/8$ for some $\delta>0$ small enough.
If $i=k-1$, $f_{i+2}=f_{k+1}=f=g'$ is deterministic and by definition $Z_{f}^{\vec{x},\vec{y}}$ is decreasing as $f$ increases. Since $f=g' \le M''$, in this case, $\ind_{\m{GB}}\cdot \bar{\P}(Z_f^{\vec{x},\vec{y}} \le \delta) \le \ind_{\m{GB}}\cdot \bar{\P}(Z_{M''}^{\vec{x},\vec{y}} \le \delta) \le \e/8$ where the last inequality follows by using \cite[Proposition 4.3]{wuconv} again. In both cases, taking expectation we arrive at \eqref{lastineq}  and we are done.

    \bibliographystyle{alpha}		
	\bibliography{hskpz,oldh}

\end{document}